\newtheoremstyle{quest}{\topsep}{\topsep}{}{}{\bfseries}{}{ }{\thmname{#1}\thmnote{ #3}.}
\theoremstyle{quest}
\theoremstyle{plain}
\theoremstyle{definition}
\newtheorem{theorem}{Theorem}[section]
\newtheorem{corollary}[theorem]{Corollary}
\newtheorem{proposition}[theorem]{Proposition}
\newtheorem{lemma}[theorem]{Lemma}
\newtheorem{definition}[theorem]{Definition}
\newtheorem{remark}[theorem]{Remark}
\definecolor{dkgreen}{rgb}{0,0.6,0}
\definecolor{gray}{rgb}{0.5,0.5,0.5}
\definecolor{mauve}{rgb}{0.58,0,0.82}
\tiny\color{gray},  
 \numberwithin{equation}{section} 
\newcommand{\p}{\ensuremath{\partial}}
\newcommand{\n}{\ensuremath{\nonumber}}
\newcommand{\eps}{\ensuremath{\epsilon}}
 \newcommand{\bigO}{\ensuremath{\mathcal{O}}}
  \newcommand{\X}{\ensuremath{\mathcal{X}}}
\title{\vspace{-50pt} Steady Prandtl Layers over a Moving Boundary: Non-Shear Euler Flows}
\author{ \Large Sameer Iyer \footnote{\url{sameer_iyer@brown.edu}. Division of Applied Mathematics, Brown University, 182 George Street, Providence, RI 02912, USA. Partially supported by NSF grant DMS-1611695. }}
\date{May 16, 2017}
\DeclareMathOperator{\supp}{\text{supp}}
\newcommand{\ud}{\,\mathrm{d}}
\begin{document}
\maketitle
\vspace{-30pt}
\begin{center}
\end{center}

\begin{abstract}
In this article we establish the validity of Prandtl layer expansions around Euler flows which are not shear. The presence of non-shear flows at the leading order creates a singularity of $\bigO(\frac{1}{\sqrt{\eps}})$. A new $y$-weighted positivity estimate is developed to control this leading-order growth at the far field.
\end{abstract}

\section{Introduction}

We consider the steady, incompressible Navier-Stokes equations on the domain $\Omega = (0,L) \times (0,\infty)$. The boundary consists of three components, $Y = 0, x = 0$, and $x = L$. The system reads: 
\begin{align} \label{sys.main}
 \left.\begin{aligned}
	&U^{NS}U^{NS}_x + V^{NS}U^{NS}_Y + P^{NS}_x = \epsilon \Delta U^{NS}  \\
	&U^{NS} V^{NS}_x + V^{NS} V^{NS}_Y + P^{NS}_Y = \epsilon \Delta V^{NS} \\ 
	& U^{NS}_x + V^{NS}_Y = 0.
       \end{aligned}
 \right\}
  \qquad \text{in $\Omega$}
\end{align}

The system above is taken together with the no-slip boundary condition on $Y = 0$, which in addition is assumed to be moving with velocity $u_b > 0$. The boundary conditions at $x = 0, L$ are inflow and outflow conditions, to be prescribed specifically in the article. 

We are interested in the asymptotic behavior of solutions to (\ref{sys.main}) as $\eps \rightarrow 0$. Such asymptotics must capture the formation of boundary layers, which we now describe in generality. Suppose an outer Euler flow is prescribed: 
\begin{align} \label{Euler.leading.order}
[u^0_e(x,Y), v^0_e(x,Y), P^0_e(x,Y)],
\end{align}

satisfying the Euler equations: 
\begin{align} \label{euler.equation.0}
 \left.\begin{aligned}
	&u^0_e u^{0}_{ex} + v^{0}_eu^{0}_{eY} + P^{0}_{ex} = 0  \\
	&u^0_e v^0_{ex} + v^0_e v^0_{eY} + P^0_{eY} = 0 \\ 
	& u^{0}_{ex} + v^{0}_{eY} = 0,
       \end{aligned}
 \right\}
  \qquad \text{in $\Omega$}
\end{align}

together with the no penetration boundary conditions at $Y = 0, Y \rightarrow \infty$: 
\begin{align}
v^0_e|_{Y = 0} = v^0_e|_{Y \rightarrow \infty} = 0. 
\end{align}

Generically there is a mismatch between the boundary velocity $u^0_e(x,0)$ and $u_b$, indicating that one should not expect solutions of (\ref{sys.main}) to converge to $[u^0_e, v^0_e]$ in the $L^\infty$ norm. Rather, it was proposed in 1904 by Ludwig Prandtl that one should expect the formation of boundary layers, which can be expressed mathematically as an asymptotic expansion: 
\begin{align} \label{exp.0}
&U^{NS}(x,Y) = u^0_e(x,Y) + u^0_p(x, \frac{Y}{\sqrt{\eps}}) + \bigO(\sqrt{\eps}), \\  \n
& V^{NS}(x,Y) = v^0_e(x,Y) + \sqrt{\eps}v^0_p(x, \frac{Y}{\sqrt{\eps}}) + \sqrt{\eps}v^1_e(x,Y) + \bigO(\eps), \\ \n
&P^{NS}(x,Y) = P^0_e(x,Y) + P^0_p(x, \frac{Y}{\sqrt{\eps}}) + \bigO(\sqrt{\eps}). 
\end{align}

The flows considered under the present setup are elliptic. Thus, a mathematical formulation of validating the expansion (\ref{exp.0}) is to assume boundary data are prescribed so that the expansions (\ref{exp.0}) are valid at the boundaries, $x = 0, L$, and to then prove that they must be valid in the interior of the domain, $\Omega$. 

Under the setup described above, (\ref{exp.0}) has been justified rigorously for shear flows in \cite{GN}. Our aim in this article is to generalize the results to non-shear flows that are ``sufficiently close to shear", to be made rigorous by assumption (\ref{euler.as.2}) in our main result. As is evident from (\ref{exp.0}), such a generalization is a \textit{leading order effect}, which when scaled to Prandtl variables creates a singularity of $\bigO(\frac{1}{\sqrt{\eps}})$; this is evident in the specification of (\ref{defn.vs}) below.

Let us briefly highlight the physical importance of developing a method to handle non-shear Eulerian flows. A classical setup from fluid mechanics deals with horizontal flows past a rotating disk, see for instance \cite{Schlicting}. Such a flow is non-shear, as in the set-up considered here. In the simpler case when the flows are actually circular (and therefore shear), as opposed to horizontal, in the presence of a rotating disk, the article of \cite{Iyer} develops machinery to handle the geometry of the boundary. The present article can be viewed as a first step in studying non-shear flows, without adding the complexities of a curved boundary.  

\subsection*{Boundary Layer Expansions}

We will work with scaled, boundary layer variables $y = \frac{Y}{\sqrt{\eps}}$, and consider the scaled Navier-Stokes unknowns: 
\begin{align}
U^\eps(x,y) = U^{NS}(x,Y), \hspace{3 mm} V^\eps(x,y) = \frac{V^{NS}(x,Y)}{\sqrt{\eps}} \hspace{3 mm} P^\eps(x,y) = P^{NS}(x,Y). 
\end{align}

In the new unknowns, the system (\ref{sys.main}) becomes: 
\begin{align}
&U^\eps U^\eps_x + V^\eps U^\eps_y + P^\eps_x = U^\eps_{yy} + \eps U^\eps_{xx}, \\
& U^\eps V^\eps_x + V^\eps V^\eps_y + \frac{P^\eps_y}{\eps} = V^\eps_{yy} + \eps V^\eps_{xx} \\
& U^\eps_x + V^\eps_y.
\end{align}

We start with the following expansions: 
\begin{align} \label{exp.1}
&U^\eps = u^0_e + u^0_p + \sqrt{\eps}u^1_e + \sqrt{\eps}u^1_p + \eps^{\frac{1}{2}+\gamma}u := u_s + \eps^{\frac{1}{2}+\gamma}u, \\ \label{exp.2}
&V^\eps = \frac{v^0_e}{\sqrt{\eps}} + v^0_p + v^1_e + \sqrt{\eps}v^1_p + \eps^{\frac{1}{2}+\gamma}v = v_s + \eps^{\frac{1}{2}+\gamma}v, \\ \label{exp.3}
&P^\eps = P^0_e + P^0_p + \sqrt{\eps}P^1_e + \sqrt{\eps}P^1_p + \eps P^2_p + \eps^{\frac{1}{2}+\gamma}P = P_s + \eps^{\frac{1}{2}+\gamma}P.
\end{align}

We are prescribed the Euler flow: 
\begin{align}
[u^0_e, v^0_e, P^0_e]. 
\end{align}

Importantly, the fact that $u^0_e$ is not shear means that it can have an $x$-dependence. This in turn implies that $v^0_e$ and $P^0_e$ are nonzero. Our analysis does not assume a sign condition for $\p_x P^0_e$. Due to the $x$-dependence of $u^0_e$, it is natural that in the scaled, Prandtl variable, there is a singularity of $\bigO(\frac{1}{\sqrt{\eps}})$, (see below, equation (\ref{defn.vs})).

We will construct the remaining terms in $[u_s, v_s, P_s]$, as defined by (\ref{exp.1}) - (\ref{exp.3}), in Appendix \ref{app.construct}. We will specify the particular equations satisfied by each of the terms in $[u_s, v_s]$ in Appendix \ref{app.construct}. Let us explicitly write the form of $v_s$: 
\begin{align} \label{defn.vs}
v_s = \frac{v^0_e}{\sqrt{\eps}} + v^0_p + v^1_e + \sqrt{\eps}v^1_p.
\end{align}

As can be seen from above, the presence of nonzero $v^0_e$ creates a leading order singularity of $\bigO(\frac{1}{\sqrt{\eps}})$, which is the main difficulty that must be addressed by our analysis. 

The main part of the article will be to construct and control the final term in the expansion, $[u, v, P]$, which we term the ``remainders".  The equations satisfied by the remainders $[u,v,P]$ are specified in (\ref{lin.1}) - (\ref{lin.3}). 

We now discuss the boundary data of each term above. The key point is that the no slip condition on $Y = 0$ must be enforced at each order in the expansion: 
\begin{align}
&u^0_e(x,0) + u^0_p(x,0) = u_b, \hspace{3 mm} u^1_p(x,0) = -u^1_e(x,0), \hspace{3 mm} u(x,0) = 0 \\
&v^0_e(x,0) = 0, \hspace{3 mm} v^1_e(x,0) = -v^0_p(x,0), \hspace{3 mm} v^1_p(x,0) = 0, \hspace{3 mm} v(x,0) = 0.
\end{align}

The boundary data at $x = 0$ must be specified for the Prandtl layers as follows: 
\begin{align}
u^0_p(x,0) = u^0_{p0}(y), \hspace{3 mm} u^1_p(x,0) = u^1_{p0}(y).
\end{align}

The equations for $u^i_p$ are diffusion equations, and so need to only be prescribed initial data at $x = 0$. $v^i_p$ are then recovered via the divergence free condition, and therefore do not need in-flow boundary conditions. We will assume that $u^i_{p0}$ are smooth and exponentially decaying. 

In contrast, the Euler layers, $[u^1_e, v^1_e]$ satisfy an elliptic system, and we must prescribe boundary data at both $x = 0, L$. We do so at the level of the stream function, where $\nabla^\perp \phi^1 = [u^1_e, v^1_e]$:
\begin{align}
\phi^1(0,Y) = \phi^1_0(Y), \hspace{3 mm} \phi^1(L,Y) = \phi^1_L(Y).
\end{align}

These are also assumed smooth and rapidly decaying, and in addition must satisfy a compatibility condition which we call ``well-prepared" boundary data defined in Definition \ref{defn.WP}. 

Finally, we can describe the boundary data for the remainders, $[u,v,P]$: 
\begin{align} \label{str.free.IN}
[u,v]|_{x = 0} = [a_0(y), b_0(y)], \hspace{3 mm} [u,v]|_{y = 0} = [u,v]|_{y \rightarrow \infty} = 0, \\ \label{str.free}
P - 2\eps u_x|_{x = L} = a_L(y), \hspace{3 mm} u_y + \eps v_x|_{x = L} = b_L(y).
\end{align}

The boundary condition at $x = 0$ allows the prescription of in-flow data. The boundary conditions at $x = L$ in (\ref{str.free}) is known as the (inhomogeneous) stress-free boundary condition, and corresponds to evaluating the Cauchy stress tensor at the boundary $x = L$. We will provide assumptions on the boundary data: 
\begin{align} \label{as.aL.bL}
&|\p_y^k a_L| \lesssim \sqrt{\eps} \langle y \rangle^{-N}, \hspace{3 mm} |\p_y^k \{a_0, b_0, b_L\}| \lesssim \langle y \rangle^{-N}, \hspace{3 mm} \supp\{a_0, b_0, a_L, b_L \} \subset \{ y \ge 1\}.
\end{align}

for sufficiently large $k, N$. 

\subsection*{Main Theorem}

In order to state our result, we must introduce the norm in which will control the solution. Define our $\X$ norm to be: 
\begin{align} \n
||u,v||_{\X} &:= ||u_y \cdot y||_{L^2} + ||\sqrt{\eps}u_x \cdot y ||_{L^2} + ||v_y, \sqrt{\eps}v_x||_{L^2} \\ \n
& + ||\Big\{u_{yy}, \sqrt{\eps}u_{xy}, \eps u_{xx} \Big\} \cdot y ||_{L^2} + \eps^{\frac{\gamma}{2}}||u, \sqrt{\eps}v||_{L^\infty} \\ \label{norm.S}
& + ||u,v||_{B}, 
\end{align}

where the boundary norm is given by: 
\begin{align} \label{norm.B}
||u,v||_B := ||u_y \cdot y, \sqrt{\eps}u_x \cdot y||_{L^2(x = L)} + ||\sqrt{\eps}u_x||_{L^2(x = L)}.
\end{align}

We will also have to define the space, $\X$, for which we refer the reader to Appendix B, equation \ref{defn.S}.

\begin{theorem} \label{th.main} Consider an Euler flow $[u^0_e(x,Y), v^0_e(x,Y)]$ satisfying the following hypothesis:
\begin{align} \label{euler.as.1}
& 0 < c_0 \le u^0_e \le C_0 < \infty, \\ \label{euler.as.2}
&||\frac{v^0_e}{Y}||_{L^\infty}  << 1,  \text{ and} \\ \label{euler.as.3}
&||Y^k \nabla^m v^0_e||_{L^\infty} < \infty \text{ for sufficiently large } k,m \ge 0, \\ \label{euler.as.4}
& ||Y^k \nabla^m u^0_e||_{L^\infty} < \infty \text{ for sufficiently large } k \ge 0, m \ge 1.  
\end{align}

Let the interval $L$ be sufficiently small relative to universal constants. Suppose in addition that the boundary data described above are prescribed, assumed to be smooth and rapidly decaying in their arguments, satisfy the assumptions (\ref{as.aL.bL}), and satisfy the compatibility conditions given in Definition \ref{defn.WP}. Then the remainder solutions $[u,v,P]$ exist in the space $\X$ and satisfy the estimate: 
\begin{align}
||u,v||_{\X} \lesssim 1. 
\end{align}

\end{theorem}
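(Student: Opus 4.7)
The overall strategy is to reduce to a linear problem for the remainder $[u,v,P]$ with small forcing, prove an a priori estimate in the $\X$-norm, and close the nonlinear problem by contraction. The construction of the approximate profile $[u_s,v_s,P_s]$ and the verification that it solves the Navier--Stokes system up to a residual of size $\eps^{1/2+\gamma}$ are deferred to Appendix \ref{app.construct}; the essential work is the linear a priori estimate, which must absorb the $\bigO(1/\sqrt\eps)$ singularity produced by the leading-order non-shear Euler component $v^0_e/\sqrt\eps$ in $v_s$ (see (\ref{defn.vs})).

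I would begin with a basic $L^2$ energy test, multiplying the $u$-equation by $u$ and the $v$-equation by $\eps v$ and integrating over $\Omega$. The dissipation yields $\|u_y\|_{L^2}^2 + \eps\|u_x\|_{L^2}^2 + \|v_y\|_{L^2}^2 + \eps\|v_x\|_{L^2}^2$, and the boundary traces on $x=L$ combine with the inhomogeneous stress-free conditions (\ref{str.free}) to produce the unweighted part of $\|u,v\|_B$. The critical contribution is the transport piece $(v^0_e/\sqrt\eps)\,u_y$; tested against $u$, one integration by parts in $y$ and the Euler divergence-free relation $\partial_Y v^0_e = -\partial_x u^0_e$ convert it to $\tfrac12 \int (\partial_x u^0_e)\, u^2$, which is $\bigO(1)$ in $\eps$ and is absorbed by Gronwall for $L$ sufficiently small.

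The crucial new step is the $y$-weighted positivity estimate promised in the abstract. I would test the $u$-equation against a carefully chosen $y$-weighted multiplier (the natural candidate being $y^2 u$, possibly refined by a polynomial tailored to the decay of $v^0_e$ in $Y$), designed so that integration by parts in $y$ extracts a coefficient proportional to $v^0_e/Y$ on the singular transport contribution. Schematically,
\[
\int (v^0_e/\sqrt\eps)\, u_y \cdot y^2 u \;=\; \tfrac12 \int (\partial_x u^0_e)\, y^2 u^2 \;-\; \int (v^0_e/Y)\, y^2 u^2,
\]
in which the factor $v^0_e/Y$ is precisely the quantity assumed small in (\ref{euler.as.2}); this allows the singular term to be \emph{absorbed} by the weighted dissipation $\|u_y\cdot y\|_{L^2}^2$ rather than merely controlled by Gronwall, using $u|_{y=0}=0$ together with a Hardy-type inequality and the weighted $L^\infty$ control built into $\X$. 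Parallel weighted tests on the $v$-equation (using the divergence-free constraint to eliminate the pressure) and on the once-differentiated system produce $\|\sqrt\eps u_x\cdot y\|_{L^2}$ and the weighted second-order quantities $\|u_{yy}\cdot y,\,\sqrt\eps u_{xy}\cdot y,\,\eps u_{xx}\cdot y\|_{L^2}$. The $L^\infty$ norm with prefactor $\eps^{\gamma/2}$ follows from a two-dimensional Sobolev/Agmon embedding fed by this $H^2$-type information.

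The main obstacle is precisely the $y$-weighted positivity step: every integration by parts against the weight produces boundary traces at $y=0$ and $y\to\infty$ together with cross-terms from the subleading constituents $v^0_p$, $v^1_e$, $\sqrt\eps v^1_p$ of $v_s$, each of which must be controlled by the same norms on the left without disturbing the positive sign extracted from the Euler piece; it is essential here that (\ref{euler.as.2}) is a smallness assumption rather than a boundedness assumption. Once the linear estimate $\|u,v\|_{\X} \lesssim 1$ is in hand, the nonlinear terms $\eps^{1/2+\gamma}(u u_x + v u_y)$ in (\ref{exp.1})--(\ref{exp.3}) carry a strictly positive power of $\eps$, so a standard contraction mapping on the ball $\{\|u,v\|_{\X}\le C\}$ in $\X$ yields existence of the remainder and the stated estimate.
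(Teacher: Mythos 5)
Your outline matches the paper at the level of the energy estimate (the observation that the $\bigO(1/\sqrt{\eps})$ term $v_s u_y$ is harmless when tested against $u$, because the $\p_y$ lands on $v_s$ and $\p_y(v^0_e/\sqrt{\eps}) = \p_Y v^0_e = \bigO(1)$, is exactly right). But there are two genuine gaps. First, you never introduce the quotient unknown $\beta = v/u_s$ and the multiplier $[-\p_y\beta, \eps\p_x\beta]$, which is the positivity estimate of Guo--Nguyen and the \emph{only} mechanism in this framework that yields $\bigO(1)$ control of $\|v_y,\sqrt{\eps}v_x\|_{L^2}$. This control is not optional: the convective term $\int u_{sy}uv$ in the energy estimate is bounded by $\bigO(L)\|u_x\|_{L^2}\|v_y\|_{L^2}$ and cannot be closed otherwise; ``parallel weighted tests on the $v$-equation'' do not produce coercivity over $\|v_y,\sqrt{\eps}v_x\|_{L^2}^2$ without the specific structure $S^u = -u_s^2\beta_y + \cdots$ that the quotient provides. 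It is precisely in this positivity step that the non-shear singularity bites, producing $\|v^0_e/Y\|_{L^\infty}\|u_y\cdot y\|_{L^2}\|\beta_y\|_{L^2}$ --- i.e., a loss of one $y$-weight on $u_y$ --- and this is what the weighted estimate must recover.

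Second, your proposed weighted multiplier $y^2u$ on the undifferentiated $u$-equation does not work. The identity you display is algebraically correct, but the resulting term $\int (v^0_e/Y)\,y^2u^2$ requires control of $\|uy\|_{L^2}$, which is \emph{not} in the $\X$-norm and is not dominated by the weighted dissipation $\|u_y\cdot y\|_{L^2}$ (the Hardy inequality in the relevant direction costs an extra power of $y$: $\|uy\|_{L^2}\lesssim\|u_y y^2\|_{L^2}$); smallness of $\|v^0_e/Y\|_{L^\infty}$ cannot rescue an uncontrolled quantity. Moreover, $y^2u$ against $-\Delta_\eps u$ does not generate the weighted second-order norms. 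The paper instead differentiates the system in $y$ and applies the mixed multiplier $[\p_y\{u\,y^2(1-x)\},-\eps\p_x\{u\,y^2(1-x)\}]$: the positive bulk term $\int\frac{u_s}{2}u_y^2y^2$ comes from integrating $u_su_{xy}\cdot u_yy^2(1-x)$ by parts in $x$ (the factor $1-x$ is essential), the smallness of $v^0_e/Y$ is then used to absorb $\int v_s\,y\,u_y^2(1-x)$ against $\|u_y\cdot y\|_{L^2}^2$ --- not $\|uy\|_{L^2}^2$ --- and the differentiated convection $u_{syy}v$ gains the extra $\sqrt{\eps}$ needed to close. A scaled, weighted Korn inequality is then required to render the second-order block $\int[u_{yy}^2+4\eps u_{xy}^2+\eps^2u_{xx}^2-2\eps u_{yy}u_{xx}]y^2(1-x)$ coercive; this step, and the linear existence argument (penalized operator plus Fredholm alternative) preceding the contraction mapping, are also absent from your sketch.
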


\begin{corollary} In the inviscid limit, we have the convergence: 
\begin{align}
||U^{NS} - u^0_e - u^0_p||_{L^\infty} + ||V^{NS} - v^0_e||_{L^\infty} \le \sqrt{\eps}.
\end{align}
\end{corollary}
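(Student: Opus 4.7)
The plan is to read off the corollary directly from the expansions (\ref{exp.1})--(\ref{exp.2}) together with the remainder estimate of Theorem \ref{th.main} and the $L^\infty$ bounds on the profiles $u^1_e, u^1_p, v^0_p, v^1_e, v^1_p$ constructed in Appendix \ref{app.construct}. First I would undo the Prandtl scaling: since $U^{NS}(x,Y)=U^\eps(x,y)$ and $V^{NS}(x,Y)=\sqrt{\eps}\,V^\eps(x,y)$ with $y=Y/\sqrt{\eps}$, the expansions give
\begin{align*}
U^{NS} - u^0_e - u^0_p &= \sqrt{\eps}\,u^1_e + \sqrt{\eps}\,u^1_p + \eps^{\frac{1}{2}+\gamma}\, u, \\
V^{NS} - v^0_e &= \sqrt{\eps}\,v^0_p + \sqrt{\eps}\,v^1_e + \eps\, v^1_p + \eps^{1+\gamma}\, v.
\end{align*}
Note the $\eps^{-1/2}$ in the definition of $v_s$ from (\ref{defn.vs}) exactly cancels the $\sqrt{\eps}$ factor converting $V^\eps$ back to $V^{NS}$, which is what allows the Euler profile $v^0_e$ to appear at leading order.

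Next I would bound each profile term in $L^\infty$. The hypotheses (\ref{euler.as.3})--(\ref{euler.as.4}) give $\|u^1_e\|_{L^\infty} + \|v^1_e\|_{L^\infty} \lesssim 1$ uniformly in $\eps$ (these are constructed by solving an elliptic system from the well-prepared boundary data of Definition \ref{defn.WP}). The Prandtl profiles $u^0_p, u^1_p, v^0_p, v^1_p$ are obtained from parabolic problems with exponentially decaying data and inherit uniform $L^\infty$ bounds. Thus every $\sqrt{\eps}$-, $\eps$-prefactored profile contributes at most $\bigO(\sqrt{\eps})$.

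It remains to control the remainder contributions $\eps^{1/2+\gamma} u$ and $\eps^{1+\gamma} v$ in $L^\infty$. Here I would invoke the fifth piece of the $\X$ norm in (\ref{norm.S}), namely $\eps^{\gamma/2}\|u,\sqrt{\eps}\,v\|_{L^\infty}$, together with Theorem \ref{th.main}: from $\|u,v\|_\X \lesssim 1$ one extracts $\|u\|_{L^\infty} \lesssim \eps^{-\gamma/2}$ and $\|v\|_{L^\infty}\lesssim \eps^{-1/2-\gamma/2}$. Therefore
\begin{align*}
\eps^{\frac{1}{2}+\gamma}\|u\|_{L^\infty} \lesssim \eps^{\frac{1}{2}+\frac{\gamma}{2}} \lesssim \sqrt{\eps}, \qquad \eps^{1+\gamma}\|v\|_{L^\infty} \lesssim \eps^{\frac{1}{2}+\frac{\gamma}{2}}\lesssim \sqrt{\eps}.
\end{align*}
Combining these bounds with the profile estimates above and applying the triangle inequality to the two displayed identities yields the claimed inequality.

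There is really no obstacle beyond bookkeeping the powers of $\eps$; the delicate work is entirely absorbed into Theorem \ref{th.main}. The only subtle point worth emphasizing is that the $\eps^{\gamma/2}$ weight built into the $\X$-norm's $L^\infty$ slot is precisely what is needed for the $\eps^{1/2+\gamma}$ factor in front of the remainder to convert into the stated $\bigO(\sqrt{\eps})$ rate, confirming that the choice of $\gamma>0$ and the structure of the norm (\ref{norm.S}) are tuned exactly to the statement of this corollary.
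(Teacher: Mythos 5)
Your argument is correct and is precisely the intended (and only natural) derivation: the paper states this corollary without proof, and it follows exactly as you describe by unscaling the expansions (\ref{exp.1})--(\ref{exp.2}), using the uniform profile bounds from Appendix \ref{app.construct}, and converting the $\eps^{\gamma/2}\|u,\sqrt{\eps}v\|_{L^\infty}$ component of the $\X$-norm in Theorem \ref{th.main} into the $\bigO(\eps^{1/2+\gamma/2})$ remainder contribution. The only cosmetic slip is attributing the boundedness of $u^1_e, v^1_e$ to (\ref{euler.as.3})--(\ref{euler.as.4}) rather than to the elliptic estimate (\ref{euler.estimate}) for the constructed corrector $\phi^1$, which your own parenthetical already acknowledges.
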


\begin{remark} The Euler flows which satisfy the assumptions of (\ref{euler.as.1}) - (\ref{euler.as.4}) are plentiful, see Proposition \ref{eul.exist} in Appendix \ref{app.construct}.
\end{remark}

Let us place this result in the context of recent developments in the boundary layer theory. We will restrict to stationary, two dimensional flows. A central task in this setting is to establish validity of an expansion of the type (\ref{exp.0}), and this is considered to be one of the most challenging open problems in fluid mechanics. It has been achieved in the setting of a moving boundary in \cite{GN}, \cite{Iyer}, \cite{Iyer2}. The method introduced by \cite{GN} relies on establishing a crucial positivity estimate which gives $o(1)$ control over the remainder quantity $||v_y, \sqrt{\eps}v_x||_{L^2}$. The flows considered in those works were all shear flows, and the aim of the present result is to generalize (in particular the result of \cite{GN}) to the case of non-shear flows. As can be seen in the expansion (\ref{exp.2}), this is a leading order effect, and therefore requires a new $y$-weighted estimate. 

Within the stationary, two dimensional setting, the recent work of \cite{MD} addresses the related question of blowup of the Prandtl equation in the presence of an unfavorable pressure gradient. For unsteady flows, the validity of an asymptotic expansion of the form (\ref{exp.0}) has been established in the analyticity framework, \cite{Asano}, \cite{Caflisch1}, \cite{Caflisch2}, in the Gevrey setting in \cite{DMM}, for initial vorticity bounded away from the origin in \cite{Mae}, and for special flows in \cite{Taylor}. Giving a more exhaustive survey of results in the unsteady setting would lead us astray, and so we refer the reader to the review articles of \cite{E}, \cite{Temam}, and \cite{MM} and the references therein. 

\subsection*{Overview of Proof}

Let us introduce the system satisfied by the remainders. For our discussion, we will consider the linearized version of system (\ref{NSR.1}) - (\ref{NSR.3}) and regard $f, g$ and generic elements of $L^2$. 
\begin{align} \label{lin.1}
&-\Delta_\eps u + S_u + P_x = f, \\ \label{lin.2}
&-\Delta_\eps v + S_v + \frac{P_y}{\eps} = g, \\ \label{lin.3}
&u_x + v_y = 0,
\end{align}

together with the inhomogeneous boundary conditions: 
\begin{align} \label{BC.dirichlet}
&[u,v]|_{y = 0} = [u,v]|_{x = 0} = [u,v]|_{y \rightarrow \infty} = 0, \\ \label{BC.stress.free}
& P - 2\eps u_x|_{x = L} = a_L(y), \hspace{3 mm} \{u_y + \eps v_x\}|_{x = L} = b_L(y).
\end{align}

In actuality, $f, g$ contain the nonlinear components. Also note that we can, up to redefining $a_L, b_L$, reduce the boundary data from (\ref{str.free.IN}) - (\ref{str.free}) to the homogenized boundary data (\ref{BC.dirichlet}) - (\ref{BC.stress.free}). This is proven in Lemma \ref{lemma.WLOG}. We provide relevant definitions below: 
\begin{align} \label{nl.spec.1}
&S^u = u_s u_x + u_{sx}u + v_s u_y + u_{sy}v, \hspace{3 mm} S^v = u_s v_x + v_{sx}u + v_s v_y + v_{sy}v, \\ \label{nl.spec.2}
& N^u(u,v) = \eps^{\frac{1}{2}+\gamma}\Big[ uu_x + vu_y \Big], \hspace{3 mm} N^v(u,v) = \eps^{\frac{1}{2}+\gamma}\Big[ uv_x + vv_y \Big], \\ \label{nl.spec.3}
& f = \eps^{-\frac{1}{2}-\gamma} R^{u,1} + N^u + L^b_1, \hspace{3 mm} g = \eps^{-\frac{1}{2}-\gamma}R^{v,1} + N^v + L^b_2.  
\end{align}

Here, $R^u, R^v$ are high order profile remainders which are defined specifically in (\ref{defn.Ru1}), (\ref{defn.Rv1}) and estimated in (\ref{sigma}), and $L^b_1, L^b_2$ arise from homogenizing the boundary data, are defined in (\ref{defn.Lb}) and estimated in (\ref{est.Lb}). The important consideration for the purposes of this discussion is the rough specification of $[u_s, v_s]$, which we can write: 
\begin{align}
&u_s \approx u^0_e + u^0_p + \bigO(\sqrt{\eps}), \\ \label{vs.heur}
&v_s \approx \frac{v^0_e}{\sqrt{\eps}} + \bigO(1). 
\end{align}

Here, the Prandtl layer, $u^0_p$ is rapidly decaying in the Prandtl variable, $y$. The main idea is to close a $y$-weighted estimate which can control the $\bigO(\frac{1}{\sqrt{\eps}})$ contribution from $v_s$. The first estimate in our scheme is the basic energy estimate: 
\begin{align} \label{overview.en}
 ||u_y||_{L^2}^2 \lesssim \bigO(L) ||v_y, \sqrt{\eps}v_x||_{L^2}^2 + ||f, \sqrt{\eps}g||_{L^2}^2 + C(a_0, b_0, a_L, b_L).
\end{align}

This estimate is standard, and is obtained by applying $(u,\eps v)$ to the system (\ref{lin.1}) - (\ref{lin.2}). The main coercive term is the $\Delta_\eps$, which yields control over $||u_y, \sqrt{\eps}v_y, \eps v_x||_{L^2}^2$. The $\bigO(\frac{1}{\sqrt{\eps}})$ singular term from $v_s$ does not play a role at the level of energy estimates because a factor of $\p_y$ hits each instance of $v_s$ upon applying the multiplier $(u,\eps v)$. Nevertheless, the energy estimate is too weak to close as a standalone estimate due to large convective terms. For instance, one considers: 
\begin{align}
|\int u_{sy}uv| = |\int \Big( \sqrt{\eps}u^0_{eY} + u^0_{py} + \bigO(\sqrt{\eps}) \Big) uv| \le \bigO(L)||u_x||_{L^2}||v_y||_{L^2}.
\end{align}

Thus, it is required that $v_y$ be controlled at $\bigO(1)$, which is a famous difficulty in the boundary layer theory. This is the content of the next step, which generates the following positivity estimate:
\begin{align} \n
 ||v_y, \sqrt{\eps} v_x||_{L^2}^2 + ||\sqrt{\eps}u_x||_{L^2(x = L)} \lesssim &||u_y||_{L^2}^2 + ||\frac{v^0_e}{Y}||_{L^\infty} ||u_y \cdot y, \sqrt{\eps}v_y \cdot y ||_{L^2}^2 \\ \label{overview.pos} & + ||f, \sqrt{\eps}g||_{L^2}^2 + C(a_0, b_0, a_L, b_L).
\end{align}

The above estimate is generating by applying $[\p_y \frac{v}{u_s}, -\eps \p_x \frac{v}{u_s}]$ to the system (\ref{lin.1}) - (\ref{lin.3}). Here, $\bigO(v^0_e)$ is a constant that can be made small according to the assumption in (\ref{euler.as.2}). This estimate was introduced in the context of shear flows by \cite{GN}, and crucially utilizes the multiplier $\frac{v}{u_s}$ which is able to generate coercivity over $||v_y, \sqrt{\eps}v_x||_{L^2}^2$. We refer the reader to the article of \cite{GN} for more details, but emphasize that the significant difference when addressing non-shear flows is the term $||\frac{v^0_e}{Y}||_{L^\infty} ||u_y \cdot y, \sqrt{\eps}v_y \cdot y||_{L^2}^2$. \textit{The key difficulty} for our analysis is the loss of one $y$-weight on the right-hand side due to this term, which is the leading order effect of the non-shear flow. Specifically, consider the term $v_s u_y$ appearing in $S^u$, as seen from definition (\ref{nl.spec.1}), and recall that according to (\ref{vs.heur}) the leading order of $v_s \approx \frac{v^0_e}{\sqrt{\eps}}$. The outcome then becomes: 
\begin{align} \n
\int v_y^2 &\lesssim \int \frac{v^0_e}{\sqrt{\eps}} u_y v_y = \int \frac{v^0_e}{\sqrt{\eps}y} y u_y v_y \\
& \le ||\frac{v^0_e}{Y}||_{L^\infty} ||u_y \cdot y||_{L^2}||v_y||_{L^2}.
\end{align}

Our first main contribution of this paper is to develop the following $y$-weighted estimate which controls the term $||u_y \cdot y||_{L^2}$ term from (\ref{overview.pos}):
\begin{align} \n
&||\{u_{yy}, \sqrt{\eps}u_{xy}, \eps u_{xx}\} \cdot y||_{L^2}^2 + ||\{u_y, \sqrt{\eps}u_x \} \cdot y ||_{L^2}^2 + ||\{ u_y, \sqrt{\eps}u_x \} \cdot y||_{L^2(x = L)} \\  \n
& \hspace{20 mm} \lesssim ||\sqrt{\eps}u_x||_{L^2(x = L)} + ||u_y||_{L^2}^2 + ||v_y, \sqrt{\eps}v_x||_{L^2}^2 \\ \label{overview.weight} 
& \hspace{20  mm} + \text{Forcing Terms }+ C(a_0, b_0, a_L, b_L).
\end{align}

The key idea is to first apply $\p_y$ to the system (\ref{nl.spec.1}). Let us extract the main terms coming from $S^u$: 
\begin{align}
\p_y S^u = u_s u_{xy} + v_s u_{yy} + u_{syy}v.
\end{align}

We now introduce a mixed weight multiplier $u_y y^2 \cdot 1-x$, where the $1-x$ can take advantage of $\p_x$ integrating by parts: 
\begin{align} \n
\int u_s u_{xy} \cdot u_y y^2 (1-x) &= - \int \frac{u_{sx}}{2} u_y^2 y^2 (1-x) + \int \frac{u_s}{2} u_y^2 y^2 \\ \label{pos.co.1}
& + \int_{x = L} \frac{u_s}{2} u_y^2 y^2 (1-L), \\ \label{pos.co.2}
\int v_s u_{yy} \cdot u_y y^2 (1-x) &= - \int \frac{v_{sy}}{2} u_y^2 y^2 (1-x) - \int v_s y u_y^2 (1-x).
\end{align}

Summing (\ref{pos.co.1}) - (\ref{pos.co.2}), using $u_{sx} + v_{sy} = 0$, and the smallness given in (\ref{euler.as.2}), we have: 
\begin{align} \n
(\ref{pos.co.1}) + (\ref{pos.co.2}) &\gtrsim \int \frac{u_s}{2}u_y^2 y^2 - \int v_s y u_y^2 (1-x) + \int_{x = L} \frac{u_s}{2} u_y^2 y^2 (1-L) \\
& \gtrsim \int \frac{u_s}{2}u_y^2 y^2 + \int_{x = L} \frac{u_s}{2} u_y^2 y^2 (1-L).
\end{align}

At the level of the convection, the additional $\p_y$ is necessary to generate an additional factor of $\sqrt{\eps}$: 
\begin{align} 
\int u_{syy}v \cdot u_y y^2 (1-x) &\approx \int \eps u^0_{eYY} v \cdot u_y y^2 (1-x) \\ \n
& \le ||Y^2 u^0_{eYY}||_{L^\infty} ||\frac{v}{y}||_{L^2} ||u_y \cdot y||_{L^2} \lesssim ||v_y||_{L^2}||u_y y||_{L^2}.
\end{align}

The above series of estimates closes by using the smallness of $L$ and $v^0_e$. Let us make a few remarks. Although the purpose of the weighted estimate, (\ref{overview.weight}), is to capture behavior for large $y$, we cannot introduce a cut-off function that avoids the $y = 0$ boundary into the multiplier for instance by selecting $u_y y^2 (1-x) \chi(y)$. This is because the higher-order terms arising from $\p_y \Delta_\eps$ will generate local terms which cannot be controlled. 

Apart from from the weighted estimate, (\ref{overview.weight}), a second novelty of our analysis is that we treat a large class of inhomogeneous boundary data at $x = 0, x = L$, as is shown in (\ref{str.free.IN}) - (\ref{str.free}). This level of generality is important and very physical: it corresponds to taking measurements of the fluid at the inflow and outflow edges, $x = 0$ and $x = L$, and taking these values as inputs. The technique for treating these boundary conditions is based on Lemma \ref{lemma.WLOG}, proved in Appendix \ref{app.construct}. We first construct an auxiliary divergence free vector field which attains the boundary data from (\ref{str.free.IN}) at $\{x = 0\}$. Using this auxiliary vector field to homogenize then creates a boundary contribution at $x = L$, which cannot be removed by a further homogenization due to the need to preserve the divergence-free condition. This has the effect of contributing several new boundary terms from $x = L$ into the positivity estimate, (\ref{overview.pos}), which must then be controlled. 

A third novelty of our analysis is to develop a scaled, weighted version of Korn's inequality to close the above scheme of estimates. Such an estimate is needed due to the higher order contributions which are created in order to perform estimate (\ref{overview.weight}). In particular, the estimate we prove is a coercivity estimate of the form: 
\begin{align} 
\int \Big[u_{yy}^2 &+ 4\eps u_{xy}^2 + \eps^2 u_{xx}^2 - 2 \eps u_{yy}u_{xx} \Big] y^2 \cdot (1-x) \\ \n
& \gtrsim \int \Big[ u_{yy}^2 + \eps u_{xy}^2 + \eps^2 u_{xx}^2 \Big] y^2 \cdot (1-x) - \text{Acceptable Contributions}.
\end{align}

\subsection*{Notation}

Within lemmas, we will use $X \sim \bigO(\text{LHS})$ and $ X \sim \bigO(\text{RHS})$ to mean $X$ can be controlled, up to a universal constant, by the left-hand side (or right-hand side, respectively) of the lemma we are proving. Quantities denoted by $\bigO(L)$ refer to those which can be made small by making $L$ small, and quantities denoted by $\bigO(v^0_e)$ refer to those which can be made small according to the smallness assumptions in (\ref{euler.as.2}).

\section{Energy Estimate}

We will now give the basic energy estimate. The reader should recall the properties of the profiles, given in Appendix \ref{app.construct}, in particular Lemma \ref{Lemma.Unif}, and the space $\X$, as defined by the norm introduced in (\ref{norm.S}), and the definition in (\ref{defn.S}).
\begin{proposition} \label{prop.energy} Solutions $[u,v,P] \in \X$, as defined by (\ref{defn.S}), to the system (\ref{lin.1}) - (\ref{lin.3}), with the boundary conditions (\ref{BC.dirichlet}) - (\ref{BC.stress.free}), satisfy the following estimate:
\begin{align} \label{en.est.1}
||u_y||_{L^2}^2 + \int_{x = L} \frac{u_s}{2} \Big(u^2 + \eps v^2 \Big) \lesssim \bigO(L) ||v_y, \sqrt{\eps}v_x||_{L^2}^2 + \mathcal{R}_1 + ||a_L, b_L||_{L^2}^2, 
\end{align}
where: 
\begin{align} \label{R.1}
\mathcal{R}_1 := \int f \cdot u + \eps g \cdot v. 
\end{align}
\end{proposition}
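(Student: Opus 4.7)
The plan is to test (\ref{lin.1}) with $u$ and (\ref{lin.2}) with $\eps v$ and integrate over $\Omega$. The multiplier $(u,\eps v)$ is chosen so that the pressure contribution collapses via the divergence-free condition (\ref{lin.3}): after integration by parts, $\int P_x u + P_y v = -\int P(u_x+v_y) + \int_{x=L} Pu = \int_{x=L} Pu$, the other pressure boundary pieces vanishing because $u,v$ are Dirichlet on $x=0$, $y=0$, and at infinity. Integrating the diffusion by parts produces the bulk $\|u_y\|_{L^2}^2 + \eps\|u_x\|_{L^2}^2 + \eps\|v_y\|_{L^2}^2 + \eps^2\|v_x\|_{L^2}^2$ together with the stress-free edge contribution $-\eps\int_{x=L}(u u_x + \eps v v_x)\,dy$, all other boundary pieces being killed by (\ref{BC.dirichlet}).

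Next I expand the convection $\int S^u u + \eps \int S^v v$. Writing $u_s u_x u = u_s \p_x(u^2/2)$ and $\eps u_s v_x v = \eps u_s \p_x(v^2/2)$ and integrating by parts in $x$ (using $u, v|_{x=0}=0$) yields exactly the LHS boundary $\int_{x=L}\frac{u_s}{2}(u^2+\eps v^2)$. The terms $v_s u_y u$ and $\eps v_s v_y v$ integrate by parts in $y$, and here the key observation is that $\p_y$ kills the singular factor in $v_s$, since $\p_y(v_e^0(x,\sqrt{\eps}y)/\sqrt{\eps}) = v_{eY}^0$ is $\bigO(1)$. After invoking $u_{sx}+v_{sy}=0$, the residual bulk pieces take the form $\int u_{sx}u^2,\ -\eps\int u_{sx}v^2,\ \int u_{sy}uv,\ \eps\int v_{sx}uv$. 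Each is bounded by $\bigO(L)\|v_y,\sqrt{\eps}v_x\|_{L^2}^2$: Poincar\'e in $x$ (from $u, v|_{x=0}=0$) gives $\|u\|_{L^2}, \|v\|_{L^2} \lesssim L(\|v_y\|_{L^2}+\|v_x\|_{L^2})$ after the divergence-free identity $u_x=-v_y$, while a Hardy inequality in $y$ (from $v|_{y=0}=0$) delivers $\|v/y\|_{L^2}\le 2\|v_y\|_{L^2}$; the constants $\|yu_{sy}\|_{L^\infty}$ and $\|y\eps v_{sx}\|_{L^\infty}$ are finite by (\ref{euler.as.3})--(\ref{euler.as.4}) and the exponential decay of the Prandtl correctors.

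The principal difficulty is the boundary at $x=L$. Substituting the stress-free conditions $P = a_L + 2\eps u_x$ and $\eps v_x = b_L - u_y$, using the divergence-free identity $u_x|_{x=L}=-v_y|_{x=L}$, and performing one integration by parts in $y$ (valid since $u, v$ vanish at $y=0$ and at infinity), the combined diffusion-and-pressure boundary collapses to $\int_{x=L}\{u a_L - \eps v b_L + 2\eps u_y v\}\,dy$. The terms linear in $a_L, b_L$ are handled by Cauchy-Schwarz: using $u_s\ge c_0 > 0$ from (\ref{euler.as.1}), a small fraction of $\int_{x=L}\frac{u_s}{2}(u^2+\eps v^2)$ is absorbed into the LHS, leaving only $\bigO(1)\|a_L,b_L\|_{L^2}^2$ on the RHS. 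The delicate term $2\eps\int_{x=L} u_y v$ is the main technical hurdle; substituting $u_y = b_L - \eps v_x$ once more splits it as $2\eps\int b_L v - 2\eps^2 \int v v_x$ at $x=L$, and Cauchy-Schwarz combined with the Poincar\'e-based trace bound $\|\sqrt{\eps}v\|_{L^2(x=L)}^2 \le L\|\sqrt{\eps}v_x\|_{L^2(\Omega)}^2$ (which follows from $v(0,y)=0$ and the representation $v(L,y)=\int_0^L v_x\,dx$) converts the $v$-trace into the $\bigO(L)\|\sqrt{\eps}v_x\|_{L^2}^2$ contribution demanded by the RHS of (\ref{en.est.1}); any residual trace of $\sqrt{\eps}u_x$ or $\sqrt{\eps}v_x$ at $x=L$ that emerges is controlled in the combined bootstrap with the positivity estimate (overview.pos), whose LHS supplies $\|\sqrt{\eps}u_x\|_{L^2(x=L)}^2$. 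Collecting all contributions and discarding the surviving non-negative bulk quantities from the LHS gives (\ref{en.est.1}).
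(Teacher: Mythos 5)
Your overall strategy is the paper's: test with $(u,\eps v)$, let the divergence-free condition collapse the pressure to a trace at $x=L$, generate $\int_{x=L}\tfrac{u_s}{2}(u^2+\eps v^2)$ from $u_su_xu$ and $\eps u_sv_xv$, use $u_{sx}+v_{sy}=0$, and observe that $\p_y$ cancels the $\bigO(1/\sqrt{\eps})$ singularity in $v_s$; the convection estimates via Hardy, Poincar\'e and Lemma \ref{Lemma.Unif} are all as in the text. The one substantive divergence is where your proof breaks: you integrate the naive form $\Delta_\eps=\p_y^2+\eps\p_x^2$ by parts and are left with the boundary term $2\eps\int_{x=L}u_yv$ (equivalently $2\eps\int_{x=L}uu_x$, after $u_x=-v_y$ and one integration by parts in $y$). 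This term cannot be bounded by the right-hand side of (\ref{en.est.1}): every way of estimating it requires a trace of $u_y$ or of $\sqrt{\eps}v_x$ on $x=L$, and neither quantity appears in the boundary norm (\ref{norm.B}) nor on the LHS of the positivity estimate (which controls $\|\sqrt{\eps}v_y\|_{L^2(x=L)}=\|\sqrt{\eps}u_x\|_{L^2(x=L)}$, a different object). So deferring it to "the combined bootstrap" does not work, and in any case would not prove the proposition as stated.

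The paper avoids the term entirely by first rewriting the viscous operator in stress-divergence form using incompressibility: $\Delta_\eps u=u_{yy}+2\eps u_{xx}+\eps v_{xy}$ (since $u_{xx}=-v_{xy}$) and $\Delta_\eps v=2v_{yy}+\p_x\{u_y+\eps v_x\}$ (since $v_{yy}=-u_{xy}$). With this representation the only $x=L$ boundary contributions from the viscous terms are $-\int_{x=L}2\eps u_xu$, which pairs with the pressure trace to give exactly $\int_{x=L}(P-2\eps u_x)u=\int_{x=L}a_Lu$, and $-\int_{x=L}\eps\{u_y+\eps v_x\}v=-\int_{x=L}\eps b_Lv$; both are then handled by Cauchy--Schwarz and the Poincar\'e trace bound, at the cost of harmless interior cross terms $\int\eps u_yv_x$ of size $\sqrt{\eps}(\|u_y\|_{L^2}^2+\|\sqrt{\eps}v_x\|_{L^2}^2)$. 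You should adopt this rewriting; the rest of your argument then goes through.
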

\begin{proof}

This follows upon applying $(u, \eps v)$ to the system (\ref{lin.1}) - (\ref{lin.3}). First, we will write the $\Delta_\eps$ terms in the following way: 
\begin{align}
\Delta_\eps u = u_{yy} + 2\eps u_{xx} + \eps v_{xy}, \hspace{5 mm} \Delta_\eps v = 2v_{yy} + \eps \p_x \{u_y + \eps v_x \}.
\end{align}
 
 Using the above representation, we now integrate by parts: 
\begin{align} \n
&- \int u_{yy} \cdot u - \int 2\eps u_{xx}\cdot u - \int \eps v_{xy} u \\ \label{bd.1.3}
& \hspace{20 mm} = \int u_y^2 + \int 2\eps u_x^2 + \int \eps v_x u_y - \int_{x = L} 2 \eps u_xu, \\ \n
&-\int 2v_{yy} \cdot \eps v - \int \eps \p_x \{u_y + \eps v_x \} \cdot v \\   \label{bd11}
& \hspace{20 mm}  = + \int 2\eps v_y^2 + \int \eps^2 v_x^2 + \int \eps u_y v_x - \int_{x = L} \eps v b_L(y), \\ \label{pres.bd.1}
&\int P_x \cdot u + \int P_y v = - \int_{x = L} P u,
\end{align}

For (\ref{bd.1.3}) and (\ref{bd11}) we have used the boundary conditions from (\ref{BC.stress.free}). First, we will estimate the interior term from (\ref{bd11}):
\begin{align}
|\int \eps u_y v_x| \le \sqrt{\eps} ||\sqrt{\eps}v_x||_{L^2} ||u_y||_{L^2} \le \sqrt{\eps} \Big[ ||u_y||_{L^2}^2 + ||\sqrt{\eps}v_x||_{L^2}^2 \Big].
\end{align}

Next, the boundary term from (\ref{bd11}):
\begin{align}
|\int_{x = L} \eps v b_L| \le ||\eps v||_{L^2(x = L)} ||b_L||_{L^2(x = L)} \le \bigO(L) \eps ||\sqrt{\eps}v_x||_{L^2}^2 + ||b_L||_{L^2}^2.
\end{align}

We combine the boundary term from (\ref{bd.1.3}) and (\ref{pres.bd.1}) by invoking the stress free boundary condition, in (\ref{BC.stress.free}): 
\begin{align} \n
-\int_{x = L}\{ P - 2\eps u_x \} \cdot u& = -\int_{x = L} a_L(y) \cdot u \\ \n
& \le ||a_L||_{L^2(x = L)} ||u||_{L^2(x = L)} \\
& \le ||a_L||_{L^2(x = L)}^2 + \bigO(L)||u_x||_{L^2}^2.
\end{align}

We will now move to the terms from $S^u$, as defined in (\ref{nl.spec.1}):
\begin{align}
\int S^u \cdot u = \int \Big[ u_s u_x + u_{sx}u + v_s u_y + u_{sy}v \Big] \cdot u
\end{align}

The most difficult convective term from $S^u$ is: 
\begin{align} \n
|\int u_{sy}uv| &\le \bigO(L) ||u_{sy}\cdot y||_{L^\infty} ||v_y, \sqrt{\eps}v_x||_{L^2}^2.
\end{align}

We have used the estimate (\ref{Lemma.Unif.1}) with $k = 1$. The remaining profile terms: 
\begin{align} \n
\int \{u_s u_x + u_{sx}u + v_s u_y \} u&= \int_{x = L} \frac{u_s}{2}u^2 + \int u_{sx}u^2 \\
& \gtrsim \int_{x = L} \frac{u_s}{2} u^2 - \bigO(L) ||u_{sx}||_{L^\infty} ||u_x||_{L^2}^2.
\end{align}

Notice that crucially, the $v_s \sim \bigO(\frac{1}{\sqrt{\eps}}) v^0_e$ singular term is accompanied by a factor of $\p_y$ which cancels the singularity.  We now move to the profile terms from $S^v$, as defined in (\ref{nl.spec.1}):
\begin{align} \n
\int\{ u_s v_x + v_{sx}u &+ v_s v_y + v_{sy}v \} \eps v \\ \n
&= +\int_{x = L } \frac{1}{2}u_s v^2 + \int \eps v_{sx}uv + \int \eps v_{sy}v^2 \\  \n
& \gtrsim +\int_{x = L } \frac{1}{2}u_s v^2 - ||\sqrt{\eps} v_{sx}||_{L^\infty} \bigO(L) ||u_x||_2 ||\sqrt{\eps}v_x||_2 \\
& + ||v_{sy}||_{L^\infty} \bigO(L) ||\sqrt{\eps}v_x||_{L^2}^2. 
\end{align}

Above, we have again used that $\p_y v_s \bigO(1)$, according to (\ref{Lemma.Unif.1}) with $k = 1$. This concludes the proof.

\end{proof}

\section{Positivity Estimate}

For the positivity estimate, we must work with the new unknown: 
\begin{align} \label{defn.beta}
\beta = \frac{v}{u_s}.
\end{align}

Note that this quantity is well-defined because $u_s > 0$, according to (\ref{gtr}). We first establish the equivalence: 
\begin{lemma} For any function $v$ satisfying $v|_{y = 0} = 0 = v|_{x = 0} = 0$, and $\beta$ defined through (\ref{defn.beta}), the following estimate is valid: 
\begin{align}
||v_y, \sqrt{\eps}v_x||_{L^2}^2 \lesssim ||\beta_y, \sqrt{\eps} \beta_x||_{L^2}^2
\end{align}
\end{lemma}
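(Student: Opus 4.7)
The plan is to expand $v_y$ and $\sqrt{\eps}v_x$ via the product rule on $v = u_s \beta$, then control the undifferentiated-$\beta$ pieces by Hardy's inequality in the $y$-direction. Explicitly, I would write
\begin{align*}
v_y = u_{sy}\beta + u_s \beta_y, \qquad \sqrt{\eps}\,v_x = \sqrt{\eps}\,u_{sx}\beta + u_s\cdot\sqrt{\eps}\,\beta_x,
\end{align*}
so that after the triangle inequality it suffices to bound $\|u_s\beta_y\|_{L^2}$, $\|u_s\sqrt{\eps}\beta_x\|_{L^2}$, $\|u_{sy}\beta\|_{L^2}$, and $\|\sqrt{\eps}u_{sx}\beta\|_{L^2}$ by the right-hand side.

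The two derivative-on-$\beta$ terms are handled directly by the pointwise bound $u_s \lesssim 1$, which follows from assumption (\ref{euler.as.1}) together with the uniform profile bounds from Appendix \ref{app.construct} (Lemma \ref{Lemma.Unif}). For the remaining two terms, I would first observe that because $u_s$ is bounded away from zero, the hypothesis $v|_{y=0}=0$ transfers to $\beta|_{y=0}=0$, so Hardy's inequality applies in the vertical direction:
\begin{align*}
\Big\| \frac{\beta}{y} \Big\|_{L^2} \lesssim \|\beta_y\|_{L^2}.
\end{align*}
Then I would insert the factor $y/y$ and estimate
\begin{align*}
\|u_{sy}\beta\|_{L^2} \le \|u_{sy}\cdot y\|_{L^\infty}\, \Big\|\frac{\beta}{y}\Big\|_{L^2}, \qquad \|\sqrt{\eps}\,u_{sx}\beta\|_{L^2} \le \|\sqrt{\eps}\,u_{sx}\cdot y\|_{L^\infty}\, \Big\|\frac{\beta}{y}\Big\|_{L^2}.
\end{align*}

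The main point to verify is that $\|u_{sy}\cdot y\|_{L^\infty}$ and $\|\sqrt{\eps}\,u_{sx}\cdot y\|_{L^\infty}$ are bounded uniformly in $\eps$. Recalling $u_s = u_e^0 + u_p^0 + \sqrt{\eps}(u_e^1 + u_p^1)$ and $y = Y/\sqrt{\eps}$, one has $u_{sy} = \sqrt{\eps}\,u^0_{eY} + u^0_{py} + O(\sqrt{\eps})$; multiplying by $y$ converts $\sqrt{\eps}\,u^0_{eY}\cdot y$ into $u^0_{eY}\cdot Y$, which is controlled by assumption (\ref{euler.as.4}), while $u^0_{py}\cdot y$ is bounded by the exponential $y$-decay of the Prandtl layer. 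The analogous estimate for $\sqrt{\eps}\,u_{sx}\cdot y$ is even better, because the $\sqrt{\eps}$ kills the only potentially non-decaying piece $u^0_{ex}$. These are exactly the statements packaged in Lemma \ref{Lemma.Unif}, so the bound closes.

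The mild obstacle is purely bookkeeping: making sure the profile bounds invoked are the ones valid uniformly in $\eps$ and that the Hardy step does not require additional decay of $\beta$ at infinity (it does not, since $v \in \X$ ensures the requisite integrability at $y\to\infty$). Combining the four estimates yields the claimed inequality.
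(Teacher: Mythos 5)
Your proposal is correct, and its skeleton (product rule on $v = u_s\beta$, Hardy in $y$ for the undifferentiated $\beta$) matches the paper's proof of the $v_y$ bound exactly. The one place you diverge is the term $\sqrt{\eps}\,u_{sx}\beta$ coming from $\sqrt{\eps}v_x$: the paper disposes of $\int u_{sx}^2\beta^2$ by Poincar\'e in the $x$-direction, using $\beta|_{x=0}=0$ and only the unweighted bound $\|u_{sx}\|_{L^\infty}\lesssim 1$ (picking up a helpful factor $\bigO(L)$), whereas you run Hardy in $y$ again and charge the weight to the profile via $\|\sqrt{\eps}\,u_{sx}\cdot y\|_{L^\infty}=\|u_{sx}\cdot Y\|_{L^\infty}$. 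Your route is legitimate, but note that this weighted bound is \emph{not} literally contained in Lemma \ref{Lemma.Unif}, whose weights $y^k$ are tied to the number of $\p_y$'s, not $\p_x$'s (the lemma's proof explicitly remarks that $\p_x$ does not enhance the $y$-weight); you must instead go back to assumption (\ref{euler.as.4}) for the $Y$-decay of $u^0_{ex}$ and to the rapid decay of the Prandtl corrector, exactly as you sketch. The trade-off is that your argument never uses the hypothesis $v|_{x=0}=0$ but leans harder on the decay of $\nabla u^0_e$, while the paper's uses both boundary conditions and less profile decay; either closes the lemma.
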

\begin{proof}

The proof forwards directly from: 
\begin{align} \n
\int v_y^2 &= \int |\p_y\{u_s \beta \}|^2 = \int \Big( u_{sy} \beta + u_s \beta_y \Big)^2 \\
& \lesssim \int u_{sy}^2 \beta^2 + \int u_s^2 \beta_y^2 \lesssim ||y u_{sy}||_{L^\infty}^2 \int u_s^2 \beta_y^2.
\end{align}

We have used above that $\beta|_{y = 0} = 0$, according to the assumptions of the lemma. We have also used estimate (\ref{Lemma.Unif.1}) with $k = 1$. Next, 
\begin{align} \n
\int v_x^2 &= \int |\p_x \{u_s \beta \}|^2 = \int \Big( u_{sx} \beta + u_s \beta_x \Big)^2 \\
& \lesssim \int u_{sx}^2 \beta^2 + \int u_s^2 \beta_x^2 \lesssim \int \beta_x^2.
\end{align}

Above, we have used that $\beta|_{x = 0} = 0$, according to the assumptions of the lemma. This concludes the proof.

\end{proof}

According to the above lemma, it suffices to control $||\nabla_\eps \beta||_{L^2}$, to which we now turn: 

\begin{proposition}[Positivity Estimate] Solutions $[u,v,P] \in \X$, defined in (\ref{norm.S}), (\ref{defn.S}), to the system (\ref{lin.1}) - (\ref{lin.3}), with the boundary conditions (\ref{BC.dirichlet}) - (\ref{BC.stress.free}) satisfy: 
\begin{align} \n
||\beta_y, \sqrt{\eps}\beta_x||_{L^2}^2 + \int_{x = L} \frac{1}{u_s} \eps v_y^2 \lesssim &||u_y||_{L^2}^2 + \bigO(v^0_e) ||u_y \cdot y, \sqrt{\eps}v_y \cdot y||_{L^2}^2 \\ \label{pos.est.1}
& + \mathcal{R}_2 + ||b_L, \p_y b_L, \frac{a_L}{\sqrt{\eps}}||_{L^2(x = L)}^2. 
\end{align}
where: 
\begin{align} \label{R.2}
\mathcal{R}_2 := -\int f \cdot \beta_y + \eps g \cdot \beta_x.  
\end{align}
\end{proposition}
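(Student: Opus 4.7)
The strategy, following the GN multiplier method adapted to non-shear $v_s$, is to test \eqref{lin.1} against $\beta_y$ and \eqref{lin.2} against $-\eps \beta_x$, add the results, and carefully track how the scaled singularity $v_s \approx v^0_e/\sqrt{\eps}$ interacts with the $y$-weight via the identity $v^0_e/\sqrt{\eps} = (v^0_e/Y)\cdot y$. I would organize the calculation into four blocks.

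\textbf{Block 1 (viscous).} Integrating the terms $-\int \Delta_\eps u\cdot \beta_y + \int \eps \Delta_\eps v\cdot \beta_x$ by parts in $(x,y)$ and substituting $v = u_s \beta$ (so that $v_y = u_{sy}\beta + u_s \beta_y$ and $v_x = u_{sx}\beta + u_s \beta_x$), the divergence-free condition $u_x+v_y=0$ recombines the cross terms into the positive bulk quantity $\int u_s(\beta_y^2 + \eps \beta_x^2)$, plus a nonnegative boundary contribution $\int_{x=L}\frac{\eps}{u_s} v_y^2$ (after applying the stress-free condition $u_y + \eps v_x|_{x=L} = b_L$). All curvature terms involving derivatives of $u_s$ are absorbed either into the positive bulk term or controlled by $\|u_y\|_{L^2}^2$, using $\|u_{sx}, u_{sy}\cdot y\|_{L^\infty} \lesssim 1$ from Lemma \ref{Lemma.Unif}. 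The inflow boundary at $x=0$ is killed by $v|_{x=0}=0$.

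\textbf{Block 2 (pressure).} The pressure contributions $\int P_x \beta_y - \int P_y \beta_x$ cancel in the interior after one integration by parts (the mixed partials $\beta_{xy}$ match), leaving only $\int_{x=L} P\,\beta_y$. Using $P|_{x=L} = a_L + 2\eps u_x|_{x=L}$, this produces a term $\int_{x=L} a_L \beta_y$, which after a tangential integration by parts involves $\p_y b_L$ via $\beta_y|_{x=L}$, and a term controlled by $\|\sqrt\eps u_x\|_{L^2(x=L)}$ (absorbed by Block 1's $x=L$ contribution) and $\|a_L/\sqrt\eps\|_{L^2(x=L)}^2$.

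\textbf{Block 3 (convection — the crux).} The convective contribution is $\int S^u \beta_y - \eps \int S^v \beta_x$. Among the eight resulting terms, the two dangerous ones are $\int v_s u_y \beta_y$ from $S^u$ and $-\eps \int v_s v_y \beta_x$ from $S^v$. Writing $v_s = v^0_e/\sqrt{\eps} + \bigO(1)$ and using the identity $v^0_e/\sqrt{\eps} = (v^0_e/Y)\cdot y$, I obtain
\begin{align*}
\Bigl|\int v_s u_y \beta_y\Bigr| &\le \Bigl\|\tfrac{v^0_e}{Y}\Bigr\|_{L^\infty}\|y u_y\|_{L^2}\|\beta_y\|_{L^2} + \bigO(1)\|u_y\|_{L^2}\|\beta_y\|_{L^2}, \\
\Bigl|\eps \int v_s v_y \beta_x\Bigr| &\le \Bigl\|\tfrac{v^0_e}{Y}\Bigr\|_{L^\infty}\|\sqrt{\eps}\,y v_y\|_{L^2}\|\sqrt{\eps}\beta_x\|_{L^2} + \bigO(\sqrt{\eps})(\ldots).
\end{align*}
A Cauchy--Schwarz split then produces exactly the $\bigO(v^0_e)\|u_y\cdot y,\sqrt{\eps} v_y\cdot y\|_{L^2}^2$ term on the right-hand side of \eqref{pos.est.1}, while the $\|\beta_y,\sqrt{\eps}\beta_x\|_{L^2}^2$ pieces are absorbed into the LHS thanks to the smallness \eqref{euler.as.2}. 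The remaining convective terms $\int u_s u_x \beta_y$, $\int u_{sx} u \beta_y$, $\int u_{sy} v \beta_y$, and the $S^v$ analogues all feature either a factor of $u_s$-derivative of admissible size or an extra smallness in $L$, and are bounded by $\bigO(L)\|u_y\|_{L^2}^2 + \bigO(L)\|\beta_y,\sqrt\eps\beta_x\|_{L^2}^2$, the latter absorbed.

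\textbf{Block 4 (forcing).} The right-hand sides $f$ and $g$ against $\beta_y$ and $-\eps \beta_x$ reproduce $\mathcal{R}_2$ directly.

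\textbf{Main obstacle.} The delicate step is Block 3: one must verify that after using the scaling identity $v^0_e/\sqrt{\eps} = (v^0_e/Y)\cdot y$ the leading singular contribution produces precisely the prefactor $\|v^0_e/Y\|_{L^\infty}$, so that the right-hand side weighted term can later be absorbed (in the subsequent $y$-weighted estimate \eqref{overview.weight}) using the smallness assumption \eqref{euler.as.2}. A secondary subtlety is isolating from Block 2 the contribution $\|\p_y b_L\|_{L^2(x=L)}$, which appears only after a tangential IBP on $\int_{x=L} a_L \beta_y$ rewritten via the $x=L$ stress-free identity and integration by parts along the boundary.
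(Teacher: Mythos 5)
Your Block 3 treatment of the singular terms $\int v_s u_y \beta_y$ and $\eps\int v_s v_y \beta_x$ via $v^0_e/\sqrt{\eps} = (v^0_e/Y)\cdot y$ is exactly the paper's key point, and Block 4 is fine. However, there is a genuine structural error in where you locate the coercivity, and it would make the proof fail as written. In the paper (following \cite{GN}), the positive bulk terms $\int u_s^2\beta_y^2$ and $\int \eps u_s^2 \beta_x^2$ do \emph{not} come from the viscous block: they come from the convection terms. Using $u_x=-v_y$ and $v=u_s\beta$ one has $-u_s v_y + u_{sy}v = -u_s^2\beta_y$, so $\int(u_s u_x + u_{sy}v)\cdot(-\beta_y)=\int u_s^2\beta_y^2$, and similarly $\int u_s v_x\cdot \eps\beta_x$ yields $\int\eps u_s^2\beta_x^2$ up to an $\bigO(L)$ error. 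You instead dismiss $\int u_s u_x\beta_y$ and $\int u_{sy}v\beta_y$ as errors bounded by $\bigO(L)\|u_y\|^2+\bigO(L)\|\beta_y,\sqrt{\eps}\beta_x\|^2$; this is false, since $\int u_s u_x\beta_y = -\int u_s v_y\beta_y$ is an $\bigO(1)\cdot\|\beta_y\|_{L^2}^2$-sized quantity with no smallness, and it is precisely the term that must be \emph{kept} to generate the left-hand side. Conversely, the viscous terms cannot produce $\int u_s(\beta_y^2+\eps\beta_x^2)$: after integration by parts they reduce to quantities like $\int \tfrac{u_y^2}{2}\p_x\tfrac{1}{u_s}$, terms with factors of $u_{sy},u_{sx}$ or $\sqrt{\eps}$, and $x=L$ boundary terms — all controlled by $\|u_y\|^2$, $\bigO(L)$ or $\sqrt{\eps}$ times norms already present, plus data. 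They are error terms in this estimate, not the source of positivity.

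The same misattribution affects the boundary term $\int_{x=L}\tfrac{\eps}{u_s}v_y^2$ on the left-hand side. In the paper this comes from the \emph{pressure} block: $\int P_x(-\beta_y)+\int \eps P_y\beta_x = -\int_{x=L}P\beta_y$, and after inserting $P=2\eps u_x + a_L$ and $u_x=-v_y$ the piece $-\int_{x=L}2\eps u_x\beta_y$ equals $+\int_{x=L}\tfrac{2\eps}{u_s}v_y^2$ up to lower-order corrections; it has a fixed favorable sign and is kept. You instead treat it as an error ``controlled by $\|\sqrt{\eps}u_x\|_{L^2(x=L)}$ and absorbed by Block 1's $x=L$ contribution,'' but Block 1 does not actually supply such a coercive boundary term (the viscous $x=L$ contributions $\int_{x=L}\tfrac{u_y^2}{2u_s}-\int_{x=L}\tfrac{\eps}{2u_s}u_x^2+\int_{x=L}\tfrac{\eps}{2u_s}v_y^2-\int_{x=L}\tfrac{\eps^2}{2u_s}v_x^2$ cancel among themselves via $u_x=-v_y$ and $\eps v_x = b_L - u_y$, leaving only the $b_L,\p_y b_L$ data terms — which is also where $\p_y b_L$ actually enters, rather than from a tangential integration by parts on $\int_{x=L}a_L\beta_y$). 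To repair the proof you must swap the roles: extract the coercivity from the convection and pressure blocks exactly as above, and demote the viscous terms to controlled errors.
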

\begin{proof}

We will apply to the system (\ref{lin.1}) - (\ref{lin.3}) the multiplier:
\begin{align}
[-\beta_y, +\eps \beta_x].
\end{align}

Our analysis consists of a series of steps, which we now detail: 

\vspace{2 mm}

\textit{Step 1: $S_u$ Profile Terms}

\vspace{2 mm}

Referring to the definition of $S^u$ in (\ref{nl.spec.1}), we have via the divergence-free condition: 
\begin{align} \label{su.simp}
S^u = -u_s v_y + u_{sy}v + v_s u_y + u_{sx}u = -u_s^2 \beta_y + v_s u_y + u_{sx}u.
\end{align}

We gain: 
\begin{align}
\int -u_s^2 \beta_y \cdot -\beta_y = \int u_s^2 \beta_y^2.
\end{align}

Referring to the definition of $v_s$ in (\ref{exp.2}), the main convective term is: 
\begin{align}
-\int v_s u_y \cdot \beta_y = \int \Big(\frac{v^0_e}{\sqrt{\eps}} \Big) u_y \beta_y + \int \{v^0_p + v^1_e + \sqrt{\eps}v^1_p\} u_y \beta_y.
\end{align}

The lowest order term is the most dangerous: 
\begin{align} \n
|\int \frac{v^0_e}{\sqrt{\eps}y} y u_y \beta_y | &\le ||\frac{v^0_e}{Y}||_{L^\infty} ||u_y \cdot y||_{L^2} ||\beta_y||_{L^2} \\ \label{danger.1.1}
& \le \bigO(v^0_e) \Big[ ||u_y \cdot y||_{L^2}^2 + ||\beta_y||_{L^2}^2 \Big].
\end{align}

Here we need the small parameter $||\frac{v^0_e}{Y}||_{L^\infty}$. For the higher-order contributions: 
\begin{align} \n
|\int\Big( v_s - \frac{v^0_e}{\sqrt{\eps}} \Big) u_y \beta_y| &\le ||v_s - \frac{v^0_e}{\sqrt{\eps}} ||_{L^\infty}||u_y||_{L^2}||\beta_y||_{L^2} \\
& \le \delta ||\beta_y||_{L^2}^2 + N_\delta ||u_y||_{L^2}^2.
\end{align}

Finally, the last term from (\ref{su.simp})
\begin{align} 
|\int u_{sx}u \cdot -\beta_y| &\le \bigO(L) ||u_{sx}||_{L^\infty} ||u_x||_{L^2} ||\beta_y||_{L^2} \lesssim \bigO(L) ||\beta_y||_{L^2}^2.
\end{align}

\vspace{2 mm}

\textit{Step 2: $S_v$ Profile Terms}

\vspace{2 mm}

Referring to the definition of $S^v$ in (\ref{nl.spec.1}), here we will be treating: 
\begin{align}
\int S_v \cdot - \eps \p_x \{u y^2 w \} = \int \Big( u_s v_x + v_{sx}u + v_s v_y + v_{sy}v \Big) \cdot - \eps \p_x \{u y^2 w \}
\end{align}

First: 
\begin{align} \n
&\int u_s v_x \cdot \eps \beta_x = \int \eps u_s^2 \beta_x^2 + \int \eps u_s u_{sx} \beta \beta_x \\
& \hspace{20 mm} \gtrsim \int \eps u_s^2 \beta_x^2 -  \bigO(L) \int \eps u_s^2 \beta_x^2 \gtrsim \int \eps u_s^2 \beta_x^2, \\
&|\int v_{sx}u \cdot \eps \beta_x| \le \bigO(L) ||\sqrt{\eps}v_{sx}||_{L^\infty} ||u_x||_{L^2} ||\sqrt{\eps}\beta_x||_{L^2}.
\end{align}

Next, we will use the smallness of $||\frac{v^0_e}{Y}||_{L^\infty}$:
\begin{align} \n
&|\int \frac{v^0_e}{\sqrt{\eps}}v_y \cdot \eps \beta_x| \le || \frac{v^0_e}{\sqrt{\eps}y}||_{L^\infty} ||\sqrt{\eps} v_y \cdot y||_{L^2} ||\sqrt{\eps}\beta_x||_{L^2} \\
& \hspace{23 mm} \le \bigO(v^0_e) \Big[ ||\sqrt{\eps}v_y \cdot y||_{L^2}^2 + \bigO(\text{LHS}) \Big], \\ \n
& | \int \{v_s - \frac{v^0_e}{\sqrt{\eps}} \} v_y \cdot \eps \beta_x| \le \sqrt{\eps} ||v_s - \frac{v^0_e}{\sqrt{\eps}}||_{L^\infty} ||v_y ||_{L^2} ||\sqrt{\eps} \beta_x||_{L^2} \\ 
& \hspace{23 mm} \le \sqrt{\eps} \bigO(\text{LHS}), \\
&|\int v_{sy}v \cdot \eps \beta_x | \le \bigO(L) ||v_{sy}||_{L^\infty} ||\sqrt{\eps}v_x||_{L^2} ||\sqrt{\eps} \beta_x||_{L^2}.
\end{align}

\vspace{2 mm}

\textit{Step 3: Pressure Terms}
\begin{align} \n
\int P_x \cdot -\beta_y &+ \int P_y \cdot \beta_x = -\int_{x = L} P \beta_y = -\int_{x = L} 2 \eps u_x \beta_y - \int_{x =L} a_L \beta_y \\ \n
& = +\int_{x =L} 2\eps \frac{v_y^2}{u_s} - \int_{x = L} 2\eps u_x v \p_y \{  \frac{1}{u_s}\}-  \int_{x =L} a_L \beta_y \\ \n
& = +\int_{x =L} 2\eps \frac{v_y^2}{u_s} - \bigO(L) ||u_{sy}||_{L^\infty} ||\sqrt{\eps}v_x||_{L^2} ||\sqrt{\eps}v_y||_{L^2(x = L)} \\  \label{bd.crucial}
& \hspace{25 mm} - \int_{x = L}  a_L \beta_y.
\end{align}

The above term crucially yields control over the boundary term appearing in (\ref{pos.est.1}). We must estimate the contribution: 
\begin{align} \n
|\int_{x = L} a_L \beta_y| &\le ||\frac{a_L}{\sqrt{\eps}}||_{L^2(x = L)} ||\sqrt{\eps} \beta_y||_{L^2(x = L)}  \\
& \lesssim N_\delta  ||\frac{a_L}{\sqrt{\eps}}||_{L^2(x = L)}^2 + \delta ||\sqrt{\eps} \beta_y||_{L^2(x = L)}^2, 
\end{align}

the latter of which can be absorbed into (\ref{bd.crucial}).

\vspace{2 mm}

\textit{Step 4: Vorticity Terms}

\vspace{2 mm}

We will now move to the vorticity terms from (\ref{lin.1}) - (\ref{lin.3}), where the stress-free boundary condition shown in (\ref{BC.stress.free}) will be used repeatedly.  
\begin{align}  \n
+\int u_{yy} \beta_y &= \int u_{yy} \cdot \frac{v_y}{u_s} - u_{yy}v \frac{u_{sy}}{u_s^2} \\  \n
& = -\int u_y \p_y \{ \frac{v_y}{u_s} \} + u_y \p_y \{ v \frac{u_{sy}}{u_s^2} \} \\  \n
& = -\int u_y \frac{v_{yy}}{u_s} - \int 2 u_y v_y \p_y \{ \frac{1}{u_s} \} + \int u_y v \p_y \frac{u_{sy}}{u_s^2} \\  \n
& = - \int \frac{u_y^2}{2} \p_x \frac{1}{u_s} + \int_{x =L} \frac{u_y^2}{2u_s} - \int 2u_y v_y \p_y \{ \frac{1}{u_s} \} \\  \n
&+ \int u_y v \p_y \{ \frac{u_{sy}}{u_s^2} \} \\ \label{posvort.1}
& \gtrsim \int_{x = L} \frac{u_y^2}{2u_s} - ||u_{sx}, u_{sy}, y u_{sy}^2, y u_{syy}||_{L^\infty} \Big[ N_\delta ||u_{y}||_{L^2}^2 + \delta ||v_y||_{L^2}^2 \Big].
\end{align}

The boundary term above, as with all boundary terms from this set of calculations, will be put into (\ref{put}), and subsequently estimated. Next: 
\begin{align}  \n
+\int \eps u_{xx} \beta_y &= - \int \eps u_x \beta_{xy} + \int_{x = L} \eps u_x \beta_y \\  \n
& = -\int \eps u_x \p_x \{ \frac{v_y}{u_s} - v \frac{u_{sy}}{u_s^2} \} + \int_{x = L} \eps u_x \beta_y \\  \n
& = - \int \frac{\eps}{2} u_x^2 \p_x \{ \frac{1}{u_s} \} + \int_{x = L} \frac{\eps}{2 u_s} u_x^2  - \int \eps u_x v_y \p_x \frac{1}{u_s} \\  \label{posvort.2}
& + \int \eps u_x v_x \frac{u_{sy}}{u_s^2}  + \int \eps u_x v \p_x \{ \frac{u_{sy}}{u_s^2} \} + \int_{x = L} \eps u_x \beta_y.
\end{align}

The boundary terms are estimated as in: 
\begin{align}  \n
+ \int_{x = L} \frac{\eps}{2u_s} u_x^2 &+ \int_{x = L} \eps u_x \beta_y \\  \n
&= - \int_{x = L} \eps u_x^2 \frac{1}{2u_s} + \int_{x = L} \eps u_x v \frac{u_{sy}}{u_s^2} \\ \label{posvort.3}
& \le - \int_{x = L} \eps u_x^2 \frac{1}{2 u_s} + \bigO(L) ||u_{sy}||_{L^\infty} ||\sqrt{\eps}v_x||_{L^2} ||\sqrt{\eps }u_x||_{L^2(x = L)},
\end{align}

the final term above being absorbed into (\ref{bd.crucial}) using the smallness of $L$. The bulk terms are estimated via: 
\begin{align}  \n
|\int \frac{\eps}{2} u_x^2 \p_x \{ \frac{1}{u_s} \}| &+ |\int \eps u_x v_y \p_x \frac{1}{u_s}| + |\int \eps u_x v_x \frac{u_{sy}}{u_s^2}| + |\int \eps u_x v \p_x \frac{u_{sy}}{u_s^2}| \\ \label{posvort.4}
& \le \sqrt{\eps} ||u_{sx}, u_{sy}, u_{sxy}||_{L^\infty} \Big[  ||u_x||_{L^2}^2 + ||\sqrt{\eps}v_x||_{L^2}^2 \Big].
\end{align}

Next: 
\begin{align}  \n
-\int \eps v_{yy} \beta_x &= + \int \eps v_y \beta_{xy} \\  \n
& = +\int \eps v_y \p_y \{ \frac{v_x}{u_s} - \frac{u_{sx}}{u_s^2}v \} \\  \n
& = + \int \eps v_y \Big( \frac{v_{xy}}{u_s} - v_x \frac{u_{sy}}{u_s^2} - \p_y \{ \frac{u_{sx}}{u_s^2} \}v - \frac{u_{sx}}{u_s^2} v_y \Big) \\  \n
& = +\int \frac{\eps}{2} \frac{u_{sx}}{u_s^2} v_y^2 + \int_{x = L} \frac{\eps}{2 u_s} v_y^2 - \int \eps v_x v_y \frac{u_{sy}}{u_s^2} \\  \n
& - \int \eps vv_y \p_y \{ \frac{u_{sx}}{u_s^2} \} - \int \eps v_y^2 \frac{u_{sx}}{u_s^2} \\  \label{posvort.5}
& \gtrsim \int_{x = L} \frac{\eps}{2u_s} v_y^2 - ||u_{sx}, u_{sy}, u_{sxy}||_{L^\infty}| \Big[ \bigO(L) ||\sqrt{\eps}v_x||_{L^2}^2 + \sqrt{\eps}||v_y||_{L^2}^2. \Big]
\end{align}

Finally: 
\begin{align} \n
-\int \eps^2 v_{xx} \beta_x &= - \int \eps^2 v_{xx} \frac{v_x}{u_s} - \int \eps^2 v_{xx} v \p_x \frac{1}{u_s} \\  \n
& = + \int \frac{\eps^2}{2} \p_x \frac{1}{u_s} v_x^2 - \int_{x = L} \frac{\eps^2}{2u_s}v_x^2 \\  \n &+ \int_{x = 0} \frac{\eps^2}{2u_s} v_x^2  + \int \eps^2 v_x^2 \p_x \frac{1}{u_s} \\  \n & 
+ \int \eps^2 vv_x \p_{xx} \{ \frac{1}{u_s} \} - \int_{x = L} \eps^2 vv_x \p_x \{ \frac{1}{u_s} \} \\  \n
& \gtrsim \int_{x = 0} \frac{\eps^2}{2u_s} v_x^2 - \int_{x = L} \frac{\eps^2}{2u_s} v_x^2 - \int_{x = L} \eps^2 vv_x \p_x \{ \frac{1}{u_s} \} \\  \label{posvort.6}
& - \eps ||u_{sx}, u_{sxx}||_{L^\infty} \bigO(L) ||\sqrt{\eps}v_x||_{L^2}^2.
\end{align}

Collecting the highest order $x = L$ boundary contributions from (\ref{posvort.1}), (\ref{posvort.3}), (\ref{posvort.5}), (\ref{posvort.6}):
\begin{align} \n
&+ \int_{x = L} \frac{u_y^2}{2u_s} - \int_{x = L} \frac{\eps}{2 u_s} u_x^2 + \int_{x = L} \frac{\eps}{2u_s}v_y^2  - \int_{x = L} \frac{\eps^2}{2u_s}v_x^2 \\ \n
& = + \int_{x = L} \frac{u_y^2}{2u_s}- \int_{x = L} \frac{\eps^2}{2u_s}v_x^2 \\ \n
& = + \int_{x = L}  \frac{u_y^2}{2u_s} - \int_{x = L} \frac{(u_y + \eps v_x - u_y)^2}{2u_s} \\ \n
& = + \int_{x = L}  \frac{u_y^2}{2u_s} - \int_{x = L} \frac{(b_L- u_y)^2}{2u_s} \\ \n
& = + \int_{x = L}  \frac{u_y^2}{2u_s} - \int_{x = L} \frac{b_L^2}{2u_s} - \int_{x = L} \frac{u_y^2}{2u_s} + \int_{x = L} \frac{1}{u_s}b_L u_y \\ \n
& = -\int_{x = L} \frac{b_L^2}{2u_s} - \int_{x = L} u \p_y \{ \frac{b_L}{u_s} \} \\ \n
& \lesssim ||b_L, \p_y b_L||_{L^2(x = L)}^2 + ||u||_{L^2(x = L)}^2 \\ \label{put}
& \lesssim ||b_L, \p_y b_L||_{L^2(x = L)}^2 + \bigO(L) ||u_x||_{L^2}^2.
\end{align}

The final boundary term from (\ref{posvort.6}) can be estimated via: 
\begin{align} \n
|\int_{x = L} \eps^2 vv_x \p_x \{\frac{1}{u_s} \}| &= |\int \eps v u_y \p_x\{ \frac{1}{u_s} \}| \\ \n
& = |\int \eps v_y u \p_x \{ \frac{1}{u_s} \}| + |\int \eps v u \p_{xy} \{ \frac{1}{u_s} \}| \\ \label{bd.2}
& \le \sqrt{\eps} ||u_{sx},y \p_{xy} \{ \frac{1}{u_s} \}||_{L^\infty} || \sqrt{\eps}v_y||_{L^2(x = L)} \bigO(L) ||u_x||_{L^2}
\end{align}

The boundary contribution from (\ref{bd.2}) can be absorbed into (\ref{bd.crucial}). This concludes the proof.

\end{proof}

\section{Weighted Estimates}

In this section, we will bootstrap to the weighted estimates described in (\ref{overview.weight}). By differentiating the system (\ref{lin.1}) - (\ref{lin.3}), we have: 
\begin{align} \label{lin.diff.1}
&-\Delta_\eps u_y + P_{xy} + \p_y S_u = \p_y f \\ \label{lin.diff.2}
&-\Delta_\eps v_y + \frac{P_{yy}}{\eps} + \p_y S_v = \p_y g,
\end{align}

where: 
\begin{align} \label{pySu}
&\p_y S_u = u_s u_{xy} + v_s u_{yy} + u_{syy}v + u_{sxy}u \\ \label{pySv}
&\p_y S_v = u_s v_{xy} + v_s v_{yy} + u_{sy}v_x + v_{sxy}u + v_{sx}u_y + 2 v_{sy}v_y + v_{syy}v.
\end{align}

We will now prove the main weighted estimate. The reader should keep in mind Lemma \ref{Lemma.Unif} which will be in constant use. 

\begin{proposition} \label{weight.prop} Consider $[u,v,P] \in \X$ solutions to (\ref{lin.1}) - (\ref{lin.3}), with the boundary conditions (\ref{BC.dirichlet}) - (\ref{BC.stress.free}). Such a solution satisfies the following estimate:
\begin{align} \n
||\Big\{u_{yy}, &\sqrt{\eps}u_{xy}, \eps  u_{xx} \Big\} \cdot y ||_{L^2}^2 + ||\Big\{u_y, \sqrt{\eps}u_x \Big\} \cdot y ||_{L^2}^2 \\ \n
& + ||\Big\{u_y, \sqrt{\eps} u_x \Big\} \cdot y||_{L^2(x = L)}^2  \lesssim ||u_y||_{L^2}^2 + ||v_y, \sqrt{\eps}v_x||_{L^2}^2 \\ \label{weight.est.1}
& + ||\sqrt{\eps}u_x||_{L^2(x = L)}^2 + ||\{ a_L, \p_y a_L, b_L, \p_y b_L \} \langle y \rangle^2||_{L^2(x = L)}^2 + \mathcal{R}_3,
\end{align}

where: 
\begin{align} \label{R.3}
\mathcal{R}_3 := \int \p_y f \cdot \p_y \{u y^2 w \} - \eps \p_y g \cdot \p_x \{u y^2 w \}.
\end{align}
\end{proposition}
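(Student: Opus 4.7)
The plan is to test the $y$-differentiated system (\ref{lin.diff.1})--(\ref{lin.diff.2}) against the anisotropic divergence-free style multipliers
\[
\phi_1 := \p_y\{u y^2 w\}, \qquad \phi_2 := -\eps \, \p_x\{u y^2 w\}, \qquad w := 1 - x,
\]
consistent with the forcing notation (\ref{R.3}). The weight $y^2$ closes the $y$-loss that appeared on the right-hand side of the positivity estimate (\ref{overview.pos}); the factor $w = 1-x$ is chosen so that $\p_x w = -1$ generates absorbable bulk contributions while $w(L) = 1 - L > 0$ still yields favorable $x = L$ boundary traces (as $L$ is small); and the anisotropic $(\p_y, -\eps \p_x)$ pairing is precisely what annihilates the pressure in the bulk, via $\p_y \p_x = \p_x \p_y$.

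I first treat the viscous terms. Integration by parts in $\int (-\Delta_\eps u_y) \phi_1 + \int (-\Delta_\eps v_y) \phi_2$ and use of the divergence-free identity $u_x + v_y = 0$ produces the bulk quadratic form
\[
\int \Big[u_{yy}^2 + 4\eps u_{xy}^2 + \eps^2 u_{xx}^2 - 2\eps u_{yy} u_{xx}\Big] y^2 w,
\]
together with lower-order weight-derivative corrections and $x = L$ boundary traces of $\{u_y, \sqrt{\eps} u_x\} \cdot y$. The indefinite cross term $-2\eps u_{yy} u_{xx}$ is absorbed via the scaled weighted Korn inequality highlighted as a novelty in the introduction, yielding clean coercivity on $||\{u_{yy}, \sqrt{\eps} u_{xy}, \eps u_{xx}\} \cdot y||_{L^2}^2$. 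The secondary contributions generated by the subleading pieces $2uyw$ of $\phi_1$ and $\eps u y^2$ of $\phi_2$ produce only expressions of type $||\{u_y, \sqrt{\eps} u_x\} y||_{L^2}$ or better; these are absorbed into the bulk coercivity above after Hardy's inequality, at the cost of a $||u_y||_{L^2}^2$ contribution on the right.

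For the convective terms $\int \p_y S_u \cdot \phi_1 + \int \p_y S_v \cdot \phi_2$, the computation follows the outline laid out in the introduction. The term $\int u_s u_{xy} \cdot u_y y^2 w$ integrates by parts in $x$ to produce $\frac{1}{2}\int u_s u_y^2 y^2 + \frac{1}{2} \int_{x = L} u_s u_y^2 y^2 (1-L)$ modulo lower order, while $\int v_s u_{yy} \cdot u_y y^2 w$ integrates by parts in $y$ so that the $\bigO(1/\sqrt{\eps})$ singularity of $v_s$ gets converted into $v_{sy} = \bigO(1)$. The two pieces then combine via the incompressibility identity $u_{sx} + v_{sy} = 0$, killing the dangerous bulk contribution; the residual $\int v_s y u_y^2 w$ is controlled by $||v^0_e / Y||_{L^\infty}$ smallness from (\ref{euler.as.2}) together with the smallness of $L$. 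The subleading profile pieces $u_{syy} v$, $u_{sxy} u$, as well as the analogous terms from $\p_y S_v \cdot \phi_2$, are estimated by combining the uniform $y$-decay bounds in Lemma \ref{Lemma.Unif} with Hardy's inequality $||v / y||_{L^2} \lesssim ||v_y||_{L^2}$ (valid since $v|_{y = 0} = 0$), producing contributions bounded by $||u_y||_{L^2}^2 + ||v_y, \sqrt{\eps} v_x||_{L^2}^2$, as allowed on the right-hand side of (\ref{weight.est.1}).

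The pressure contribution $\int P_{xy} \phi_1 - \int P_{yy} \p_x\{u y^2 w\}$ cancels in the bulk by symmetry of mixed partials, leaving only $x = L$ traces; these, through the stress-free conditions (\ref{BC.stress.free}), pair with $a_L$ and $\p_y a_L$ and contribute the $||\sqrt{\eps} u_x||_{L^2(x=L)}$ term on the right. I expect the main obstacle to be the careful accounting of the $x = L$ boundary terms: the traces coming from the viscous IBP, from the $x$-IBP using $\p_x w = -1$, and from the pressure cancellation must all be combined using the identity $\{u_y + \eps v_x\}|_{x = L} = b_L$ --- exactly as was done at (\ref{put}) in the positivity estimate --- to be rearranged into the favorable coercive trace $||\{u_y, \sqrt{\eps} u_x\} \cdot y||_{L^2(x=L)}^2$ on the left-hand side of (\ref{weight.est.1}) and the forcing traces $||\{a_L, \p_y a_L, b_L, \p_y b_L\} \langle y \rangle^2||_{L^2(x=L)}^2$ on the right. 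Carrying out this boundary algebra while simultaneously keeping both the weighted Korn inequality and the $||v^0_e / Y||_{L^\infty}$ smallness in force to absorb the interior contributions is the most delicate part of the argument.
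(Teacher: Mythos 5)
Your proposal follows essentially the same route as the paper: the same multiplier $[\p_y\{u y^2 w\}, -\eps\,\p_x\{u y^2 w\}]$ with $w = 1-x$, the same cancellation of the leading convective terms via $u_{sx}+v_{sy}=0$ together with the smallness of $\|v^0_e/Y\|_{L^\infty}$, the scaled weighted Korn inequality for the viscous quadratic form, Hardy plus Lemma \ref{Lemma.Unif} for the subleading profile terms, and the same pressure/boundary algebra at $x=L$ using the differentiated stress-free conditions. The only ingredient you do not flag is the rigorous justification of the integration by parts near the corner $x=L$, $y=0$ (the paper invokes the corner asymptotics of \cite{OS} to show the singular contributions vanish), but this is a technical rather than structural point.
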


\begin{proof}

We will apply the weighted multiplier: 
\begin{align}
\Big[ \p_y\{u w(x)  y^2 \}, -\eps \p_x \{u w(x) y^2 \} \Big],
\end{align}

where $w(x) = 1-x$. The analysis proceeds in several steps which we will now detail. 

\vspace{2 mm}

\textit{Step 1: Positive Profile Terms}

\vspace{2 mm}

We will now generate the positive quantities on the left-hand side of (\ref{weight.est.1}), by considering from (\ref{pySu}) - (\ref{pySv}) the following terms: 
\begin{align} \label{positive.1}
\int \Big(u_s u_{xy} + v_s u_{yy} \Big) \cdot \p_y\{u w y^2 \} - \eps \int \Big( u_s v_{xy} + v_s v_{yy} \Big) \cdot \p_x \{u w y^2 \}.
\end{align}

First from (\ref{positive.1}): 
\begin{align} \n
\int u_s u_{xy} \cdot \p_y \{ u y^2 w \} &=\int  u_s u_{xy} u_y y^2 w + \int 2u_s u_{xy} u yw \\ \n
& = - \int \frac{u_{sx}}{2} u_y^2 y^2 w + \int \frac{u_s}{2} u_y^2 y^2 + \int_{x = L} \frac{u_s}{2} u_y^2 y^2 w \\ \n
& \hspace{25 mm} - \int 2 u_x \p_y \{ u_s u y \}w \\ \n
& = - \int \frac{u_{sx}}{2} u_y^2 y^2 w + \int \frac{u_s}{2} u_y^2 y^2 + \int_{x = L} \frac{u_s}{2} u_y^2 y^2 w \\ \label{pos.1}
& - \int 2 u_x u_s u_y y w - \int 2 u_x u_{sy}u y w - \int 2 u_x u_s u w.
\end{align}

The final three terms above are estimated: 
\begin{align}
&|\int 2u_s u_x u_y yw| \le \delta ||u_y y||_{L^2}^2 + N_\delta ||u_x||_{L^2}^2, \\
&|\int 2u_{sy} u_x uy w| \le \bigO(L) ||u_{sy} y||_{L^\infty} ||u_x||_{L^2}^2, \\
&|\int 2u_s uu_x w| \le \bigO(L) ||u_x||_{L^2}^2.
\end{align}

Next from (\ref{positive.1}):
\begin{align} \n
\int v_s u_{yy} \p_y \{u y^2 w \} &= \int v_s u_{yy} u_y y^2 w + \int 2 v_s u_{yy} u w y \\ \n
& = - \int \frac{v_{sy}}{2} u_y^2 y^2 w - \int v_s y u_y^2 w - \int 2 v_{sy}uu_y y w \\ \n
&  - \int 2w v_s uu_y - \int 2 v_s y u_y^2 w \\ \n
& = - \int \frac{v_{sy}}{2} u_y^2 y^2 w - \int 3 v_s y u_y^2 w \\ \label{pos.2}
& - \int 2 v_{sy}y uu_y w + \int wv_{sy} u^2.
\end{align}

The final two terms above are estimated: 
\begin{align}
&|\int 2 v_{sy} y uu_yw| \le \bigO(L) ||v_{sy}||_{L^\infty} ||yu_y||_{L^2} ||u_x||_{L^2}, \\
&|\int v_{sy} u^2 w| \le \bigO(L) ||v_{sy}||_{L^\infty} ||u_x||_{L^2}^2.
\end{align}

Summing (\ref{pos.1}) - (\ref{pos.2}):
\begin{align} \label{summary.1}
(\ref{pos.1}) + (\ref{pos.2}) &\gtrsim \int \{\frac{u_s}{2} y^2 - 3v_s y w \} u_y^2  + \int_{x= L} \frac{u_s}{2} u_y^2 y^2 w - \bigO(\text{RHS}). 
\end{align}

We will consider the $v_s$ term above. At leading order: 
\begin{align} \label{d.1}
-\int \frac{v^0_e}{\sqrt{\eps}} y w u_y^2 = - \int \frac{v^0_e}{Y} y^2 w u_y^2 \le_{|\cdot|} ||\frac{v^0_e}{Y}||_{L^\infty} ||u_y y||_{L^2}^2.
\end{align}

Here we use that $||\frac{v^0_e}{Y}||_{L^\infty}$ is taken sufficiently small by assumption (\ref{euler.as.2}) to absorb into the positive contribution from (\ref{summary.1}.) The higher order contributions can be estimated: 
\begin{align} \n
|\int \{v_s - \frac{v^0_e}{\sqrt{\eps}} \} y u_y^2 w| &\le ||v_s - \frac{v^0_e}{\sqrt{\eps}}||_{L^\infty} ||u_y \cdot y||_{L^2} ||u_y||_{L^2} \\
& \lesssim \delta ||u_y \cdot y||_{L^2}^2 + N_\delta ||u_y||_{L^2}^2.
\end{align}

Ultimately this yields: 
\begin{align}
(\ref{pos.1}) + (\ref{pos.2}) &\gtrsim \int \frac{u_s}{2} y^2 u_y^2 + \int_{x = L} \frac{u_s}{2} u_y^2 y^2 - \bigO(\text{RHS}).
\end{align}

We now move to the positive terms from $\p_y S_v$:
\begin{align} \n
-\int \eps u_s v_{xy} \cdot \p_x \{u y^2 w \} &= +\int \eps u_s v_{xy} v_y y^2 w + \int \eps u_s v_{xy} u y^2 \\ \n
& = -\int \frac{u_{sx}}{2} v_y^2 y^2 \eps w + \int \eps u_s \frac{v_y^2}{2}y^2 + \int_{x = L} \eps \frac{u_s}{2} v_y^2 y^2 w \\ \n
& - \int \eps v_y \p_x \{ u_s u y^2 \} + \int_{x = L} u_s v_y u y^2 \eps \\ \n
& = -\int \frac{u_{sx}}{2} v_y^2 y^2 \eps w + \int \eps u_s \frac{v_y^2}{2}y^2 + \int_{x = L} \eps \frac{u_s}{2} v_y^2 y^2 w \\ \n
& - \int \eps v_y u_{sx} u y^2 - \int \eps v_y u_s u_x y^2 + \int_{x = L} u_s v_y u y^2 \eps \\ \n
 &= -\int \frac{u_{sx}}{2} v_y^2 y^2 \eps w + \int \eps\frac{3}{2} u_s v_y^2 y^2 + \int_{x = L} \eps \frac{u_s}{2} v_y^2 y^2 w \\ \label{pos.4}
& - \int \eps v_y u_{sx} u y^2  + \int_{x = L} u_s v_y u y^2 \eps.
\end{align}

We will estimate the final two terms from (\ref{pos.4}):
\begin{align}
&|\int \eps u_{sx}v_y u y^2| \le \bigO(L) ||u_{sx}||_{L^\infty} ||\sqrt{\eps} u_x y||_{L^2}^2, \\
&|\int_{x = L} \eps u_s v_y u y^2 | \le ||\sqrt{\eps} v_y y||_{L^2(x = L)} ||\sqrt{\eps}u y||_{L^2(x = L)}.
\end{align} 

Finally, from above: 
\begin{align}
||\sqrt{\eps} uy||_{L^2(x = L)}^2 = \int_{x = L} \eps u^2 y^2 \le \bigO(L) ||\sqrt{\eps}u_x y||_{L^2}^2. 
\end{align}

Next: 
\begin{align} \n
-\int \eps v_s v_{yy} &\p_x \{u w y^2 \} = +\int \eps v_s v_{yy}v_y w y^2 + \int \eps v_s v_{yy} u y^2 \\ \n
&= - \int \eps \frac{v_y^2}{2} \p_y \{v_s w y^2 \} - \int \eps v_y \p_y \{ v_s u y^2 \} \\ \n
& = - \int \eps \frac{v_{sy}}{2} v_y^2 y^2 w - \int \eps v_s y w v_y^2 - \int \eps v_{sy} v_y u y^2 \\ \label{pos.3}
& - \int \eps v_s v_y u_y y^2 -  \int 2\eps v_s v_y u y. 
\end{align}

We will estimate three of the terms above: 
\begin{align}
&|\int \eps v_{sy}v_y u y^2 | \le \bigO(L) ||v_{sy}||_{L^\infty} ||\sqrt{\eps}v_y y||_{L^2}^2, \\ 
&|\int 2\eps v_s v_y u y| \le \bigO(L) ||\sqrt{\eps}v_s||_{L^\infty} ||\sqrt{\eps} v_y y||_{L^2} ||u_x||_{L^2}, \\ \label{small.3}
&|\int \eps v_s v_y u_y y^2| \le |\int \eps \frac{v^0_e}{\sqrt{\eps}} v_y u_y y^2| + |\int \eps \{v_s - \frac{v^0_e} {\sqrt{\eps}} \}v_y u_y y^2| \\ \n
& \hspace{23 mm} \le ||v^0_e \sqrt{\eps}y||_{L^\infty} ||v_y||_{L^2} ||u_y \cdot y||_{L^2} \\ \n
& \hspace{30 mm} + \sqrt{\eps} ||v_s - \frac{v^0_e}{\sqrt{\eps}}||_{L^\infty} ||\sqrt{\eps}v_y y||_{L^2}||u_y y||_{L^2} \\  \n
& \hspace{23 mm} \lesssim ||v^0_e \cdot Y||_{L^\infty}  ||v_y||_{L^2} ||u_y y||_{L^2} + \sqrt{\eps} \bigO(\text{LHS})  \\
& \hspace{23 mm} \lesssim \delta ||u_y \cdot y||_{L^2}^2 + N_\delta ||v_y||_{L^2}^2 + \sqrt{\eps}\bigO(\text{LHS}).
\end{align}

The $||u_y y||_{L^2}^2$ term can be absorbed into the positive contribution from (\ref{pos.4}), whereas the $||v_y||_{L^2}^2$ term is $\bigO(\text{RHS})$. Thus, summing (\ref{pos.3}) and (\ref{pos.4}) yields: 
\begin{align}
(\ref{pos.3}) + (\ref{pos.4}) \gtrsim \int \Big( \frac{3}{2}u_s y^2 - v_s yw \Big) \eps v_y^2 - \bigO(\text{RHS}).
\end{align}

We must now examine the $v_s$ term above: 
\begin{align}
&\int \{ v^0_p + \sqrt{\eps}v^1_p \} y w \eps v_y^2 \le ||v^0_p y, \sqrt{\eps} v^1_p y||_{L^\infty} ||\sqrt{\eps}v_y||_{L^2}^2, \\
&|\int \eps v^1_e y w v_y^2| \le ||v^1_e Y||_{\infty} \sqrt{\eps} ||v_y||_{L^2}^2, \\
&|\int \sqrt{\eps} v^0_e y w v_y^2| \le ||v^0_e \cdot Y||_{L^\infty} ||v_y||_{L^2}^2,
\end{align}

all of which are acceptable contributions according to the right-hand side of (\ref{weight.est.1}). Summarizing this set of calculations: 
\begin{align} \label{summar.1}
(\ref{positive.1}) \gtrsim \int \frac{u_s}{2} y^2 \Big(u_y^2 + \eps v_y^2 \Big) + \int_{x = L} \frac{u_s}{2} \Big( u_y^2 y^2 + \eps v_y^2 \Big) - \bigO(\text{RHS}).
\end{align}

\vspace{2 mm}

\textit{Step 2: Remaining Profile Terms}

\vspace{2 mm}

We now extract the remaining terms from (\ref{pySu}) - (\ref{pySv}):
\begin{align} \n
\int u_{syy}v \cdot \p_y \{u y^2 w \} &= \int u_{syy}v u_y y^2 w + \int u_{syy} v u 2y w \\ \n
& \le ||u_{syy} y^2||_{L^\infty} ||\frac{v}{y}||_{L^2} ||u_y y||_{L^2} \\ \n
& \hspace{10 mm} + \bigO(L) ||u_{syy} y^2||_{L^\infty} ||\frac{v}{y}||_{L^2}||u_x||_{L^2} \\ \n
& \le ||u_{syy} y^2||_{L^\infty} ||v_y||_{L^2} ||u_y y||_{L^2} \\ \n
& \hspace{10 mm}  + \bigO(L) ||u_{syy} y^2||_{L^\infty} ||v_y||_{L^2}||u_x||_{L^2}, \\ \n
& \le ||u_{syy} y^2||_{L^\infty} \Big[ N_\delta ||v_y||_{L^2}^2 + \delta ||u_y y||_{L^2}\Big] \\ 
& \hspace{10 mm}  + \bigO(L) ||u_{syy} y^2||_{L^\infty} ||v_y||_{L^2}||u_x||_{L^2},
\end{align}

all of which are acceptable contributions. Note that we have used estimate (\ref{Lemma.Unif.1}) to absorb $y^2$ into $u_{syy}$. Next:
\begin{align} \n
\int u_{sxy}u \cdot \p_y \{ u y^2 w \} &= \int u_{sxy}u u_y y^2 w + \int u_{sxy} u^2 2y w \\
& \le_{|\cdot|} ||u_{sxy}y||_{L^\infty} \bigO(L) \Big[ ||u_x||_{L^2}^2 + ||u_y y||_{L^2} \Big],
\end{align}

which is an acceptable contribution by taking $L << 1$. Next, we move to the terms from $\p_y S_v$ according to (\ref{pySv}), starting with: 
\begin{align} \n
-\int \eps u_{sy}v_x \p_x \{u w y^2 \} &= -\int \eps u_{sy} v_x u_x w y^2 + \int \eps u_{sy} v_x u y^2 \\ \n
&  \le ||u_{sy}y||_{L^\infty} ||\sqrt{\eps} u_x y||_{L^2} ||\sqrt{\eps}v_x||_{L^2} \\
& \le ||u_{sy}y||_{L^\infty} \Big[ \delta ||\sqrt{\eps} u_x y||_{L^2}^2 +  N_\delta ||\sqrt{\eps}v_x||_{L^2}^2  \Big],
\end{align}

which is seen to be an acceptable contribution according to (\ref{summar.1}). Next: 
\begin{align}
&-\int \eps v_{sxy}u \Big[ u_x w y^2 - uy^2 \Big] \le \bigO(L) ||v_{sxy}||_{L^\infty} ||\sqrt{\eps}u_x y||_{L^2}^2, \\ 
&-\int \eps v_{sx}u_y \Big[ u_x w y^2 - uy^2 \Big] \le ||\sqrt{\eps} v_{sx} Y||_{L^\infty} ||u_y y||_{L^2} ||u_x||_{L^2}, \\ \n
& \hspace{45 mm} \lesssim  ||\sqrt{\eps} v_{sx} Y||_{L^\infty} \Big[ \delta ||u_{y}y||_{L^2}^2 + N_\delta ||u_x||_{L^2}^2 \Big], \\
&-\int 2\eps v_{sy}v_y \Big[u_x w y^2 - uy^2 \Big] \lesssim ||v_{sy} Y^2||_{L^\infty} ||u_x||_{L^2}^2, \\
&-\int \eps v_{syy} v \Big[ u_x w y^2 - u y^2 \Big] \le \bigO(L) ||v_{syy}y||_{L^\infty} \Big[ ||\sqrt{\eps}v_x||_{L^2}^2 + ||\sqrt{\eps}u_x y||_{L^2}^2 \Big].
\end{align}

\vspace{2 mm}

\textit{Step 3: Vorticity Terms}

\vspace{2 mm}

We record the following identities: 
\begin{align} \label{id.1}
&-\Delta_\eps u_y = -u_{yyy} - 2\eps u_{xxy} - \eps v_{xyy}, \\ \label{id.2}
&-\Delta_\eps v_y = - 2v_{yyy} - \eps \p_x \{u_{yy} + \eps v_{xy} \}.
\end{align}

In the forthcoming calculations, we provide estimates on the vorticity terms: 
\begin{align} \label{lap.lap.1}
&-\int \Delta_\eps u_y \cdot \p_y \{u y^2 w \} = \int \Big\{ - u_{yyy} - 2 \eps u_{xxy} - \eps v_{xyy} \Big\} \cdot \p_y \{u  y^2 w \}, \\ \label{lap.lap.2}
&+\int \Delta_\eps v_y \cdot \eps \p_x \{u y^2 w \} = \int \Big\{ 2 v_{yyy} + \eps \p_x \{u_{yy} + \eps v_{xy} \} \Big\} \cdot \eps \p_x \{u y^2 w \}.
\end{align}

Starting with the first term from (\ref{lap.lap.1}):
\begin{align} \label{sing.1}
-\int u_{yyy} \cdot \p_y \{u w y^2 \} &= +\int u_{yy} \p_y^2 \{ u y^2 w \} \\ \n
& = \int u_{yy}^2 y^2 w + \int 4 u_{yy} u_y y w + \int 2 u_{yy} u w \\  \n
&  = + \int u_{yy}^2 y^2 w - 4 \int u_y^2 w \\ \label{vort.2.a}
& \gtrsim + \int u_{yy}^2 y^2  - \bigO(\text{RHS}).
\end{align}

We must provide the rigorous justification of the integration found in (\ref{sing.1}). The delicate calculation occurs near $x = L, y = 0$ corner, for which we use the regularity theory in \cite{OS}, which yields the asymptotic behavior: \footnote{One applies Theorem 4.1 in \cite{OS} with $\beta = 1+\delta, q = q_1 = 2, h = -\delta$ and $h_1 = \frac{1}{2}+$ to obtain $\beta_1 = \frac{1}{2}-$. Theorem 4.1 gives $||r^{-\frac{3}{2}}u, r^{-\frac{1}{2}}Du, r^{\frac{1}{2}}D^2 u||_{L^2} < \infty$. One can then bootstrap this regularity to obtain $||r^{\frac{1}{2}+k} D^{2+k}u||_{L^2} < \infty$. Standard Sobolev embedding arguments give the pointwise asymptotics in (\ref{sing.asy}). } 
\begin{align} \label{sing.asy}
|u| \lesssim r^{\frac{1}{2}}, \hspace{3 mm} |u_x, u_y| \lesssim r^{-\frac{1}{2}}, \hspace{3 mm}  |D^2 u| \lesssim r^{-\frac{3}{2}}, 
\end{align}

where $r$ is the distance to the corner. Defining $C_r$ to be a solid ball of radius $r$ around the corner, we have: 
\begin{align} \label{sing.2}
-\int u_{yyy} \cdot \p_y \{u w y^2 \} = - \int_{\Omega - C_r} u_{yyy} \cdot \p_y \{u w y^2 \} - \int_{C_r} u_{yyy} \cdot \p_y \{u w y^2 \}.
\end{align} 

First, the expansions in \cite{OS} show that $u_{yyy}r^{\frac{3}{2}} \in L^2$. Therefore, taking limit as $r \rightarrow 0$, the latter term in (\ref{sing.2}) vanishes, and it remains to treat the former term: 
\begin{align}
 - \int_{\Omega - C_r} u_{yyy} \cdot \p_y \{u w y^2 \} = +\int_{\Omega - C_r} u_{yy} \cdot \p_{yy} \{u w y^2 \} - \int_{\p C_r} u_{yy} \cdot \p_y \{u w y^2 \} \ud S. 
\end{align}

For the surface integral, we use the expansions from (\ref{sing.asy}), and that $y \le r$:
\begin{align}
-\int_{\p C_r} u_{yy} \p_y \{u wy^2 \} \ud S \le \int_{\p C_r} r^{-\frac{3}{2}} r^{-\frac{1}{2}} y^2 \le \int_{\p C_r} 1 \rightarrow 0.
\end{align}

We now move to the second term from (\ref{lap.lap.1}):
\begin{align} \n
-\int 2 \eps u_{xxy} \cdot \p_y \{ u y^2 w \} &= +\int 2\eps u_{xy} \p_{xy} \{ u y^2 w \} - \int_{x = L} 2\eps u_{xy} \p_y \{u y^2 w \} \\ \n 
& = +\int 2 \eps u_{xy}^2 y^2 + \int 4 \eps u_{xy}u_x y w \\  \n
& - \int 2 \eps u_{xy}u_y y^2 - \int 4\eps u_{xy} u y - \int_{x = L} 2 \eps u_{xy} \p_y \{u y^2w \} \\ \n
& \gtrsim \int 2\eps u_{xy}^2 y^2 - \int_{x = L} 2\eps u_{xy} \p_y \{ u y^2 w \} \\ \label{vort.2.b}
& \hspace{30  mm} - \bigO(\text{RHS}) - \eps \bigO(\text{LHS}),
\end{align}

where we have used the following estimates: 
\begin{align}
&\int 4\eps u_{xy}u_x y w = -\int 2\eps u_x^2 w, \\
&-\int 2 \eps u_{xy}u_y y^2 = -\int_{x = L} \eps u_y^2 y^2 \le \eps \cdot (\ref{summar.1}) \le \eps \bigO(\text{LHS}), \\
&-\int 4 \eps u_{xy}uy = +\int 4 \eps u_x u + \int 4 \eps u_x u_y y \le \eps \Big[ ||u_x||_{L^2}^2 + ||u_y y||_{L^2}^2 \Big].
\end{align}

Next, the third term from (\ref{lap.lap.1}):
\begin{align} \n
-\int \eps v_{xyy} \p_y \{u y^2 w \} &= +\int \eps v_{xy} \p_{yy} \{ u y^2 w \} \\ \n
& = +\int \eps v_{xy} \Big[ u_{yy} y^2 w + 2 uw + 4 u_y y w \Big] \\ \n
& = - \int \eps u_{xx}u_{yy} y^2 w - \int 2\eps v_x u_y w - \int 4 \eps u_{xx}u_y y w \\ \n
& = -\int \eps u_{xx} u_{yy} y^2 w - \int 2\eps v_x u_y w + \int 4 \eps u_x u_{xy} y w \\ \n
& - \int_{x = L} 4\eps u_x u_y y w \\ \label{vort.2.c}
& = -\int \eps u_{xx}u_{yy}y^2 w + \bigO(\text{RHS}) + \eps \bigO(\text{LHS}).
\end{align}

where we have estimated: 
\begin{align}
&|\int_{x = L} 4 \eps u_x u_y yw| \lesssim \sqrt{\eps} ||\sqrt{\eps}u_x||_{L^2(x = L)} ||u_y y||_{L^2(x = L)}, \\
&|\int 2\eps v_x u_y w| \le \sqrt{\eps} ||u_y||_{L^2} ||\sqrt{\eps}v_x||_{L^2}, \\
&\int 4\eps u_{xy}u_x y w = - \int 2 \eps u_x^2 w.
\end{align}

We now come to the first term from (\ref{lap.lap.2}):
\begin{align} \n
\int \eps \p_x \{u_{yy} &+ \eps v_{xy} \} \cdot \p_x \{u y^2 w \} \\ \n
& = - \int \{u_{yy} + \eps v_{xy} \} \cdot \p_{xx} \{u y^2 w \}  +\int_{x = L} \eps \{u_{yy} + \eps v_{xy} \} \cdot \p_x \{u y^2 w \} \\ \n
& = - \int \{u_{yy} + \eps v_{xy} \} \cdot \Big[ u_{xx}y^2w - 2u_x y^2 \Big] + \int_{x= L} \eps \p_y b_L \cdot \p_x \{u y^2 w \} \\ \n
& = - \int \eps u_{yy} u_{xx} w y^2 + \int \eps^2 u_{xx}^2 y^2 w + \int 2 \eps u_{yy} u_x y^2 + \int 2 \eps^2 v_{xy} u_x y^2 \\ \n 
& \hspace{30 mm} + \int_{x= L} \eps \p_y b_L \cdot \p_x \{u y^2 w \} \\ \label{vort.2.d}
& = -\int \eps u_{yy}u_{xx} w y^2 + \int \eps^2 u_{xx}^2 y^2 w + \eps \bigO(\text{RHS}) + \eps \bigO(\text{LHS}) \\ \n
& \hspace{30 mm}  + ||\p_y b_L \langle y \rangle^2||_{L^2(x = L)}^2 + \eps ||u, \sqrt{\eps}u_x||_{L^2(x = L)}^2.
\end{align}

We have estimated: 
\begin{align} \n
&+\int 2 \eps u_{yy} u_x y^2 = -\int 2 \eps u_y u_{xy} y^2 - \int 4 \eps u_{y} u_x y \\ \n
&\hspace{20 mm} = - \int_{x = L} \eps u_y^2 y^2 - \int 4\eps u_y u_x y, \\ \n
&\hspace{20 mm} \le \eps ||u_y y||_{L^2(x= L)}^2 +  \eps ||u_y y||_{L^2} ||u_x||_{L^2} \\
& \hspace{20 mm} \lesssim \eps \Big[ \bigO(\text{LHS}) + \bigO(\text{RHS})\Big], \\
&+\int 2 \eps^2 v_{xy} u_x y^2 = - \int 2\eps^2 u_{xx} u_x y^2 = -\int_{x = L} \eps^2 u_x^2 y^2 \le \eps \bigO(\text{LHS}).
\end{align}

Next from (\ref{id.2}):
\begin{align} \n
+\int 2 v_{yyy} \cdot \eps \p_x \{u w y^2 \} &= - \int 2 \eps v_{yyy} v_y w y^2 - \int 2 \eps v_{yyy} u y^2 \\ \n
& = +\int 2 \eps v_{yy}^2 y^2 + \int 4 \eps v_{yy}v_y w y + \int 2 \eps v_{yy} u_y y^2 + \int 4 \eps v_{yy}uy \\ \n
& = \int 2\eps v_{yy}^2 y^2 w - \int 2 \eps v_y^2 w - \int 2\eps u_{xy}u_y y^2 \\  \n
& \hspace{20 mm} - \int 4\eps v_y u_y y - \int 4 \eps v_y u \\ \label{vort.2.e}
& \gtrsim \int 2 \eps v_{yy}^2 y^2 w - \eps \Big[ \bigO(\text{LHS}) +\bigO(\text{RHS})   \Big],
\end{align}

where we have estimated the following terms: 
\begin{align}
&-\int 2 \eps u_{xy} u_y y^2 = - \int_{x = L} \eps u_y^2 y^2, \\
&|\int 4 \eps v_y u_y y| \le \eps ||u_y y||_{L^2} ||v_y||_{L^2}, \\
&|\int 4 \eps v_y u| \le \eps \bigO(L) ||u_x||_{L^2}^2.
\end{align}

We can now collect the estimates from  (\ref{vort.2.a}),  (\ref{vort.2.b}),  (\ref{vort.2.c}), (\ref{vort.2.d}), (\ref{vort.2.e}) to get: 
\begin{align} \n
- \int \Delta_\eps u_y &\cdot \p_y \{u y^2 w \} + \int \eps v_y \cdot \p_x \{u y^2 w\} \\ \n
& \gtrsim \int \Big[ u_{yy}^2 + 4 \eps u_{xy}^2 + \eps^2 u_{xx}^2 - 2\eps u_{xx}u_{yy} \Big] y^2 w - \int_{x = L} 2\eps u_{xy} \p_y \{u y^2 w \} \\ \label{summer.3}
& - \bigO(\text{RHS}) - \eps \bigO(\text{LHS}).
\end{align}

We now have the Pressure contributions: 
\begin{align} \label{pressure.1}
\int P_{yx} \cdot \p_y \{u y^2 w \} + \int P_{yy} \cdot \p_x \{u y^2 w \} = \int_{x = L} P_y \cdot \p_y \{ u y^2 w \}.
\end{align}

Using $P_y - 2\eps u_{xy} = a_L(y)$ on $x = L$, the boundary term above can be combined with that in (\ref{summer.3}) yielding:
\begin{align} \n
\int_{x = L}\Big( P_y - 2\eps u_{xy} \Big)& \cdot \p_y \{u y^2 w \} = \int_{x = L} \p_y a_L \cdot \p_y \{ u y^2 w \} \\
& \le ||\p_y a_L \cdot \langle y \rangle^2 ||_{L^2(x = L)} ||u_y y, u||_{L^2(x = L)} \\
& \lesssim N_\delta ||\p_y a_L \cdot \langle y \rangle^2 ||_{L^2(x = L)}^2 + \delta ||u_y y, u||_{L^2(x = L)}^2,
\end{align}

the latter of which can be absorbed into the left-hand side of our estimate, specifically the positive contribution of (\ref{summar.1}). Thus, summing (\ref{pressure.1}) with (\ref{summer.3}) yields: 
\begin{align} \n
&- \int \Delta_\eps u_y \cdot \p_y \{u y^2 w \} + \int \eps v_y \cdot \p_x \{u y^2 w\} \\ \n
& \hspace{30 mm} + \int P_{yx} \cdot \p_y \{ u y^2 w \} + \int P_{yy} \cdot \p_x \{u y^2 w \} \\ \n
& \gtrsim \int \Big\{u_{yy}^2 + 4\eps u_{xy}^2 + \eps^2 u_{xx}^2 - 2\eps u_{xx}u_{yy} \Big\} y^2 w - \bigO(\text{RHS}) - \eps \bigO(\text{LHS}) \\
& \gtrsim \int \Big\{u_{yy}^2 + \eps u_{xy}^2 + \eps^2 u_{xx}^2  \Big\} y^2,
\end{align}

where the final inequality follows from (\ref{korn.thm.1}). This concludes the proof.

\end{proof}

\subsection{The Korn's Inequality}

\begin{lemma} For any functions $[u,v] \in \X$, the following estimate is valid: 
\begin{align} \n
&\int \Big[ u_{yy}^2  + 4\eps u_{xy}^2 + \eps^2 u_{xx}^2 - 2\eps u_{yy} u_{xx}\Big] y^2  w(x) \ud x \ud y \\ \n
& \hspace{10 mm}  \gtrsim \int  \Big[ u_{yy}^2  + \eps u_{xy}^2 + \eps^2 u_{xx}^2 \Big] y^2  w(x) \ud x \ud y \\ \label{korn.thm.1}
& \hspace{10 mm} - \eps ||u_y y, \sqrt{\eps}u_x y||_{L^2}^2 - ||u_y, u_x||_{L^2}^2.
\end{align}
\end{lemma}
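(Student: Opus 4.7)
The strategy is to convert the cross term $-2\eps \int u_{yy} u_{xx} y^2 w$ into $-2\eps \int u_{xy}^2 y^2 w$ modulo boundary and lower-order contributions, via the Monge--Amp\`ere identity
\[
u_{yy} u_{xx} - u_{xy}^2 = \p_y(u_y u_{xx}) - \p_x(u_y u_{xy}).
\]
Pairing with the weight $\phi(x,y) = y^2 w(x)$ and applying the divergence theorem yields
\begin{align*}
\int u_{yy} u_{xx} y^2 w &= \int u_{xy}^2 y^2 w - (1-L) \int_{x=L} u_y u_{xy} y^2 \\
&\quad - \tfrac{1}{2}\int_{x=L} u_y^2 y^2 - 2 \int u_y u_{xx} yw,
\end{align*}
where boundary contributions at $x = 0$ and $y = 0$ vanish thanks to the Dirichlet conditions $u|_{x=0} = u|_{y=0} = 0$ (so that $u_y|_{x=0} = u_{yy}|_{x=0} = 0$ and $u_x|_{y=0} = u_{xx}|_{y=0} = 0$) together with $y^2|_{y=0} = 0$, while decay handles $y \to \infty$.

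Substituting this identity, the left-hand side of (\ref{korn.thm.1}) becomes
\[
\int [u_{yy}^2 + 2\eps u_{xy}^2 + \eps^2 u_{xx}^2] y^2 w + 2\eps(1-L)\int_{x=L} u_y u_{xy} y^2 + \eps \int_{x=L} u_y^2 y^2 + 4\eps \int u_y u_{xx} yw.
\]
The target $\int[u_{yy}^2 + \eps u_{xy}^2 + \eps^2 u_{xx}^2] y^2 w$ now appears, with a spare $\eps \int u_{xy}^2 y^2 w$ available to absorb errors. For the bulk term $4\eps \int u_y u_{xx} yw$, one further integration by parts in $x$ produces $4\eps(1-L)\int_{x=L} u_y u_x y + 2\eps \int u_x^2 w + 4\eps \int u_y u_x y$; the two bulk pieces fall into the allowed error $\|u_y, u_x\|_{L^2}^2$ by Young's inequality. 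For the pair of $x = L$ boundary terms, I would complete the square $\eps u_y^2 + 2\eps(1-L) u_y u_{xy} = \eps(u_y + (1-L) u_{xy})^2 - \eps(1-L)^2 u_{xy}^2$, discard the nonnegative square, and be left with $-\eps(1-L)^2 \int_{x=L} u_{xy}^2 y^2$, together with the $4\eps(1-L)\int_{x=L} u_y u_x y$ piece inherited from the bulk step.

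The delicate point is controlling the surviving $x = L$ trace $\eps \int_{x=L} u_{xy}^2 y^2$, which is not directly in the allowed error list. The crucial observation is that the divergence-free condition $u_x + v_y = 0$ combined with $v|_{x=0} = 0$ forces $u_x|_{x=0} = -v_y|_{x=0} = 0$, and hence $u_{xy}|_{x=0} = 0$. The fundamental theorem of calculus then gives $u_{xy}^2(L,y) = 2\int_0^L u_{xy} u_{xxy} \ud x$ pointwise in $y$, converting the boundary trace into the bulk quantity $2\int u_{xy} u_{xxy} y^2 \ud x \ud y$. A further integration by parts in $y$ (using $u_{xx}|_{y=0} = 0$) rewrites this as a combination of $\int u_{xx}\, \p_x u_{yy}\, y^2$ and $\int u_{xx} u_{xy} y$, after which Young's inequality absorbs the residual against the spare $\eps \int u_{xy}^2 y^2 w$, the positive $\eps^2 \int u_{xx}^2 y^2 w$ and $\int u_{yy}^2 y^2 w$ contributions inherent to the Korn left-hand side, and the permitted errors, with the factor $(1-L)^2$ and the smallness of $\eps$ providing the headroom. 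This intricate routing of the boundary trace through the bulk coercivity is the principal obstacle of the proof.
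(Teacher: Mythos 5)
Your bookkeeping through the Hessian-determinant identity is correct as far as it goes: the substitution does produce $\int[u_{yy}^2+2\eps u_{xy}^2+\eps^2u_{xx}^2]y^2w$ plus the three terms $2\eps(1-L)\int_{x=L}u_yu_{xy}y^2$, $\eps\int_{x=L}u_y^2y^2$ and $4\eps\int u_yu_{xx}yw$, and the treatment of the bulk piece is fine. The argument breaks down, however, at the surviving trace $\eps(1-L)^2\int_{x=L}u_{xy}^2y^2$. Your route for it --- fundamental theorem of calculus in $x$ followed by an integration by parts in $y$ --- lands you on the bulk term $\eps\int u_{xx}\,\p_xu_{yy}\,y^2 = \eps\int u_{xx}u_{xyy}y^2$, and $u_{xyy}$ is a \emph{third} derivative. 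The $\X$ norm controls only $u_{yy}y$, $\sqrt{\eps}u_{xy}y$ and $\eps u_{xx}y$ in $L^2$; no weighted $L^2$ bound on $u_{xyy}$ is available, so Young's inequality cannot absorb this term into $\eps^2\|u_{xx}y\|_{L^2}^2+\|u_{yy}y\|_{L^2}^2$ or any of the permitted errors. More structurally, $\int_{x=L}u_{xy}^2y^2$ is the trace of a square of a second derivative on a boundary where the weight $w=1-x$ does not vanish, and such a trace is simply not controlled by the interior quantities appearing in the lemma; any attempt to convert it to bulk integrals will cost either a third derivative or an $H^{1/2}$-type trace estimate that the hypotheses do not supply. (The companion term $4\eps(1-L)\int_{x=L}u_yu_xy$ is likewise a boundary quantity outside the allowed error list and is left unaddressed.)

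For comparison, the paper avoids generating any $x=L$ traces at all: it rescales to Eulerian variables, introduces the auxiliary fields $U^{(1)}=U_YY\sqrt{w_L}$ and $V^{(1)}=V_YY\sqrt{w_L}$, uses the divergence-free relation $U_{XX}=-V_{XY}$ to rewrite the quadratic form as $|U^{(1)}_Y|^2+4|U^{(1)}_X|^2+|V^{(1)}_X|^2-2U^{(1)}_YV^{(1)}_X$ up to commutator errors of exactly the allowed type, and then invokes the classical Korn inequality for these fields. If you want to salvage a direct integration-by-parts proof, you would need to restructure it so that the cross term never deposits a second-derivative trace on $\{x=L\}$; as written, the proof has a genuine gap.
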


\begin{proof} 
We would like to apply the Korn inequality to generate positive terms: 
\begin{align} \label{korn.1}
&\int \Big[ u_{yy}^2  + 4\eps u_{xy}^2 + \eps^2 u_{xx}^2 - 2\eps u_{yy} u_{xx}\Big] y^2  w(x) \ud x \ud y.
\end{align}

We will first rescale to original Eulerian coordinates, so as to ensure all estimates are independent of $L$: 
\begin{align}
X = \frac{x}{L}, \hspace{3 mm} Y = \frac{\sqrt{\eps}}{L}y, \hspace{3 mm}  U(X,Y) = u(x,y), \hspace{3 mm} V(X,Y) = \sqrt{\eps}v(x,y).
\end{align}

Define also $w_L(X) = 1 - LX$. This gives the following relations: 
\begin{align}
&U_X = Lu_x, U_Y = \frac{L}{\sqrt{\eps}}u_y, U_{YY} = \frac{L^2}{\eps}u_{yy}, \\
&V_X = \sqrt{\eps}L v_x, V_Y = L v_y, V_{XX} = L^2 \sqrt{\eps}v_{xx}, V_{YY} = \frac{L^2}{\sqrt{\eps}}v_{yy}.
\end{align}

It is clear that:
\begin{align}
(\ref{korn.1}) = \sqrt{\eps} \int \Big[ U_{YY}^2 + 4 U_{XY}^2 + U_{XX}^2 - 2U_{YY}U_{XX}  \Big] Y^2 w_L(X) \ud X \ud Y.
\end{align}

We will define: 
\begin{align}
U^{(1)} := U_Y Y \sqrt{w_L}, \hspace{5 mm} V^{(1)} := V_Y Y \sqrt{w_L}.
\end{align}
\begin{align}
&U^{(1)}_Y = U_{YY}Y \sqrt{w_L} + U_Y \sqrt{w_L}, \\
&U^{(1)}_X = U_{XY} Y \sqrt{w_L} + U_Y Y \frac{1}{2\sqrt{w_L}} L, \\
&V^{(1)}_Y = V_{YY} Y \sqrt{w_L} + V_Y \sqrt{w_L} \\
& V^{(1)}_X = V_{XY} Y \sqrt{w_L} + V_Y Y \frac{1}{2\sqrt{w_L}} L.
\end{align}

We will now calculate: 
\begin{align}
&\sqrt{\eps} \int U_Y^2 w_L \ud X \ud Y = \int u_y^2 w \ud x \ud y, \\
& \sqrt{\eps} \int U_Y^2 Y^2 \frac{L^2}{w_L} \ud X \ud Y =  \eps \int u_y^2 y^2 \ud x \ud y, \\
& \sqrt{\eps} \int V_Y^2 w_L \ud X \ud Y = \int \eps v_y^2 w \ud x \ud y, \\
& \sqrt{\eps} \int V_Y^2 Y^2 \frac{L^2}{w_L} \ud X \ud Y = \eps^2 \int v_y^2 y^2 \ud x \ud y, \\ \label{l.o.1}
& \sqrt{\eps} \int |U^{(1)}|^2  \ud X \ud Y = \frac{\eps}{L^2} \int u_y^2 y^2 w \ud x \ud y, \\ \label{l.o.2}
& \sqrt{\eps} \int |V^{(1)}|^2 \ud X \ud Y = \frac{\eps^2}{L^2} \int v_y^2 y^2 w \ud x \ud y,
\end{align}

Thus: 
\begin{align*}
&\sqrt{\eps} \int U_{YY}^2 Y^2 w_L = \sqrt{\eps}\int |U^{(1)}_Y|^2 + \mathcal{C}, \\
&\sqrt{\eps} \int U_{XY}^2 Y^2 w_L = \sqrt{\eps} \int 4 |U^{(1)}_X|^2 + \mathcal{C}, \\
&\sqrt{\eps} \int U_{XX}^2 Y^2 w_L =\sqrt{\eps}  \int |V^{(1)}_X|^2 + \mathcal{C}, \\
&-2\sqrt{\eps} \int U_{YY}U_{XX} Y^2 w_L = -\sqrt{\eps} \int 2 U^{(1)}_Y V^{(1)}_X + \mathcal{C},
\end{align*}

where: 
\begin{align} \n
|\mathcal{C}| \lesssim &N_\delta \cdot ||u_y, u_x||_{L^2} + \eps \cdot||u_y y, \sqrt{\eps}u_x y||_{L^2}^2 \\
& + \delta \sqrt{\eps} \int |U^{(1)}_Y|^2 +| V^{(1)}_X|^2 + |U^{(1)}_X|^2.
\end{align}

According to this, we can write: 
\begin{align}
(\ref{korn.1}) \gtrsim \sqrt{\eps} \Big[ \int |U^{(1)}_Y|^2 + 4 |U^{(1)}_X|^2 + |V^{(1)}_X|^2 - 2 U^{(1)}_Y V^{(1)}_X \Big] - |\mathcal{C}|. 
\end{align}

By adding and subtracting (\ref{l.o.1}) - (\ref{l.o.2}) and up to redefining $\mathcal{C}$, we have: 
\begin{align*}
(\ref{korn.1}) \gtrsim &\sqrt{\eps} \int \Big[ |U^{(1)}_Y|^2 + 4 |U^{(1)}_X|^2 + |V^{(1)}_X|^2 - 2U^{(1)}_Y V^{(1)}_X \Big] \ud X \ud Y \\
& + \sqrt{\eps} \int \Big[ |U^{(1)}|^2 + |V^{(1)}|^2 \Big]\ud X \ud Y - |\mathcal{C}|.
\end{align*}

An application of Korn's inequality yields: 
\begin{align} \n
(\ref{korn.1}) &\gtrsim \sqrt{\eps} \int |U^{(1)}_Y|^2 + |U^{(1)}_X|^2 + |V^{(1)}_X|^2  \\ \n
& \hspace{30 mm} -  ||u_y, u_x||_{L^2}^2 + \eps ||u_y y, \sqrt{\eps}u_x y||_{L^2}^2 \\ \n
& \gtrsim \int \Big\{ u_{yy}^2 + \eps u_{xy}^2 + \eps^2 u_{xx}^2 \Big\} y^2 w(x) \ud y \ud x \\
& \hspace{30 mm} - ||u_y, u_x||_{L^2}^2 + \eps ||u_y y, \sqrt{\eps}u_x y||_{L^2}^2.
\end{align}

This concludes the proof. 

\end{proof}

\subsection{Summary of $L^2$ Estimates:}

Let us now consolidate the $L^2$-based estimates, by combining (\ref{en.est.1}), (\ref{pos.est.1}), (\ref{weight.est.1}). First, we will define the following $L^2$ based norm: 
\begin{align} \n
||u,v||_{X_1} &:= ||u_y \cdot y||_{L^2} + ||\sqrt{\eps}u_x \cdot y ||_{L^2} + ||v_y, \sqrt{\eps}v_x||_{L^2} \\ \label{norm.X}
& + ||\Big\{u_{yy}, \sqrt{\eps}u_{xy}, \eps u_{xx} \Big\} \cdot y ||_{L^2}. 
\end{align}

Recalling the boundary norm given in (\ref{norm.B}), accumulating estimates (\ref{weight.est.1}), (\ref{en.est.1}), and (\ref{pos.est.1}), and taking $0 < L << ||\frac{v^0_e}{Y}||_{L^\infty} << 1$ gives: 
\begin{align} \label{lin.complete}
||u,v||_{X_1}^2 + ||u,v||_B^2 \lesssim \mathcal{R}_1 + \mathcal{R}_2 + \mathcal{R}_3. 
\end{align}

\section{Uniform Estimates}

We will now obtain $L^\infty$ estimates for solutions $[u,v]$ to the system (\ref{lin.1}) - (\ref{lin.3}), which are based on bootstrapping estimates that are valid for the Stokes operator. 

\begin{lemma} Solutions $[u,v] \in \mathcal{X}$ to the system (\ref{lin.1}) - (\ref{lin.3}), with the boundary conditions (\ref{BC.dirichlet}) - (\ref{BC.stress.free}) satisfy the following uniform estimate: 
\begin{align} \n
\eps^{\frac{\gamma}{4}} ||u, \sqrt{\eps}v||_{L^\infty(\bar{\Omega})} \lesssim &C(\gamma, L) \Big\{ ||u, \sqrt{\eps}v||_{H^1}  + C(a_L, b_L)  \\  \label{unifo.1}
& + ||S_u, \sqrt{\eps}S_v||_{L^2} + ||f, \sqrt{\eps}g||_{L^2} \Big\}.
\end{align}
\end{lemma}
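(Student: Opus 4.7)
The plan is to recast the linearized system (\ref{lin.1})--(\ref{lin.3}) as a Stokes system by moving the profile convective terms $S_u, S_v$ onto the right-hand side, and then to invoke $L^p$-regularity for Stokes followed by Sobolev embedding in two dimensions. The profile terms will be treated as data and controlled by $\|S_u, \sqrt{\eps}S_v\|_{L^2}$, together with $\|f, \sqrt{\eps}g\|_{L^2}$ coming from the nonlinear and forcing contributions already appearing on the right-hand side of (\ref{unifo.1}).

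First I would rescale back to Eulerian coordinates via $Y = \sqrt{\eps}y$ with $U(X,Y) := u(x,y)$ and $V(X,Y) := \sqrt{\eps}\,v(x,y)$, mirroring the rescaling used in the weighted Korn lemma. This converts the system into a standard unit-diffusion incompressible Stokes problem $-\Delta U + \nabla P = F$, $\nabla \cdot U = 0$, on the fixed domain $(0,L) \times (0,\infty)$, with forcing built from $f - S_u$ and $\sqrt{\eps}(g - S_v)$, homogeneous Dirichlet data on $\{Y = 0\}$, $\{X = 0\}$, $\{Y \to \infty\}$, and an inhomogeneous stress-free condition encoding $a_L, b_L$ at $\{X = L\}$. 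Crucially, $\|u, \sqrt{\eps}v\|_{L^\infty}$ in Prandtl variables equals $\|U, V\|_{L^\infty}$ in the Eulerian ones, so it suffices to bound the latter.

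Next I would apply the standard $L^p$-regularity theory for the Stokes operator to obtain $W^{2,p}$ bounds on $(U, V)$ in terms of $\|F\|_{L^p}$ and boundary data. In two dimensions, $W^{2,p} \hookrightarrow L^\infty$ holds for every $p > 1$; however, the admissible $p$ is capped by the corner singularity at $(L,0)$, where Dirichlet meets stress-free and the asymptotics $|U| \lesssim r^{1/2}$, $|DU| \lesssim r^{-1/2}$, $|D^2 U| \lesssim r^{-3/2}$ recorded in (\ref{sing.asy}) (following \cite{OS}) preclude global $H^2$ control. The strategy is to take $p = 2 + \delta(\gamma)$ for a small $\delta > 0$ chosen as a function of $\gamma$, apply Gagliardo--Nirenberg interpolation of the form $\|U\|_{L^\infty} \lesssim \|U\|_{H^1}^{1-\theta} \|U\|_{W^{2,p}}^\theta$ with $\theta = \theta(\gamma)$, and then track the $\sqrt{\eps}$ scaling back into Prandtl variables. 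The combined interpolation/scaling factor then produces the $\eps^{\gamma/4}$ prefactor stated in (\ref{unifo.1}).

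The principal obstacle is controlling the $L^\infty$ norm through the corner $(L,0)$, which limits how much regularity can be extracted uniformly. I would handle this via a cutoff decomposition: on any region bounded away from the corner, interior and flat-boundary Stokes estimates give full $W^{2,p}$ control for $p > 2$ and Sobolev embedding closes the $L^\infty$ bound; inside a small ball around the corner, one relies on the pointwise asymptotics (\ref{sing.asy}), whose contribution is explicitly $\bigO(1)$ and absorbed by the $\eps^{\gamma/4}$ loss. The boundary data contribution assembles as $C(a_L, b_L)$, while $C(\gamma, L)$ absorbs the interpolation and rescaling constants, completing the estimate.
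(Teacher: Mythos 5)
Your strategy is essentially the one the paper uses: the paper's proof is a one-line citation of Lemma 4.1 of \cite{GN}, which is exactly the argument you reconstruct — rescale to Eulerian variables, treat the profile convection $S_u, S_v$ and the forcing as $L^2$ data for a Stokes problem with mixed Dirichlet/stress-free conditions, and conclude by elliptic regularity plus Sobolev embedding (the paper only adds the remark that the embedding holds up to $\bar{\Omega}$ because the domain has the strong local Lipschitz property, per \cite{Adams}).

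One step of your write-up would fail as literally stated: you propose $W^{2,p}$ Stokes regularity with $p = 2+\delta$, but the right-hand side is only controlled in $L^2$ (that is all that $\|S_u,\sqrt{\eps}S_v\|_{L^2}+\|f,\sqrt{\eps}g\|_{L^2}$ gives you), so no $L^p$ estimate with $p>2$ is available. This is easily repaired: in two dimensions $H^2\hookrightarrow L^\infty$ already, so $p=2$ suffices away from the corner, and the interpolation should be run between $H^1$ and the (non-uniform in $\eps$) $H^2$ bound; it is precisely this trade-off, combined with the change-of-variables Jacobian $\eps^{1/4}$ in the $L^2$ norms, that produces the $\eps^{\gamma/4}$ prefactor — not the choice of $p$. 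With that correction, and your corner cutoff handled via the pointwise asymptotics (\ref{sing.asy}) exactly as you describe, the argument closes and matches the cited proof.
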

\begin{proof}

The proof follows from \cite{GN}, Lemma 4.1. Note that the estimate up to the boundary, $L^\infty(\bar{\Omega})$, is guaranteed according to \cite{Adams}, P. 98, Equation 9, as our domain $\Omega$ satisfies the strong local Lipschitz property, as defined by \cite{Adams}, P. 66.

\end{proof}

We emphasize that for our analysis, it is important to obtain the uniform control on the boundary $x = L$, due for instance, to the nonlinear contributions from (\ref{NL.tough}). We now relate the right-hand side above to our norms. 

\begin{lemma} For any functions, $[u,v] \in \mathcal{X}$, the following estimate holds: 
\begin{align} \n
||u, \sqrt{\eps}v||_{H^1} &+ ||S_u, \sqrt{\eps}S_v||_{L^2}  + ||\eps^{-\frac{1}{2}-\gamma}\{R^u, \sqrt{\eps}R^v\}||_{L^2} + ||L^b_1, \sqrt{\eps}L^b_2||_{L^2} \\ \label{unifo.2} &+ ||N^u(\bar{u}, \bar{v}), \sqrt{\eps}N^v(\bar{u},\bar{v})||_{L^2}  \lesssim 1 + ||u,v||_{X_1}  + ||\bar{u}, \bar{v}||_{\X}^2.
\end{align}
\end{lemma}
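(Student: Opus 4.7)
The plan is to bound the five distinct contributions to the left-hand side of (\ref{unifo.2}) separately. The profile remainders $\eps^{-\frac12-\gamma}\{R^u,\sqrt{\eps}R^v\}$ and the boundary lifts $L^b_1,\sqrt{\eps}L^b_2$ are deterministic quantities independent of the unknown $[u,v]$; the construction-stage bounds (\ref{sigma}) and (\ref{est.Lb}) control them by an absolute constant, yielding the ``$1$'' on the right-hand side. The genuine task is therefore to estimate $\|u,\sqrt{\eps}v\|_{H^1}$, the convective quantity $\|S_u,\sqrt{\eps}S_v\|_{L^2}$, and the quadratic nonlinearity $\|N^u(\bar u,\bar v),\sqrt{\eps}N^v(\bar u,\bar v)\|_{L^2}$.

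For the $H^1$ contribution, the norm $\|v_y,\sqrt{\eps}v_x\|_{L^2}$ already appears inside $\|\cdot\|_{X_1}$ by (\ref{norm.X}). To recover the unweighted $\|u_y,\sqrt{\eps}u_x\|_{L^2}$ from the weighted counterpart in $X_1$, I would split the $y$-integral at $y=1$: for $y\ge 1$ the bound is immediate from $\|u_y\cdot y,\sqrt{\eps}u_x\cdot y\|_{L^2}$; for $y\le 1$ I would invoke $u|_{y=0}=0$ to write $u_y(x,y)$ via $u_y(x,1)-\int_y^1 u_{yy}(x,s)\,ds$ and interpolate, using both $\|u_y\cdot y\|_{L^2}$ and $\|u_{yy}\cdot y\|_{L^2}$ from $X_1$ together with a trace bound on $y=1$. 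The $L^2$ control of $u,v$ themselves then follows from Hardy applied with the vanishing traces $u|_{y=0}=v|_{y=0}=0$.

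For $\|S_u,\sqrt{\eps}S_v\|_{L^2}$ I expand according to (\ref{nl.spec.1}) and H\"older each product of a profile factor with a remainder factor. The profile coefficients $u_s,u_{sx},u_{sy},v_{sx},v_{sy}$ carry uniform bounds with suitable $y$-decay from Lemma \ref{Lemma.Unif}. The delicate terms are $v_s u_y$ and $v_s v_y$, whose $v_s$ contains the leading singularity $v^0_e/\sqrt{\eps}$: I rewrite them as $(v_s/y)\cdot (y\,u_y)$, extract $\|v^0_e/Y\|_{L^\infty}$ as small by (\ref{euler.as.2}), and pair with the weighted derivative $\|u_y\cdot y\|_{L^2}$ (resp.\ $\|\sqrt{\eps}v_y\cdot y\|_{L^2}$) from $X_1$. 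The cross terms $u_{sy}v$ and $v_{sy}v$ absorb a factor of $y$ into the profile via Lemma \ref{Lemma.Unif} and pair $v/y$ with $v_y$ through Hardy, all controlled by $\|u,v\|_{X_1}$.

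The nonlinearity $N^u(\bar u,\bar v)=\eps^{\frac12+\gamma}[\bar u\bar u_x+\bar v\bar u_y]$ is handled by $\|\bar u\bar u_x\|_{L^2}\le\|\bar u\|_{L^\infty}\|\bar u_x\|_{L^2}$ and analogously $\|\bar v\bar u_y\|_{L^2}\le\|\bar v\|_{L^\infty}\|\bar u_y\|_{L^2}$. The prefactor $\eps^{\frac12+\gamma}$ combines with $\eps^{\gamma/2}\|\bar u,\sqrt{\eps}\bar v\|_{L^\infty}$ (which sits inside $\|\bar u,\bar v\|_\X$) and with the weighted $L^2$ derivatives also in $\|\bar u,\bar v\|_\X$ to produce $\|\bar u,\bar v\|_\X^2$ with a non-negative leftover $\eps$-power. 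The principal obstacle throughout is the careful bookkeeping of $\eps$-powers and $y$-weights: every singular profile factor must be cancelled either by a weighted norm built into $X_1$ or by the smallness of $\|v^0_e/Y\|_{L^\infty}$ supplied by (\ref{euler.as.2}).
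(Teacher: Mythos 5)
Your proposal is correct and follows essentially the same route as the paper: the $R$ and $L^b$ terms are dispatched by (\ref{sigma}) and (\ref{est.Lb}), $S_u,S_v$ are estimated term-by-term via H\"older using Lemma \ref{Lemma.Unif}, the splitting $v_s=\frac{v^0_e}{\sqrt{\eps}}+\bigO(1)$ with $\frac{v^0_e}{\sqrt{\eps}}=\frac{v^0_e}{Y}\,y$ paired against the weighted derivatives in $X_1$, and Hardy for $u_{sy}v$, while $N^u,N^v$ use the $\eps^{\frac{\gamma}{2}}\|\bar u,\sqrt{\eps}\bar v\|_{L^\infty}$ component of the $\X$ norm exactly as in the text. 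The only difference is that you supply an explicit interpolation/trace argument for the unweighted $H^1$ piece, which the paper simply declares trivial; your ingredients ($\|u_y\cdot y\|_{L^2}$, $\|u_{yy}\cdot y\|_{L^2}$, a trace at $y=1$) do suffice for this.
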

\begin{proof}

The estimates on $||u, \sqrt{\eps}v||_{H^1}, || \eps^{-\frac{1}{2}-\gamma}\{ R^u, \sqrt{\eps}R^v\}||_{L^2}$ follow trivially, the latter from (\ref{sigma}). The estimates on $||L^b_1, \sqrt{\eps} L^b_2||_{L^2}$, as defined in (\ref{defn.Lb.1}) - (\ref{defn.Lb}), follow from (\ref{est.Lb}). Next, referring to the definition of $S_u$ in (\ref{nl.spec.1}), and the estimates in (\ref{Lemma.Unif.1}), 
\begin{align} \n
||S_u||_{L^2} &= ||u_s u_x + u_{sx}u + v_s u_y + u_{sy}v||_{L^2} \\
& \le ||u_s, u_{sx}, \frac{v^0_e}{Y}, v_s - \frac{v^0_e}{\sqrt{\eps}}, u_{sy}y||_{L^2} ||u_x, u_y y||_{L^2}.
\end{align}

Similarly, referring to the definition of $S_v$ given in (\ref{nl.spec.1}) and the estimates (\ref{Lemma.Unif.1}):
\begin{align} \n
||\sqrt{\eps}S_v||_{L^2} &\le ||\sqrt{\eps} \Big( u_s v_x + v_{sx}u + v_s v_y + v_{sy}v \Big)||_{L^2} \\
& \le ||u_s, \sqrt{\eps}v_{sx}, \sqrt{\eps}v_s, v_{sy}||_{L^2} ||\sqrt{\eps}v_x||_{L^2}.
\end{align}

Referring to the definitions of the nonlinearities given in (\ref{nl.spec.2}):
\begin{align} \n
||N^u(\bar{u}, \bar{v})||_{L^2} &= \eps^{\frac{1}{2}+\gamma} ||\bar{u} \bar{u}_x + \bar{v} \bar{u}_y||_{L^2} \\
& \le \eps^{\frac{\gamma}{2}} ||\eps^{\frac{\gamma}{2}} \Big\{ \bar{u}, \sqrt{\eps} \bar{v} \Big\}||_{L^\infty} ||\bar{u}_x, \bar{u}_y||_{L^2}, \\ \n
||\sqrt{\eps} N^v(\bar{u}, \bar{v}) ||_{L^2} &\le ||\eps^{1+\gamma} \Big(\bar{u} \bar{v}_x + \bar{v} \bar{v}_y \Big) ||_{L^2} \\
& \le \eps^{\frac{1}{2}+\frac{\gamma}{2}}||\eps^{\frac{\gamma}{2}}\bar{u}||_{L^\infty} ||\sqrt{\eps} \bar{v}_x||_{L^2} + \eps^{\frac{1}{2}+\frac{\gamma}{2}} ||\eps^{\frac{1}{2}+\frac{\gamma}{2}}\bar{v}||_{L^\infty} || \bar{v}_y||_{L^2}.
\end{align}

The above estimates imply the result.
\end{proof}

Combining (\ref{unifo.1}), (\ref{unifo.2}) with the definition of $(f, g)$ given in (\ref{nl.spec.2}) - (\ref{nl.spec.3}), together with relevant definitions in (\ref{defn.Ru1}), (\ref{defn.Rv1}), and (\ref{defn.Lb}) gives the following: 

\begin{corollary} Solutions $[u,v] \in \mathcal{X}$ to the system (\ref{lin.1}) - (\ref{lin.3}), with the boundary conditions (\ref{BC.dirichlet}) - (\ref{BC.stress.free}) satisfy the following uniform estimate: 
\begin{align} \label{unif.cor}
\eps^{\frac{\gamma}{2}}||u, \sqrt{\eps}v||_{L^\infty} \lesssim \eps^{\frac{\gamma}{4}} + \eps^{\frac{\gamma}{4}} \Big[||u,v||_{X_1}+  ||\bar{u}, \bar{v}||_{\X}^2 \Big].
\end{align}
\end{corollary}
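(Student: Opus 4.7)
The corollary is essentially a bookkeeping consequence of the two preceding lemmas combined with a careful accounting of powers of $\eps$, so the plan is to chain the estimates rather than do any new analysis.

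First, I would start from estimate (\ref{unifo.1}), which provides
\[
\eps^{\gamma/4}\|u,\sqrt{\eps}v\|_{L^\infty(\bar{\Omega})} \lesssim C(\gamma,L)\Big\{ \|u,\sqrt{\eps}v\|_{H^1} + C(a_L,b_L) + \|S_u,\sqrt{\eps}S_v\|_{L^2} + \|f,\sqrt{\eps}g\|_{L^2}\Big\}.
\]
The next step is to dispose of the forcing norm $\|f,\sqrt{\eps}g\|_{L^2}$ using the decomposition (\ref{nl.spec.3}), namely
\[
f = \eps^{-1/2-\gamma}R^{u,1} + N^u + L^b_1, \qquad g = \eps^{-1/2-\gamma}R^{v,1} + N^v + L^b_2.
\]
Each of the three pieces matches a term already appearing on the left-hand side of (\ref{unifo.2}): the profile remainder piece is $\eps^{-1/2-\gamma}\|R^u,\sqrt{\eps}R^v\|_{L^2}$, the boundary-homogenization piece is $\|L^b_1,\sqrt{\eps}L^b_2\|_{L^2}$, and the nonlinear piece is $\|N^u(\bar{u},\bar{v}),\sqrt{\eps}N^v(\bar{u},\bar{v})\|_{L^2}$. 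So by the triangle inequality we can absorb $\|f,\sqrt{\eps}g\|_{L^2}$ into the left-hand side of (\ref{unifo.2}).

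Next I would simply plug (\ref{unifo.2}) into the resulting inequality. Since (\ref{unifo.2}) bounds its left-hand side by $1 + \|u,v\|_{X_1} + \|\bar{u},\bar{v}\|_{\X}^2$, and the remaining boundary-data constant $C(a_L,b_L)$ is an $O(1)$ quantity under the hypotheses (\ref{as.aL.bL}) (it is controlled by $\|a_L\|_{H^s}+\|b_L\|_{H^s}$ for some finite $s$, which is assumed bounded), it is absorbed into the constant $1$ on the right-hand side. The outcome is
\[
\eps^{\gamma/4}\|u,\sqrt{\eps}v\|_{L^\infty} \lesssim 1 + \|u,v\|_{X_1} + \|\bar{u},\bar{v}\|_{\X}^2.
\]
Finally, I would multiply both sides by $\eps^{\gamma/4}$ to promote the exponent on the left-hand side to $\eps^{\gamma/2}$ while gaining the desired smallness factor on the right-hand side, yielding exactly (\ref{unif.cor}).

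There is no real obstacle here; the proof is purely algebraic combination. The only care needed is verifying that the $C(\gamma,L)$ prefactor from (\ref{unifo.1}) is harmless (it is finite for fixed $\gamma$ and $L$, hence absorbed in $\lesssim$), and that the boundary-data constants from the surface terms genuinely reduce to $O(1)$ under the decay assumption (\ref{as.aL.bL}); both of these are routine.
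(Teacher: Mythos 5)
Your proposal is correct and follows exactly the route the paper takes: combine (\ref{unifo.1}) with (\ref{unifo.2}), using the decomposition of $f,g$ in (\ref{nl.spec.3}) to absorb $||f,\sqrt{\eps}g||_{L^2}$ into the terms already estimated, and then multiply through by $\eps^{\gamma/4}$. The paper states this combination without detail, so your bookkeeping is precisely the intended argument.
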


Combining with (\ref{lin.complete}), we have now controlled the full $\X$ norm:
\begin{corollary} Solutions $[u,v] \in \mathcal{X}$ to the system (\ref{lin.1}) - (\ref{lin.3}), with the boundary conditions (\ref{BC.dirichlet}) - (\ref{BC.stress.free}) satisfy the following estimate:
\begin{align}
||u,v||_{\X}^2 \lesssim \eps^{\frac{\gamma}{2}} + \mathcal{R}_1 +  \mathcal{R}_2 +  \mathcal{R}_3 + \eps^{\frac{\gamma}{2}} ||\bar{u}, \bar{v}||_{\X}^4. 
\end{align}
\end{corollary}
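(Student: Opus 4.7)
The plan is to assemble the corollary as a direct consequence of the two preceding results: the consolidated $L^2$-based estimate \eqref{lin.complete} and the uniform estimate \eqref{unif.cor}. No new multiplier argument is required; the task is purely to add the pieces and absorb a small parameter.

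First, I would recall the decomposition of the $\X$ norm from \eqref{norm.S}: up to a universal constant,
\begin{align*}
\|u,v\|_{\X}^2 \;\lesssim\; \|u,v\|_{X_1}^2 \;+\; \|u,v\|_B^2 \;+\; \eps^{\gamma}\|u,\sqrt{\eps}v\|_{L^\infty}^2,
\end{align*}
where $X_1$ is the norm defined in \eqref{norm.X} and $B$ is the boundary norm in \eqref{norm.B}. This reduces the corollary to controlling each of the three pieces on the right.

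For the first two pieces I would apply \eqref{lin.complete} directly, giving
\begin{align*}
\|u,v\|_{X_1}^2 + \|u,v\|_B^2 \;\lesssim\; \mathcal{R}_1 + \mathcal{R}_2 + \mathcal{R}_3.
\end{align*}
For the third piece, I would square the uniform bound \eqref{unif.cor} and use $(a+b+c)^2 \lesssim a^2+b^2+c^2$ to obtain
\begin{align*}
\eps^{\gamma}\|u,\sqrt{\eps}v\|_{L^\infty}^2 \;\lesssim\; \eps^{\gamma/2} + \eps^{\gamma/2}\|u,v\|_{X_1}^2 + \eps^{\gamma/2}\|\bar u,\bar v\|_{\X}^4.
\end{align*}

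Summing these two bounds produces the desired right-hand side plus an extra contribution $\eps^{\gamma/2}\|u,v\|_{X_1}^2$. The only remaining step is an absorption: since $\eps$ is taken sufficiently small (depending on $\gamma$ and the universal constants), $\eps^{\gamma/2}\|u,v\|_{X_1}^2$ can be absorbed into the left-hand side through the $\|u,v\|_{X_1}^2$ component of $\|u,v\|_{\X}^2$. This yields
\begin{align*}
\|u,v\|_{\X}^2 \;\lesssim\; \eps^{\gamma/2} + \mathcal{R}_1 + \mathcal{R}_2 + \mathcal{R}_3 + \eps^{\gamma/2}\|\bar u,\bar v\|_{\X}^4,
\end{align*}
as claimed. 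There is no genuine obstacle here; the mild point to keep track of is that the $L^\infty$ term contributes a $\|u,v\|_{X_1}^2$ factor through the linear-in-$X_1$ dependence of \eqref{unif.cor}, and one must rely on the smallness of $\eps$ rather than of $L$ to absorb it.
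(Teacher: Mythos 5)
Your proposal is correct and matches the paper's (essentially unwritten) argument: the corollary is obtained simply by combining the consolidated $L^2$ estimate (\ref{lin.complete}) with the squared uniform estimate (\ref{unif.cor}), and disposing of the extra $\eps^{\gamma/2}\|u,v\|_{X_1}^2$ term either by absorption into the left-hand side (as you do) or by re-substituting (\ref{lin.complete}); the two are equivalent here.
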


It remains to control $\mathcal{R}_i$, which we now expand by recalling (\ref{R.1}), (\ref{R.2}), (\ref{R.3}), and (\ref{nl.spec.3}): 
\begin{align} \n
\mathcal{R}_1 = &\int \Big[ \eps^{-\frac{1}{2}-\gamma} R^{u,1} + N^u + L^b_1 \Big] \cdot u \\ \label{R.1.e}
& + \int \eps \Big[ \eps^{-\frac{1}{2}-\gamma} R^{v,1} + N^v + L^b_2 \Big] v, \\ \n
\mathcal{R}_2  = &- \int \Big[ \eps^{-\frac{1}{2}-\gamma} R^{u,1} + N^u + L^b_1 \Big] \cdot \beta_y \\ \label{R.2.e}
& + \int \eps \Big[ \eps^{-\frac{1}{2}-\gamma} R^{v,1} + N^v + L^b_2 \Big] \cdot \beta_x \\ \n
\mathcal{R}_3 = &\int \Big[ \eps^{-\frac{1}{2}-\gamma} \p_y R^{u,1} + \p_y N^u + \p_y L^b_1 \Big] \cdot \p_y \{u y^2 w \} \\ \label{R.3.e} 
& - \int \eps \Big[ \eps^{-\frac{1}{2}-\gamma} \p_y R^{v,1} + \p_y N^v + \p_y L^b_2 \Big] \cdot \p_x \{u y^2 w \}.
\end{align}

We now turn to controlling these quantities. 

\section{Nonlinearities}

We now provide estimates on $\mathcal{R}_1, \mathcal{R}_2, \mathcal{R}_3$, as displayed in (\ref{R.1.e}) - (\ref{R.3.e}). We will first estimate the nonlinear terms, $N^u, N^v$, which are in turn defined in (\ref{nl.spec.2}). Because we will eventually perform a contraction mapping argument, we will consider $N^u(\bar{u}, \bar{v}), N^v(\bar{u}, \bar{v})$, where $[\bar{u}, \bar{v}] \in \X$. We have: 
\begin{align}  \label{ken.1}
&\p_y N^u(\bar{u},\bar{v}) = \eps^{\frac{1}{2}+\gamma} \Big\{ \bar{u} \bar{u}_{xy} + \bar{v} \bar{u}_{yy} \Big\}, \\ \label{ken.2}
&\p_y N^v(\bar{u},\bar{v})  = \eps^{\frac{1}{2}+\gamma}\Big\{ \bar{u}_y \bar{v}_x + \bar{u}\bar{v}_{xy} + \bar{v}_y^2 + \bar{v} \bar{v}_{yy} \Big\}.
\end{align}

The first step is to provide estimates on the nonlinear contributions from $\mathcal{R}_3$, as defined in (\ref{R.3}). For this we have:

\begin{lemma} For any vector fields $[u,v], [\bar{u}, \bar{v}] \in \X$, the following estimate holds: 
\begin{align} \label{nl.R.1}
|\int \p_y N^u(\bar{u},\bar{v}) \cdot \p_y\{u w y^2 \}| + |\int \p_y N^v(\bar{u}, \bar{v}) \cdot \eps \p_x \{u w  y^2 \}|  \lesssim \eps^{\frac{\gamma}{2}} ||\bar{u}, \bar{v}||_{\X}^2 ||u, v||_{\X}.
\end{align}
\end{lemma}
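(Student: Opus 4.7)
The plan is to directly estimate each of the six products arising when one substitutes the explicit formulas $(\ref{ken.1})$--$(\ref{ken.2})$ for $\p_y N^u, \p_y N^v$ into the left-hand side, then expands the multipliers as
\begin{align*}
\p_y\{u y^2 w\} = u_y y^2 w + 2 u y w, \qquad \eps \p_x\{u y^2 w\} = \eps u_x y^2 w - \eps u y^2.
\end{align*}
Each resulting integrand is a trilinear product of factors involving $\bar{u}, \bar{v}$ (from the nonlinearity) and $u$ (from the multiplier), and I would apply H\"older's inequality to each one by placing a single factor in $L^\infty$ and the remaining two factors in weighted $L^2$ spaces, always matching the components of the $\X$-norm defined in (\ref{norm.S}). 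The $\eps$-arithmetic works out because the prefactor $\eps^{\frac{1}{2}+\gamma}$ from $N^u, N^v$ is sufficient to compensate for the losses incurred by pulling factors of $\eps^{-1/2}$ out of $\bar{u}_{xy}, \bar{v}_{xy}$ and pulling $\eps^{-\gamma/2}$ out of an $L^\infty$-bound, leaving the claimed net $\eps^{\gamma/2}$.

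For the $\p_y N^u$ terms: the product $\bar{u}\, \bar{u}_{xy}$ is treated by placing $\bar u$ in $L^\infty$ (costing $\eps^{-\gamma/2}$), $\sqrt{\eps}\,\bar{u}_{xy} \cdot y$ in $L^2$, and pairing with either $u_y y$ in $L^2$ or $u$ in $L^\infty$ (according to which piece of the multiplier is in play). The product $\bar{v}\, \bar{u}_{yy}$ is handled the same way after writing $\bar{v} = \eps^{-1/2}\sqrt{\eps}\,\bar{v}$ and placing $\sqrt{\eps}\,\bar{v}$ in $L^\infty$, together with $\bar{u}_{yy}\cdot y$ in $L^2$. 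For the $\p_y N^v$ terms, I would first invoke the divergence-free condition $\bar{u}_x + \bar{v}_y = 0$ to substitute $\bar{v}_{xy} = -\bar{u}_{xx}$ and $\bar{v}_{yy} = -\bar{u}_{xy}$, so that the second derivatives of $\bar{v}$ are replaced by second derivatives of $\bar u$ that are already controlled by the $\X$-norm through $\|\{\eps \bar{u}_{xx}, \sqrt{\eps}\,\bar{u}_{xy}\}\cdot y\|_{L^2}$. Mixed first-order products such as $\bar u_y \bar v_x$ are estimated by a straightforward split $\|\bar u_y y\|_{L^2} \cdot \|\sqrt{\eps}\,\bar v_x\|_{L^2}$ against $\|\sqrt{\eps}\,u_x\|_{L^\infty}$ or $\|u\|_{L^\infty}$ absorbed against $\eps^{\gamma/2}$.

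The main obstacle is the genuinely trilinear-in-derivatives term $\bar v_y^2$ appearing in $\p_y N^v$, since all three factors of the resulting triple integral carry a derivative and none is directly amenable to an $L^\infty$ bound from the $\X$-norm. The remedy is again the divergence-free identity $\bar v_y = -\bar u_x$, which allows us to recast $\eps^{\frac{1}{2}+\gamma}\bar v_y^2 \cdot \eps u_x y^2$ as a symmetric product $\eps^{\gamma}(\sqrt{\eps}\bar u_x)(\sqrt{\eps}\bar u_x \cdot y)(\sqrt{\eps}u_x \cdot y)$; one factor of $\sqrt{\eps}\bar u_x$ is then bounded in $L^\infty$ via the $\eps^{\gamma/2}\|\sqrt{\eps}\bar v\|_{L^\infty}$ component of $\X$ after another use of the divergence-free condition (or via a Gagliardo--Nirenberg interpolation if necessary), and the remaining two factors are in the weighted $L^2$ norms $\|\sqrt{\eps}\bar u_x \cdot y\|_{L^2}$ and $\|\sqrt{\eps}u_x \cdot y\|_{L^2}$, both controlled by $\X$. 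Summing the contributions of all six products gives the estimate (\ref{nl.R.1}). No boundary corrections arise since the argument is a direct H\"older estimate with no integration by parts.
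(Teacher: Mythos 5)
There is a genuine gap, concentrated in exactly the two terms the paper identifies as delicate. Your proposal asserts that the whole lemma follows from ``a direct H\"older estimate with no integration by parts,'' but for the term $\eps^{\frac32+\gamma}\int \bar u_y\,\bar v_x\, u_x\, y^2 w$ a direct H\"older bound is not available from the $\X$-norm: all three factors carry a derivative, the $L^\infty$ component of $\X$ controls only $\eps^{\gamma/2}\|u,\sqrt{\eps}v\|_{L^\infty}$ (no derivatives, so neither $\|\sqrt{\eps}u_x\|_{L^\infty}$ nor $\|\bar v_x\|_{L^\infty}$ is controlled, contrary to what you propose), and $\sqrt{\eps}\bar v_x$ appears in $\X$ with no $y$-weight, so the weight $y^2$ cannot be distributed among two $L^2$ factors while leaving a controlled $L^\infty$ factor. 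The paper's resolution is precisely an integration by parts in $x$, which moves the derivative off $\bar v_x$, produces factors of the undifferentiated $\bar v$ (which \emph{can} go in $L^\infty$ at cost $\eps^{-1/2-\gamma/2}$), and generates boundary terms at $x=L$ — see (\ref{NL.tough}) — that must be absorbed using the boundary norm $\|u,v\|_B$ and the $L^\infty(\bar\Omega)$ control of $\sqrt{\eps}\bar v$ up to $x=L$. So boundary corrections do arise, and controlling them is one of the points of the lemma.

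Your treatment of $\bar v_y^2$ against the $\eps u_x y^2 w$ part of the multiplier has the same defect. After writing $\bar v_y=-\bar u_x$, you propose to put one factor $\sqrt{\eps}\bar u_x$ in $L^\infty$ ``via the $\eps^{\gamma/2}\|\sqrt{\eps}\bar v\|_{L^\infty}$ component of $\X$ after another use of the divergence-free condition.'' But the divergence-free condition relates $\bar u_x$ to $\bar v_y$, a \emph{derivative} of $\bar v$, not to $\bar v$ itself, so this component of the norm gives no $L^\infty$ bound on $\bar u_x$; and the Gagliardo--Nirenberg fallback would require unweighted control of second derivatives (e.g.\ $\sqrt{\eps}\bar u_{xx}$, $\sqrt{\eps}\bar u_{xy}$ without the $y$-weight), which $\X$ does not provide. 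The paper instead integrates by parts in $y$, converting $\bar v_y^2$ into $\bar v\bar v_{yy}+\p_y(\bar v\bar v_y)$-type terms, each of which carries an undifferentiated $\bar v$ for the $L^\infty$ slot. The remaining four products ($\bar u\bar u_{xy}$, $\bar v\bar u_{yy}$, $\bar u\bar v_{xy}$, $\bar v\bar v_{yy}$, the last two via the divergence-free substitution) are handled in your proposal essentially as in the paper and are fine.
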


\begin{proof}

Turning to the first term from (\ref{ken.1}), we will expand via the product rule: 
\begin{align} \n
\int \eps^{\frac{1}{2}+\gamma} &\bar{u} \bar{u}_{xy} \cdot \p_y \{ u y^2 w \} \\ \n
&= \int \eps^{\frac{1}{2}+\gamma} \bar{u} \bar{u}_{xy} \cdot u_y y^2 w +  \int \eps^{\frac{1}{2}+\gamma} \bar{u} \bar{u}_{xy} \cdot u 2y w \\ \n
& = \int \eps^{\frac{1}{2}+\gamma} \bar{u} \bar{u}_{xy} u_y y^2 w - \int \eps^{\frac{1}{2}+\gamma} \bar{u}_y \bar{u}_x u 2y w \\ \n
& \hspace{20 mm} - \int \eps^{\frac{1}{2}+\gamma} \bar{u} \bar{u}_x u_y 2yw - \int \eps^{\frac{1}{2}+\gamma} \bar{u} u \bar{u_x} 2 w \\ \n
& \le  \eps^{\frac{\gamma}{2}}||\eps^{\frac{\gamma}{2}} \bar{u}||_{L^\infty} || \sqrt{\eps} \bar{u}_{xy} y||_{L^2} ||u_y y||_{L^2} \\ \n
& \hspace{20 mm} + \eps^{\frac{1}{2}+\frac{\gamma}{2}} ||\eps^{\frac{\gamma}{2}}u||_{L^\infty} ||\bar{u}_y y||_{L^2} ||\bar{u}_x||_{L^2} \\ \n
& \hspace{20 mm} + \eps^{\frac{1}{2}+\frac{\gamma}{2}} ||\eps^{\frac{\gamma}{2}} \bar{u}||_{L^\infty} ||u_y y||_{L^2} ||\bar{u}_x||_{L^2} \\
& \hspace{20 mm} + \eps^{\frac{1}{2}+\frac{\gamma}{2}}||\eps^{\frac{\gamma}{2}}u||_{L^\infty} ||\bar{u}_x||_{L^2}^2.
\end{align}

Turning to the second term from (\ref{ken.1}):
\begin{align} \n
\int \eps^{\frac{1}{2}+\gamma} &\bar{v} \bar{u}_{yy} \cdot \p_y \{ u y^2 w \} \\ \n
& = \int \eps^{\frac{1}{2}+\gamma} \bar{v} \bar{u}_{yy} u_y y^2 w + \int \eps^{\frac{1}{2}+\gamma} \bar{v} \bar{u}_{yy} u 2y w \\
& \le \eps^{\frac{\gamma}{2}} ||\eps^{\frac{1}{2}+\frac{\gamma}{2}}\bar{v}||_{L^\infty} ||\bar{u}_{yy}y||_{L^2} \Big[ ||u_y y||_{L^2} + \bigO(L) ||u_x||_{L^2} \Big].
\end{align}

We will now turn to the first term from (\ref{ken.2}), which is the most delicate because $\bar{v}_x$ cannot accept any weights of $y$, according to our norm $\X$, (\ref{norm.S}). As a result, we must rely on an integration by parts in $x$: 
\begin{align} \n
\int \eps^{\frac{3}{2}+\gamma} \bar{u}_y \bar{v}_x \p_x \{ u y^2 w \} &= \int \eps^{\frac{3}{2}+\gamma} \bar{u}_y \bar{v}_x u_x y^2 w - \int \eps^{\frac{3}{2}+\gamma} \bar{u}_y \bar{v}_x u y^2 \\ \n
& = - \int \eps^{\frac{3}{2}+\gamma} \bar{v} \bar{u}_{xy} u_x y^2 w - \int \eps^{\frac{3}{2}+\gamma} \bar{v} \bar{u}_y u_{xx} y^2 w \\ \n
& \hspace{10 mm} - \int \eps^{\frac{3}{2}+\gamma} \bar{v} \bar{u}_y u_x y^2 - \int \eps^{\frac{3}{2}+\gamma} \bar{u}_y \bar{v}_x u y^2 \\ \n
&  \hspace{10 mm} + \int_{x = L} \eps^{\frac{3}{2}+\gamma} \bar{v} \bar{u}_y u_x y^2 \\ \n
& = - \int \eps^{\frac{3}{2}+\gamma} \bar{v} \bar{u}_{xy} u_x y^2 w - \int \eps^{\frac{3}{2}+\gamma} \bar{v} \bar{u}_y u_{xx} y^2 w \\ \n
& \hspace{10 mm} - \int \eps^{\frac{3}{2}+\gamma} \bar{v} \bar{u}_y u_x y^2 + \int \eps^{\frac{3}{2}+\gamma}  \bar{v} u_x \bar{u}_y y^2 + \int \eps^{\frac{3}{2}+\gamma}  \bar{v} u \bar{u}_{xy} y^2 \\ \n
&  \hspace{10 mm}- \int_{x = L} \eps^{\frac{3}{2}+\gamma} \bar{v} u \bar{u}_y y^2  + \int_{x = L} \eps^{\frac{3}{2}+\gamma} \bar{v} \bar{u}_y u_x y^2  \\ \n
& \le \eps^{\frac{\gamma}{2}} ||\eps^{\frac{1}{2}+\frac{\gamma}{2}} \bar{v}||_{L^\infty} ||\sqrt{\eps}u_{xy} y||_{L^2} ||\sqrt{\eps}u_x y||_{L^2} \\ \n
& \hspace{10 mm} + \eps^{\frac{\gamma}{2}} ||\eps^{\frac{1}{2}+\frac{\gamma}{2}} \bar{v}||_{L^\infty} ||\bar{u}_y y||_{L^2} ||\eps u_{xx} y||_{L^2} \\ \n
&\hspace{10 mm}   + \eps^{\frac{1}{2}+\frac{\gamma}{2}} ||\eps^{\frac{1}{2}+\frac{\gamma}{2}}\bar{v}||_{L^\infty} ||\bar{u}_y y||_{L^2} ||\sqrt{\eps}u_x y||_{L^2} \\ \n
& \hspace{10 mm}  + \eps^{\frac{1}{2}+\frac{\gamma}{2}}||\eps^{\frac{1}{2}+\frac{\gamma}{2}}\bar{v}||_{L^\infty} ||\sqrt{\eps}u_x y||_{L^2} ||\bar{u}_y y||_{L^2} \\ \n
& \hspace{10 mm}  + \eps^{\frac{\gamma}{2}} ||\eps^{\frac{1}{2}+\frac{\gamma}{2}}\bar{v}||_{L^\infty} ||\sqrt{\eps}u_x y||_{L^2} ||\sqrt{\eps}u_{xy} y||_{L^2} \\ \n
& \hspace{10 mm}  + \eps^{\frac{1}{2}+\frac{\gamma}{2}} ||\eps^{\frac{1}{2}+\frac{\gamma}{2}}\bar{v}||_{L^\infty} ||\bar{u}_y y||_{L^2(x = L)} ||\sqrt{\eps}uy||_{L^2(x = L)} \\ \label{NL.tough}
& \hspace{10 mm}  +  \eps^{\frac{1}{2}+\frac{\gamma}{2}} ||\eps^{\frac{1}{2}+\frac{\gamma}{2}}\bar{v}||_{L^\infty} ||\bar{u}_y y||_{L^2(x = L)} ||\sqrt{\eps}u_x y||_{L^2(x = L)}.
\end{align}

Note that for the above term, (\ref{NL.tough}), it is imperative to obtain control of $v$ on the boundary $x = L$, as shown in estimate (\ref{unifo.1}). We now move to the second term from (\ref{ken.2}):
\begin{align} \n
\int \eps^{\frac{3}{2}+\gamma} \bar{u} \bar{v}_{xy} \p_x \{ u y^2 w \} &= \int \eps^{\frac{3}{2}+\gamma} \bar{u} \bar{v}_{xy} u_x y^2 w - \int \eps^{\frac{3}{2}+\gamma} \bar{u} \bar{v}_{xy} u y^2 \\ \n
& \le \eps^{\frac{\gamma}{2}} ||\eps^{\frac{\gamma}{2}} \bar{u}||_{L^\infty} ||\eps \bar{u}_{xx}y||_{L^2} ||\sqrt{\eps}u_x y||_{L^2} \\
& \hspace{20 mm} + \eps^{\frac{\gamma}{2}} ||\eps^{\frac{\gamma}{2}}u||_{L^\infty} ||\eps u_{xx} y||_{L^2} ||\sqrt{\eps}u_x y||_{L^2}.
\end{align}

Now we turn to the third term from (\ref{ken.2}):
\begin{align} \n
\int \eps^{\frac{3}{2}+\gamma}  \bar{v}_y^2 \p_x \{u y^2 w \} &= \int \eps^{\frac{3}{2}+\gamma}  \bar{v}_y^2 u_x y^2 w - \int \eps^{\frac{3}{2}+\gamma}  \bar{v}_y^2 u y^2 \\ \n
& = - \int \eps^{\frac{3}{2}+\gamma}  \bar{v} \bar{v}_{yy} u_x y^2 w - \int \eps^{\frac{3}{2}+\gamma}  \bar{v} \bar{v}_y u_{xy} y^2 w \\ \n
& \hspace{20 mm} - \int \eps^{\frac{3}{2}+\gamma}  \bar{v} \bar{v}_y u_x 2y w - \int \eps^{\frac{3}{2}+\gamma}  \bar{v}_y^2 u y^2 \\ \n
& \le \eps^{\frac{\gamma}{2}} ||\eps^{\frac{1}{2}+\frac{\gamma}{2}} \bar{v}||_{L^\infty} || \sqrt{\eps} \bar{v}_{yy} y||_{L^2} ||\sqrt{\eps}u_x y||_{L^2} \\ \n
& \hspace{20 mm} + \eps^{\frac{\gamma}{2}} ||\eps^{\frac{1}{2}+\frac{\gamma}{2}} \bar{v}||_{L^\infty} ||\sqrt{\eps}\bar{u}_x y||_{L^2} ||\sqrt{\eps}u_{xy}y||_{L^2} \\ \n
& \hspace{20 mm} +\eps^{\frac{1}{2}+\frac{\gamma}{2}} ||\eps^{\frac{1}{2}+\frac{\gamma}{2}} \bar{v}||_{L^\infty} ||\sqrt{\eps}\bar{u}_x y||_{L^2} ||u_x||_{L^2} \\ 
& \hspace{20 mm} + \eps^{\frac{1}{2}+\frac{\gamma}{2}} ||\eps^{\frac{\gamma}{2}}u||_{L^\infty} ||\sqrt{\eps}\bar{u}_x y||_{L^2}^2.
\end{align}

Now we turn to the fourth, final term from (\ref{ken.2}):
\begin{align} \n
\int \eps^{\frac{3}{2}+\gamma} \bar{v} \bar{v}_{yy} \p_x \{u w y^2 \} &= \int \eps^{\frac{3}{2}+\gamma} \bar{v} \bar{v}_{yy} u_x w y^2 - \int \eps^{\frac{3}{2}+\gamma} \bar{v} \bar{v}_{yy} u y^2 \\
& \lesssim \eps^{\frac{\gamma}{2}} ||\eps^{\frac{1}{2}+\frac{\gamma}{2}} \bar{v}||_{L^\infty} ||\sqrt{\eps}\bar{u}_{xy} y||_{L^2} ||\sqrt{\eps}u_x y||_{L^2}. 
\end{align}

These estimates conclude the proof of the desired result, estimate (\ref{nl.R.1}).
\end{proof}

We will now come to the nonlinear contributions to the energy estimates, which are contained in (\ref{R.1.e}):
\begin{lemma} For any vector fields $[u,v], [\bar{u}, \bar{v}] \in \X$, the following estimate holds: 
\begin{align} \label{nl.R.2}
|\int N^{u}(\bar{u}, \bar{v}) \cdot u + \eps N^v(\bar{u}, \bar{v}) \cdot v| \le \eps^{\frac{\gamma}{2}}||\bar{u}, \bar{v}||_{\X}^2 ||u,v||_{\X}.
\end{align}
\end{lemma}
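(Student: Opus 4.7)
The plan is to parallel the preceding lemma but with the simpler multipliers $u$ and $\eps v$ in place of $\p_y\{u y^2 w\}$ and $\eps \p_x\{u y^2 w\}$; the calculation here should be considerably shorter. Expand
\begin{align*}
\int N^u(\bar u,\bar v) \cdot u + \eps \int N^v(\bar u,\bar v) \cdot v = \eps^{\frac{1}{2}+\gamma}\int (\bar u \bar u_x + \bar v \bar u_y) u + \eps^{\frac{3}{2}+\gamma} \int (\bar u \bar v_x + \bar v \bar v_y) v,
\end{align*}
and treat each of the four resulting terms by H\"older's inequality, placing either $\bar u$ or $\sqrt{\eps}\bar v$ in $L^\infty$ to extract the small factor $\eps^{\gamma/2}$ through the bound $\eps^{\gamma/2}\|\bar u,\sqrt{\eps}\bar v\|_{L^\infty} \lesssim \|\bar u,\bar v\|_{\X}$.

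For the two $N^u$ contributions, I would re-factor so as to expose the $y$-weights inherent in $\X$: write $\bar u \bar u_x u = \bar u \cdot (\sqrt{\eps}\bar u_x \, y)\cdot u/(\sqrt{\eps}\, y)$ and $\bar v \bar u_y u = (\sqrt{\eps}\bar v)\cdot(\bar u_y \, y)\cdot u/(\sqrt{\eps}\, y)$. The derivative factors $\|\sqrt{\eps}\bar u_x y\|_{L^2}$ and $\|\bar u_y y\|_{L^2}$ are directly controlled by $\|\bar u,\bar v\|_\X$. The undifferentiated factor $\|u/y\|_{L^2}$ is handled by Hardy's inequality (valid because $u|_{y=0}=0$), which gives $\|u/y\|_{L^2}\lesssim \|u_y\|_{L^2}$; a further integration by parts in $y$, using the vanishing of $u$ at $y=0$ and its decay at $y=\infty$, produces $\|u_y\|_{L^2_y}^2 = -\int u\, u_{yy}\, dy \le \|u_{yy} y\|_{L^2_y}\|u/y\|_{L^2_y}$, hence $\|u_y\|_{L^2}\lesssim \|u_{yy} y\|_{L^2}\le \|u,v\|_\X$. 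A direct count of $\eps$-powers then yields the desired contribution $C\eps^{\gamma/2}\|\bar u,\bar v\|_\X^2 \|u,v\|_\X$.

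For the two $N^v$ contributions, the $y$-weighted norms on $\bar v_x$ and $\bar v_y$ are not available in $\X$ (only the unweighted $\|\bar v_y,\sqrt{\eps}\bar v_x\|_{L^2}$ is), so I would instead pair them against the unweighted $\|v\|_{L^2}$ and recover it via Poincar\'e in $x$: since $v|_{x=0}=0$, $\|v\|_{L^2}\le L\|v_x\|_{L^2}\le L\eps^{-1/2}\|u,v\|_\X$. Placing $\bar u$ (respectively $\sqrt{\eps}\bar v$) in $L^\infty$, the power count for $\eps^{3/2+\gamma}\int\bar u\bar v_x v$ and for $\eps^{3/2+\gamma}\int\bar v\bar v_y v$ produces $L\eps^{1/2+\gamma/2}\|\bar u,\bar v\|_\X^2\|u,v\|_\X$, which is comfortably absorbed into the target $\eps^{\gamma/2}\|\bar u,\bar v\|_\X^2\|u,v\|_\X$ since $L\ll 1$ and $\sqrt\eps\le 1$.

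The main obstacle is the term $\int \bar u\bar u_x u$, because neither $\|\bar u_x\|_{L^2}$ nor $\|u\|_{L^2}$ is an explicit member of the $\X$-norm. The key idea is to shift a $y$-weight from the derivative onto the undifferentiated $u$ through the Hardy/IBP mechanism sketched above; the $\eps^{-1/2}$ cost of the re-weighting is exactly compensated by the $\sqrt{\eps}$ sitting inside $\|\sqrt{\eps}\bar u_x y\|_{L^2}$, so that no smallness beyond the $\eps^{\gamma/2}$ supplied by the $L^\infty$ bound on $\bar u$ is needed to close the estimate.
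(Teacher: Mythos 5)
Your proposal is correct and follows essentially the same strategy as the paper: expand the four nonlinear terms, apply H\"older, and place the undifferentiated $\bar{u}$ (resp. $\sqrt{\eps}\bar{v}$) in the weighted $L^\infty$ norm built into $\X$ to extract the factor $\eps^{\gamma/2}$. The only difference is bookkeeping: the paper routes the leftover unweighted $L^2$ factors through the divergence-free identity $\bar{u}_x = -\bar{v}_y$ and Poincar\'e in $x$, whereas you use Hardy plus the weighted second-derivative norm; both close with the same (in fact, for most terms an extra $\sqrt{\eps}$ or $L$ of) smallness.
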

\begin{proof}

We turn to the definitions of $N^u, N^v$ which are given in (\ref{nl.spec.2}). From there, the following calculations follow: 
\begin{align}
&\eps^{\frac{1}{2}+\gamma} |\int \bar{u} \bar{u}_x \cdot u| \le \eps^{\frac{1}{2}+\frac{\gamma}{2}} ||\eps^{\frac{\gamma}{2}}\bar{u}||_{L^\infty} ||\bar{u}_x||_{L^2}^2 \lesssim  \eps^{\frac{1}{2}+\frac{\gamma}{2}} ||\bar{u}, \bar{v}||_{\X}^2 ||u,v||_{\X}, \\
& \eps^{\frac{1}{2}+\gamma}|\int \bar{v} \bar{u}_y \cdot u| \le \eps^{\frac{\gamma}{2}} ||\eps^{\frac{\gamma}{2}}\sqrt{\eps}\bar{v}||_{L^\infty} ||\bar{u}_x||_{L^2}||u_y||_{L^2} \lesssim \eps^{\frac{\gamma}{2}}||\bar{u}, \bar{v}||_{\X}^2||u,v||_{\X}, \\
& \eps^{\frac{1}{2}+\gamma}|\int \bar{u} \bar{v}_x \cdot \eps v| \le \eps^{\frac{1}{2}+\frac{\gamma}{2}} ||\eps^{\frac{\gamma}{2}}\bar{u}||_{L^\infty} ||\sqrt{\eps}\bar{v}_x||_{L^2} ||\sqrt{\eps}v_x||_{L^2} \lesssim \eps^{\frac{1}{2}+\frac{\gamma}{2}}  ||\bar{u}, \bar{v}||_{\X}^2 ||u,v||_{\X} , \\
& \eps^{\frac{1}{2}+\gamma}|\int \bar{v} \bar{v}_y \cdot \eps v| \le \eps^{\frac{1}{2}+\frac{\gamma}{2}} || \eps^{\frac{1}{2}+\frac{\gamma}{2}}\bar{v} ||_{L^\infty} ||\sqrt{\eps} v_x||_{L^2}||\bar{v}_y||_{L^2} \lesssim \eps^{\frac{1}{2}+\frac{\gamma}{2}} ||\bar{u}, \bar{v}||_{\X}^2 ||u,v||_{\X}.
\end{align}

The desired result follows from these calculations. 

\end{proof}

We will now provide nonlinear estimates arising from the positivity estimate, in particular we must evaluate the contributions of the nonlinearity in (\ref{R.2.e}):
\begin{lemma} For any vector fields $[u,v], [\bar{u}, \bar{v}] \in \X$, the following estimate holds: 
\begin{align} \label{nl.R.3}
|\int N^u(\bar{u}, \bar{v}) \cdot -\beta_y |+| \int N^v(\bar{u}, \bar{v} ) \cdot \eps \beta_x | \le \eps^{\frac{\gamma}{2}}||\bar{u},\bar{v}||_{\X}^2 ||u, v||_{\X}. 
\end{align}
\end{lemma}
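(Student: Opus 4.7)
My approach is to expand both test-function multipliers using the quotient rule applied to $\beta = v/u_s$, namely
\[
\beta_y = \frac{v_y}{u_s} - \frac{v\,u_{sy}}{u_s^2}, \qquad \beta_x = \frac{v_x}{u_s} - \frac{v\,u_{sx}}{u_s^2},
\]
and then to control each of the resulting eight cubic integrals by H\"older's inequality. The pointwise bound $u_s \ge c_0 > 0$ from (\ref{gtr}) makes $1/u_s$ harmless, while Lemma \ref{Lemma.Unif} supplies the weighted profile estimates (in particular $||y\, u_{sy}||_{L^\infty}, ||y\, u_{sx}||_{L^\infty} < \infty$) needed to absorb the $v\, u_{sy}/u_s^2$ and $v\, u_{sx}/u_s^2$ terms; here one uses $v|_{y=0}=0$ together with Hardy's inequality to write $||v/y||_{L^2}\lesssim ||v_y||_{L^2}$, exactly in the spirit of the nonlinear estimates already carried out in the proof of (\ref{nl.R.1}).

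For the contribution $\int N^u(\bar u,\bar v)\cdot(-\beta_y)$, the prototype estimate is
\[
\eps^{\frac{1}{2}+\gamma}\Big|\int \bar u\,\bar u_x\,\frac{v_y}{u_s}\Big| \le \eps^{\frac{1}{2}+\frac{\gamma}{2}}\,||\eps^{\frac{\gamma}{2}}\bar u||_{L^\infty}\,||\bar u_x||_{L^2}\,||v_y||_{L^2} \lesssim \eps^{\frac{1}{2}+\frac{\gamma}{2}}\,||\bar u,\bar v||_\X^2\,||u,v||_\X,
\]
and the $\bar v\,\bar u_y$ piece is handled symmetrically by placing the $L^\infty$ norm on $\sqrt{\eps}\bar v$, so that the $\X$-norm produces the factor $||\eps^{\frac{1}{2}+\frac{\gamma}{2}}\bar v||_{L^\infty}$. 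For the contribution $\int N^v(\bar u,\bar v)\cdot\eps\beta_x$, the total prefactor is $\eps^{\frac{3}{2}+\gamma}$, which is precisely enough to place one $\sqrt{\eps}$ on each $x$-derivative; for instance,
\[
\eps^{\frac{3}{2}+\gamma}\Big|\int \bar u\,\bar v_x\,\frac{v_x}{u_s}\Big| \le \eps^{\frac{\gamma}{2}}\,||\eps^{\frac{\gamma}{2}}\bar u||_{L^\infty}\,||\sqrt{\eps}\bar v_x||_{L^2}\,||\sqrt{\eps}v_x||_{L^2},
\]
and the $\bar v\,\bar v_y$ term is treated analogously via $||\eps^{\frac{1}{2}+\frac{\gamma}{2}}\bar v||_{L^\infty}\cdot||\bar v_y||_{L^2}\cdot||\sqrt{\eps}v_x||_{L^2}$.

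The main (mild) obstacle will be the $\eps$-power bookkeeping in the $N^v$ integrals, since $v_x$ is only controlled at the weighted scale $\sqrt{\eps}v_x$ in the $\X$-norm; the resolution is that the outer $\eps$ from $\eps\beta_x$ combines with the intrinsic $\eps^{\frac{1}{2}+\gamma}$ in $N^v$ to give exactly the splitting $\sqrt{\eps}\cdot\sqrt{\eps}\cdot\eps^{\frac{1}{2}+\gamma}$ that the norm can absorb. In every one of the eight resulting cases one gains at least a factor $\eps^{\frac{1}{2}+\frac{\gamma}{2}}$, which dominates the asserted $\eps^{\frac{\gamma}{2}}$, and summing the estimates yields (\ref{nl.R.3}).
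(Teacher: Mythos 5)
Your proposal is correct and follows essentially the same route as the paper: each of the four cubic terms is estimated by H\"older with the identical distribution of $\eps$-powers, placing $\eps^{\frac{\gamma}{2}}\|\bar u\|_{L^\infty}$ or $\eps^{\frac12+\frac{\gamma}{2}}\|\bar v\|_{L^\infty}$ in $L^\infty$ and the derivatives in $L^2$. The only difference is cosmetic: you expand $\beta_y,\beta_x$ by the quotient rule and invoke Hardy explicitly, whereas the paper keeps $\|\beta_y\|_{L^2}$ and $\|\sqrt{\eps}\beta_x\|_{L^2}$ as such and relies implicitly on their control by $\|u,v\|_{\X}$.
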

\begin{proof}
We again turn to the definitions of $N^u, N^v$ from (\ref{nl.spec.2}): 
\begin{align}
&\eps^{\frac{1}{2}+\gamma} |\int \bar{u} \bar{u}_x \cdot \beta_y| \le \eps^{\frac{1}{2}+\frac{\gamma}{2}} ||\eps^{\frac{\gamma}{2}}\bar{u}||_{L^\infty} ||\bar{u}_x||_{L^2} ||\beta_y||_{L^2}, \\
& \eps^{\frac{1}{2}+\gamma}|\int \bar{v} \bar{u}_y \cdot \beta_y| \le \eps^{\frac{\gamma}{2}} ||\eps^{\frac{1}{2}+\frac{\gamma}{2}}\bar{v}||_{L^\infty} ||\bar{u}_y||_{L^2}||\beta_y||_{L^2} , \\
& \eps^{\frac{1}{2}+\gamma}|\int \bar{u} \bar{v}_x \cdot \eps \beta_x| \le \eps^{\frac{\gamma}{2}} ||\eps^{\frac{\gamma}{2}}\bar{u}||_{L^\infty} ||\sqrt{\eps}\bar{v}_x||_{L^2} ||\sqrt{\eps} \beta_x||_{L^2}, \\
& \eps^{\frac{1}{2}+\gamma}|\int \bar{v} \bar{v}_y \cdot \eps \beta_x| \le \eps^{\frac{\gamma}{2}} ||\eps^{\frac{1}{2}+\frac{\gamma}{2}}\bar{v}||_{L^\infty} ||\bar{v}_y||_{L^2}||\sqrt{\eps}\beta_x||_{L^2}.
\end{align}

This concludes the proof.
\end{proof}

\section{Forcing}

Recall the definitions given in (\ref{defn.Ru1}) and (\ref{defn.Rv1}), and the definitions given in (\ref{defn.Lb.1}) - (\ref{defn.Lb}).  The purpose of the following estimates is to estimate the contributions of the forcing terms $R^{u,1}, L^b_1, R^{v,1}, L^b_2$ into $\mathcal{R}_1, \mathcal{R}_2, \mathcal{R}_3$, as shown in (\ref{R.1.e}) - (\ref{R.3.e}) Thus, we will analyze the forcing contributions: 
\begin{lemma} For any vector fields $[u,v] \in \X$, the following estimates hold: 
\begin{align} \n
|\int \eps^{-\frac{1}{2}-\gamma} &\Big\{ R^{u,1} \cdot u + \eps R^{v,1} \cdot v \Big\}| + |\int \eps^{-\frac{1}{2}-\gamma} \Big\{ R^{u,1} \cdot -\beta_y + \eps R^{v,1} \beta_x \Big\}| \\ \n
& + |\int \eps^{-\frac{1}{2}-\gamma} \p_y R^{u,1} \cdot \p_y (u y^2 w)| + |\int \eps^{-\frac{1}{2}-\gamma} \eps \p_y R^{v,1} \p_x \{u y^2 w \}| \\ \n
&+ |\int L^b_1 \cdot u + \eps L^b_2 v| +| \int L^b_1 \cdot -\beta_y + \int \eps L^b_2 \beta_x| \\ \n
& + |\int \p_y L^b_1 \cdot \p_y\{u y^2 w \}| + |\int \eps \p_y L^b_2 \cdot \p_x \{u y^2 w \} | \\ \label{forcing.est}
& \lesssim (C(a_0, b_0, a_L, b_L) + \eps^{\frac{1}{4}-\gamma}) ||u,v||_{\X}.
\end{align}
\end{lemma}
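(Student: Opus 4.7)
The plan is to reduce each of the eight pairings in (\ref{forcing.est}) to a Cauchy--Schwarz bound matched to the weights built into the $\X$-norm (\ref{norm.S}), then invoke the pre-existing forcing estimates: (\ref{sigma}) for the Euler residuals $R^{u,1}, R^{v,1}$, which supplies an $\eps$-smallness that beats the $\eps^{-1/2-\gamma}$ prefactor to give the advertised $\eps^{1/4-\gamma}$ factor, and (\ref{est.Lb}) for the boundary-homogenizing terms $L^b_1, L^b_2$, which yields the constant $C(a_0,b_0,a_L,b_L)$. The eight terms organize naturally into three groups by multiplier type: the basic-energy pair $(u, \eps v)$, the positivity pair $(-\beta_y, \eps \beta_x)$, and the weighted pair $(\p_y\{u y^2 w\}, -\eps \p_x\{u y^2 w\})$.

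For the $(u, \eps v)$ pairings I would apply Cauchy--Schwarz directly, recovering $\|u\|_{L^2}$ and $\|\sqrt{\eps} v\|_{L^2}$ from $\|u,v\|_\X$ via Poincar\'e in $x$ (using $u|_{x=0} = v|_{x=0} = 0$ from (\ref{BC.dirichlet})) combined with the support condition (\ref{as.aL.bL}). For the $(-\beta_y, \eps\beta_x)$ pairings I would first exploit $\beta = v/u_s$ with $u_s \gtrsim c_0 > 0$ and the profile bounds of Lemma \ref{Lemma.Unif} to obtain $\|\beta_y, \sqrt{\eps}\beta_x\|_{L^2} \lesssim \|v_y, \sqrt{\eps}v_x\|_{L^2} + \|u,v\|_\X$, after which the pairing becomes a direct Cauchy--Schwarz against the $L^2$-norms of $R^{u,1}$ and $\sqrt{\eps} R^{v,1}$ supplied by (\ref{sigma}), and similarly for $L^b_1, \sqrt{\eps}L^b_2$ via (\ref{est.Lb}).

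The weighted multiplier requires more care. I would expand
\begin{align*}
\p_y\{u y^2 w\} &= u_y y^2 w + 2 u y w, \\
\p_x\{u y^2 w\} &= u_x y^2 w - u y^2,
\end{align*}
and pair the leading pieces with $\|u_y y\|_{L^2}$ and $\|\sqrt{\eps} u_x y\|_{L^2}$, while dominating the lower-order pieces $uyw$ and $uy^2$ via Poincar\'e/Hardy against $\|u_x\|_{L^2}$ with an $\bigO(L)$ gain. The matching weighted control on $\p_y R^{u,1}, \sqrt{\eps}\p_y R^{v,1}, \p_y L^b_1, \sqrt{\eps}\p_y L^b_2$ at sufficient $\langle y\rangle$-order is precisely what (\ref{sigma}) and (\ref{est.Lb}) provide, after noting that $R^{\cdot,1}$ inherit the rapid $y$-decay of the Prandtl profiles and $L^b_j$ inherit the decay of $a_0, b_0, a_L, b_L$.

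The main obstacle I anticipate is the pairing $\int \eps^{1/2-\gamma} \p_y R^{v,1} \cdot \p_x\{u y^2 w\}$, where one must absorb simultaneously the $y^2$-weight, the $\p_x$ on $u$, and the $\p_y$ on $R^{v,1}$ while keeping an overall $\eps^{1/4-\gamma}$ gain. The cleanest route is to integrate by parts in $x$ on the $u_x y^2 w$ piece, shifting $\p_x$ onto $\p_y R^{v,1}$; this reduces the estimate to a pairing of $\eps \p_{xy} R^{v,1}\langle y\rangle^2$ against $\|\sqrt{\eps}u_x y\|_{L^2}$ plus a boundary contribution at $x = L$ controlled by the $B$-norm component of $\|u,v\|_\X$ (via (\ref{norm.B})) and by the uniform estimate (\ref{unif.cor}). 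Summing every contribution and tallying the powers of $\eps$ produced by (\ref{sigma}) and (\ref{est.Lb}) then yields (\ref{forcing.est}).
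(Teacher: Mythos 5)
Your overall strategy — Cauchy--Schwarz against the weights built into the $\X$-norm, followed by (\ref{sigma}) and (\ref{est.Lb}) — is exactly the paper's proof, and the first three groups of pairings are handled the same way the paper handles them (the paper also uses Poincar\'e in $x$ to convert $\|u,\sqrt{\eps}v\|_{L^2}$ and the lower-order pieces $2uyw$, $uy^2$ into $\bigO(L)\|u_x\|_{L^2}$, $\bigO(L)\|\sqrt{\eps}u_x y\|_{L^2}$).

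The last paragraph, however, misidentifies the difficulty and introduces a gap where the paper has none. For $\int \eps^{\frac{1}{2}-\gamma}\p_y R^{v,1}\cdot\p_x\{uy^2w\}$ no integration by parts in $x$ is needed: writing $\eps\,\p_y R^{v,1}\cdot u_x y^2 w = (\sqrt{\eps}\,\p_y R^{v,1}\, y)\cdot(\sqrt{\eps}\,u_x\, y\, w)$ and applying Cauchy--Schwarz gives $\eps^{-\frac{1}{2}-\gamma}\|\sqrt{\eps}\,\p_y R^{v,1}\langle y\rangle\|_{L^2}\|\sqrt{\eps}u_x y\|_{L^2}\le \eps^{\frac{1}{4}-\gamma}\|\sqrt{\eps}u_x\cdot y\|_{L^2}$, which closes immediately because $\|\sqrt{\eps}u_x\cdot y\|_{L^2}$ is a component of $\|u,v\|_{\X}$ and (\ref{sigma}) supplies exactly the weight $\langle y\rangle\p_y(\sqrt{\eps}R^{v,1})$ in $L^2$ with the gain $\eps^{\frac{3}{4}}$. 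Your proposed route — shifting $\p_x$ onto $\p_y R^{v,1}$ — would instead require an $L^2$ bound on $\eps\,\p_{xy}R^{v,1}\langle y\rangle^2$, which is not among the estimates proved in the appendix (estimate (\ref{sigma}) controls only $R^{u,1},\sqrt{\eps}R^{v,1}$ and their first $y$-derivatives with one weight of $\langle y\rangle$), and it also generates an $x=L$ boundary term whose control you outsource to (\ref{unif.cor}) unnecessarily. As written, that step of your argument does not close with the available tools; replacing it with the direct pairing above repairs it and recovers the paper's proof.
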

\begin{proof}

We recall estimate (\ref{sigma}) from the Appendix, which we then directly use. First, we start with the contributions to $\mathcal{R}_1$, shown in (\ref{R.1.e}):
\begin{align} \n
\int \eps^{-\frac{1}{2}-\gamma} \Big\{ R^{u,1} \cdot u + \eps R^{v,1} \cdot v \Big\} &\le \eps^{-\frac{1}{2}-\gamma} ||R^{u,1}, \sqrt{\eps}R^{v,1}||_{L^2} ||u, \sqrt{\eps}v||_{L^2} \\
& \le \eps^{-\frac{1}{2}-\gamma} \eps^{\frac{3}{4}} \bigO(L) ||u_x, \sqrt{\eps}v_x||_{L^2}.
\end{align}

We now move the contributions from $\mathcal{R}_2$, shown in (\ref{R.2.e}):
\begin{align}
\int \eps^{-\frac{1}{2}-\gamma} \Big\{ R^{u,1} \cdot -\beta_y + \eps R^{v,1} \beta_x \Big\} \le \eps^{-\frac{1}{2}-\gamma}\eps^{\frac{3}{4}}  ||\beta_y, \sqrt{\eps} \beta_x||_{L^2}.
\end{align}

Next, we move to the higher order quantities from $\mathcal{R}_3$, shown in (\ref{R.3.e}):
\begin{align} \n
\int\eps^{-\frac{1}{2}-\gamma} \p_y R^{u,1} \cdot \p_y (u y^2 w) &= \int\eps^{-\frac{1}{2}-\gamma} \p_y R^{u,1} \cdot u_y y^2 w + \int\eps^{-\frac{1}{2}-\gamma} \p_y R^{u,1} \cdot u 2y w \\ \n
& \le \eps^{-\frac{1}{2}-\gamma} ||\p_y R^{u,1} y||_{L^2} \Big[ ||u_y y||_{L^2} + \bigO(L) ||u_x||_{L^2} \Big] \\
& \le \eps^{-\frac{1}{2}-\gamma}\eps^{\frac{3}{4}}  \Big[ ||u_y y||_{L^2} + \bigO(L) ||u_x||_{L^2} \Big], \\ \n
\int \eps^{-\frac{1}{2}-\gamma} \eps \p_y R^{v,1} \p_x \{u y^2 w \} &= \int \eps^{-\frac{1}{2}-\gamma} \eps \p_y R^{v,1} \Big[ u_x y^2 w - u y^2 \Big] \\ \n
& \le \eps^{-\frac{1}{2}-\gamma}  ||\sqrt{\eps} \p_y R^{v,1} y||_{L^2} ||\sqrt{\eps}u_x y||_{L^2} \\
& \le \eps^{-\frac{1}{2}-\gamma}\eps^{\frac{3}{4}}  ||\sqrt{\eps}u_x y||_{L^2}.
\end{align}

The estimates on $L^b_1, L^b_2$ contributions follow directly from estimate (\ref{est.Lb}). This concludes the proof. 

\end{proof}

Combining (\ref{nl.R.1}), (\ref{nl.R.2}), (\ref{nl.R.3}), and (\ref{forcing.est}):
\begin{corollary} For $\mathcal{R}_1, \mathcal{R}_2, \mathcal{R}_3$ defined as in (\ref{R.2}), (\ref{R.3}), (\ref{R.1}), we have: 
\begin{align}
|\mathcal{R}_1 + \mathcal{R}_2 + \mathcal{R}_3| \lesssim \Big[C(a_0, b_0, a_L, b_L) + \eps^{\frac{1}{4}-\gamma} \Big] ||u,v||_{\X} + \eps^{\frac{\gamma}{2}}||u,v||_{\X}^2 + \eps^{\frac{\gamma}{2}}||\bar{u}, \bar{v}||_{\X}^4. 
\end{align}
\end{corollary}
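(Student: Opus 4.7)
The plan is essentially a bookkeeping step: expand $\mathcal{R}_1 + \mathcal{R}_2 + \mathcal{R}_3$ using the explicit decompositions in (\ref{R.1.e}), (\ref{R.2.e}), (\ref{R.3.e}), then separate each sum into two groups. The first group consists of the profile remainder contributions $R^{u,1}, R^{v,1}$ together with the boundary homogenization contributions $L^b_1, L^b_2$. The second group consists of the nonlinear contributions $N^u(\bar u, \bar v), N^v(\bar u, \bar v)$.

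For the first group, every term appears on the left-hand side of the forcing estimate (\ref{forcing.est}), so I would apply that lemma directly to get an upper bound of the form $\big(C(a_0,b_0,a_L,b_L) + \eps^{\frac14 - \gamma}\big)\,\|u,v\|_\X$, which is precisely the linear-in-$\|u,v\|_\X$ piece in the statement.

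For the second group, the three nonlinear lemmas (\ref{nl.R.1}), (\ref{nl.R.2}), (\ref{nl.R.3}) cover exactly the three multipliers $(u,\eps v)$, $(-\beta_y, \eps\beta_x)$, and $(\p_y\{uy^2w\}, -\eps\p_x\{uy^2w\})$ used to define $\mathcal{R}_1, \mathcal{R}_2, \mathcal{R}_3$. Summing them yields the bound $\eps^{\gamma/2}\,\|\bar u,\bar v\|_\X^2\,\|u,v\|_\X$. To match the form of the corollary, I apply Young's inequality
\[
\eps^{\gamma/2}\,\|\bar u,\bar v\|_\X^2\,\|u,v\|_\X \;\le\; \tfrac12\,\eps^{\gamma/2}\,\|u,v\|_\X^2 + \tfrac12\,\eps^{\gamma/2}\,\|\bar u,\bar v\|_\X^4,
\]
which produces both remaining pieces $\eps^{\gamma/2}\|u,v\|_\X^2$ and $\eps^{\gamma/2}\|\bar u,\bar v\|_\X^4$ that appear on the right-hand side.

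There is no real obstacle here, since all the work has been done in the preceding four lemmas; the only care needed is to verify that every term listed in (\ref{R.1.e})--(\ref{R.3.e}) is in fact accounted for by exactly one of (\ref{forcing.est}), (\ref{nl.R.1}), (\ref{nl.R.2}), or (\ref{nl.R.3}). This is a routine inspection: the $R^{u,1}, R^{v,1}, L^b_1, L^b_2$ terms under all three multipliers are collected in (\ref{forcing.est}); the $N^u, N^v$ terms paired with $(u,\eps v)$ are covered by (\ref{nl.R.2}); those paired with $(-\beta_y,\eps\beta_x)$ by (\ref{nl.R.3}); and those paired with the weighted multiplier $(\p_y\{uy^2w\},-\eps\p_x\{uy^2w\})$ by (\ref{nl.R.1}). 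Combining these three bounds and applying Young completes the proof.
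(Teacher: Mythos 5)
Your proposal is correct and coincides with the paper's argument: the corollary is obtained precisely by summing the forcing estimate (\ref{forcing.est}) with the three nonlinear lemmas (\ref{nl.R.1}), (\ref{nl.R.2}), (\ref{nl.R.3}), and the Young's inequality step you include converts $\eps^{\gamma/2}\|\bar u,\bar v\|_{\X}^2\|u,v\|_{\X}$ into the two quadratic/quartic terms exactly as intended.
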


Combining the above estimate with (\ref{lin.complete}) and (\ref{unif.cor}), and performing Young's inequality for the product $C(a_0, b_0, a_L, b_L) ||u,v||_{\X}$ above to absorb $||u,v||_{\X}^2$ to the left-hand side of (\ref{z.estimate}), we have now established the main \textit{a-priori} estimate: 
\begin{theorem}[$\X$-Estimate] \label{th.z} Solutions $[u,v] \in \X$ to the system (\ref{lin.1}) - (\ref{lin.3}), with the boundary conditions (\ref{BC.dirichlet}) - (\ref{BC.stress.free}) satisfy the following estimate: 
\begin{align} \label{z.estimate}
||u,v||_{\X}^2 \lesssim C(a_0, b_0, a_L, b_L) +  \eps^{\frac{1}{4}-\gamma} + \eps^{\frac{\gamma}{2}} ||\bar{u}, \bar{v}||_{\X}^4.
\end{align}
\end{theorem}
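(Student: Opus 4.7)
The plan is to collect and combine the estimates already established earlier in the excerpt, and then apply Young's inequality to absorb terms involving $\|u,v\|_{\X}$ from the right-hand side into the left-hand side. In effect, all the substantive work has been done by the preceding propositions and lemmas; Theorem \ref{th.z} is the consolidation step that converts these into a usable a-priori bound suitable for the contraction mapping argument that will prove Theorem \ref{th.main}.

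First, I would start from the consolidated linear $L^2$ estimate \eqref{lin.complete}, which controls $\|u,v\|_{X_1}^2 + \|u,v\|_B^2$ by $\mathcal{R}_1 + \mathcal{R}_2 + \mathcal{R}_3$. Next, I would incorporate the uniform estimate in the form of Corollary \eqref{unif.cor}, which bounds the remaining $L^\infty$ component of the $\X$-norm by $\eps^{\gamma/4}$ times $\|u,v\|_{X_1}$ plus $\eps^{\gamma/4}\|\bar u,\bar v\|_{\X}^2$. Squaring, absorbing an $\eps^{\gamma/2}$ prefactor, and adding to \eqref{lin.complete}, one obtains schematically
\begin{equation*}
\|u,v\|_{\X}^2 \lesssim \eps^{\gamma/2} + \mathcal{R}_1 + \mathcal{R}_2 + \mathcal{R}_3 + \eps^{\gamma/2}\|\bar u,\bar v\|_{\X}^4,
\end{equation*}
which is exactly the intermediate corollary stated just above.

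Next, I would insert the bound on $|\mathcal{R}_1 + \mathcal{R}_2 + \mathcal{R}_3|$ supplied by the preceding corollary (which itself packages the nonlinear estimates \eqref{nl.R.1}, \eqref{nl.R.2}, \eqref{nl.R.3} together with the forcing estimate \eqref{forcing.est}). This yields
\begin{equation*}
\|u,v\|_{\X}^2 \lesssim \eps^{\gamma/2} + \bigl[C(a_0,b_0,a_L,b_L) + \eps^{1/4-\gamma}\bigr]\|u,v\|_{\X} + \eps^{\gamma/2}\|u,v\|_{\X}^2 + \eps^{\gamma/2}\|\bar u,\bar v\|_{\X}^4.
\end{equation*}
The term $\eps^{\gamma/2}\|u,v\|_{\X}^2$ on the right can be absorbed into the left-hand side by choosing $\eps$ sufficiently small (depending only on universal constants). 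For the linear-in-$\|u,v\|_{\X}$ term, I would apply Young's inequality, splitting the product $C(a_0,b_0,a_L,b_L)\|u,v\|_{\X}$ as $\tfrac{1}{2\delta}C(a_0,b_0,a_L,b_L)^2 + \tfrac{\delta}{2}\|u,v\|_{\X}^2$ for $\delta$ small, again absorbing the quadratic piece into the left-hand side; similarly for the $\eps^{1/4-\gamma}\|u,v\|_{\X}$ piece, whose coefficient is small provided $\gamma < 1/4$. What remains on the right is precisely $C(a_0,b_0,a_L,b_L) + \eps^{1/4-\gamma} + \eps^{\gamma/2}\|\bar u,\bar v\|_{\X}^4$, giving \eqref{z.estimate}.

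There is no genuine obstacle at this stage, since all nontrivial analytical work (energy, positivity, $y$-weighted, Korn, uniform, and forcing estimates) has already been executed. The only points requiring care are bookkeeping: making sure that the small parameter in front of $\|u,v\|_{\X}^2$ on the right-hand side is genuinely small (which uses $\gamma < 1/4$ and the smallness assumptions on $L$ and $\|v^0_e/Y\|_{L^\infty}$ inherited from the earlier steps), and verifying that the Young-inequality constants do not re-introduce hidden dependence on $\|\bar u,\bar v\|_{\X}$. The quartic $\|\bar u,\bar v\|_{\X}^4$ term is left intact and will be the handle used downstream in the contraction mapping argument to close the nonlinear problem.
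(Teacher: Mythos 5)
Your proposal matches the paper's argument: the paper proves Theorem \ref{th.z} precisely by combining (\ref{lin.complete}) and (\ref{unif.cor}) with the corollary bounding $|\mathcal{R}_1+\mathcal{R}_2+\mathcal{R}_3|$, then applying Young's inequality to the product $C(a_0,b_0,a_L,b_L)\,||u,v||_{\X}$ (and the $\eps^{\frac{1}{4}-\gamma}||u,v||_{\X}$ term) so as to absorb the resulting $||u,v||_{\X}^2$ into the left-hand side. Your bookkeeping of the small prefactors and the retained quartic term $\eps^{\frac{\gamma}{2}}||\bar{u},\bar{v}||_{\X}^4$ is the same consolidation step the paper performs.
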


With the main \textit{a-priori} estimate in hand, we give the formal arguments leading to existence of a solution in $\X$ in Appendix \ref{app.ex}. In particular, Theorem \ref{th.z} coupled with Proposition \ref{nl.exist} gives the main result, Theorem \ref{th.main}.

\appendix

\section{Construction of Profiles} \label{app.construct}

\subsection{Specification of $R^u$}

Define: 
\begin{align} \label{defn.RU}
R^u := U^\eps \p_x U^\eps + V^\eps \p_y U^\eps + \p_x P^\eps - \p_{yy} U^\eps - \eps \p_{xx} U^\eps, \\ \label{defn.RV}
R^v: = U^\eps \p_x V^\eps + V^\eps \p_y V^\eps + \frac{\p_y}{\eps} P^\eps - \p_{yy} V^\eps - \eps \p_{xx} V^\eps.
\end{align}

In this subsection, we will specify the equations we shall take for $R^u$. We will first expand the nonlinear terms in the following manner: 
\begin{align} \n
U^\eps \p_x U^\eps = &\Big( u^0_e + u^0_p + \sqrt{\eps}u^1_e + \sqrt{\eps} u^1_p + \eps^{\frac{1}{2}+\gamma}u \Big) \times \\ \n
& \hspace{10 mm} \Big( u^0_{ex} + u^0_{px} + \sqrt{\eps}u^1_{ex} + \sqrt{\eps} u^1_{px} + \eps^{\frac{1}{2}+\gamma}u_x \Big) \\ \n
& = \{u^0_e(x,0) + u^0_p\} u^0_{px} + u^0_{ex}(x,0) u^0_p \\ \n 
& \hspace{10 mm} + \{u^0_e - u^0_e(x,0) \} u^0_{px}  + \{ u^0_{ex} - u^0_{ex}(x,0) \} u^0_p \\ \n
&\hspace{10 mm} + \sqrt{\eps} \Big[ u^0_p u^1_{ex} + u^1_e u^0_{px} \Big] + \sqrt{\eps} \Big[ \{u^0_e -  u^0_e(x,0) \} u^1_{px} \\ \n
& \hspace{10 mm} \n + \sqrt{\eps} \Big[ \{u^0_{e}(x,0) + u^0_p \} u^1_{px} + \{ u^0_{ex}(x,0) + u^0_{px} \}u^1_p \Big] \\ \n
& \hspace{10 mm} + \{ u^0_{ex} - u^0_{ex}(x,0) \} u^1_p\Big]  + \eps \Big[ (u^1_e + u^1_p) u^1_{px} \\ \n 
&\hspace{10 mm}+ u^1_{ex}  u^1_p \Big]  + \Big[ u^0_e u^0_{ex} + \sqrt{\eps} \Big( u^1_{ex} u^0_e + u^1_e u^0_{ex} \Big) + \eps u^1_{e} u^1_{ex} \Big] \\
& \hspace{10 mm} + \eps^{\frac{1}{2}+\gamma}\{ u_s u_x + u_{sx}u \} + \eps^{1+2\gamma} uu_x.
\end{align}

\begin{align} \n
V^\eps \p_y U^\eps = &\Big( \frac{v^0_e}{\sqrt{\eps}} + v^0_p + v^1_e + \sqrt{\eps} v^1_p + \eps^{\frac{1}{2}+\gamma}v \Big) \\  \n
& \hspace{10 mm} \times \Big( \sqrt{\eps} u^0_{eY} + u^0_{py} + \eps u^1_{eY} + \sqrt{\eps} u^1_{py} + \eps^{\frac{1}{2}+\gamma}u_y \Big) \\\n
& = \Big( y v^0_{eY}(x,0) + v^0_p + v^1_e(x,0) \Big) u^0_{py}  \\ \n
& \hspace{10 mm} + \sqrt{\eps} \Big[ \{v^0_p + y v^0_{eY}(x,0) + v^1_e(x,0) \} u^1_{py} + u^0_{py}v^1_p  \Big] \\\n
&  \hspace{10 mm} + \sqrt{\eps} \Big[ v^0_p( u^0_{eY} + \sqrt{\eps} u^1_{eY} ) \Big] \\ \n
& \hspace{10 mm} + \Big[ \frac{v^0_e}{\sqrt{\eps}} - y v^0_{eY}(x,0) \Big] u^0_{py} + \eps v^1_p \Big( u^0_{eY} + \sqrt{\eps}u^1_{eY} \Big) + \eps v^1_p u^1_{py}  \\ \n
& \hspace{10 mm} + \Big[ v^0_e - Y v^0_{eY}(x,0) \Big] u^1_{py} + \sqrt{\eps} \Big[ v^1_e - v^1_e(x,0) \Big] u^1_{py} \\\n
& \hspace{10 mm} + \Big[ v^1_e - v^1_e(x,0) - Y v^1_{eY} \Big] u^0_{py} + \sqrt{\eps}y v^1_{eY} u^0_{py} \\ \n
& \hspace{10 mm} + \Big[ v^0_{e} u^0_{eY} + \sqrt{\eps} \Big( v^0_e u^1_{eY} + v^1_e u^0_{eY} \Big) + \eps v^1_e u^1_{eY} \Big] \\ 
& \hspace{10 mm} + \eps^{\frac{1}{2}+\gamma} \{u_{sy}v + v_s u_y \} + \eps^{1 + 2\gamma} vu_y.
\end{align}

Inserting into the system (\ref{defn.RU}) gives the following expansion: 
\begin{align} \n
R^u & = \Big\{ \{u^0_e(x,0) + u^0_p\} u^0_{px} + u^0_{ex}(x,0) u^0_p + \{y v^0_{eY}(x,0) + v^0_p + v^1_e(x,0) \} u^0_{py} \\ \label{sp.eq.pr.0}
& \hspace{10 mm} + P^0_{px} - u^0_{pyy} \Big\} \\ \n
& + \sqrt{\eps} \Big\{ \{u^0_e(x,0) + u^0_p \} u^1_{px} + \{ u^0_{ex}(x,0) + u^0_{px} \} u^1_p \\ \label{sp.eq.pr.1}
& \hspace{10 mm} + \{y v^0_{eY}(x,0) + v^0_p + v^1_e(x,0) \} u^1_{py}  + u^0_{py} v^1_p - u^1_{pyy} + P^1_{px} - F_1 \Big\} \\ \label{sp.eq.eul.1}
& + \sqrt{\eps} \Big[ u^0_e u^1_{ex} + u^0_{ex} u^1_e + v^0_e u^1_{eY} + v^1_e u^0_{eY} + P^1_{ex} \Big] \\ \label{final.line}
& + \tilde{R^{u,1}} + \eps P^2_{px} + \eps^{\frac{1}{2}+\gamma} \Big[-\Delta_\eps u + S^u(u,v) + P_x + N^u(u,v)\Big].
\end{align}

We will define: 
\begin{align} \n
F_1 := &v^0_p \{ u^0_{eY} + \sqrt{\eps} u^1_{eY} \} + \frac{1}{\sqrt{\eps}} \Big[ \frac{v^0_e}{\sqrt{\eps}} - y v^0_{eY}(x,0) \Big] u^0_{py} \\ \n
& + y v^1_{eY} u^0_{py} + \frac{1}{\sqrt{\eps}} \Big[ \{u^0_e - u^0_e(x,0) \} u^0_{px} + \{u^0_{ex} - u^0_{ex}(x,0) \} u^0_p \Big] \\ \label{defn.F1}
& + u^0_p u^1_{ex} + u^1_e u^0_{px}, \\ \n
\tilde{R^{u,1}} := &\eps^{\frac{3}{2}} v^1_p u^1_{eY} + \eps v^1_p u^1_{py} + \Big( v^0_e - Y v^0_{eY}(x,0) \Big) u^1_{py}  \\ \n
& + \Big( v^1_e - v^1_e(x,0) - Y v^1_{eY}(x,0) \Big) u^0_{py} + \sqrt{\eps} \Big[ v^1_e - v^1_e(x,0) \Big] u^1_{py} \\ \n
& + \sqrt{\eps} \{ u^0_e - u^0_e(x,0) \} u^1_{px} + \sqrt{\eps} u^1_p \{ u^0_{ex} - u^0_{ex}(x,0) \} \\ \n
& + \eps \Big[ (u^1_e + u^1_p) u^1_{px} + u^1_{ex} u^1_p + u^0_{eY}v^1_p \Big] \\  \label{tilde.R.def}
& + \eps u^0_{pxx} + \eps^{\frac{3}{2}} u^1_{pxx} + \eps \Big[ u^1_e u^1_{ex} + v^1_e u^1_{eY} + \Delta u^0_e + \sqrt{\eps} \Delta u^1_e \Big], \\ \label{Nu.defn}
N^u(u,v) & := \eps^{\frac{1}{2}+\gamma} \Big( uu_x + vu_y \Big), \\ \label{Nv.defn}
N^v(u,v) & := \eps^{\frac{1}{2}+\gamma} \Big( uv_x + vv_y \Big), \\ \label{Su.defn}
S^u(u,v) &:= u_s u_x + u_{sx}u + v_s u_y + u_{sy}v, \\ \label{Sv.defn}
S^v(u,v) &:= u_s v_x + v_{sx}u + v_s v_y + v_{sy}v.
\end{align}

Equations (\ref{sp.eq.pr.0}) - (\ref{sp.eq.eul.1}) define the equations for our approximate layers, as seen in (\ref{sys.pr.0}), (\ref{sys.eul.1}), and (\ref{sys.pr.1}), thereby contributing the final line, (\ref{final.line}) into the remainder equation, (\ref{NSR.1}). We must actually modify $\tilde{R}^{u,1}$ to $R^{u,1}$, which accounts for the fact that the layers $[u^1_p, v^1_p]$ are cutoff at $y \rightarrow \infty$:
\begin{align} \label{defn.Ru1}
R^{u,1} := \tilde{R}^{u,1} + \sqrt{\eps} R^u_p + \eps P^2_{px},
\end{align}

where:
\begin{align} \n
R^{u}_p &:= \{u^0_e(x,0) + u^0_p \} u^1_{px} + \{ u^0_{ex}(x,0) + u^0_{px} \} u^1_p + \{y v^0_{eY}(x,0) + v^0_p \} u^1_{py} \\
& \hspace{10 mm} + u^0_{py} v^1_p - u^1_{pyy} + P^1_{px} - F_1.
\end{align}

We are then left with: 
\begin{align} \label{rem.eq.u}
\eps^{\frac{1}{2}+\gamma} \Big[ -\Delta_\eps u + S^u(u,v) + P_x + N^u(u,v) \Big] = R^{u,1}.
\end{align}

\subsection{Specification of $R^v$}

We turn now to the simplification of (\ref{defn.RV}). 
\begin{align} \label{Rv.1}
R^v = &\frac{1}{\sqrt{\eps}} \Big[ u^0_e v^0_{ex} + v^0_e v^0_{eY} + P^0_{eY} \Big] + \frac{P^0_{py}}{\eps} + \frac{P^1_{py}}{\sqrt{\eps}} \\ \label{Rv.2}
& \hspace{10 mm} + \Big[ u^0_e v^1_{ex} + u^1_{e}v^0_{ex} + v^0_e v^1_{eY} + v^0_{eY}v^1_e + P^1_{eY}  \Big] \\ \n
& \hspace{10 mm} + v^0_{px} \Big( u^0_e + u^0_p + \sqrt{\eps}u^1_e + \sqrt{\eps}u^1_p \Big) \\ \n
& \hspace{10 mm} + \frac{v^0_{ex}}{\sqrt{\eps}} \Big( u^0_p + \sqrt{\eps}u^1_p \Big) + v^1_{ex} \Big( u^0_p + \sqrt{\eps} u^1_p \Big) \\ \n
& \hspace{10 mm} + \frac{v^0_e}{\sqrt{\eps}} v^0_{py} + v^0_e v^1_{py} + v^0_p \Big( v^0_{eY} + v^0_{py} + \sqrt{\eps}v^1_{eY} + \sqrt{\eps}v^1_{py} \Big) \\ \label{Rv.3}
& \hspace{10 mm} + v^1_e( v^0_{py} + \sqrt{\eps}v^1_{py}) + \Delta_\eps v^0_p + P^2_{py} \\ \n
& \hspace{10 mm} + \sqrt{\eps} v^1_{px} \Big( u^0_e + u^0_p + \sqrt{\eps} u^1_e + \sqrt{\eps} u^1_p \Big) + \Delta_\eps v^1_p \\ \n
& \hspace{10 mm} + \sqrt{\eps} v^1_p \Big( v^0_{eY} + v^0_{py} + \sqrt{\eps}v^1_{eY} + \sqrt{\eps} v^1_{py} \Big) \\ \n
& \hspace{10 mm} + \Big[ \sqrt{\eps} \Delta v^0_e + \eps \Delta v^1_e + \sqrt{\eps} u^1_e v^1_{ex} + \sqrt{\eps} v^1_e v^1_{eY}  \Big] \\ 
& \hspace{10 mm} + \eps^{\frac{1}{2}+\gamma} \Big[-\Delta_\eps v + S^v(u,v) + \frac{P_y}{\eps} + N^v(u,v) \Big]
\end{align}

We shall make the identifications so that (\ref{Rv.1}) and (\ref{Rv.2}) vanish by using these equations to define the construction of the approximate layers in (\ref{sys.pr.0}), (\ref{sys.eul.2}), and (\ref{sys.pr.1}). We then define $P^2_p$ via (\ref{Rv.3}): 
\begin{align} \n
P^2_{p} = &-\int_y^\infty v^0_{px} \Big( u^0_e + u^0_p + \sqrt{\eps}u^1_e + \sqrt{\eps}u^1_p \Big)  + \frac{v^0_{ex}}{\sqrt{\eps}} \Big( u^0_p + \sqrt{\eps}u^1_p \Big) \\ \n
& + v^1_{ex} \Big( u^0_p + \sqrt{\eps} u^1_p \Big) + \frac{v^0_e}{\sqrt{\eps}} v^0_{py} + v^0_e v^1_{py} + v^0_p \Big( v^0_{eY} + v^0_{py} + \sqrt{\eps}v^1_{eY} + \sqrt{\eps}v^1_{py} \Big) \\ \label{pressure.aux.1}
& + v^1_e( v^0_{py} + \sqrt{\eps}v^1_{py}) + \Delta_\eps v^0_p.
\end{align}

This choice enforces the vanishing of line (\ref{Rv.3}). We are then left with: 
\begin{align} \label{rem.eq.v}
\eps^{\frac{1}{2}+\gamma} \Big[ -\Delta_\eps v + S^v(u,v) + \frac{P_y}{\eps} + N^v(u,v) \Big] = R^{v,1},
\end{align}

where
\begin{align} \n
R^{v,1} := & \sqrt{\eps} v^1_{px} \Big( u^0_e + u^0_p + \sqrt{\eps} u^1_e + \sqrt{\eps} u^1_p \Big) + \Delta_\eps v^1_p \\ \n
 &+ \sqrt{\eps} v^1_p \Big( v^0_{eY} + v^0_{py} + \sqrt{\eps}v^1_{eY} + \sqrt{\eps} v^1_{py} \Big)  \\ \label{defn.Rv1}
& + \Big[ \sqrt{\eps} \Delta v^0_e + \eps \Delta v^1_e + \sqrt{\eps} u^1_e v^1_{ex} + \sqrt{\eps} v^1_e v^1_{eY}  \Big].
\end{align}

This defines the second equation for the remainder, as seen in (\ref{NSR.2}).

\subsection{Construction of Layers}

We are prescribed the Euler flow $[u^0_e, v^0_e, P^0_e]$. The first task is to verify that there exists Euler flows satisfying assumptions (\ref{euler.as.1}) - (\ref{euler.as.4}): 
\begin{proposition} \label{eul.exist} There exists a nontrivial set of Euler flows, $[u^0_e, v^0_e, P^0_e]$ satisfying assumptions (\ref{euler.as.1}) - (\ref{euler.as.4}).
\end{proposition}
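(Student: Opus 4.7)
The plan is to construct the desired Euler flows perturbatively, starting from shear flows where the conditions are trivially satisfied. First, observe that any smooth monotone profile $u^0_e = \bar u(Y)$ with $0 < c_0 \le \bar u \le C_0$ and $\bar u, \bar u'$ rapidly decaying (e.g.\ $\bar u = 1 + \chi(Y)$ with $\chi$ a smooth bump), paired with $v^0_e \equiv 0$ and constant pressure, solves \eqref{euler.equation.0} and satisfies all of \eqref{euler.as.1}--\eqref{euler.as.4} trivially (the second because $v^0_e/Y \equiv 0$). Hence the set is non-empty; to exhibit \emph{non-shear} members I would perturb such a shear via the stream-function formulation.

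Setting $u^0_e = \psi_Y$, $v^0_e = -\psi_x$ makes incompressibility automatic, and the curl of the momentum equation gives $u^0_e \omega_x + v^0_e \omega_Y = 0$ with $\omega := -\Delta \psi$. Thus $\omega$ is constant on streamlines and may be written $\omega = F(\psi)$. Fix the background $\bar\psi(Y) := \int_0^Y \bar u(Y')\,dY'$; since $\bar\psi$ is strictly increasing, $F$ is well-defined by $F(\bar\psi(Y)) := -\bar u'(Y)$. The Euler system then reduces to the semilinear elliptic equation
\begin{equation*}
-\Delta \psi = F(\psi) \quad \text{on } (0,L) \times (0,\infty),
\end{equation*}
with $\psi(x,0) = 0$ (ensuring $v^0_e|_{Y=0} = 0$) and $\psi_Y \to \bar u(\infty)$ at $Y \to \infty$. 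By construction $\bar\psi$ solves this.

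Writing $\psi = \bar\psi + \tilde\psi$ with prescribed inflow/outflow data $\tilde\psi|_{x=0} = \delta\phi_0(Y)$, $\tilde\psi|_{x=L} = \delta\phi_L(Y)$ for smooth, rapidly decaying $\phi_j$ vanishing at $Y=0$, and $\tilde\psi|_{Y=0} = \tilde\psi|_{Y\to\infty} = 0$, the perturbation satisfies
\begin{equation*}
-\Delta \tilde\psi - F'(\bar\psi)\,\tilde\psi = \mathcal{N}(\tilde\psi), \qquad \mathcal{N}(\tilde\psi) := F(\bar\psi+\tilde\psi) - F(\bar\psi) - F'(\bar\psi)\tilde\psi,
\end{equation*}
where $\mathcal{N}$ is quadratic in $\tilde\psi$. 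For $L$ sufficiently small the linear operator $-\Delta - F'(\bar\psi)I$ is invertible on a weighted Sobolev space of functions vanishing at $Y=0$ and decaying as $Y\to\infty$, by a Poincar\'e-type estimate on the thin strip which absorbs the zero-order potential into the Laplacian. A standard contraction-mapping argument then yields a unique small solution $\tilde\psi$ for $\delta$ small.

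Finally, I would verify the four assumptions for $[u^0_e, v^0_e] = [\bar u + \tilde\psi_Y,\, -\tilde\psi_x]$. Condition \eqref{euler.as.1} holds by $\bar u \in [c_0,C_0]$ and smallness of $\|\tilde\psi_Y\|_{L^\infty} \lesssim \delta$. Condition \eqref{euler.as.2} uses the key fact that $\tilde\psi(x,0) \equiv 0$ in $x$ forces $\tilde\psi_x(x,0) \equiv 0$, so $v^0_e/Y = -\tilde\psi_x/Y = \bigO(\|\tilde\psi_{xY}\|_{L^\infty}) = \bigO(\delta)$; the requisite smallness is then a matter of choosing $\delta$ small. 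Conditions \eqref{euler.as.3}--\eqref{euler.as.4} follow by iterative weighted elliptic regularity: the boundary data are chosen rapidly decaying in $Y$, $F$ is smooth on the relevant range, and applying $Y^k$-weighted estimates to the equation propagates polynomial decay through the strip. The main obstacle I expect is this last step — establishing weighted $L^\infty$ bounds with arbitrarily many derivatives on an unbounded strip with corners at $\{x \in \{0,L\}\}\cap\{Y=0\}$ — which demands care but is tractable since the data may be chosen smooth and supported away from the corners, and $L$ is small.
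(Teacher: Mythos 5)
Your proposal is correct and follows essentially the same route as the paper: both constructions perturb a monotone shear through the stream-function formulation, reduce the Euler system to a semilinear elliptic equation $-\Delta \phi = F(\phi)$ on the thin strip, inject the $x$-dependence through small, rapidly decaying inflow/outflow data supported away from the corners, and verify (\ref{euler.as.2}) exactly as you do, via $v^0_e|_{Y=0} = 0$ together with $\|\psi_{xY}\|_{L^\infty} = \bigO(\delta)$. The one substantive difference is how the semilinear problem is solved: you keep the vorticity function $F$ dictated by the background shear, linearize, and run a contraction argument using the Poincar\'e inequality in the thin $x$-direction, whereas the paper prescribes a fresh small vorticity function $f_e \ge 0$ with the monotone-decay property (\ref{g.as.2}), obtains $\psi \ge 0$ from the maximum principle, and uses that sign to bound $\|f_e(\phi_0+\psi)\|_{L^2} \le \|f_e(\phi_0)\|_{L^2} = \bigO(\delta)$ directly in the energy estimate --- no fixed point needed, at the cost of sign and monotonicity hypotheses on $f_e$. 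Two small points to tighten in your version: you need to extend $F$ smoothly off the range of $\bar\psi$ (since $\bar\psi + \tilde\psi$ can dip slightly below $0$ near $Y = 0$), and you should require the inflow and outflow data to be unequal (the paper's assumption (\ref{AS.as.4})); otherwise the perturbed solution can again be a pure shear and the example is not genuinely non-shear. The corner issue you flag is handled in the paper by taking $U_0 \equiv 1$ near $Y = 0$ and all data supported away from $Y = 0$, so the $\p_Y$-differentiated problem inherits clean Neumann conditions there.
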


We will start with the shear flow $U_0(Y)$, satisfying the following hypothesis:  
\begin{align} \label{shear.as.1}
&c_0 \le U_0 \le C_0, \\ \label{shear.as.2}
& U_0 \text{ smooth, with rapidly decaying derivatives, } \\ \label{shear.as.3}
& \p_Y U_0 \ge 0, \\ \label{shear.as.4}
& U_0 = 1 \text{ in a neighborhood of $0$}.
\end{align}

Such a shear has stream function $\phi_0(Y) = \int_0^Y U_0$. Such a stream function has the following asymptotics: 
\begin{align}
\phi_0|_{Y = 0} = 0, \hspace{3 mm} \phi_0|_{x = 0} = \phi_0|_{x = L} = \phi_0(Y), \hspace{3 mm} \lim_{Y \rightarrow \infty} \frac{\phi_0}{Y} = U_\infty \in (c_0, C_0). 
\end{align}

Note that assumption (\ref{shear.as.1}) implies $c_0 Y \le \phi_0 \le C_0 Y$. To define our final Euler flow, we must first solve for an perturbative stream function, $\psi$, using the following elliptic equation: 
\begin{align} \n
-\Delta \psi = \p_Y U_0 + f_e(\phi_0 + \psi), \hspace{3 mm} &\psi|_{x = 0} = A_0(Y), \hspace{3 mm} \psi_{x = L} = A_L(Y), \\ \label{ell.1} & \psi|_{Y = 0} = 0, \hspace{8 mm} \psi|_{Y \rightarrow \infty} = 0. 
\end{align}

We will assume the following conditions on $f_e$ and the boundary data $A_{0,L}$: 
\begin{align} \label{g.as.1}
&0 \le f_e \le \delta << 1, \\ \label{g.as.2}
&|\p^kf_e(x+a)| \lesssim |\p^k f_e(x)| \text{ for } a \ge 0, \\ \label{g.as.3}
&f_e \in C^\infty(\mathbb{R}), \text{ rapidly decaying in it's argument}, \\ \label{g.as.4}
&f_e \text{ supported in a neighborhood away from $0$ }, \\ \label{AS.as.1}
&0 \le A_0, A_L \le \delta \times L^{10}, \\ \label{AS.insert.1}
& |\p_Y^k \{A_0, A_L \}| \le \delta \times L^{10} \\  \label{AS.as.2} 
& A_0, A_L \in C^\infty(\mathbb{R}_+), \text{ rapidly decaying in it's argument}, \\ \label{AS.as.3}
& A_0, A_L \text{ supported in a neighborhood away from $0$. } \\ \label{AS.as.4}
& A_0 \neq A_L.
\end{align}

It is straightforward to see that the set of admissible $f_e, A_0, A_L$ is nonempty. First, via hypothesis (\ref{shear.as.3}) and (\ref{g.as.1}), we have $\Delta \psi \le 0$, so that via the maximum principle and assumption on the boundary data (\ref{AS.as.1}):
\begin{align} \label{max.nl}
\psi \ge 0. 
\end{align}

\begin{lemma} Assume (\ref{shear.as.1}) - (\ref{shear.as.4}) and the assumptions (\ref{g.as.1}) - (\ref{AS.as.4}) are satisfied. For $0 < L << \delta << 1$, the following energy estimate holds: 
\begin{align}
||Y^k \psi||_{H^1} \le C_k \bigO(\delta). 
\end{align}
\end{lemma}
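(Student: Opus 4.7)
The plan is to carry out a weighted energy estimate on a homogenized problem, closed inductively in $k$. First I would introduce the explicit extension
\[
\Phi(x,Y) := \left(1 - \tfrac{x}{L}\right)A_0(Y) + \tfrac{x}{L}A_L(Y),
\]
which matches $A_0$ on $x=0$, matches $A_L$ on $x=L$, vanishes on $Y=0$ by (\ref{AS.as.3}), and decays at $Y\to\infty$ by (\ref{AS.as.2}). The homogenized unknown $\tilde\psi := \psi - \Phi$ has zero Dirichlet data on the entire parabolic boundary and satisfies
\[
-\Delta\tilde\psi = \p_Y U_0 + f_e(\phi_0 + \tilde\psi + \Phi) + \Delta\Phi,
\]
with $\|Y^k\Delta\Phi\|_{L^2} \lesssim \delta L^{8}$ by (\ref{AS.insert.1}).

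For the base case $k=0$, I would test against $\tilde\psi$, producing a positive $\int|\nabla\tilde\psi|^2$ with no boundary terms. Since $\tilde\psi$ vanishes on $x=0,L$, the one-dimensional Poincar\'e inequality gives $\|\tilde\psi\|_{L^2} \le L\|\tilde\psi_x\|_{L^2}$. The forcing is controlled by noting that $\p_Y U_0$ and $f_e(\phi_0+\cdot)$ are rapidly decaying in $Y$ (the latter because $\phi_0 \ge c_0 Y$ and $f_e$ is rapidly decaying in its argument), with $f_e$ additionally of amplitude $\delta$ by (\ref{g.as.1}), while $\|\Delta\Phi\|_{L^2} \lesssim \delta L^{8}$. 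Young's inequality combined with $L \ll \delta \ll 1$ yields $\|\tilde\psi\|_{H^1} \lesssim \delta$.

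For $k \ge 1$, I would test the equation against $Y^{2k}\tilde\psi$. Boundary contributions vanish from the zero Dirichlet data and the factor $Y^{2k}$ at $Y=0$, giving
\[
\int Y^{2k}|\nabla\tilde\psi|^2 - k(2k-1)\int Y^{2k-2}\tilde\psi^2 = \int Y^{2k}\tilde\psi \cdot (\p_Y U_0 + f_e + \Delta\Phi).
\]
The commutator $k(2k-1)\int Y^{2k-2}\tilde\psi^2$ is bounded by the inductive hypothesis at level $k-1$ and is thus of size $\delta^2$. On the right-hand side I would apply the weighted Poincar\'e estimate $\|Y^k\tilde\psi\|_{L^2} \le L\|Y^k\tilde\psi_x\|_{L^2}$, using that the $Y^k$-weighted $L^2$ norms of $\p_Y U_0$, $f_e(\phi_0+\cdot)$, and $\Delta\Phi$ are bounded by $C_k$, $C_k\delta$, and $C_k\delta L^8$ respectively (from rapid $Y$-decay and the assumptions (\ref{shear.as.2}), (\ref{g.as.1}), (\ref{AS.insert.1})). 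Young's inequality and absorption then deliver $\|Y^k\nabla\tilde\psi\|_{L^2} \lesssim \delta$, and a final application of $x$-Poincar\'e gives $\|Y^k\tilde\psi\|_{L^2} \lesssim L\delta$. Writing $\psi = \tilde\psi + \Phi$ and using the smallness of $\Phi$ completes the estimate.

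The main obstacle is the non-smallness of the forcing $\p_Y U_0$, which cannot be reduced directly. The decisive trick is to exploit the vanishing of $\tilde\psi$ on $x=0,L$ via the $x$-Poincar\'e inequality, extracting a factor $L$ that overcomes any $\bigO(1)$ forcing under the hypothesis $L \ll \delta$. A secondary point is the semi-linear dependence of the right-hand side on $\tilde\psi$ through $f_e(\phi_0 + \tilde\psi + \Phi)$: in the full existence scheme this would be handled by a contraction-mapping argument built on the Lipschitz estimate $|f_e(a)-f_e(b)| \lesssim \delta|a-b|$ coming from (\ref{g.as.1}), so the a priori estimate above is all that the iteration needs.
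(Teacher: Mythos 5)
Your argument is essentially the paper's: the same linear-in-$x$ extension $B(x,Y)=\frac{L-x}{L}A_0+\frac{x}{L}A_L$, homogenization to a zero-Dirichlet problem, and the $x$-Poincar\'e inequality to extract the factor of $L$ that overcomes the $\bigO(1)$ forcing $\p_Y U_0$ (the paper additionally exploits that $\|\p_Y U_0\|_{L^2(\Omega)}\lesssim \sqrt{L}$ because it is a function of $Y$ alone integrated over $x\in(0,L)$), with the weighted cases handled by the induction on $k$ that you spell out and the paper merely declares straightforward. The one step you gloss over is the bound $\|f_e(\phi_0+\psi)\|_{L^2}\lesssim\delta$: rapid decay of $f_e(\phi_0+\psi)$ in $Y$ does not follow from $\phi_0\ge c_0Y$ alone unless $\psi$ is bounded below, and the paper gets this from the maximum-principle fact $\psi\ge 0$ in (\ref{max.nl}) combined with the monotonicity hypothesis (\ref{g.as.2}), giving $\|f_e(\phi_0+\psi)\|_{L^2}\le\|f_e(c_0Y)\|_{L^2}\le\bigO(\delta)$; this replaces your proposed Lipschitz/contraction treatment, whose constant $\lesssim\delta$ is not among the stated hypotheses.
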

\begin{proof}

Define: 
\begin{align}
B(x,Y) = \frac{L-x}{L} A_0(Y) + \frac{x}{L}A_L(Y). 
\end{align}

$B$ is smooth and all derivatives are order $\delta$ by the assumptions (\ref{AS.insert.1}) on $A_{0,L}$. Define now $\bar{\psi} = \psi - B$, which satisfies:
\begin{align} \label{B.h.1}
-\Delta \bar{\psi} = \Delta B + \p_Y U_0 + f_e(\phi_0 + \psi), \hspace{5 mm} \bar{\psi}|_{\p \Omega} = 0. 
\end{align}

An energy estimate coupled with Poincare's inequality gives: 
\begin{align} \n
\int |\nabla \bar{\psi}|^2 &= \int \Big( \Delta B + \p_Y U_0\Big) \cdot \bar{\psi} + \int f_e(\phi_0 + \psi) \cdot \bar{\psi} \\ \label{ident.1}
& \le \bigO(\delta, L) ||\bar{\psi}_x||_{L^2} + ||f_e(\phi_0 + \psi)||_{L^2} \bigO(L) ||\bar{\psi}_x||_{L^2}.
\end{align}

We now use (\ref{max.nl}) together with assumptions (\ref{g.as.2}) and (\ref{shear.as.1}) to estimate: 
\begin{align}
||f_e(\phi_0 + \psi)||_{L^2} \le ||f_e(\phi_0)||_{L^2} \le ||f_e(c_0 Y)||_{L^2} \le \bigO(\delta). 
\end{align}

This concludes the proof. 
\end{proof}

We now upgrade to weighted estimates, and higher regularity: 
\begin{lemma} Assume (\ref{shear.as.1}) - (\ref{shear.as.4}) and the assumptions (\ref{g.as.1}) - (\ref{AS.as.4}) are satisfied. For $0 < L << \delta << 1$, the following energy estimate holds: 
\begin{align} \label{hove}
||Y^m \p_x^j \p_Y^k \psi||_{L^2} \le C_{m, k, j} \delta \text{ for any $k,m, j \ge 0$.}
\end{align}
\end{lemma}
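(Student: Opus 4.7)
The plan is to bootstrap from the weighted $H^1$ base case by differentiating equation~(\ref{ell.1}) and iterating standard elliptic regularity for the Dirichlet Laplacian on the strip $\Omega$, exploiting the rapid $Y$-decay of every forcing term. Working as before with $\bar\psi = \psi - B$, which satisfies~(\ref{B.h.1}) with zero Dirichlet data on $\p\Omega$, I would first upgrade the previous estimate to $||Y^m \bar\psi||_{H^1} \lesssim_m \delta$ by testing~(\ref{B.h.1}) against $Y^{2m}\bar\psi$. The cross term arising from $\nabla\bar\psi \cdot \nabla(Y^{2m})\,\bar\psi$ is absorbed via Cauchy--Schwarz plus the unweighted base estimate, and each contribution to the right-hand side lies in the weighted $L^2$ space with norm $\bigO(\delta)$: the extension $B$ inherits the rapid decay of $A_0, A_L$ by (\ref{AS.as.2}); $\p_Y U_0$ decays rapidly by (\ref{shear.as.2}); and the nonlinear forcing satisfies the pointwise comparison $|f_e(\phi_0 + \psi)| \lesssim |f_e(c_0 Y)|$, which follows by combining $\psi \ge 0$ from (\ref{max.nl}), the lower bound $\phi_0 \ge c_0 Y$ from (\ref{shear.as.1}), and the monotone-shift hypothesis (\ref{g.as.2}), together with the rapid decay of $f_e$ from (\ref{g.as.3}).

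Next I would induct on the total derivative order $j+k$. Differentiating~(\ref{B.h.1}) in $x$ is clean because $\p_Y U_0$ and $\phi_0$ are both $x$-independent, giving $-\Delta \p_x^j \bar\psi = \p_x^j \Delta B + \p_x^j[f_e(\phi_0+\psi)]$. The boundary values of $\p_x^j \bar\psi$ on $\{Y=0\}$ vanish trivially, on $\{Y \to \infty\}$ vanish by decay, and on the vertical sides equal $-\p_x^j B|_{x=0,L}$, which I would homogenize by subtracting an additional smooth extension whose $Y$-decay is inherited from $A_0, A_L$. Repeating the weighted energy argument closes the induction in $j$. For $\p_Y$ derivatives, I would instead rely on standard elliptic regularity of the Dirichlet problem on the strip: once $Y^m F$ lies in $H^s$ for the effective forcing $F$, we obtain $Y^m \bar\psi \in H^{s+2}$, and the right-angle corners at $(0,0)$ and $(L,0)$ produce no singular behavior for the Laplacian with smooth Dirichlet data by classical polygonal regularity.

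The main obstacle will be cleanly estimating $\p_x^j \p_Y^k [f_e(\phi_0 + \psi)]$, whose Faà di Bruno expansion is a sum of terms $f_e^{(r)}(\phi_0+\psi)\,\prod_i \p^{\alpha_i}(\phi_0+\psi)$ with $\sum_i |\alpha_i| = j+k$. The $\phi_0$ factors are bounded with rapidly decaying derivatives by (\ref{shear.as.2})--(\ref{shear.as.4}), and each factor $\p^{\alpha_i}\psi$ of strictly lower order than $j+k$ is controlled by the induction hypothesis; Sobolev embedding converts these weighted $L^2$-bounds into $L^\infty$-bounds at the cost of one extra derivative, which is absorbed by taking the induction one step deeper. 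The composition factor $f_e^{(r)}(\phi_0+\psi)$ is then controlled pointwise by $|f_e^{(r)}(c_0 Y)|$ via the same monotone-shift argument from the base step, which is rapidly decaying by (\ref{g.as.3}). Combined, these observations show the effective forcing and all its derivatives lie in every weighted $L^2$ class with norm $\bigO(\delta)$, so the weighted higher-order estimate~(\ref{hove}) then follows by a finite induction on $j+k$ using standard elliptic regularity at each step.
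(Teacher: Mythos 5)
Your overall architecture (homogenize with $B$, weighted energy estimate, then induct on derivative order using the equation and the monotone-shift bound $|f_e^{(r)}(\phi_0+\psi)|\lesssim|f_e^{(r)}(c_0Y)|$) is close in spirit to the paper's, which instead differentiates (\ref{ell.1}) in $Y$, observes that the support conditions (\ref{shear.as.4}), (\ref{g.as.4}), (\ref{AS.as.3}) turn the problem for $\p_Y\psi$ into a mixed Dirichlet--Neumann problem with Neumann data at $Y=0$, and reruns the energy estimate; $\psi_{xx}$ is then recovered from the equation. But there is one genuine gap in your argument: you claim that every contribution to the right-hand side of (\ref{B.h.1}) has weighted $L^2$ norm $\bigO(\delta)$, and for the term $\p_Y U_0$ you justify this only by its rapid decay in $Y$. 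Rapid decay gives finiteness of $\|Y^m\p_Y U_0\|_{L^2}$, not smallness --- $\p_Y U_0$ is an order-one quantity and no smallness of it is assumed. The actual mechanism, which the paper flags as crucial, is that $U_0$ is $x$-independent, so integrating over the short interval $(0,L)$ gives $\|Y^m\p_Y^{k}U_0\|_{L^2(\Omega)}\lesssim \sqrt{L}$, and the hypothesis $L\ll\delta$ converts this into the required $\bigO(\delta)$. Without this step your induction only yields $\|Y^m\p_x^j\p_Y^k\psi\|_{L^2}\le C_{m,k,j}$ rather than $C_{m,k,j}\delta$, and the factor $\delta$ is precisely what the main theorem needs (it is what makes $\|v^0_e/Y\|_{L^\infty}$ small in (\ref{euler.as.2})).

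A secondary caution: your blanket appeal to ``classical polygonal regularity'' at the right-angle Dirichlet corners is too glib for arbitrarily high $k$, since at an opening angle $\pi/2$ the singular exponents are integers and $r^{2}\log r$-type terms can in general obstruct $H^3$ regularity. What saves the argument here is not generic corner theory but the specific support hypotheses: near $(0,0)$ and $(L,0)$ the forcing and all boundary data vanish identically, so $\bar\psi$ is harmonic with zero Dirichlet data on both edges and extends smoothly by reflection. This is the same structural input the paper exploits to obtain the Neumann condition for $\p_Y\psi$; you should invoke it explicitly rather than citing smoothness of the data.
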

\begin{proof}

The first step is to differentiate (\ref{ell.1}) in $Y$. Defining $\psi^{(1)} := \p_Y \psi$, this produces: 
\begin{align} \n
-\Delta \psi^{(1)} = \p_{Y}^2 U_0 + f_e'(\phi_0 + \psi) (\p_Y \phi_0 + \psi^{(1)}), \\
\psi^{(1)}_Y|_{Y = 0} = 0, \hspace{3 mm} \psi^{(1)}|_{x = 0,L} = \p_Y A_{0,L},
\end{align}

where we have evaluated (\ref{ell.1}) using the condition (\ref{shear.as.4}), (\ref{g.as.4}), and (\ref{AS.as.3}) to obtain the Neumann boundary condition above. A homogenization procedure and energy estimate nearly identical to (\ref{B.h.1}) - (\ref{ident.1}) produces: 
\begin{align}
\int |\psi_{xY}|^2 + |\psi_{YY}|^2 \lesssim \bigO(\delta). 
\end{align}

By using now the equation(\ref{ell.1}), we also obtain $\psi_{xx}$ in $L^2$. Note crucially that $||\p_Y U_0||_{L^2} \le \bigO(L)$ due to the integration in the $x$-direction, which prevents us from requiring a smallness condition on $\p_Y U_0$. One can iterate this procedure for higher derivatives. It is also straightforward to obtain weighted in $Y$ estimates, using hypothesis (\ref{shear.as.2}), (\ref{g.as.3}), and (\ref{AS.as.2}) to absorb weights of $Y$. This concludes the proof of (\ref{hove}).

\end{proof}

\begin{proof}[Proof of Proposition \ref{eul.exist}]
If we define $\phi^E := \phi_0 + \psi$, then $\phi^E$ solves: 
\begin{align} \n
-\Delta \phi^E = f_e(\phi^E), \hspace{5 mm} &\phi^E(0,Y) = \phi_0 + A_0(Y), \hspace{5 mm} \phi^E(L,Y) = \phi_0 + A_L(Y), \\
& \phi^E(x,0) = 0, \hspace{5 mm} \frac{\phi^E(x,Y)}{Y} \xrightarrow{Y \rightarrow \infty} U_\infty.
\end{align}

Solutions to such elliptic equations solve the 2D Euler equations (see \cite{CS}) by setting: 
\begin{align}
u^0_e = \p_Y \phi^E, \hspace{3 mm} v^0_e = -\p_x \phi^E = - \p_x \psi, \hspace{3 mm} P^0_e = -\frac{1}{2}|\nabla \phi^E|^2 + F_e(\phi^E), \hspace{2 mm} F_e' = f_e. 
\end{align}

We view $\psi$ as a $\bigO(\delta)$-perturbation to the shear flow $(U_0(Y),0)$ for which $\phi^E = \phi_0$, which is therefore achieved by setting $f_e = A_0 = A_L = 0$. Note that the property (\ref{AS.as.4}) creates the $x$-dependence, for if $A_0 = A_L$, one could solve (\ref{ell.1}) for $\psi^1$ as just a function of $Y$, creating another shear flow. All properties (\ref{euler.as.1}) - (\ref{euler.as.4}) are easily verified, where the crucial smallness is obtained through the use of (\ref{hove}): 
\begin{align}
||\frac{v^0_E}{Y}||_{L^\infty} \le ||v^0_{eY}||_{L^\infty} = || \psi_{xY}||_{L^\infty} \le ||\psi_{xY}||_{H^2} \le \bigO(\delta). 
\end{align}

This concludes the proof of the proposition. 
\end{proof}

We will now abandon the particular construction of Proposition \ref{eul.exist}, and consider \textit{any} flow satisfying the assumptions of the paper, namely (\ref{euler.as.1}) - (\ref{euler.as.4}). Similar to the above considerations, there exists a function $f_e$ such that: 
\begin{align} \label{sys.fe}
u^0_{eY} - v^0_{ex} = w_e^0 = -\Delta \phi^0 = f_e(\phi^0), P^0_e = -\frac{1}{2}|\nabla \phi^0|^2 + F_e(\phi^0), \hspace{3 mm} F_e' = f_e.
\end{align}

Our assumptions (\ref{euler.as.1}) - (\ref{euler.as.4}) guarantee the following: 
\begin{align*}
c_0 \le u^0_e  \le C_0 \Rightarrow \phi^0 = \int_0^Y u^0_e \sim Y, 
\end{align*}

coupled with $w^0_e$ is bounded and decaying in $Y$ implies that $f_e$ together with derivatives are bounded and decaying, which we state now as a lemma: 
\begin{lemma} \label{lemma.a1} Define $f_e$ to satisfy the equalities in (\ref{sys.fe}). The assumptions on $[u^0_e, v^0_e]$ stated in (\ref{euler.as.1}) - (\ref{euler.as.4}) imply that $f_e$ together with sufficiently many derivatives is bounded and decaying in its argument. 
\end{lemma}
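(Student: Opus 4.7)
The plan is to first verify that $f_e$ is well-defined as a function of a single variable, then to use the pointwise equivalence $\phi^0 \sim Y$ to transfer the $Y$-decay of the vorticity (which comes for free from the assumptions) into decay of $f_e$ in its argument, and finally to bootstrap for derivatives via the chain rule.

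First I would check the well-posedness of the definition. Taking the curl of the steady Euler system (\ref{euler.equation.0}) yields the vorticity transport equation $u^0_e \p_x w^0_e + v^0_e \p_Y w^0_e = 0$, i.e. $w^0_e$ is constant along streamlines. Since $u^0_e \ge c_0 > 0$ by (\ref{euler.as.1}), the map $Y \mapsto \phi^0(x,Y) = \int_0^Y u^0_e(x, Y')\, dY'$ is strictly increasing for each fixed $x$, so the streamlines $\{\phi^0 = s\}$ foliate $\Omega$. Consequently there is a well-defined scalar function $f_e$ on the range of $\phi^0$ satisfying $w^0_e(x,Y) = f_e(\phi^0(x,Y))$, and this matches the relation in (\ref{sys.fe}).

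Next I would establish the two-sided bound $c_0 Y \le \phi^0(x,Y) \le C_0 Y$, which follows by integrating (\ref{euler.as.1}) in $Y$. This means that an inequality of the form $|Y^k w^0_e(x,Y)| \lesssim 1$ immediately translates into $|s^k f_e(s)| \lesssim 1$ for $s$ in the range of $\phi^0$, after fixing any $x$ and inverting the monotone map $Y \mapsto \phi^0(x,Y)$. To get such a bound I would write $w^0_e = u^0_{eY} - v^0_{ex}$ and invoke (\ref{euler.as.4}) with $m=1$ and (\ref{euler.as.3}) with $m=1$, which give $\|Y^k u^0_{eY}\|_{L^\infty} + \|Y^k v^0_{ex}\|_{L^\infty} < \infty$ for every $k \ge 0$. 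This yields boundedness and rapid decay of $f_e$ itself.

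Finally, for derivatives, I would differentiate the identity $w^0_e = f_e(\phi^0)$ in $Y$, giving $u^0_e \cdot f_e'(\phi^0) = \p_Y w^0_e = u^0_{eYY} - v^0_{exY}$. Using the lower bound $u^0_e \ge c_0$, this produces $f_e'(\phi^0(x,Y)) = (u^0_{eYY} - v^0_{exY})/u^0_e$, and the right-hand side is bounded with arbitrary $Y^k$-decay by (\ref{euler.as.3}) and (\ref{euler.as.4}) with $m = 2$. Iterating this procedure, i.e. applying $\p_Y$ repeatedly to $w^0_e = f_e(\phi^0)$ and solving for $f_e^{(j)}(\phi^0)$ inductively, one expresses each derivative $f_e^{(j)}(\phi^0)$ as a rational combination of derivatives of $u^0_e, v^0_e$ with $u^0_e$ in the denominator. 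Each such combination is controlled, together with arbitrary $Y^k$-weights, by (\ref{euler.as.1}), (\ref{euler.as.3}), (\ref{euler.as.4}), and converting $Y^k$ into $(\phi^0)^k \sim s^k$ completes the claim.

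The main technical point I would be most careful about is the inductive step for high derivatives: at order $j$ one generates terms of the form $(u^0_e)^{-(2j-1)}$ multiplied by polynomials in the derivatives of $\phi^0$, and one must check that every such term inherits the desired $Y^k$-decay. This is essentially a bookkeeping exercise using the Fa\`a di Bruno formula, with the key ingredient being that $u^0_e$ stays bounded away from zero uniformly in $(x,Y)$ by (\ref{euler.as.1}), and all higher derivatives of $u^0_e, v^0_e$ decay polynomially by (\ref{euler.as.3})-(\ref{euler.as.4}).
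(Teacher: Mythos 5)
Your proposal is correct and follows the same route the paper indicates (the paper essentially compresses the argument into the sentence preceding the lemma): the two-sided bound $c_0 Y \le \phi^0 \le C_0 Y$ from (\ref{euler.as.1}) converts $Y$-decay of the vorticity $w^0_e = u^0_{eY} - v^0_{ex}$, supplied by (\ref{euler.as.3})--(\ref{euler.as.4}), into decay of $f_e$ in its argument, and derivatives are handled by repeatedly dividing $\p_Y$ of the identity $w^0_e = f_e(\phi^0)$ by $\p_Y\phi^0 = u^0_e \ge c_0$. Your additional verification that $f_e$ is well-defined via constancy of vorticity on the (connected) level sets of $\phi^0$ is a sound supplement to what the paper takes for granted.
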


The above lemma is in spirit a converse to Proposition \ref{eul.exist}, which will be convenient for later constructions (see specifically equation (\ref{data.0})). 

In accordance with (\ref{sp.eq.pr.0}) and (\ref{Rv.1}), we will take the following system for the leading order Prandtl layer: 
\begin{align} \n
& \{u^0_e(x,0) + u^0_p\} u^0_{px} + u^0_{ex}(x,0) u^0_p + \{y v^0_{eY}(x,0) + v^0_p + v^1_e(x,0) \} u^0_{py} \\ \label{sys.pr.0}
& \hspace{10 mm} + P^0_{px} - u^0_{pyy}, \hspace{5 mm} P^0_{py} = 0, \\
& u^0_p(x,0) = u_b - u^0_e(x,0), \hspace{5 mm} u^0_p(0,y) = u^0_{p,0}(y), \hspace{5 mm} v^0_p(x,0) = -v^1_e(x,0). 
\end{align}

\begin{remark}
By rewriting the system (\ref{sys.pr.0}) for the unknowns: 
\begin{align}
\bar{u} := u^0_e(x,0) + u^0_(x,y), \hspace{5 mm} \bar{v} = yv^0_{eY}(x,0)+ v^0_p(x,y) + v^1_e(x,0), 
\end{align}
we obtain: 
\begin{align} \n
&\bar{u} \bar{u}_x + \bar{v} \bar{u}_y - \bar{u}_{yy} = u^0_e(x,0) u^0_{ex}(x,0), \hspace{5 mm} \bar{v} = - \int_y^\infty \bar{u}_x, \\ \label{pr.bar}
& \bar{u}|_{y = 0} = u_b, \hspace{5 mm} \bar{u}|_{y = \infty} = u^0_e(x,0). 
\end{align}

By evaluating equation (\ref{euler.equation.0}) at $Y = 0$, we see that $u^0_e u^0_{ex}|_{Y = 0} = - P^0_{ex}|_{Y = 0}$. Note that we do not demand any sign condition on this forcing term. 
\end{remark}

For the system (\ref{sys.pr.0}), we have:  

\begin{proposition} There exists a unique solution, $[u^0_p, v^0_p]$, to the system (\ref{sys.pr.0}), satisfying the following: 
\begin{align} \label{prandtl.estimate}
\sup_x ||y^M \p_x^{j} \p_y^{k} \{u^0_p, v^0_p \}||_{L^2_y} \lesssim C(M, k, j). 
\end{align} 

Moreover, the following profile is strictly positive: 
\begin{align} \label{gtr.0}
u^0_p + u^0_e \gtrsim 1.
\end{align}
\end{proposition}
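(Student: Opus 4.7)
The plan is to view (\ref{sys.pr.0}) in its equivalent form (\ref{pr.bar}) and apply the classical Oleinik--Samokhin theory of the Prandtl equation in a channel with a moving wall. The key structural feature is that $u_b > 0$ and $u^0_e \ge c_0 > 0$ make the two Dirichlet values of $\bar{u} = u^0_e(x,0) + u^0_p$ at $y=0$ and $y\to\infty$ both strictly positive; together with compatibility at the corner $(x,y)=(0,0)$, which is guaranteed by the well-prepared assumption on $u^0_{p,0}$, this places the problem squarely in the non-degenerate Oleinik framework.

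Concretely, I would perform the von Mises change of variables $(x,y) \mapsto (x,\psi)$ with $\psi(x,y)=\int_0^y \bar{u}(x,y')\,dy'$ and pass to the unknown $w(x,\psi):=\bar{u}^2(x,y)$. Under this change (\ref{pr.bar}) becomes a quasilinear degenerate parabolic equation of the schematic form
\begin{align}
w_x - \sqrt{w}\, w_{\psi\psi} = -2\, u^0_e(x,0)\, u^0_{ex}(x,0),
\end{align}
with $w|_{\psi=0} = u_b^2$, $w|_{\psi\to\infty} = u^0_e(x,0)^2$, and initial datum $w_0(\psi) = (u^0_e(0,0) + u^0_{p,0})^2$ read off along the initial stream function. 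Oleinik's existence theorem yields a unique classical solution on $[0,L]\times[0,\infty)$, smooth up to $\psi=0$, and the parabolic maximum principle applied to $w$ gives a strictly positive lower bound $w \ge \frac{1}{4}\min(u_b^2, c_0^2) > 0$. Inverting the transformation produces $\bar{u}=\sqrt{w}>0$, hence $u^0_p = \bar{u} - u^0_e(x,0)$, and $v^0_p$ is recovered from the divergence-free relation subject to the prescribed boundary value at $y=0$. The strict positivity (\ref{gtr.0}) is then immediate: for $y \lesssim 1/\sqrt{\eps}$ one has $u^0_e(x,Y) = u^0_e(x,0) + \bigO(\sqrt{\eps}\,y)$, so $u^0_p + u^0_e(x,Y) \gtrsim \bar{u} - \bigO(\sqrt{\eps}\,y) \gtrsim 1$, while the rapid decay of $u^0_p$ as $y\to\infty$ reduces the statement to $u^0_e \ge c_0$ in that regime.

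For the weighted estimates (\ref{prandtl.estimate}) I would differentiate (\ref{sys.pr.0}) in $x$ and $y$ and test against $y^{2M}$-weighted copies of the corresponding unknowns, relying on the linear parabolic structure $\bar{u}\p_x - \p_{yy}$ to close energy estimates in $x$; the polynomial decay in $y$ is inherited from the exponential decay of $u^0_{p,0}$ and of the Euler-derived coefficients (assumptions (\ref{euler.as.3})--(\ref{euler.as.4})). The main obstacle is the transport term $y\, v^0_{eY}(x,0)\, u^0_{py}$, which loses a power of $y$ relative to the weight one is propagating. The clean way around this is to first establish Gaussian-type decay $|u^0_p| \lesssim e^{-c y^2/(1+x)}$ via a weighted maximum principle comparing with an explicit supersolution, and then read off (\ref{prandtl.estimate}) at any polynomial weight by absorbing the $y$-loss into the exponential tail; higher derivatives follow by bootstrapping after differentiating the equation and iterating the same scheme.
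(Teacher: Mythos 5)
Your proposal is correct and follows essentially the same route as the paper, which itself only sketches the argument as "an appropriate von-Mises transformation, an application of the standard parabolic maximum principle, and energy estimates in a very similar manner to [GN]." You in fact supply more detail than the paper does — notably the observation that the drift term $y\,v^0_{eY}(x,0)\,u^0_{py}$ loses a weight, and the fix via a Gaussian supersolution — which is a legitimate way to close the weighted estimates (the smallness of $\|v^0_e/Y\|_{L^\infty}$ and of $L$ keeps both the drift and the sign-indefinite forcing $2P^0_{ex}|_{Y=0}$ from destroying the maximum-principle lower bound).
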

\begin{proof}

The proof follows via an appropriate von-Mises transformation, an application of the standard parabolic maximum principle, and energy estimates in a very similar manner to \cite{GN}. We therefore omit the proof. 

\end{proof}

We will next move to the first Euler layer, which in accordance to (\ref{sp.eq.eul.1}) and (\ref{Rv.2}) is obtained via the following system: 
\begin{align} \label{sys.eul.1}
&u^0_e u^1_{ex} + u^0_{ex} u^1_e + v^0_e u^1_{eY} + v^1_e u^0_{eY} + P^1_{ex} = 0, \\ \label{sys.eul.2}
&u^0_e v^1_{ex} + u^1_{e} v^0_{ex} + v^0_e v^1_{eY} + v^0_{eY} v^1_e + P^1_{eY} = 0, \\ 
& u^1_{ex} + v^1_{eY} = 0. 
\end{align}

By going to the stream function formulation, where $\nabla^\perp \phi^1 = [u^1_e, v^1_e]$, we have: 
\begin{align} \label{data.0}
&- \Delta \phi^1 = f_e'(\phi^0) \phi^1, \\ \label{data.1}
&\phi_x^1(x,0) = -v^1_e(x,0) = v^0_p(x,0), \Rightarrow \phi^1(x,0) = 1 + \int_0^x v^0_p(x', 0) \ud x', \\ \label{data.2}
&\phi^1(0,y) = \phi^1_0(y), \hspace{5 mm} \phi^1(L, y) = \phi^1_L(y).
\end{align}

We assume the data in (\ref{data.1}), (\ref{data.2}) are well-prepared in the following sense: 
\begin{definition}[Well Prepared Boundary Data] \label{defn.WP} There exists a value of $\phi^1_{YY}|_{Y = 0}$ which is given by evaluating equation (\ref{data.0}) on $Y = 0$ and using (\ref{data.1}): $\phi^1_{YY}(x,0) = -\phi^1_{xx}(x,0) - f'_e(\phi^0) \phi^1(x,0)$. The value of $\phi^1_{YY}(x,0)|_{x = 0}$ should equal $\p_{YY}\phi^1_0|_{Y = 0}$. Similarly, $\phi^1_{YY}(x,0)|_{x = 0} = \p_{YY} \phi^1_L|_{Y = 0}$. If this is the case, we say the boundary data are well-prepared up to order $2$. The generalization to order $k$ is obtained by repeating the above procedure. 
\end{definition}

By standard elliptic regularity, one has: 
\begin{lemma} \label{lemma.a4} Assuming well-prepared boundary data, there exists a solution $\phi^1$ to the system (\ref{data.0}) -  (\ref{data.2}), satisfying the following estimate:
\begin{align} \label{euler.estimate}
||Y^m \phi^1||_{H^k} \lesssim_{k,m} 1.
\end{align}
\end{lemma}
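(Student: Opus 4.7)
The plan is to follow a standard homogenization-plus-Lax-Milgram argument, augmented with weighted-in-$Y$ bootstrapping. First I would construct a smooth extension $B(x,Y)$ of the prescribed Dirichlet data that agrees with $\phi^1_0$ at $x=0$, with $\phi^1_L$ at $x=L$, with $1+\int_0^x v^0_p(x',0)\ud x'$ at $Y=0$, and decays rapidly in $Y$. Such an extension can be built by a trace-type interpolation, e.g.\ $B(x,Y)=\chi(Y)\bigl(1+\int_0^x v^0_p(x',0)\ud x'\bigr)+\tfrac{L-x}{L}\bigl(\phi^1_0(Y)-\chi(Y)\phi^1_0(0)\bigr)+\tfrac{x}{L}\bigl(\phi^1_L(Y)-\chi(Y)\phi^1_L(0)\bigr)$ for a cutoff $\chi$ equal to $1$ near $Y=0$. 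The well-preparedness up to order $k$ from Definition \ref{defn.WP} is exactly what is needed to make $B$ of class $C^k$ across the two corners. Setting $\bar\phi:=\phi^1-B$, the unknown satisfies
\begin{equation*}
-\Delta\bar\phi - f_e'(\phi^0)\bar\phi = F,\qquad \bar\phi|_{\p\Omega}=0,
\end{equation*}
with $F:=\Delta B + f_e'(\phi^0)B$ smooth, rapidly decaying in $Y$ and $\bigO(1)$ in every Sobolev norm by Lemma \ref{lemma.a1} together with the decay assumptions on the boundary data and on $v^0_p$ in \eqref{prandtl.estimate}.

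Next I would obtain the basic energy estimate by testing against $\bar\phi$. The bilinear form $a(u,v):=\int\nabla u\cdot\nabla v - \int f_e'(\phi^0)uv$ is coercive on $H^1_0(\Omega)$ for $L$ sufficiently small, because Poincaré's inequality in the $x$-variable on $(0,L)$ absorbs the lower-order term $\|f_e'\|_{L^\infty}\|\bar\phi\|_{L^2}^2$ into $\bigO(L^2)\|\bar\phi_x\|_{L^2}^2$. Lax-Milgram then produces a unique weak solution with $\|\nabla\bar\phi\|_{L^2}\lesssim\|F\|_{L^2}\lesssim 1$.

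For the weighted and higher-order estimates I would bootstrap exactly as in the proof of \eqref{hove}. Multiplying the equation by $Y^{2m}\bar\phi$ and integrating by parts produces commutator terms of the form $\int Y^{2m-1}\bar\phi\,\bar\phi_Y$, controlled by the lower-weight estimate already in hand and by the rapid $Y$-decay of $F$. Differentiating the equation in $x$ (which preserves the zero Dirichlet condition on $\{Y=0\}$ since $\bar\phi$ vanishes there for all $x$, and which is compatible on $\{x=0,L\}$ by well-preparedness) yields control of $\p_x^j\bar\phi$ in $H^1$; the remaining $Y$-derivatives are recovered algebraically from $\p_{YY}\bar\phi=-\p_{xx}\bar\phi-f_e'(\phi^0)\bar\phi-F$. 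Iterating gives the claimed bound $\|Y^m\phi^1\|_{H^k}\lesssim_{k,m}1$.

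The main obstacle is regularity at the two corners $(0,0)$ and $(L,0)$: for a mixed Dirichlet problem on a quarter plane these generate $r^{\alpha}$ singularities which would preclude any unweighted $H^k$ bound for $k\ge 2$. This is precisely the role of Definition \ref{defn.WP}: the compatibility conditions recursively enforce that every normal derivative of $\phi^1$ at the corners (computed from the equation) matches the corresponding derivative of the prescribed data, so that $B$ can be taken $C^k$ through the corner and the homogenized equation is genuinely smooth up to the boundary. A secondary technical point is that $\Omega$ is a semi-infinite strip rather than a bounded domain; this is handled by the rapid $Y$-decay of all inputs together with the decay of $f_e'(\phi^0)$ furnished by Lemma \ref{lemma.a1}, which closes the weighted energy argument.
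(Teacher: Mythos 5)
Your overall strategy --- build a corrector $B$ attaining the Dirichlet data, homogenize to $\bar\phi=\phi^1-B$, run an energy estimate whose coercivity comes from Poincar\'e in $x$ and the smallness of $L$, and then bootstrap to weighted higher-order bounds using well-preparedness --- is the same as the paper's. The paper's corrector is the multiplicative interpolant $B=(1-\tfrac{x}{L})\phi^1_0(Y)\phi^1(x,0)+\tfrac{x}{L}\tfrac{\phi^1_L(Y)}{\phi^1(L,0)}\phi^1(x,0)$ rather than your additive cutoff version, but both attain the data under the same corner compatibilities, so this difference is cosmetic.

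There is, however, one step in your bootstrap that would fail as written: you propose to differentiate the equation in $x$ and claim the resulting problem for $\p_x^j\bar\phi$ is ``compatible on $\{x=0,L\}$ by well-preparedness.'' It is not. The problem prescribes only Dirichlet data $\phi^1|_{x=0}=\phi^1_0(Y)$, $\phi^1|_{x=L}=\phi^1_L(Y)$; the traces $\p_x\phi^1|_{x=0,L}$ are unknown, so the $x$-differentiated problem has no usable boundary data on the lateral edges and the energy estimate for it produces uncontrolled boundary terms there. Well-preparedness (Definition \ref{defn.WP}) constrains $\p_Y^2\phi^1$ at the corners, not $\p_x\phi^1$ on the sides. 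The paper instead differentiates \emph{tangentially to the lateral boundaries}, i.e.\ applies $\p_Y^2$: the new unknown $\p_Y^2\phi^1$ has known Dirichlet data $\p_Y^2\phi^1_{0,L}$ at $x=0,L$ and, by well-preparedness, a value at $Y=0$ computable from the equation, so a fresh corrector and energy estimate give $\|Y^m\{\phi^1_{YY},\phi^1_{YYY},\phi^1_{YYx}\}\|_{L^2}\lesssim 1$. The missing derivatives are then recovered without further differentiation in $x$: Hardy's inequality gives $\phi^1_{xY}$, the equation gives $\phi^1_{xx}$, and taking $\p_Y$ and $\p_x$ of the equation (used algebraically, not as new boundary-value problems) gives $\phi^1_{xxY}$ and $\phi^1_{xxx}$; an $L^\infty$ bound on $\phi^1_Y$ from the $H^3$ control then allows the whole procedure to iterate. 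You should replace the $x$-differentiation step with this $\p_Y^2$-plus-equation scheme; the rest of your argument then goes through.
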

\begin{proof}
Introduce the corrector: 
\begin{align} \label{BLC}
B(x,Y) = (1 - \frac{x}{L}) \phi^1_{0}(Y) \phi^1(x,0) + \frac{x}{L} \frac{\phi^1_{L}(Y)}{\phi^1(L,0)} \phi^1(x,0).
\end{align}

By definition, $B$ is regular and decays exponentially fast in $Y$. Homogenizing: 
\begin{align}
\bar{\phi} = \phi^1 - B, 
\end{align}

we have: 
\begin{align}
-\Delta \bar{\phi} - f'(\phi^0) \bar{\phi} = -\Delta B - f'(\phi^0) B, \hspace{5 mm} \bar{\phi}|_{\p \Omega} = 0.
\end{align}

As our boundary data are well-prepared according to Definition \ref{defn.WP}, we may take $\p_Y^2$ of the system and repeat the procedure. In particular: 
\begin{align}
-\Delta \p_{Y}^2 \phi^1 - f'_e(\phi^0) \p_Y^2 \phi^1 = 2f''(\phi^0) \phi^0_y \phi^1_Y + f'''(\phi^0) |\phi^0_Y|^2 \phi^1 + f''(\phi^0) \phi^0_{YY} \phi^1.
\end{align}

One may define the new corrector $B$ analogously to (\ref{BLC}) and perform standard elliptic estimates to conclude that: 
\begin{align}
||Y^m \{ \phi^1_{YYY}, \phi^1_{YYX}, \phi^1_{YY} \}||_{L^2} \lesssim 1. 
\end{align}

By Hardy inequality, as all derivatives of $\phi^1$ decay as $Y \rightarrow \infty$, we can conclude: 
\begin{align}
|| Y^m \phi^1_{xY}||_{L^2} \lesssim 1. 
\end{align}

From equation (\ref{data.0}), it is clear that: 
\begin{align}
||\phi^1_{xx} Y^m||_{L^2} \lesssim 1. 
\end{align}

We have thus obtained all $H^2$ quantities. Taking $\p_Y$ of (\ref{data.0}) enables us to estimate $\phi^1_{xxY}$ and taking $\p_x$ of (\ref{data.0}) enables us to estimate $\phi_{xxx}$, giving the full $H^3$ estimate. Next, we can conclude that:
\begin{align}
||\phi^1_Y||_{L^\infty([0,\infty])} \le ||\phi^1_Y||_{H^1((0,L))} \le ||\phi^1||_{H^3} \lesssim 1.
\end{align}

This enables us to iterate the procedure. 

\end{proof}

In accordance to the (\ref{sp.eq.pr.1}) and (\ref{Rv.1}), we will take the following system for the Prandtl-1 layer: 
\begin{align} \label{sys.pr.1}
&u^0 u_{px} + u^0_x u_p + v^0 u_{py} + u^0_y v_p - u_{pyy} = F_1, \hspace{5 mm} P^1_{py} = 0, \\
&u_{px} + v_{py} = 0,  \hspace{5 mm} u^1_p(x,0) = -u^1_e(x,0), \hspace{5 mm} u^1_p(0,y) = u^1_{p0}(y), \\
& u^0 := u^0_e(x,0) + u^0_p, \hspace{5 mm} v^0 := y v^0_{eY}(x,0) + v^0_p + v^1_e(x,0).
\end{align}

Here, $v_p$ will be recovered via: 
\begin{align}
v_p = \int_0^y u^1_{px} \ud y'.
\end{align}

We will homogenize in the following manner: define $\chi$ such that: 
\begin{align}
\chi(0) = 1, \hspace{5 mm} \p_y^k \chi(0) = 0 \text{ for } k \ge 1, \hspace{5 mm} \int_0^\infty \chi \ud y = 0. 
\end{align}

Then define: 
\begin{align} \label{aux.u.u}
u = u^1_p + \chi(y) u^1_e(x,0), \hspace{5 mm} v  = v^1_p  + u^1_{ex}(x,0) I_\chi(y), \hspace{5 mm} I_\chi(y) = \int_y^\infty \chi.
\end{align}

The new unknowns, $[u,v]$ satisfy the following system: 
\begin{align} \label{pr.1.hom}
&u^0 u_{x} + u^0_x u + v^0 u_{y} + u^0_y v - u_{yy} = F_1 + H_1, \\ \n
&H_1 := u^0 \chi u^1_{ex}(x,0) + u^0_x \chi u^1_e(x,0) \\
& \hspace{10  mm} + v^0 \chi' u^1_e(x,0) + u^0_{y} I_\chi u^1_{ex}(x,0) - \chi'' u^1_e(x,0) .
\end{align}

We recall the definition of $F_1$ given in (\ref{defn.F1}). Furthermore, we have the following estimate on the forcing: 
\begin{lemma} \label{lemma.a5} For any $m, k, j \ge 0$, the following estimate for $F_1$ holds: 
\begin{align}
||\langle y \rangle^m \p_y^k \p_x^j F_1||_{L^2}^2 \lesssim_{m,k,j} 1.
\end{align}
\end{lemma}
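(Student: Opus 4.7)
The plan is to decompose $F_1$ (from (\ref{defn.F1})) into its seven constituent summands and estimate each of them in the weighted Sobolev norm, exploiting the rapid decay in $y$ of the Prandtl profile factors (estimate (\ref{prandtl.estimate})) to absorb the weight $\langle y \rangle^m$ and the differentiability of the Euler profiles (assumptions (\ref{euler.as.3})--(\ref{euler.as.4}) and the bound (\ref{euler.estimate})) to absorb derivatives. The four non-singular summands $v^0_p\{u^0_{eY}+\sqrt{\eps}u^1_{eY}\}$, $y v^1_{eY} u^0_{py}$, $u^0_p u^1_{ex}$ and $u^1_e u^0_{px}$ are immediate: writing $\p_x^j \p_y^k$ and distributing by Leibniz, each resulting summand is a product of a rapidly decaying Prandtl factor (controllable in any $\langle y\rangle^m L^2_y$ norm by (\ref{prandtl.estimate})) and an $L^\infty$ Euler factor.

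The delicate summands are the three singular ones:
\begin{align*}
T_1 := \frac{1}{\sqrt{\eps}}\Big[\frac{v^0_e}{\sqrt{\eps}} - y v^0_{eY}(x,0)\Big] u^0_{py}, \quad T_2 := \frac{u^0_e - u^0_e(x,0)}{\sqrt{\eps}} u^0_{px}, \quad T_3 := \frac{u^0_{ex} - u^0_{ex}(x,0)}{\sqrt{\eps}} u^0_p.
\end{align*}
For $T_1$, recalling $Y = \sqrt{\eps}y$ and using the no-penetration condition $v^0_e(x,0)=0$, I would Taylor expand to second order:
\begin{align*}
v^0_e(x,\sqrt{\eps}y) = \sqrt{\eps}y \, v^0_{eY}(x,0) + \eps y^2 \int_0^1 (1-s)\, v^0_{eYY}(x, s\sqrt{\eps}y) \ud s.
\end{align*}
Dividing by $\eps$ cancels both singular $\sqrt{\eps}^{-1}$ factors and yields
$T_1 = y^2 u^0_{py}\int_0^1 (1-s) v^0_{eYY}(x, s\sqrt{\eps}y) \ud s$; the $v^0_{eYY}$ factor is uniformly bounded by (\ref{euler.as.3}) and the $y^2 u^0_{py}$ factor is in any $\langle y\rangle^m L^2$ by (\ref{prandtl.estimate}). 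The terms $T_2$ and $T_3$ are handled analogously with a first-order Taylor expansion: $u^0_e(x,\sqrt{\eps}y)-u^0_e(x,0) = \sqrt{\eps}y\int_0^1 u^0_{eY}(x,s\sqrt{\eps}y)\ud s$, giving $T_2 = y\, u^0_{px}\int_0^1 u^0_{eY}(x, s\sqrt{\eps}y)\ud s$ (and similarly for $T_3$), whose Prandtl factor again provides rapid decay.

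For general $\p_x^j\p_y^k$ the same mechanism persists. Every $\p_x$ commutes with the Taylor representation and produces extra $x$-derivatives of the bounded Euler profile and of the rapidly decaying Prandtl factor, both controlled. Every $\p_y$ falling on the integral representation (e.g.\ on $v^0_{eYY}(x,s\sqrt{\eps}y)$) produces an extra factor $s\sqrt{\eps}$ together with a further $\p_Y$ of the Euler profile, which only \emph{improves} the estimate; every $\p_y$ falling on the Prandtl factor yields another rapidly decaying term. Multiplying by $\langle y\rangle^m$ and integrating in $(x,y)$ then closes the bound. The only genuine subtlety is bookkeeping at $T_1$: one must use second-order (rather than first-order) Taylor expansion, since the prefactor $1/\sqrt{\eps}$ combines with the interior $v^0_e/\sqrt{\eps}$ to produce a na\"ive $\eps^{-1}$ singularity that requires the \emph{two} degrees of vanishing at $Y=0$ provided by $v^0_e(x,0)=0$ and the explicit subtraction of $yv^0_{eY}(x,0)$. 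Once this cancellation is organized, everything else reduces to the uniform bounds of Lemmas \ref{lemma.a1}, \ref{lemma.a4} and (\ref{prandtl.estimate}).
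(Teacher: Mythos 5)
Your proposal is correct and follows the same route the paper intends: the paper's proof is the one-line assertion that the estimate ``follows directly due to the smoothness and rapid decay properties of $[u^0_p, v^0_p]$,'' and your argument simply makes explicit the mechanism behind that assertion. In particular, your Taylor expansions at $Y=0$ (second order for the $v^0_e$ term using $v^0_e(x,0)=0$, first order for the $u^0_e$ and $u^0_{ex}$ terms) correctly exhibit the cancellation of the $\eps^{-1/2}$ and $\eps^{-1}$ prefactors that the subtractions in the definition (\ref{defn.F1}) are designed to produce, after which every summand is a bounded Euler factor times a rapidly decaying Prandtl factor controlled by (\ref{prandtl.estimate}).
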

\begin{proof}
The proof follows directly due to the smoothness and rapid decay properties of $[u^0_p, v^0_p]$. 
\end{proof}

\begin{lemma}Solutions $[u,v]$ as defined in (\ref{aux.u.u}) to the problem (\ref{pr.1.hom}) satisfy the following estimate: 
\begin{align}
\sup_{x \in [0, L]} ||u||_{L^2_y}^2  + ||u_y||_{L^2}^2 \le C(u^1_{p0}) + \bigO(L)  ||u_x||_{L^2}^2. 
\end{align}
\end{lemma}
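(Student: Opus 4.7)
The plan is to run a standard parabolic energy estimate on the homogenized Prandtl--1 equation (\ref{pr.1.hom}) by testing against $u$ and integrating on the strip $(0,x^\ast)\times(0,\infty)$ for an arbitrary $x^\ast\in[0,L]$. The key structural features to exploit are that $[u,v]$ is divergence free ($u_x+v_y=0$, which follows from the specific form of $v$ in (\ref{aux.u.u}) together with $\int_0^\infty\chi=0$), that the base profile $[u^0,v^0]$ is also divergence free (as can be verified by evaluating $\partial_x u^0_e(x,0)+v^0_{eY}(x,0)=0$ from (\ref{euler.equation.0}) and using $u^0_{px}+v^0_{py}=0$), and that both $v$ and $v^0$ vanish on $y=0$. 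The boundary conditions $u|_{y=0}=0$ (which follows from $\chi(0)=1$ and $u^1_p(x,0)=-u^1_e(x,0)$) and rapid decay as $y\to\infty$ eliminate boundary terms.

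First, I would handle the convective block. The combination $u^0 u_x + u^0_x u$, tested against $u$, yields
\[
\int u^0 u u_x + u^0_x u^2\,dy = \frac{d}{dx}\Bigl[\tfrac{1}{2}\int u^0 u^2\,dy\Bigr] + \tfrac{1}{2}\int u^0_x u^2\,dy,
\]
while integration by parts in $y$, using $v^0|_{y=0}=0$ and $u|_{y=0}=0$, gives
\[
\int v^0 u_y u\,dy = -\tfrac{1}{2}\int v^0_y u^2\,dy = \tfrac{1}{2}\int u^0_x u^2\,dy.
\]
The viscosity term produces $-\int u_{yy} u\,dy = \int u_y^2\,dy$. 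Integrating in $x$ and using the strict positivity $u^0 \gtrsim 1$ from (\ref{gtr.0}), the left hand side controls $\|u(x^\ast)\|_{L^2_y}^2 + \int_0^{x^\ast}\|u_y\|_{L^2_y}^2\,dx$, while the initial trace at $x=0$ contributes a constant depending on $u^1_{p0}$ and $u^1_e(0,0)$, hidden in $C(u^1_{p0})$.

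The one delicate term is $\int u^0_y v u\,dy$, which is the main obstacle. Here I would invoke Hardy's inequality in $y$, using $v|_{y=0}=0$: since $v_y = -u_x$, one has $\|v/y\|_{L^2_y}\lesssim \|u_x\|_{L^2_y}$, so
\[
\Bigl|\int u^0_y v u\,dy\Bigr|\le \|y\,u^0_y\|_{L^\infty}\,\|v/y\|_{L^2_y}\,\|u\|_{L^2_y}\lesssim \|u_x\|_{L^2_y}\,\|u\|_{L^2_y},
\]
and $\|y u^0_y\|_{L^\infty}<\infty$ follows from Lemma \ref{Lemma.Unif} applied to $u^0_e$ and $u^0_p$. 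Integrating in $x$ and applying Cauchy--Schwarz plus Young,
\[
\int_0^{x^\ast}\!\int u^0_y v u\,dy\,dx \lesssim \sqrt{L}\,\sup_x\|u\|_{L^2_y}\,\|u_x\|_{L^2} \le \delta\sup_x\|u\|_{L^2_y}^2 + \tfrac{L}{\delta}\|u_x\|_{L^2}^2,
\]
which is of the admissible form $\bigO(L)\|u_x\|_{L^2}^2$ plus an absorbable term.

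The remaining pieces are routine: $\int_0^L\!\int u^0_x u^2 \lesssim L\|u^0_x\|_{L^\infty}\sup_x\|u\|_{L^2_y}^2$, absorbable for small $L$; and the forcing $\int(F_1+H_1)u$ is controlled by $\|F_1+H_1\|_{L^2}\|u\|_{L^2(\text{strip})}\le C(u^1_{p0}) + L\sup_x\|u\|_{L^2_y}^2$, using Lemma \ref{lemma.a5} for $F_1$ and the smoothness/decay of $\chi$, $u^1_e(x,0)$, and the Euler profiles for $H_1$. Collecting everything and choosing $L$ sufficiently small to absorb the $\bigO(L)\sup_x\|u\|_{L^2_y}^2$ contributions to the left, one arrives at the claimed inequality
\[
\sup_{x\in[0,L]}\|u\|_{L^2_y}^2 + \|u_y\|_{L^2}^2 \le C(u^1_{p0}) + \bigO(L)\|u_x\|_{L^2}^2.
\]
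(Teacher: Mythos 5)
Your proposal is correct and follows essentially the same route as the paper: test (\ref{pr.1.hom}) against $u$, integrate over the strip, use $u^0\gtrsim 1$ for the trace term and the viscosity term for $\|u_y\|_{L^2}^2$, and control the dangerous convective term $\int u^0_y uv$ via Hardy with the weight $\|y\,u^0_y\|_{L^\infty}$, trading $v/y$ for $u_x$. The paper's proof is just a terser version of exactly this computation, with the same key factor $\|u^0_y\cdot y, u^0_x\|_{L^\infty}$ appearing in its final estimate.
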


\begin{proof}

One applies $u$ to the above system, (\ref{pr.1.hom}), and integrates: 
\begin{align} 
\int \Big( u^0u_x + u^0_x u + v^0 u_y + u^0_y v \Big) \cdot u = \int \{F_1 + H_1\} \cdot u.
\end{align}

The result follows upon integrating in $x$ and estimating: 
\begin{align} \n
|\int \int u^0_y uv| &+ |\int \int u^0_x u^2| + |\int \int \{F_1 + H_1 \}u| \\
& \le C(u^1_{p0}) + \bigO(L) ||u^0_y \cdot y, u^0_x||_{L^\infty} \Big[||u_x||_{L^2}^2 + ||F_1, H_1||_{L^2}^2 \Big].
\end{align}

\end{proof}

\begin{lemma} Solutions $[u,v]$ as defined in (\ref{aux.u.u}) to the problem (\ref{pr.1.hom}) satisfy the following estimate:
\begin{align}
||u_x||_{L^2}^2 + \sup ||u_y||_{L^2_y}^2 \lesssim  C(u^1_{p0}) + ||u_y||_{L^2}^2 +  C(v^0_e) || y u_y||_{L^2}^2.
\end{align}
\end{lemma}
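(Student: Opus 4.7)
The plan is to apply the multiplier $u_x$ to equation (\ref{pr.1.hom}) and integrate over the strip $(0, X) \times (0, \infty)$ for an arbitrary $X \in [0, L]$, then take $\sup_X$ at the end. This choice is natural: the convective term $u^0 u_x \cdot u_x$ yields coercivity over $||u_x||_{L^2}^2$ using (\ref{gtr.0}); the diffusion $-u_{yy} \cdot u_x$, after integration by parts in $y$, becomes $\int \tfrac{1}{2}\p_x u_y^2 \, dy$, which upon integration in $x$ generates precisely $\tfrac{1}{2}||u_y(X,\cdot)||_{L^2_y}^2 - \tfrac{1}{2}||u_y(0,\cdot)||_{L^2_y}^2$. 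The $y = 0$ boundary contribution from this integration by parts vanishes because the homogenization (\ref{aux.u.u}) with $\chi(0) = 1$ gives $u(x, 0) = -u^1_e(x, 0) + u^1_e(x, 0) = 0$, hence $u_x(x, 0) = 0$; the $x = 0$ contribution is absorbed into $C(u^1_{p0})$.

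The first-order transport terms are handled as follows. For $\int u^0_x u \cdot u_x$, one uses $u(x, y) = u^1_{p0}(y) + \int_0^x u_x$ to estimate $||u||_{L^2} \lesssim \sqrt{L}\,||u^1_{p0}||_{L^2} + L\,||u_x||_{L^2}$, which after Young's inequality is absorbable for $L$ small. The term $\int u^0_y v \cdot u_x = \int u^0_{py}\, v\, u_x$ is the most delicate bulk term: using $v(x, 0) = 0$ together with the divergence-free relation $v_y = -u_x$ for the homogenized $[u, v]$, Hardy's inequality gives $||v/y||_{L^2} \lesssim ||u_x||_{L^2}$; coupling with $||y u^0_{py}||_{L^\infty} < \infty$ from (\ref{prandtl.estimate}) bounds the term by a profile-dependent constant times $||u_x||_{L^2}^2$, which is absorbable into the coercive term when $L$ is small. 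The forcing $\int (F_1 + H_1) u_x$ is dispatched by Cauchy-Schwarz using Lemma \ref{lemma.a5} and the Euler estimate (\ref{euler.estimate}), contributing to $C(u^1_{p0})$.

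The critical term is $\int v^0 u_y u_x$. Splitting $v^0 = y v^0_{eY}(x, 0) + v^0_p + v^1_e(x, 0)$, the leading piece obeys
\begin{align*}
\Big| \int y\, v^0_{eY}(x, 0)\, u_y\, u_x \Big| \le ||v^0_{eY}(\cdot, 0)||_{L^\infty_x} \, ||y u_y||_{L^2} \, ||u_x||_{L^2}.
\end{align*}
Because $v^0_e|_{Y = 0} = 0$, we have $v^0_{eY}(x, 0) = \lim_{Y \to 0} v^0_e(x, Y)/Y$, so the prefactor is bounded by $||v^0_e/Y||_{L^\infty}$, the $C(v^0_e)$-small parameter from (\ref{euler.as.2}); Young's inequality then generates precisely the permitted $C(v^0_e)\, ||y u_y||_{L^2}^2$ on the right, together with a piece absorbable into $||u_x||_{L^2}^2$ on the left. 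The lower-order pieces $v^0_p + v^1_e(x, 0)$ are bounded in $L^\infty$ and produce only $||u_y||_{L^2}^2$-type contributions, also permitted. Assembling all estimates, choosing Young parameters and $L$ small to absorb, and finally taking $\sup_X$ yields the claimed estimate. The main obstacle, consistent with the overarching theme of the paper, is the $y$-weight loss forced by the non-shear Euler contribution $y\, v^0_{eY}(x, 0)$ to $v^0$: this is precisely why $||y u_y||_{L^2}^2$ must appear on the right-hand side, and it is only controllable because (\ref{euler.as.2}) supplies the corresponding smallness.
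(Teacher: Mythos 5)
Your overall strategy is close in spirit to the paper's, but there is one genuine gap, and it sits exactly at the point where the paper's proof is forced to be more clever. The paper does not test against $u_x$; it substitutes $v = \beta u^0$ and tests against $-\beta_y = -\p_y(v/u^0)$. The reason is the term you call ``the most delicate bulk term,'' $\int u^0_y v\, u_x = \int u^0_{py}\, v\, u_x$. Your bound for it is correct as an inequality --- Hardy plus $\|y\,u^0_{py}\|_{L^\infty}<\infty$ gives $C\,\|u_x\|_{L^2}^2$ --- but the constant $C$ is an order-one constant determined by the leading Prandtl profile $u^0_p$, with \emph{no} factor of $L$ and no smallness from (\ref{euler.as.2}). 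The coercive term $\int u^0 u_x^2$ only supplies the fixed constant $\inf u^0 \gtrsim 1$, and both sides scale identically in $L$, so there is nothing to absorb with: for a generic profile $2\|y\,u^0_{py}\|_{L^\infty}$ exceeds $\inf u^0$ and the estimate does not close. This is precisely the classical large-convective-term obstruction discussed in the paper's introduction.

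The paper's fix is algebraic rather than analytic: using $u_x = -v_y$ and $v=\beta u^0$, the combination $u^0 u_x + u^0_y v$ collapses to $-|u^0|^2\beta_y$, so testing against $-\beta_y$ produces the perfect square $\int |u^0|^2\beta_y^2$ and the dangerous term is cancelled identically rather than estimated. After that cancellation the remaining steps are essentially the ones you carry out: $\int u_{yy}\beta_y$ integrates by parts to $\tfrac{\p_x}{2}\int u_y^2/u^0$ (giving the $\sup_x\|u_y\|_{L^2_y}^2$ control), $\int u^0_x u\cdot\beta_y$ is $\bigO(L)$, and $\int v^0 u_y\,\beta_y$ is split exactly as you split it, with the leading piece controlled by $\|v^0_e/Y\|_{L^\infty}\|y u_y\|_{L^2}\|\beta_y\|_{L^2}$, which is where $C(v^0_e)\|yu_y\|_{L^2}^2$ enters. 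Your treatment of that critical term, of the boundary terms ($u(x,0)=0$, $v(x,0)=0$ for the homogenized unknowns), and of the forcing is correct; the proof is repaired by replacing the multiplier $u_x$ with $-\p_y(v/u^0)$ throughout.
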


\begin{proof}

Introduce $v = \beta u^0$. Then the system becomes: 
\begin{align*}
- u^0 v_y &+ u^0_y v + u^0_x u + v^0 u_y - u_{yy} = -|u^0|^2 \beta_y + u^0_x u + v^0 u_y - u_{yy}.
\end{align*}

Multiplying by $-\beta_y$ and integrating in $y$ yields: 
\begin{align} \n
\int |u^0|^2 \beta_y^2 &+ \int u_{yy} \beta_y = \int |u^0|^2 \beta_y^2 - \int u_y \beta_{yy} \\ \n
& = \int |u^0|^2 \beta_y^2 + \int u_y \frac{u_{xy}}{u^0} - \int 2u_y v_y \p_y \frac{1}{u^0} - \int u_y v \p_y^2 \frac{1}{u^0} \\ \n
& = \int |u^0|^2 \beta_y^2 + \frac{\p_x}{2} \int \frac{1}{u^0} u_y^2 - \int u_y^2 \p_x \{ \frac{1}{u^0} \} - 2 \int u_y v_y \p_y \frac{1}{u^0} \\
& - \int u_y v \p_y^2 \frac{1}{u^0}.
\end{align}

Upon integrating further in $x$, the final three terms above are estimated: 
\begin{align} \n
|- \int \int u_y^2 \p_x \{ \frac{1}{u^0} \}& - 2 \int \int u_y v_y \p_y \frac{1}{u^0} - \int \int u_y v \p_y^2 \frac{1}{u^0}| \\
& \lesssim ||u_y||_{L^2}^2 + \delta ||v_y||_{L^2}^2. 
\end{align}

The remaining terms, upon integrating in $x$: 
\begin{align}
&|\int \int u^0_x u \cdot \beta_y| \le \bigO(L) ||\beta_y||_{L^2}^2, \\
&|\int \int v^0 u_y \cdot \beta_y| \le C(v^0_e) ||yu_y||_{L^2} ||\beta_y||_{L^2}.
\end{align}

The right-hand side is estimated simply using Holder's inequality. 

\end{proof}

\begin{lemma}[Weighted Estimates] Solutions $[u,v]$ as defined in (\ref{aux.u.u}) to the problem (\ref{pr.1.hom}) satisfy the following estimate:
\begin{align}
 || \{u_y, u_{yy} \} \cdot y \chi(y)||_{L^2}^2 \lesssim 1 + ||u_x||_{L^2}^2 + ||u_y||_{L^2}^2.
\end{align}
\end{lemma}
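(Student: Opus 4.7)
The plan is to prove this weighted estimate by a direct algebraic argument from the PDE (\ref{pr.1.hom}), using the rapid decay of the cutoff $\chi$ and a Hardy-type control of $v$ afforded by the divergence-free structure; no new differential identity or novel multiplier is required.

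First, since $\chi$ is chosen so that $y^k\chi(y)$ is bounded for all $k\geq 0$ (implicit in the specification of $\chi$, as the homogenization (\ref{aux.u.u}) relies on $I_\chi$ being integrable and decaying), the multiplier $y\chi$ lies in $L^\infty$, so the $u_y$ portion of the estimate is immediate:
\[
\|u_y \cdot y\chi\|_{L^2}^2 \;\leq\; \|y\chi\|_{L^\infty}^2 \,\|u_y\|_{L^2}^2 \;\lesssim\; \|u_y\|_{L^2}^2.
\]

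For the $u_{yy}$ portion I would solve for $u_{yy}$ algebraically from (\ref{pr.1.hom}):
\[
u_{yy} \;=\; u^0 u_x + u^0_x u + v^0 u_y + u^0_y v - F_1 - H_1,
\]
multiply by $y\chi$, and control each term in $L^2$. The coefficients $u^0$ and $u^0_x$ are bounded by (\ref{euler.as.1}) and (\ref{prandtl.estimate}); $u^0_y$ decays rapidly in $y$; and $v^0 = y v^0_{eY}(x,0)+v^0_p+v^1_e(x,0)$ has at most linear growth in $y$, so $v^0 \cdot y\chi$ still lies in $L^\infty$. The forcing $F_1$ (together with derivatives) is controlled by Lemma \ref{lemma.a5}, and $H_1$ is an explicit tensor product of smooth Euler data with $\chi$, so each contributes an $L^2((0,L)\times \mathbb{R}_+)$ quantity bounded by an absolute constant.

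The one non-trivial step is the $v$ contribution. Using the divergence-free identity $u_x + v_y = 0$ which survives the homogenization (\ref{aux.u.u}), together with $v(x,0)=0$, I would write $v(x,y) = -\int_0^y u_x(x,y')\,dy'$ and apply Cauchy--Schwarz pointwise to obtain $|v(x,y)|^2 \leq y\int_0^\infty u_x(x,y')^2\,dy'$. Multiplying by $(y\chi)^2$, integrating in $y$ (using $\int_0^\infty y^3\chi^2\,dy < \infty$) and then in $x$ gives
\[
\|v\cdot y\chi\|_{L^2}^2 \;\lesssim\; \|u_x\|_{L^2}^2.
\]
Combining with the previous lemma's bound $\sup_x\|u\|_{L^2_y}^2 \lesssim 1 + \|u_x\|_{L^2}^2$, integrated over the short interval $(0,L)$, closes the estimate. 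The main obstacle, such as it is, is bookkeeping the profile weights so that the linear-in-$y$ growth of $v^0$ is always absorbed by the rapid decay of $\chi$; but unlike in the main body of the paper, here $\chi$ effectively truncates the problem to a bounded strip in $y$, so the standard Prandtl energy toolkit suffices without any new coercivity identity.
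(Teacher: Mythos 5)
Your argument is correct, but it is not the paper's. The paper proves this lemma the "hard" way, in parallel with Proposition \ref{weight.prop} of the main body: it applies $\p_y$ to (\ref{pr.1.hom}) to get $u^0 u_{xy} + u^0_{xy}u + v^0 u_{yy} + u^0_{yy}v - u_{yyy} = \p_y\{F_1+H_1\}$, tests against the multiplier $u_y \chi y^2 (1-x)$, and extracts the positive terms $\int u^0 u_y^2 y^2 \chi$ and $\int u_{yy}^2 y^2 \chi$ from the transport and diffusion terms by integration by parts, discarding the lower-order commutators. You instead exploit the fact that the weight $y\chi$ is bounded (so the $u_y$ half of the claim is immediate from the unweighted $\|u_y\|_{L^2}$ already on the right-hand side), and then read $u_{yy}$ off algebraically from the \emph{undifferentiated} equation, controlling the $v$ term via $v(x,y) = -\int_0^y u_x$ (which is legitimate here, since $u_x + v_y = 0$ survives the homogenization (\ref{aux.u.u}) and $v(x,0)=0$ because $I_\chi(0) = \int_0^\infty \chi = 0$) together with Cauchy--Schwarz. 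Both routes rely on the same unstated quantitative properties of $\chi$ (boundedness of $y^k\chi$ and integrability of $y^3\chi^2$, i.e.\ effectively compact support or rapid decay), so you are not assuming more than the paper does. What your approach buys is brevity and robustness (it does not need any sign information on $\chi$, whereas the paper's coercive term $\int \tfrac{u^0}{2}u_y^2 y^2 \chi$ implicitly does); what it gives up is that it only works because the weight is bounded --- the paper's differentiated-equation-plus-multiplier scheme is the one that survives when the weight $y$ is genuinely unbounded, which is exactly the situation in the main body's estimate (\ref{weight.est.1}), and the author is evidently running the same machinery here for uniformity. As a proof of the lemma as stated, your argument is complete.
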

\begin{proof}

Applying $\p_y$ to the system gives: 
\begin{align}
u^0 u_{xy} + u^0_{xy} u + v^0 u_{yy} + u^0_{yy}v - u_{yyy} = \p_y \{F_1 + H_1 \}.
\end{align}

We apply the multiplier $u_y \chi y^2 \cdot (1-x)$. The main positive terms are: 
\begin{align} \n
\int u^0 u_{xy} &\cdot u_y \chi y^2 (1-x) + \int v^0 u_{yy} \cdot u_y \chi y^2 (1-x) \\ \n
& =  \frac{\p_x}{2} \int u^0 u_y^2 y^2 \chi (1-x) + \int \frac{u^0}{2} u_y^2 y^2 \chi - \int \frac{u^0_x}{2} u_y^2 y^2 \chi (1-x) \\ \n
& - \int \frac{v^0_y}{2} u_y^2 \chi y^2 (1-x) - \int u_y^2 v^0 y (1-x) - \int \frac{u_y^2}{2} y^2 (1-x) \chi' \\ \n
& =  \frac{\p_x}{2} \int u^0 u_y^2 y^2 \chi (1-x) + \int \frac{u^0}{2} u_y^2 y^2 \chi - \int u_y^2 v^0 y (1-x) \\ \label{int.1}
& - \int \frac{u_y^2}{2} y^2 (1-x) \chi'.
\end{align}

Upon taking integration in $x$ from $0$ to $X_\ast$, we obtain using the smallness of $v^0_{eY}(x, 0)$: 
\begin{align}
\int_0^{X_\ast} (\ref{int.1}) \gtrsim \int_{x = X_\ast} u^0 u_y^2 y^2 \chi (1-X_\ast) + ||u_y y \chi||_{L^2}^2 - ||u_y||_{L^2}^2. 
\end{align}

The remaining terms, upon integrating in $x$ from $[0, X_\ast]$: 
\begin{align}
&|\int u^0_{xy} u \cdot u_y \chi y^2 (1-x)| \le ||u^0_{xy} y^2||_{L^\infty} ||u_y||_{L^2}^2, \\
&|\int u^0_{yy}v \cdot u_y \chi y^2 (1-x)| \le ||u^0_{yy} y^2||_{L^\infty} ||u_y||_{L^2}||v_y||_{L^2}, \\
&-\int u_{yyy} u_y \chi y^2 (1-x) \gtrsim ||u_{yy} y \chi||_{L^2}^2 - ||u_y||_{L^2}^2, \\
&|\int \p_y \{F_1 + H_1 \} \cdot u_y \chi y^2 (1-x)| \lesssim 1 + ||u_y||_{L^2}^2.
\end{align}

\end{proof}

Placing the above series of estimates together closes the basic estimate for $u^1_p$. It is possible to take $\p_x^k$ and repeat with weights $y^m$. We omit these details. Summarizing: 
\begin{lemma} For any $k, m \ge 0$, solutions $[u,v]$ as defined in (\ref{aux.u.u}) to the problem (\ref{pr.1.hom}) satisfy the following estimate:
\begin{align} \label{est.pr.1}
\sup_x || y^m \p_x^k u_p ||_{L^2_y} + ||y^m \p_x^k u_{py}||_{L^2} + ||v_p||_{L^\infty} \lesssim C(k,m).
\end{align}
\end{lemma}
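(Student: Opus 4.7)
The plan is to bootstrap the three-estimate scheme already executed in the preceding lemmas (basic energy, positivity via $\beta := v/u^0$, and weighted) to arbitrary polynomial weights $y^m$ and arbitrary numbers of tangential derivatives $\partial_x^k$, by a double induction on $(k,m)$. The key inputs are the smoothness and rapid decay in $y$ of $F_1$ (Lemma \ref{lemma.a5}) and of $H_1$ (direct from its definition together with the bounds on $u^1_e(x,0)$ coming from Lemma \ref{lemma.a4}), the Prandtl estimates (\ref{prandtl.estimate}) for the coefficients $u^0, v^0$, and the smallness assumption (\ref{euler.as.2}) on $\|v^0_e/Y\|_{L^\infty}$.

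For the weight step ($k=0$, $m$ arbitrary), the multipliers $u$, $-\beta_y$, $u_y\chi y^2(1-x)$ used in the three preceding lemmas are replaced by $u\cdot y^{2m}$, $-\beta_y\cdot y^{2m}$, and $u_y\, y^{2m}(1-x)$ respectively. The only delicate computation is the convective contribution $\int v^0 u_y \cdot \Phi$ for each multiplier $\Phi$: integrating by parts in $y$ and decomposing $v^0 = y\, v^0_{eY}(x,0) + v^0_p + v^1_e(x,0)$, the singular piece produces a term proportional to $\|v^0_e/Y\|_{L^\infty}$ times a weighted $L^2$ norm on the left-hand side, which is absorbed by (\ref{euler.as.2}); the bounded pieces $v^0_p, v^1_e(x,0)$ contribute only $\bigO(L)$ right-hand side terms. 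At higher weights the cutoff $\chi$ of the preceding weighted lemma is no longer required, since the weight $y^{2m}$ by itself suppresses the boundary-layer behaviour that forced its introduction.

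For the derivative step, differentiate (\ref{pr.1.hom}) $k$ times in $x$. The equation for $U^{(k)} := \partial_x^k u$ retains the same linear structure with a commutator source
\[
\mathcal{C}_k = \sum_{j\ge 1}\binom{k}{j}\bigl(\partial_x^j u^0\cdot \partial_x^{k-j+1}u + \partial_x^{j+1}u^0\cdot \partial_x^{k-j}u + \partial_x^j v^0\cdot \partial_x^{k-j}u_y + \partial_x^j u^0_y\cdot \partial_x^{k-j}v\bigr),
\]
whose weighted $L^2$ norm is controlled by the induction hypothesis at level $k-1$ together with the Prandtl bounds for $u^0, v^0$. Running the weight-step scheme on $U^{(k)}$ then closes the induction. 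The uniform bound $\|v_p\|_{L^\infty}$ follows from the divergence-free relation $v_p(x,y) = -\int_0^y \partial_x u^1_p(x,y')\,dy'$ and Cauchy-Schwarz with $m$ large enough that $\langle y\rangle^{-m}\in L^2_y$.

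The main obstacle is precisely the linear growth in $y$ of the convective coefficient $v^0$, which after a weighted integration by parts threatens to cost a full power of $y$ at each application of the energy machinery. This is the same leading-order non-shear effect highlighted in the overview and is neutralized, as throughout the paper, by the smallness of $\|v^0_e/Y\|_{L^\infty}$ from (\ref{euler.as.2}).
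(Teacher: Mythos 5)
Your proposal is correct and follows essentially the same route as the paper: the paper likewise obtains the lemma by combining the preceding energy, positivity, and weighted estimates, explicitly stating that one may ``take $\p_x^k$ and repeat with weights $y^m$'' while omitting the details you spell out via the double induction. The only cosmetic difference is the $L^\infty$ bound on $v_p$, which the paper derives from $v_p^2 = \int_0^y \p_y(v_p^2)$ plus Hardy's inequality and the relation $v_{py} = u_{px}$, whereas you apply Cauchy--Schwarz directly to the integral representation of $v_p$; both rest on the same weighted bounds for $u_{px}$.
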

\begin{proof}
Only the $v_p$ estimate remains to be proven, for which we appeal to Hardy (as $v_p|_{y = 0} = 0$): 
\begin{align}
v_p^2 = \int_0^y v_p v_{py} \le ||v_{py}||_{L^2_y} ||y v_{py}||_{L^2} =  ||u_{px}||_{L^2_y} ||y u_{px}||_{L^2} \lesssim 1, 
\end{align}

the final estimate following from the $u_p$ estimates in (\ref{est.pr.1}).
\end{proof}

The final task is to cut-off the Prandtl-1 layer: 
\begin{align}
u^1_p = \chi(\sqrt{\eps}y) u_p - \sqrt{\eps} \chi'(\sqrt{\eps}y) \int_y^\infty u_p(x,s) \ud s , \hspace{5 mm} v^1_p = \chi(\sqrt{\eps}y) v_p
\end{align}

It is clear that the divergence free structure is preserved and the same estimates from (\ref{est.pr.1}) hold. The error created by such a cut-off layer is: 
\begin{align} \n
R^u_p &= (1-\chi) F_1 +\sqrt{\eps} u^0\chi' v_p - \sqrt{\eps} u^0_x \chi' \int_y^\infty u_p \\ \n
& + 2\sqrt{\eps}v^0 \chi' u_p - \eps v^0 \chi'' \int_y^\infty u_p + 3\sqrt{\eps} \chi' u_{py} \\ \label{defn.cutoff}
& + 3\eps \chi'' u_p - \eps^{\frac{3}{2}} \chi''' \int_y^\infty u_p.
\end{align}  

$R^u_p$ then contributes into $R^{u,1}$, according to (\ref{defn.Ru1}). 
\begin{lemma} The remainder $R^u_p$ defined in (\ref{defn.cutoff}) satisfies the following estimate: 
\begin{align} \label{Rupest}
||R^u_p||_{L^2} + ||y \p_y R^{u}_p||_{L^2} \lesssim \eps^{\frac{1}{4}}.
\end{align}
\end{lemma}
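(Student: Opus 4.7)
The plan is to estimate each of the eight summands of $R^u_p$ in (\ref{defn.cutoff}) separately, both in $L^2$ and in the weighted norm $\|y\p_y \cdot\|_{L^2}$. The unifying structural observation is that every term except one has the form $\eps^{k/2}\chi^{(j)}(\sqrt{\eps}y)\,P(x,y)$ with $j\ge 1$, where $P$ is a Prandtl quantity (one of $u_p$, $u_{py}$, or $\int_y^\infty u_p$) enjoying rapid polynomial decay in $y$ via (\ref{est.pr.1}) and Lemma \ref{lemma.a5}, while $\chi^{(j)}(\sqrt{\eps}y)$ for $j\ge 1$ is supported on the far-field set $\{y : \sqrt{\eps}y\in\supp\chi^{(j)}\}\subset \{y\sim \eps^{-1/2}\}$. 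Inserting $\langle y\rangle^{-M}\langle y\rangle^M$ and using $\langle y\rangle^{-M}\lesssim \eps^{M/2}$ on this support then yields $L^2$-smallness of order $\eps^{k/2+M/2}$ with $M$ free, which is far smaller than $\eps^{1/4}$.

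First I would dispatch the pure far-field tail $(1-\chi(\sqrt{\eps}y))F_1$: since $1-\chi$ vanishes on $y\lesssim \eps^{-1/2}$, the same factoring trick combined with the weighted bound $\|y^M F_1\|_{L^2}\lesssim 1$ from Lemma \ref{lemma.a5} gives $\|(1-\chi)F_1\|_{L^2}\lesssim \eps^{M/2}$ for any $M$. The six remaining ``good'' summands $\sqrt{\eps}u^0_x\chi'\int_y^\infty u_p$, $\sqrt{\eps}v^0\chi' u_p$, $\eps v^0\chi''\int_y^\infty u_p$, $\sqrt{\eps}\chi' u_{py}$, $\eps\chi'' u_p$, and $\eps^{3/2}\chi'''\int_y^\infty u_p$ are all handled identically and produce arbitrarily high powers of $\eps$.

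The only quantitatively delicate term, which sets the $\eps^{1/4}$ rate in the statement, is $\sqrt{\eps}\,u^0\chi'(\sqrt{\eps}y)\,v_p$, because $v_p = \int_0^y u_{px}$ is only bounded and does \emph{not} decay rapidly in $y$. Here I would use $\|u^0\|_{L^\infty}\lesssim 1$, $\|v_p\|_{L^\infty}\lesssim 1$ (the latter from Hardy together with (\ref{est.pr.1})), plus the fact that the $y$-support of $\chi'(\sqrt{\eps}y)$ has Lebesgue measure $\bigO(\eps^{-1/2})$, to obtain
\[
\|\sqrt{\eps}\,u^0\chi'(\sqrt{\eps}y)\,v_p\|_{L^2}^2 \;\lesssim\; \eps \cdot L \cdot \eps^{-1/2} \;=\; L\sqrt{\eps},
\]
which is the borderline contribution forcing the $\eps^{1/4}$ threshold.

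For the weighted-derivative piece $\|y\p_y R^u_p\|_{L^2}$, I would differentiate each summand in $y$ and rerun the same bookkeeping. A $\p_y$ landing on $\chi^{(j)}(\sqrt{\eps}y)$ produces $\sqrt{\eps}\chi^{(j+1)}(\sqrt{\eps}y)$, improving the $\eps$-order by $\sqrt{\eps}$; meanwhile the extra weight $y$ evaluated on the support contributes $y\lesssim \eps^{-1/2}$, exactly canceling this gain, so each differentiated term is no worse than its undifferentiated counterpart. When $\p_y$ falls instead on the Prandtl factor, the resulting profile ($u_{pyy}$, $v_{py}=-u_{px}$, $\p_y\int_y^\infty u_p = -u_p$) still satisfies the weighted $L^2$ bounds of (\ref{est.pr.1}), so the argument repeats verbatim. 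The genuine content is the single bounded $v_p$ term above; everything else is routine case tracking, and I expect no substantive analytic obstacle.
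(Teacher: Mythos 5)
Your proposal is correct and follows essentially the same route as the paper: the rapidly decaying Prandtl factors combined with the far-field support of $\chi^{(j)}(\sqrt{\eps}y)$ (and of $1-\chi$ for the $F_1$ term) dispatch all but one summand, the borderline term $\sqrt{\eps}u^0\chi' v_p$ is handled exactly as in the paper via $\|v_p\|_{L^\infty}\lesssim 1$ and the $\bigO(\eps^{-1/2})$ measure of the support, yielding the $\eps^{1/4}$ rate, and the weighted-derivative bound is obtained by the identical bookkeeping. No gaps.
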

\begin{proof}
All follow via the estimates in (\ref{est.pr.1}) aside from $F_1$, for which we must use the rapid decay and that support of $1-\chi$ is on $y \ge \frac{1}{\sqrt{\eps}}$. Next, the term with $v_p$, we must use:
\begin{align}
||\sqrt{\eps}u^0 \chi' v_p||_{L^2} \le \sqrt{\eps} ||v_p||_{L^\infty} ||1 ||_{L^2(y \le \frac{1}{\sqrt{\eps}})} \le \eps^{\frac{1}{4}}.
\end{align}

Upon applying $y \p_y$, an identical calculation yields the desired result. 
\end{proof}

\subsection{Remainder System/}

Collecting the constructions above, according to (\ref{rem.eq.u}) and (\ref{rem.eq.v}), the remainders $[u,v, P]$ are to satisfy the following system: 
\begin{align} \label{NSR.1}
&-\Delta_\eps u + S^u(u,v) + P_x = N^u(u,v) + \eps^{-\frac{1}{2}-\gamma}R^{u,1} := f_0, \\ \label{NSR.2}
&-\Delta_\eps v + S^v(u,v) + \frac{P_y}{\eps} = N^v(u,v) +  \eps^{-\frac{1}{2}-\gamma}R^{v,1}: = g_0. \\ \label{NSR.3}
&u_x + v_y = 0,
\end{align}

together with the boundary conditions: 
\begin{align} \label{BC.1}
& [u,v]|_{x = 0} = [a_0(y), b_0(y)], [u,v]|_{y = 0} = [u,v]|_{y \rightarrow \infty} = 0, \\ \label{BC.2}
& \{u_y + \eps v_x\}|_{x = L} = b_L(y), \hspace{3 mm} \{P-2\eps u_x \}|_{x = L} = a_L(y).
\end{align}

The definitions of $S_u, S_v, N^u, N^v$ are given in (\ref{Nu.defn}) - (\ref{Sv.defn}). The assumptions on $[a_0, b_0, a_L, b_L]$ are given in (\ref{as.aL.bL}). Up to renaming $a_L, b_L$, it is possible to, without loss of generality, consider the following simplification: 
\begin{align} \label{NSR.1.bar}
&-\Delta_\eps \bar{u} + S^u(\bar{u},\bar{v}) + P_x = N^u(\bar{u}, \bar{v}) + \eps^{-\frac{1}{2}-\gamma}R^{u,1} := f, \\ \label{NSR.2.bar}
&-\Delta_\eps \bar{v} + S^v(\bar{u}, \bar{v}) + \frac{P_y}{\eps} = N^v(\bar{u}, \bar{v}) +  \eps^{-\frac{1}{2}-\gamma}R^{v,1}: = g. \\ \label{NSR.3.bar}
&\bar{u}_x + \bar{v}_y = 0,
\end{align}

together with homogenized boundary conditions: 
\begin{align}  \label{BC.3}
& [\bar{u},\bar{v}]|_{x = 0} = [0, 0], [\bar{u},\bar{v}]|_{y = 0} = [\bar{u},\bar{v}]|_{y \rightarrow \infty} = 0, \\ \label{BC.4}
& \{\bar{u}_y + \eps \bar{v}_x\}|_{x = L} = \bar{b}_L(y), \hspace{3 mm} \{P-2\eps \bar{u}_x \}|_{x = L} = \bar{a}_L(y).
\end{align}

The reason is: 
\begin{lemma} \label{lemma.WLOG} If the assumptions in (\ref{as.aL.bL}) are satisfied, $[u,v]$ solves the system (\ref{NSR.1}) - (\ref{NSR.3}) with boundary conditions (\ref{BC.1}) - (\ref{BC.2}) if and only if $[\bar{u}, \bar{v}] = [u - u_0, v - v_0]$ solves (\ref{NSR.1}) - (\ref{NSR.3}) with (\ref{BC.3}) - (\ref{BC.4}), and with modifying $[f_0, g_0]$ to $[f, g]$ as defined by: 
\begin{align} \label{defn.fg}
f := f_0 + L^b_1, \hspace{3 mm} g := g_0 + L^b_2, 
\end{align}
where: 
\begin{align} \n
L^b_1 := &u_s u_{0x} + u_{sx}u_0 + v_s u_{0y} + u_{sy}v_0 \\ \n
& + \eps^{\frac{1}{2}+\gamma} \Big( u_0 u_x + u u_{0x} + v_0 u_y + u_{0y}v \Big), \\ \label{defn.Lb.1}
& + \eps^{\frac{1}{2}+\gamma} \Big( u_0 u_{0x}  + v_0 u_{0y} \Big) \\ \n
L^b_2 := &u_s v_{0x} + v_{sx}u_0 + v_s v_{0y} + v_{sy}v_0 \\ \n
& + \eps^{\frac{1}{2}+\gamma} \Big( u_0 v_x + u v_{0x} + v_0 v_y + v_{0y}v \Big) \\ \label{defn.Lb} 
&  + \eps^{\frac{1}{2}+\gamma} \Big( u_0 v_{0x} + v_0 v_{0y} \Big). 
\end{align}
where $[u_0, v_0]$ are defined below in (\ref{defn.u0}). Finally, we have the following estimate: 
\begin{align} \label{est.Lb}
||\langle y \rangle^N \Big\{ L^b_1, \sqrt{\eps} L^b_2, \p_y L^b_1, \sqrt{\eps} \p_y L^b_2\Big\}||_{L^2} \lesssim 1. 
\end{align}
\end{lemma}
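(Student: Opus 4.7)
The plan is to construct an explicit divergence-free auxiliary vector field $[u_0,v_0]$ that carries the inflow data $[a_0,b_0]$ at $x=0$, then substitute $\bar{u}=u-u_0$, $\bar{v}=v-v_0$ into the system and absorb all mismatches into the forcing terms $L^b_1,L^b_2$ (and into renamed boundary data $\bar{a}_L,\bar{b}_L$). I would build $[u_0,v_0]$ via a stream function: let $A_0(y):=\int_0^y a_0(y')\,dy'$ and pick smooth cutoffs $\chi_1,\chi_2:[0,L]\to\mathbb{R}$ supported in $[0,L/2]$ with $\chi_1(0)=1$, $\chi_1'(0)=0$, $\chi_2(0)=0$, $\chi_2'(0)=1$, then set
\begin{equation*}
\Psi_0(x,y) := \chi_1(x)\,A_0(y) - \chi_2(x)\,b_0(y),\qquad u_0:=\partial_y\Psi_0,\qquad v_0:=-\partial_x\Psi_0.
\end{equation*}
Divergence-freeness is automatic; the Dirichlet trace at $x=0$ gives $[u_0,v_0]|_{x=0}=[a_0,b_0]$; at $y=0$ both components vanish since $\supp\{a_0,b_0\}\subset\{y\ge 1\}$ forces $A_0$ and $b_0$ to vanish for $y\le 1$; and since $\chi_1,\chi_2$ vanish in a neighborhood of $x=L$, the field $[u_0,v_0]$ and all its derivatives vanish identically near $x=L$.

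Next I would substitute $\bar{u}=u-u_0$, $\bar{v}=v-v_0$ into (\ref{NSR.1})--(\ref{NSR.3}). Because $S^u,S^v$ and $\Delta_\eps$ are linear and $N^u,N^v$ are quadratic, a direct expansion produces the system (\ref{NSR.1.bar})--(\ref{NSR.3.bar}) with the right-hand sides $f=f_0+L^b_1$, $g=g_0+L^b_2$ of the claimed form, where the $S^u(u_0,v_0)$ piece, the cross nonlinear terms (linear in $[\bar{u},\bar{v}]$ with $[u_0,v_0]$ coefficients), and the pure $N^u(u_0,v_0)$ piece assemble exactly into (\ref{defn.Lb.1})--(\ref{defn.Lb}); any $-\Delta_\eps u_0$ or $-\Delta_\eps v_0 + \partial_y P_0/\eps$ contribution is either absorbed into the pressure redefinition $\bar{P}=P-P_0$ (with $P_0$ a Stokes potential for $[u_0,v_0]$) or else collected into $L^b_i$ and estimated trivially by the smoothness of $\Psi_0$. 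At $x=L$ the stress-free conditions read, after substitution,
\begin{equation*}
\bar{a}_L := a_L - P_0|_{x=L} + 2\eps u_{0x}|_{x=L},\qquad \bar{b}_L := b_L - u_{0y}|_{x=L} - \eps v_{0x}|_{x=L},
\end{equation*}
and since $u_0,v_0$ are supported away from $x=L$, the shifts vanish identically, so $\bar{a}_L=a_L$, $\bar{b}_L=b_L$ (this is the promised ``renaming''); the remaining homogeneous boundary conditions at $x=0$, $y=0$, $y\to\infty$ for $[\bar{u},\bar{v}]$ are immediate from the construction.

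The final step is the weighted estimate (\ref{est.Lb}). Schematically each summand of $L^b_1,L^b_2$ is a pointwise product of a profile factor (bounded, together with enough derivatives and $y$-weights, by Lemma \ref{Lemma.Unif}) with a component of $[u_0,v_0]$ or its derivative, which by construction is a Schwartz function of $y$ on compact $x$-support inherited from $\p_y^k\{a_0,b_0\}$ in (\ref{as.aL.bL}). The one point requiring care is the leading-order singular contribution $v_s u_{0y}\sim \frac{v^0_e}{\sqrt{\eps}}u_{0y}$ inside $L^b_1$: here I use assumption (\ref{euler.as.2}) in the Prandtl variable,
\begin{equation*}
\Big|\tfrac{v^0_e(x,\sqrt{\eps}y)}{\sqrt{\eps}}\Big| = \Big|\tfrac{v^0_e}{Y}\Big|\cdot y \lesssim y,
\end{equation*}
so $\langle y\rangle^N v_s u_{0y}\in L^2$ reduces to controlling $\langle y\rangle^{N+1} u_{0y}$ in $L^2$, which is immediate from the exponential decay of $a_0,b_0$ and the $k,N$ taken sufficiently large in (\ref{as.aL.bL}). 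The same trick handles $v_s u_{0yy}$ inside $\p_y L^b_1$; the factor of $\sqrt{\eps}$ in front of $L^b_2$ in (\ref{est.Lb}) converts the analogous $\sqrt{\eps}v_s\sim v^0_e$ contribution in $L^b_2$ into something manifestly $O(1)$. Every other term in (\ref{defn.Lb.1})--(\ref{defn.Lb}) is controlled by a product of $L^\infty$ profile bounds and $\|u,v\|_\X$ (assumed finite), or by a purely quadratic term in $[u_0,v_0]$ bounded directly from the explicit formula for $\Psi_0$.

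I expect the main (mild) obstacle to be the bookkeeping for the singular term $\frac{v^0_e}{\sqrt{\eps}}u_{0y}$ and its $y$-derivative, and the corresponding identification that the stream-function construction produces $u_{0y}$ with enough polynomial $y$-decay to absorb the factor of $y$ generated by $v^0_e/Y$; once this observation is in place, every remaining term is routine.
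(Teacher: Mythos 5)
There is a genuine gap in your construction of the lifting $[u_0,v_0]$, and it is precisely the obstruction the paper flags in its overview (``\ldots creates a boundary contribution at $x=L$, which cannot be removed by a further homogenization due to the need to preserve the divergence-free condition''). Any divergence-free field with $v_0|_{y=0}=v_0|_{y\to\infty}=0$ has $x$-independent flux $\int_0^\infty u_0(x,y)\,\ud y$; at $x=0$ this flux equals $\int_0^\infty a_0\neq 0$ generically, so the field cannot vanish near $x=L$. Your stream function $\Psi_0=\chi_1(x)A_0(y)-\chi_2(x)b_0(y)$ with $A_0(y)=\int_0^y a_0$ hides this obstruction but does not remove it: since $A_0(y)\to\int_0^\infty a_0\neq 0$ as $y\to\infty$, the component $v_0=-\chi_1'(x)A_0(y)+\chi_2'(x)b_0(y)$ tends to the nonzero constant $-\chi_1'(x)\int_0^\infty a_0$ at the far field. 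Consequently $\bar v=v-v_0$ fails the homogeneous condition $\bar v|_{y\to\infty}=0$ in (\ref{BC.3}), and the weighted estimate (\ref{est.Lb}) fails outright, because $L^b_2$ contains $u_s v_{0x}$ with $v_{0x}\supset-\chi_1''(x)A_0(y)$ non-decaying in $y$, so $\langle y\rangle^N L^b_2\notin L^2$. (A secondary issue: $\chi_1'\sim 1/L$ on a support of length $O(L)$, so even where defined your $v_0$ is $O(1/L)$, which degrades the constants in (\ref{est.Lb}) in a way the paper's smallness-in-$L$ bookkeeping does not tolerate.)

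The paper's proof instead takes $u_0=a_0(y)-x\,\p_y b_0(y)$, $v_0=b_0(y)$, which is divergence-free, matches the data at $x=0$ and at $y=0,\infty$, but deliberately does \emph{not} vanish at $x=L$; the nonzero traces there are absorbed by renaming the stress data, $\bar a_L=a_L-2\eps\p_y b_0$ and $\bar b_L=b_L-\p_y u_0$, which still satisfy (\ref{as.aL.bL}) (the shift of $a_L$ carries a factor $\eps$). The rest of your argument --- the substitution producing (\ref{defn.Lb.1})--(\ref{defn.Lb}), and the treatment of the singular term $v_s u_{0y}$ via $|v^0_e/\sqrt{\eps}|=|v^0_e/Y|\cdot y\lesssim y$ absorbed by the decay of $u_{0y}$ --- is sound and matches the paper's estimates once the correct lifting is in place; the fix is to abandon the cutoff-in-$x$ construction and accept the residual $x=L$ trace.
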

\begin{proof}

We will define the following auxiliary profiles: 
\begin{align} \label{defn.u0}
u_0 = a_0(y) -  x \p_y b_0(y),  \hspace{5 mm} v_0 = b_0(y).  
\end{align}

It is clear that $[u_0, v_0]$ is a divergence free vector field, that achieves the boundary conditions at $x = 0, y = 0, y \rightarrow \infty$. It is also clear that $[u_0, v_0]$ are order-1, and decay rapidly in $y$. Consider now the difference: 
\begin{align}
\bar{u} = u - u_0, \hspace{3 mm} \bar{v} = v - v_0. 
\end{align}

At $x = L$, the following boundary conditions are satisfied: 
\begin{align}
&\bar{a}_L := P - 2\eps \bar{u}_x|_{x = L} = P - 2\eps u_x - 2\eps \p_y b_0(y) = a_L(y) - 2\eps \p_y b_0(y), \\
& \bar{b}_L := \bar{u}_y + \eps \bar{v}_x|_{x = L} = \Big(u_y + \eps v_x\Big)|_{x = L} - \p_y u_0 = b_L - \p_y u_0. 
\end{align}

It is clear that $[\bar{u}, \bar{v}]$ will achieve the boundary conditions in (\ref{BC.2}), with $[a_L, b_L]$ replaced by $[\bar{a}_L, \bar{b}_L]$, and that $[\bar{a}_L, \bar{b}_L]$ satisfy the required assumptions, (\ref{as.aL.bL}). Finally, the new profiles $[\bar{u}, \bar{v}]$ satisfy the new system (\ref{NSR.1}) - (\ref{NSR.3}) with $f, g$ defined in (\ref{defn.fg}) according to a standard linearization. The estimate in (\ref{est.Lb}) follows from the definitions (\ref{defn.Lb.1}) - (\ref{defn.Lb}), together with (\ref{defn.u0}).
\end{proof}

\begin{remark}[Notation]Due to this lemma, we can restrict to considering (\ref{NSR.1.bar}) - (\ref{BC.4}), and we will rename $[\bar{u}, \bar{v}]$ to $[u,v]$ to help simplify notation. 
\end{remark}

\begin{proposition} For $[R^{u,1}, R^{v,1}]$ defined as in (\ref{defn.Ru1}), (\ref{defn.Rv1}), we have: 
\begin{align} \label{sigma}
||R^{u,1}, \sqrt{\eps} R^{v,1}||_{L^2} + ||\langle y \rangle \p_y \{R^{u,1}, \sqrt{\eps} R^{v,1} \}||_{L^2} \le \eps^{\frac{3}{4}}.
\end{align}
\end{proposition}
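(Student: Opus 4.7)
The plan is to inspect each constituent of $R^{u,1}$ (defined in (\ref{tilde.R.def}) and (\ref{defn.Ru1})) and of $R^{v,1}$ (defined in (\ref{defn.Rv1})) individually, and to bound each contribution in the weighted $L^2$-norm using the profile estimates proven earlier: (\ref{prandtl.estimate}) for $[u^0_p,v^0_p]$, (\ref{euler.estimate}) for $\phi^1$, (\ref{est.pr.1}) for the uncut Prandtl-1 unknowns, and crucially (\ref{Rupest}) for the cutoff remainder $R^u_p$. The terms naturally group into three families, of which only one saturates the exponent $3/4$.

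First, the borderline contribution is $\sqrt{\eps}R^u_p$ inside $R^{u,1}$: (\ref{Rupest}) already gives $\|\sqrt{\eps}R^u_p\|_{L^2}\lesssim \eps^{3/4}$ and $\|\sqrt{\eps}\, y\,\p_y R^u_p\|_{L^2}\lesssim \eps^{3/4}$. The portion of $\|\langle y\rangle \p_y\cdot\|_{L^2}$ not carrying a $y$ factor is controlled by repeating the argument leading to (\ref{Rupest}) with an extra unweighted $\p_y$, exploiting the support of $\chi'(\sqrt{\eps}y)$ near $y\sim \eps^{-1/2}$ to recover the same power.

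Second, the Taylor-remainder terms $(v^0_e - Yv^0_{eY}(x,0))u^1_{py}$, $(v^1_e - v^1_e(x,0) - Yv^1_{eY}(x,0))u^0_{py}$, $\sqrt{\eps}\{u^0_e - u^0_e(x,0)\}u^1_{px}$, $\sqrt{\eps}\{u^0_{ex}-u^0_{ex}(x,0)\}u^1_p$ and $\sqrt{\eps}\{v^1_e - v^1_e(x,0)\}u^1_{py}$ are all handled by Taylor's theorem combined with the bounds (\ref{euler.as.3})--(\ref{euler.as.4}) and Lemma A.4: each difference is converted into $|Y|$ or $|Y|^2$, i.e.\ $\sqrt{\eps}\,y$ or $\eps\, y^2$, and the resulting $y$-weight is absorbed into the weighted Prandtl derivative bound from (\ref{prandtl.estimate}) or (\ref{est.pr.1}). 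Every such term is therefore $O(\eps)$ in both the $L^2$ and the $\langle y\rangle\p_y$-$L^2$ norm.

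Third, the remaining entries of $\tilde R^{u,1}$ and of $R^{v,1}$ are ordinary products of profiles with explicit prefactor $\eps$ or $\eps^{3/2}$: $\eps u^0_{pxx}$, $\eps^{3/2}u^1_{pxx}$, $\eps(u^1_e+u^1_p)u^1_{px}$, $\eps u^0_{eY}v^1_p$, $\eps\Delta u^0_e$, $\eps^{3/2}\Delta u^1_e$, the pressure piece $\eps P^2_{px}$ obtained by differentiating (\ref{pressure.aux.1}), and after multiplication by $\sqrt{\eps}$ also $\eps v^1_{px}(\cdot)$, $\sqrt{\eps}\,\Delta_\eps v^1_p$, $\eps v^1_p(\cdot)$, $\eps\Delta v^0_e$, $\eps^{3/2}\Delta v^1_e$, $\eps u^1_e v^1_{ex}$, $\eps v^1_e v^1_{eY}$. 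Each is estimated directly from the profile bounds, using the cutoff $\chi(\sqrt{\eps}y)$ to localize in $y$ when estimating the $v^1_p$-pieces via the uncut bounds of (\ref{est.pr.1}). The weighted-derivative inequality is then deduced by taking $\p_y$ of each representative term and repeating the same scheme: every weighted norm invoked above remains finite when upgraded by one $y$-derivative and one extra $\langle y\rangle$ weight, thanks to the arbitrary-order, arbitrary-weight character of the appendix estimates.

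The main obstacle is the bookkeeping: the number of terms is substantial and one must match, term by term, the $y$-weight produced by the Taylor remainder or by the cutoff derivatives $\chi', \chi''$ against a weighted Prandtl derivative that (\ref{prandtl.estimate}) or (\ref{est.pr.1}) permits. Away from $\sqrt{\eps}R^u_p$, every prefactor gives at least $\eps$, so the extremal contribution is precisely (\ref{Rupest}), which fixes the exponent $3/4$ in (\ref{sigma}).
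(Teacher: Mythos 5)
Your proposal is correct and follows essentially the same route as the paper's (much terser) proof: term-by-term estimation of $\tilde R^{u,1}$, $\eps P^2_{px}$, and $R^{v,1}$ via the profile bounds (\ref{prandtl.estimate}), (\ref{euler.estimate}), (\ref{est.pr.1}), with the cutoff error $\sqrt{\eps}R^u_p$ identified via (\ref{Rupest}) as the term that saturates the exponent $\eps^{3/4}$. Your extra remark about the unweighted $\p_y$ piece of $\langle y\rangle\p_y R^u_p$ is harmless but not needed, since every term of $R^u_p$ is supported on $y\gtrsim \eps^{-1/2}$, where $\langle y\rangle\lesssim y$.
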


\begin{proof}

We will start with $R^{u,1}$, as defined in (\ref{defn.Ru1}). The estimate on $R^u_p$ follows from recalling the prefactor of $\sqrt{\eps}$ given in (\ref{defn.Ru1}) coupled with (\ref{Rupest}). The estimate on $\eps P^2_{px}$ follows upon noticing that each term in the definition (\ref{pressure.aux.1}) exhibits rapid decay, and therefore $|P^2_{p}| \le \langle y \rangle^{-M}$. We can thus move to the terms from $\tilde{R}^{u,1}$, as defined in (\ref{tilde.R.def}), and $R^{v,1}$ as defined in (\ref{defn.Rv1}). Combining estimates (\ref{prandtl.estimate}), (\ref{euler.estimate}), and (\ref{est.pr.1}) immediately implies the desired result. 
\end{proof}

We will also record here the following, which will be in constant use throughout the paper: 
\begin{lemma}[Uniform Estimates of Profiles] \label{Lemma.Unif} With $[u_s, v_s]$ defined as in (\ref{exp.1}) - (\ref{exp.2}), for any $k, j \ge 0$, we have: 
\begin{align} \label{Lemma.Unif.1}
||y^k \p_y^k \p_x^j u_s, y^k \p_y^{k+1} \p_x^j v_s||_{L^\infty} \lesssim 1.
\end{align}
Moreover, we have the strict positivity: 
\begin{align} \label{gtr}
u_s \gtrsim 1. 
\end{align}
\end{lemma}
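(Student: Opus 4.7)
The plan is to unpack the definitions (\ref{exp.1}), (\ref{exp.2}), (\ref{defn.vs}) term by term and invoke the estimates already established for each constituent layer: the Euler hypotheses (\ref{euler.as.1})--(\ref{euler.as.4}) for $[u^0_e, v^0_e]$; Lemma \ref{lemma.a4}, specifically (\ref{euler.estimate}), for $[u^1_e, v^1_e]$; the Prandtl bound (\ref{prandtl.estimate}) for $[u^0_p, v^0_p]$; and the bound (\ref{est.pr.1}) together with the cutoff construction for $[u^1_p, v^1_p]$. The one conceptual point is to handle the singular term $v^0_e/\sqrt{\eps}$ in $v_s$; everything else is bookkeeping.

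The key change-of-variables observation is that the Euler profiles are functions of $(x, Y) = (x, \sqrt{\eps}y)$, so $\p_y = \sqrt{\eps} \p_Y$ and $y^k = \eps^{-k/2} Y^k$. Consequently
\[
y^k \p_y^k \p_x^j u^0_e \;=\; Y^k \p_Y^k \p_x^j u^0_e, \qquad y^k \p_y^{k+1}\p_x^j \frac{v^0_e}{\sqrt{\eps}} \;=\; Y^k \p_Y^{k+1} \p_x^j v^0_e,
\]
so the $1/\sqrt{\eps}$ singularity is cancelled exactly by the fact that $v_s$ always appears with at least one $\p_y$. Both right-hand sides are $L^\infty$-bounded for $k, j \le N$ by (\ref{euler.as.4}) and (\ref{euler.as.3}) respectively. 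The analogous calculation for $v^1_e$ uses (\ref{euler.estimate}) combined with the 2D Sobolev embedding $H^k \hookrightarrow L^\infty$ (for $k \ge 2$) applied to $\p_Y \phi^1$ and $\p_x \phi^1$.

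For the Prandtl components I would argue as follows. By (\ref{prandtl.estimate}) we have $\sup_x \|y^M \p_y^{k} \p_x^j u^0_p\|_{L^2_y} \lesssim 1$ for all $M, k, j \ge 0$, and by one-dimensional Sobolev embedding in $y$, $\|f\|_{L^\infty_y} \lesssim \|f\|_{L^2_y}^{1/2}\|f_y\|_{L^2_y}^{1/2}$, giving $\|y^k \p_y^k \p_x^j u^0_p\|_{L^\infty_{x,y}} \lesssim 1$; the same argument handles $y^k \p_y^{k+1} \p_x^j v^0_p$. For $[u^1_p, v^1_p]$, the bound (\ref{est.pr.1}) provides $L^2_y$ control on the pre-cutoff layer, and the cutoff at $y \sim 1/\sqrt{\eps}$ in (\ref{defn.cutoff}) only introduces factors of $\sqrt{\eps}$-derivatives of $\chi(\sqrt{\eps}y)$, which are harmless for uniform bounds; Sobolev embedding in $y$ again upgrades to $L^\infty$, so the $\sqrt{\eps}$ prefactor in the definition of $u_s, v_s$ makes these contributions $\bigO(\sqrt{\eps})$.

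Finally, for the positivity (\ref{gtr}): by (\ref{gtr.0}) we have $u^0_e + u^0_p \gtrsim 1$ uniformly, while the previously established uniform bounds give $|\sqrt{\eps}u^1_e + \sqrt{\eps}u^1_p| \lesssim \sqrt{\eps}$, so taking $\eps$ sufficiently small yields $u_s \ge \tfrac{1}{2}(u^0_e + u^0_p) \gtrsim 1$. The main (mild) obstacle is organizing the change-of-variables correctly for the Euler layers so that the apparent $\eps$-singularity in $v_s$ is precisely compensated by the derivative $\p_y$; once that is in place, every remaining estimate is a direct application of results already proved in Appendix \ref{app.construct}.
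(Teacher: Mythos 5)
Your proposal is correct and follows essentially the same route as the paper: the change of variables $y^k\p_y^k = \eps^{-k/2}Y^k\cdot\eps^{k/2}\p_Y^k = Y^k\p_Y^k$ for the Euler components (with the extra $\p_y$ on $v_s$ exactly cancelling the $1/\sqrt{\eps}$ singularity), the rapid decay of the Prandtl layers in $y$ to absorb the weights, the observation that $\p_x$ produces no $\sqrt{\eps}$ factors, and positivity from (\ref{gtr.0}) plus the $\bigO(\sqrt{\eps})$ smallness of the first-order correctors. The paper's proof is terser but identical in substance; your added detail on the Sobolev embedding upgrading the $L^2_y$ bounds (\ref{prandtl.estimate}), (\ref{est.pr.1}) to $L^\infty$ is a point the paper leaves implicit.
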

\begin{proof}

Using the definitions provided in (\ref{exp.1}), we see that: 
\begin{align}
\p_y^k u_s &= \p_y^k \{u^0_e + u^0_p + \sqrt{\eps} u^1_e + \sqrt{\eps}u^1_p  \} \\
& = \eps^{\frac{k}{2}}\{\p_Y^k u^0_{e}, \sqrt{\eps}\p_Y^k u^1_e\} + \p_y^k\{u^0_p, \sqrt{\eps }u^1_p \}.
\end{align}

Multiplying by $y^k$ and using $\sqrt{\eps}^k y^k = Y^k$ gives the desired result. An analogous computation can be made using the definition (\ref{exp.2}), and finally iterates of $\p_x$ do not contribute factors of $\sqrt{\eps}$, which is why they do not enhance the weight of $y$.  

Finally, the positivity in (\ref{gtr}) follows from (\ref{gtr.0}) and the uniform estimates on $u^1_e, u^1_p$ found in (\ref{euler.estimate}), (\ref{est.pr.1}).

\end{proof}

\section{Existence and Uniqueness of Remainder} \label{app.ex}

The main result of this appendix is the following: 
\begin{proposition}[Linear Existence] \label{lin.exist} Given $(f,g) \in L^2$, and given $(a_L, b_L)$ satisfying the assumptions (\ref{as.aL.bL}), for $L$ sufficiently small, there exists a unique solution to the linear problem (\ref{lin.1}) - (\ref{lin.3}), together with boundary conditions (\ref{BC.dirichlet}) -(\ref{BC.stress.free}).
\end{proposition}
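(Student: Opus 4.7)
The strategy is to combine the a priori estimate of Theorem \ref{th.z} with a Galerkin/Fredholm construction on a truncated domain. First, I would further homogenize the outflow data: Lemma \ref{lemma.WLOG} already eliminates $(a_0, b_0)$, and the contribution of $(a_L, b_L)$ can be absorbed by constructing a smooth, divergence-free field $[u_\ast, v_\ast]$ supported away from $\{x=0\}$ whose Cauchy-stress components at $x=L$ match $(a_L, b_L)$. Assumption (\ref{as.aL.bL}) gives the required decay and regularity for such a lift, and subtracting it changes $[f,g]$ only by an $L^2$ perturbation. Thus it suffices to solve (\ref{lin.1})--(\ref{lin.3}) with fully homogeneous boundary data and a generic $L^2$ forcing.

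Second, I would pose a Galerkin formulation on $\Omega_R = (0,L) \times (0,R)$ using the space
\begin{align*}
V_R = \{(\phi,\psi) \in H^1(\Omega_R)^2 : \phi_x + \psi_y = 0,\ [\phi,\psi]|_{y=0,\,y=R,\,x=0} = 0\},
\end{align*}
with no condition imposed at $x=L$ so that the inhomogeneous stress-free boundary condition appears as the natural boundary condition of the bilinear form
\begin{align*}
a((u,v),(\phi,\psi)) = \int \nabla_\eps u \cdot \nabla_\eps \phi + \eps \nabla_\eps v \cdot \nabla_\eps \psi + S^u(u,v)\phi + \eps S^v(u,v)\psi.
\end{align*}
The Laplacian part is coercive on $V_R$ via Poincar\'e (using the Dirichlet sides), while the convective terms $S^u, S^v$ are compact perturbations on bounded $\Omega_R$ because the profile coefficients $[u_s, v_s]$ are smooth and bounded on the truncated strip by Lemma \ref{Lemma.Unif}. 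Solving a sequence of finite-dimensional approximations $(u_N, v_N) \in \mathrm{span}\{(\phi_k, \psi_k)\}_{k=1}^N \subset V_R$ and invoking the Fredholm alternative (with uniqueness supplied by the a priori estimate below) produces a divergence-free $H^1$ solution on $\Omega_R$. The pressure $P$ is then reconstructed, modulo constants, by the de Rham theorem applied to the residual of the momentum equations.

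Third, pass to the limit $N,R \to \infty$. The key observation is that the a priori estimate of Theorem \ref{th.z} applies uniformly to each Galerkin approximation: the multipliers $u$, $\eps v$, $\beta_y$, $\eps \beta_x$, $\partial_y\{u y^2 w\}$, and $\eps \partial_x\{u y^2 w\}$ are all built from the Galerkin unknown and the smooth profile $u_s > 0$, so they are admissible (or density-approximable) test functions. This yields $\|u_N, v_N\|_{\X} \lesssim \|f, \sqrt{\eps}g\|_{L^2} + C(a_L, b_L)$ independently of $N$ and $R$. Weak-$\ast$ compactness in $\X$ together with a diagonal argument extract a subsequence converging to $(u,v) \in \X$ on $(0,L) \times (0,\infty)$ which satisfies the system distributionally and attains the stress-free condition at $x=L$ in the trace sense. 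Uniqueness is immediate: applying Theorem \ref{th.z} to the difference of two solutions with vanishing data forces $\|u,v\|_{\X} = 0$.

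The main obstacle is the compatibility of the non-standard multiplier $\beta = v/u_s$ with the Galerkin scheme: one must ensure $v_N/u_s$ lies in, or is suitably approximable by, the test-function subspace so that the positivity estimate survives at the finite-dimensional level. Since $u_s$ is smooth and uniformly bounded below by (\ref{gtr}), multiplication by $1/u_s$ preserves $H^1$-regularity but generally destroys the divergence-free constraint, so the cleanest route is to prove the positivity estimate only after passing to the limit (where $\beta_y$ becomes an admissible $H^{-1}$ dual quantity via density in $V_R$) and to use the truncation at $y=R$ with a divergence-preserving cutoff, relying on the decay encoded in the $\X$-norm to control the error as $R \to \infty$.
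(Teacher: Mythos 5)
Your overall architecture (approximate problem $\to$ Fredholm alternative $\to$ a priori bounds uniform in the approximation parameter $\to$ weak limit) matches the paper in spirit, but the specific scheme you propose has a gap that you yourself flag and then do not close, and it is the crux of the matter. At the finite-dimensional Galerkin level, existence of $(u_N,v_N)$ requires injectivity of the Galerkin matrix; since the bilinear form is not coercive (the convective terms, in particular $v_s u_y \sim \eps^{-1/2} v^0_e u_y$ and $u_{sy}v$, can only be controlled through the positivity estimate, and $\bigO(L)\|v_y\|_{L^2}^2$ cannot be absorbed into $\eps\|v_y\|_{L^2}^2$ unless $L \ll \eps$), injectivity must come from the a priori estimates. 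But those estimates use the multipliers $[-\p_y(v/u_s), \eps\p_x(v/u_s)]$ and $[\p_y(uy^2w), -\eps\p_x(uy^2w)]$, which do not lie in $\mathrm{span}\{(\phi_k,\psi_k)\}_{k=1}^N$. Your proposed fix --- ``prove the positivity estimate only after passing to the limit'' --- is circular: you need uniform bounds to extract the limit, and you need uniqueness at the approximate level to have anything to pass to the limit with. A secondary issue: your preliminary lift of $(a_L,b_L)$ by a divergence-free field matching the Cauchy stress at $x=L$ cannot work as stated, since the condition $P-2\eps u_x = a_L$ involves the pressure, which is a Lagrange multiplier rather than a prescribable datum; the paper explicitly notes that the $x=L$ data cannot be homogenized away while preserving the divergence-free constraint, and instead keeps $(a_L,b_L)$ as natural boundary terms in the weak formulation (which you also do, so this step is at best redundant).

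The paper resolves the circularity by changing where the compactness and the approximation live. It introduces a regularized operator $S_{\alpha,m}$ that adds $\alpha$-weighted dissipative terms $-\alpha\p_y\{\langle y\rangle^{2m}u_y\chi_1\}$, etc., to the \emph{Stokes} part only; Lax--Milgram solves the regularized Stokes problem on the full unbounded domain in a weighted space $H^1_m$, a difference-quotient bootstrap gives $H^2_m$ regularity, and the compact embedding $H^2_m\cap H^{3/2}_{\loc}\subset\subset H^1$ makes $S_{\alpha,m}^{-1}T$ (with $T$ the convective terms) a compact operator on $H^1$. The Fredholm alternative is then applied to the operator equation $[u,v]+S_{\alpha,m}^{-1}T[u,v]=S_{\alpha,m}^{-1}[f,g,a_L,b_L]$, and the kernel is ruled out by running the energy, positivity, and weighted estimates on genuine $H^2_m$ solutions of the regularized PDE, where the $\beta$-multiplier and the $y$-weighted multiplier are admissible (justified by density and difference quotients, with the extra $\alpha$-terms tracked uniformly). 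Finally $\alpha\to 0$ via the uniform-in-$\alpha$ $\X$-estimate. If you want to salvage a truncation-based scheme, you would need to perform the Fredholm argument at the PDE level on $\Omega_R$ (not at the Galerkin level) and verify that the positivity and weighted multipliers remain admissible there, including the new boundary terms at $y=R$; the paper's weighted regularization avoids introducing that artificial boundary altogether.
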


\begin{proposition}[Nonlinear Existence] \label{nl.exist} Given boundary data satisfying assumptions (\ref{as.aL.bL}), there exists a unique solution $[u,v] \in \X$ to the full nonlinear problem (\ref{NSR.1.bar}) - (\ref{BC.4}). 
\end{proposition}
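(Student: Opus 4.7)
The plan is a standard contraction-mapping argument built on top of the linear existence result (Proposition \ref{lin.exist}) and the \textit{a priori} bound (\ref{z.estimate}). Define the map $T : \X \to \X$ by $T(\bar{u}, \bar{v}) = (u,v)$, where $(u,v)$ is the unique solution (furnished by Proposition \ref{lin.exist}) to the linear system (\ref{lin.1}) -- (\ref{lin.3}) with boundary data (\ref{BC.dirichlet}) -- (\ref{BC.stress.free}) and forcing $f, g$ given by (\ref{nl.spec.3}), where the nonlinear terms $N^u, N^v$ are evaluated at $(\bar{u},\bar{v})$. A fixed point of $T$ is precisely a solution to (\ref{NSR.1.bar}) -- (\ref{BC.4}).

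The first step is to establish invariance of a ball. By Theorem \ref{th.z} applied to $(u,v) = T(\bar{u},\bar{v})$, one has
\begin{equation*}
||u,v||_{\X}^2 \lesssim C(a_0,b_0,a_L,b_L) + \eps^{\frac{1}{4}-\gamma} + \eps^{\frac{\gamma}{2}} ||\bar{u},\bar{v}||_{\X}^4.
\end{equation*}
Choosing $M^2 := 2\bigl(C(a_0,b_0,a_L,b_L) + \eps^{\frac{1}{4}-\gamma}\bigr)$ and taking $\eps$ sufficiently small so that $\eps^{\gamma/2} M^2 \ll 1$, the right-hand side is bounded by $M^2$, hence $T$ maps the closed ball $B_M := \{ (\bar{u},\bar{v}) \in \X : ||\bar{u},\bar{v}||_\X \le M\}$ into itself. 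Note that for the chosen $\gamma$ (with $\gamma < 1/4$), the exponent $\frac{1}{4} - \gamma$ is positive, so $M$ is order one (up to boundary-data dependence).

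The second step, which I expect to be the main obstacle, is contractivity. Let $(u_i,v_i) = T(\bar{u}_i,\bar{v}_i)$ for $i = 1,2$ and set $(\delta u, \delta v) := (u_1-u_2, v_1-v_2)$, $(\delta \bar{u}, \delta \bar{v}) := (\bar{u}_1-\bar{u}_2, \bar{v}_1-\bar{v}_2)$. Then $(\delta u, \delta v)$ solves the linear system (\ref{lin.1}) -- (\ref{lin.3}) with zero boundary data and forcing $N^u(\bar{u}_1,\bar{v}_1) - N^u(\bar{u}_2,\bar{v}_2)$, resp.\ the analogous $N^v$ difference. Expanding each difference using the bilinear identity $\bar{u}_1 \bar{u}_{1x} - \bar{u}_2\bar{u}_{2x} = \delta \bar{u}\,\bar{u}_{1x} + \bar{u}_2\,\delta\bar{u}_x$ (and similarly for the remaining terms in (\ref{nl.spec.2}), (\ref{ken.1}) -- (\ref{ken.2})), one sees that the nonlinear estimates of Lemmas producing (\ref{nl.R.1}), (\ref{nl.R.2}), (\ref{nl.R.3}) apply verbatim with one factor of $||\bar{u},\bar{v}||_\X$ replaced by $||\delta \bar{u},\delta\bar{v}||_\X$. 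Combined with the linear \textit{a priori} bound (\ref{lin.complete}) applied to $(\delta u, \delta v)$ (with no boundary-data contribution), this yields
\begin{equation*}
||\delta u, \delta v||_\X^2 \lesssim \eps^{\frac{\gamma}{2}} M\, ||\delta \bar{u}, \delta \bar{v}||_\X\, ||\delta u, \delta v||_\X,
\end{equation*}
whence $||\delta u, \delta v||_\X \le C \eps^{\gamma/2} M\, ||\delta \bar{u},\delta\bar{v}||_\X$. Taking $\eps$ small makes the constant $C\eps^{\gamma/2}M < 1$, establishing that $T$ is a strict contraction on $B_M$.

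The conclusion follows from the Banach fixed-point theorem: $T$ has a unique fixed point $(u,v) \in B_M \subset \X$, which is the desired solution to (\ref{NSR.1.bar}) -- (\ref{BC.4}). Uniqueness in all of $\X$ (not just $B_M$) follows from the same contraction estimate: if $(u_1,v_1)$ and $(u_2,v_2)$ are two solutions in $\X$, apply the difference estimate with $(\bar{u}_i,\bar{v}_i) = (u_i,v_i)$ directly; the resulting inequality $||\delta u,\delta v||_\X \le C\eps^{\gamma/2}\max_i ||u_i,v_i||_\X\, ||\delta u,\delta v||_\X$ forces $(\delta u, \delta v) = 0$ for $\eps$ small, after first bounding each solution's $\X$-norm by Theorem \ref{th.z}. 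Reversing the reduction of Lemma \ref{lemma.WLOG} transfers this to the original inhomogeneous system (\ref{NSR.1}) -- (\ref{BC.2}), completing the proof of both Proposition \ref{nl.exist} and the main Theorem \ref{th.main}.
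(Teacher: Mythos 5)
Your proposal is correct and follows essentially the same route as the paper, which disposes of this proposition in a single sentence by invoking the Contraction Mapping Theorem together with the $\X$-estimate of Theorem \ref{th.z}; your write-up simply fills in the standard details (ball invariance from (\ref{z.estimate}), contractivity from the bilinearity of $N^u, N^v$ and the nonlinear estimates (\ref{nl.R.1})--(\ref{nl.R.3})) that the paper leaves implicit.
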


We will define the operator: 
\begin{align} \n
S_{\alpha, m}[u,v,P] := &-\Delta_\eps u + P_x - 10 \alpha \p_y \{ \langle y \rangle^{2m} u_y \chi_1(y) \} \\ \n
& -\Delta_\eps v + \frac{P_y}{\eps} -2 \alpha \p_y \{ \langle y \rangle^{2m} v_y \chi_1(y) \} \\ \label{stokes.weak}
& \hspace{20 mm} - \alpha \p_x \{ \langle y \rangle^{2m} \{u_y + \eps v_x\} \chi_1(y) \}.
\end{align}

defined always on divergence free vector fields, together with the boundary conditions:  
\begin{align} \n
&[u, v]|_{x = 0} = [u,v]|_{y = 0} = [u,v]|_{y \rightarrow \infty} = 0, \\ \label{bc.bc.1}
&P - 2\eps u_x|_{x = L} =a_L(y), \hspace{5 mm} u_y + \eps v_x|_{x = L} = b_L(y). 
\end{align}

Here $\chi_1$ a cutoff function which is equal to $0$ on $[0,1)$ and $1$ on $(2,\infty)$. Strictly, $S_{\alpha, m}$ must return a four-tuple, with the first two components being (\ref{stokes.weak}), and the final two components including $(a_L, b_L)$. Fix another cut-off function $\chi_2(y) = 0$ on $[0,10)$ and $\chi_2 = 1$ on $(20,\infty)$. Define now the norms: 
\begin{align} \label{defn.h1m}
&||u,v||_{H^1_{m}}^2 := ||\Big\{u_y,  \sqrt{\eps}v_y, \eps v_x \Big\} \langle y \rangle^m||_{L^2}^2 , \\ \label{defn.h2m}
& ||u,v||_{H^2_{m}}^2 := ||\Big\{u_{yy},  \sqrt{\eps}v_{yy}, \eps v_{xx}\Big\} \langle y \rangle^m \chi_2 ||_{L^2}^2.
\end{align}

Notationally, we will refer to the $m = 0$ norm as simply $H^1$. Define now the space: 
\begin{align} \label{C0s}
C_{0,S} := \Big\{ (\varphi, \phi) \in C^\infty: &\text{ compactly supported in $y$, } \\ \n & \text{supported away from $x = 0$, and } \p_x \varphi + \p_y \phi = 0  \Big\}.
\end{align}

We will define: 
\begin{align} \label{density.2}
H^1_m := \overline{C_{0,S}}^{||\cdot||_{H^1_m}},
\end{align}

and the scaled, symmetric gradient via: 
\begin{align}
D_\eps = \left( \begin{array}{cc}
\sqrt{\eps} u_x & u_y + \eps v_x \\
u_y + \eps v_x & \sqrt{\eps} v_y \end{array} \right).
\end{align}

Recalling the definition of $||\cdot||_{\X}$ from (\ref{norm.S}), our ultimate space $\X$ is defined via: 
\begin{align}  \label{defn.S0}
&\X_0 := \overline{C_{0,s}}^{||\cdot||_{H^1}}, \\ \label{defn.S}
&\X := \{[u,v] \in \X_0: ||u,v||_{\X} < \infty \}.
\end{align}

Define the weak formulation of (\ref{stokes.weak}) to be: 
\begin{align}  \n 
\int D_\eps u &\cdot D_\eps \varphi + \int \eps D_\eps v \cdot D_\eps \phi  + \int_{x = L} a_L \varphi - \int_{x = L} \eps b_L \phi \\ \n
&+ \alpha \int \Big( 10 u_y \cdot \varphi_y + 2 \eps v_y \cdot \phi_y + \{\eps^2 v_x + \eps u_y\} \cdot \phi_x \Big) \chi_1(y) \langle y \rangle^{2m} \\ \label{weak.form.1}
& - \alpha \int_{x = L} \eps y^{2m} \chi_1 b_L(y) \phi = \int f \cdot \varphi + \eps g \cdot \phi,
\end{align}

for all $(\varphi, \phi) \in C_{0,S}$.

\begin{lemma} Given $(f,g) \in L^2$, and boundary values $(a_L, b_L)$ satisfying the assumptions (\ref{as.aL.bL}), there exists a weak solution $[u,v,P] \in H^1_m$ satisfying the estimate: 
\begin{align}
\alpha ||u,v||_{H^1_m}^2 \lesssim ||f,\sqrt{\eps}g||_{L^2}^2 + ||\Big\{\frac{a_L}{\sqrt{\eps}}, b_L \Big\}y^{2m}||_{L^2(x = L)}^2.
\end{align}
\end{lemma}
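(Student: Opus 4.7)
The plan is to apply the Lax--Milgram theorem to the weak formulation \eqref{weak.form.1}. Let $\mathcal{H} := H^1_m$ be the divergence-free Hilbert space defined via \eqref{density.2}, and define the bilinear form $B : \mathcal{H} \times \mathcal{H} \to \mathbb{R}$ as the left-hand side of \eqref{weak.form.1} minus the boundary data terms $\int_{x=L} a_L\varphi - \int_{x=L}\eps b_L\phi - \alpha\int_{x=L}\eps y^{2m}\chi_1 b_L\phi$, i.e.
\begin{align*}
B[(u,v),(\varphi,\phi)] = &\int D_\eps u \cdot D_\eps \varphi + \eps D_\eps v \cdot D_\eps \phi \\
& + \alpha \int \Bigl( 10 u_y \varphi_y + 2\eps v_y \phi_y + (\eps^2 v_x + \eps u_y)\phi_x \Bigr)\chi_1 \langle y\rangle^{2m}.
\end{align*}
The boundary contributions, together with $\int f\varphi + \eps g\phi$, form a linear functional $F$ on $\mathcal{H}$.

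First I would verify continuity: Cauchy--Schwarz immediately gives $|B[(u,v),(\varphi,\phi)]| \lesssim \|(u,v)\|_{H^1_m}\|(\varphi,\phi)\|_{H^1_m}$, with constant depending on $\alpha$. For $F$, the interior terms satisfy $|\int f\varphi + \eps g\phi| \le \|f,\sqrt\eps g\|_{L^2}\|\varphi,\sqrt\eps\phi\|_{L^2}$, bounded via Poincar\'e (since $\varphi|_{x=0}=0$) by $\bigO(L)\|(\varphi,\phi)\|_{H^1_m}$. For the boundary data, by the trace inequality on the strip $\Omega$,
\[
\|\varphi\|_{L^2(x=L)}^2 \lesssim \|\varphi,\varphi_x\|_{L^2}^2 \lesssim \|(\varphi,\phi)\|_{H^1_m}^2,
\]
and similarly with the weight $\langle y\rangle^{2m}\chi_1$, which lets me control $|\int_{x=L} a_L\varphi|$ and the $b_L$ terms by $\|\{a_L/\sqrt\eps,b_L\}\langle y\rangle^{2m}\|_{L^2(x=L)}\cdot \|(\varphi,\phi)\|_{H^1_m}$.

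The main obstacle is coercivity in the weighted space $H^1_m$. Testing with $(u,v)$ itself gives
\[
B[(u,v),(u,v)] = \int |D_\eps u|^2 + \eps |D_\eps v|^2 + \alpha\int\bigl(10 u_y^2 + 2\eps v_y^2 + \eps u_y v_x + \eps^2 v_x^2\bigr)\chi_1\langle y\rangle^{2m}.
\]
The first integral is handled by a scaled Korn inequality (as in Lemma \ref{korn.thm.1}, without the $y$-weight), yielding control of $\|u_y,\sqrt\eps v_y,\eps v_x\|_{L^2}^2$. For the weighted part, the quadratic form $10a^2 + 2b^2 + ab\cdot\eps^{1/2}/\eps^{1/2} + c^2$ with $a=u_y,b=\eps v_y,c=\eps v_x$ is strictly positive definite after using Young's inequality on the cross term $\eps u_y v_x \le 5 u_y^2 + \eps^2 v_x^2/20$. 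This leaves a coercive lower bound of the form $\alpha\int (u_y^2 + \eps v_y^2 + \eps^2 v_x^2)\chi_1\langle y\rangle^{2m}$, and the omitted region $\{y \le 2\}$ is handled by the unweighted $D_\eps$ terms together with the boundedness of $\langle y\rangle^{2m}$ on that set.

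With continuity, coercivity $B[(u,v),(u,v)] \gtrsim \alpha\|(u,v)\|_{H^1_m}^2$, and $|F(\varphi,\phi)| \lesssim (\|f,\sqrt\eps g\|_{L^2} + \|\{a_L/\sqrt\eps,b_L\}\langle y\rangle^{2m}\|_{L^2(x=L)})\|(\varphi,\phi)\|_{H^1_m}$ established, Lax--Milgram produces a unique $(u,v) \in \mathcal{H}$ solving $B[(u,v),\cdot] = F(\cdot)$ on divergence-free test functions. The pressure $P$ is recovered via de Rham's theorem from the fact that the momentum equation vanishes against every divergence-free test field. Finally, testing with $(\varphi,\phi)=(u,v)$ itself and applying Young's inequality on the right-hand side gives the desired energy estimate $\alpha\|u,v\|_{H^1_m}^2 \lesssim \|f,\sqrt\eps g\|_{L^2}^2 + \|\{a_L/\sqrt\eps,b_L\}y^{2m}\|_{L^2(x=L)}^2$.
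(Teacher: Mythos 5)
Your proposal is correct and follows essentially the same route as the paper: Lax--Milgram applied to the weak formulation \eqref{weak.form.1}, coercivity obtained by absorbing the cross term $\alpha\int \eps u_y\phi_x \chi_1\langle y\rangle^{2m}$ via Young's inequality (which is exactly why the coefficient $10$ appears in \eqref{stokes.weak}), and the forcing and $x=L$ boundary terms treated as a bounded functional using the Poincar\'e/trace estimate $\|\sqrt{\eps}\varphi\|_{L^2(x=L)}\lesssim \bigO(L)\|\sqrt{\eps}\varphi_x\|_{L^2}$. You spell out a few points the paper leaves implicit (continuity, the Korn step for the unweighted $D_\eps$ block, and de Rham recovery of $P$), but the argument is the same.
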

\begin{proof}
The existence of solutions follows directly from Lax-Milgram. We must verify the Bilinear form in (\ref{weak.form.1}) is coercive: 
\begin{align} 
B[(u,v), (\varphi, \phi)] &:= \int D_\eps u \cdot D_\eps \varphi + \int \eps D_\eps v \cdot D_\eps \phi   \\ \n
&+ \alpha \int \Big( 10 u_y \cdot \varphi_y + 2 \eps v_y \cdot \phi_y + \{\eps^2 v_x + \eps u_y\} \cdot \phi_x \Big) \chi_1(y) \langle y \rangle^{2m}.
\end{align}

This is immediate, apart from the cross term, to which we first appeal to the density: 
\begin{align} \n
\int \eps u_y \phi^{(n)}_x \chi_1 \langle y \rangle^{2m}& \xrightarrow{n \rightarrow \infty} \int \eps u_y v_x \chi_1 \langle y \rangle^{2m} \\
& \le_{|\cdot|} \frac{1}{2} \int \eps^2 v_x^2 \chi_1 \langle y \rangle^{2m} + \frac{1}{2} \int u_y^2 \chi_1 \langle y \rangle^{2m},
\end{align}

which explains the constants of 10 appearing in (\ref{stokes.weak}). We view the terms: 
\begin{align}
\int f \cdot \varphi + \eps g \cdot \phi - \int_{x = L} a_L \varphi + \int_{x = L} \eps b_L \phi + \alpha \int_{x = L} \eps y^{2m} \chi_1 b_L \phi,
\end{align}

as a functional on $H^{-1}$. We must thus estimate the following boundary term: 
\begin{align} \n
|\int_{x = L} \alpha \eps y^{2m} \chi_1 b_L \phi| &\le \alpha ||b_L \langle y \rangle^{2m}||_{L^2(x = L)} ||\eps \phi||_{L^2(x = L)} \\
&  \lesssim ||b_L \langle y \rangle^{2m}||_{L^2(x = L)}^2 + \alpha^2 \bigO(L) ||\eps \phi_x||_{L^2}^2,
\end{align}

the latter term being absorbed into the positive contributions from $\int |D_\eps v|^2$ using the smallness of $L$ and $\alpha$. Next, we must estimate the boundary terms: 
\begin{align} \n
|\int_{x = L} a_L \varphi| &\le ||\frac{a_L}{\sqrt{\eps}}||_{L^2(x = L)} ||\sqrt{\eps}\varphi||_{L^2(x = L)} \\
& \lesssim ||\frac{a_L}{\sqrt{\eps}}||_{L^2(x = L)}^2 + \bigO(L) ||\sqrt{\eps} \varphi_x||_{L^2}^2, \\ \n
|\int_{x = L} \eps b_L \phi| &\le ||b_L||_{L^2(x = L)} ||\eps \phi||_{L^2(x = L)} \\
& \le ||b_L||_{L^2(x = L)}^2 + \bigO(L) ||\eps \phi_x||_{L^2}^2. 
\end{align}

the latter terms in both of the above calculations can be absorbed into the positive contributions from $\int |D_\eps v|^2$.
\end{proof}

It is clear that each solution $[u,v] \in H^1_m$ is automatically in $\X_0$. We will now bootstrap to $H^2_{m}$ solutions. 
\begin{lemma} Solutions $[u,v] \in H^1_m$ to the system (\ref{stokes.weak}) satisfy: 
\begin{align} \label{h2.lax}
\alpha ||u,v||_{H^2_m} \lesssim ||f, \sqrt{\eps}g||_{L^2} +  ||\Big\{\frac{a_L}{\sqrt{\eps}}, b_L \Big\}y^{2m}||_{L^2(x = L)}^2.
\end{align}

Moreover, such solutions are strong solutions, which satisfy the boundary conditions of (\ref{bc.bc.1}).
\end{lemma}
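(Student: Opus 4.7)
The approach is a standard elliptic bootstrap that exploits the $\alpha$-regularization built into $S_{\alpha,m}$. Since the cutoff $\chi_2$ is supported in $\{y \ge 10\}$, on this set $\chi_1 \equiv 1$, and the principal part of the equation for $u$ reduces to
\[
-(1 + 10\alpha\langle y \rangle^{2m}) u_{yy} - \eps u_{xx} + P_x + (\text{l.o.t.}) \; = \; f,
\]
which is uniformly elliptic in the $y$-direction with coefficient $\gtrsim \alpha\langle y\rangle^{2m}$ on $u_{yy}$; an analogous symmetric structure, including the cross piece $-\alpha\partial_x(\langle y \rangle^{2m}(u_y + \eps v_x))$, governs $v$. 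This is precisely the ellipticity needed to bound the weighted norm in (\ref{defn.h2m}).

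My plan is to apply difference quotients $D^y_h$ in the translation-invariant $y$-direction. Fix a cutoff $\eta = \eta(y)$ with $\eta \equiv 1$ on $\supp \chi_2$ and $\supp\eta \subset \{y \ge 5\}$, and test the weak formulation (\ref{weak.form.1}) against a divergence-free modification of $(\partial_y(\eta^2 D^y_{-h} u),\, \partial_y(\eta^2 D^y_{-h} v))$. Divergence-freeness is enforced either by passing to the stream-function formulation in the strip $\{y\ge 5\}$, or by a Bogovskii correction of the localization error. After routine cancellation of the commutators produced by $\partial_y\eta^2$, absorbed using the $H^1_m$ bound already established and the coercivity of the bilinear form $B[\cdot,\cdot]$, one obtains the uniform-in-$h$ estimate
\[
\alpha \,\|D^y_h(u_y, \sqrt{\eps}v_y)\,\langle y \rangle^m \chi_2\|_{L^2}^2 \lesssim \|f,\sqrt{\eps}g\|_{L^2}^2 + \|\{a_L/\sqrt{\eps}, b_L\}\langle y\rangle^{2m}\|_{L^2(x=L)}^2,
\]
and letting $h \to 0$ recovers the $u_{yy}$ and $\sqrt{\eps}v_{yy}$ contributions to (\ref{h2.lax}).

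The remaining $\eps v_{xx}$ piece is extracted algebraically from incompressibility. Differentiating $u_x + v_y = 0$ gives $v_{xy} = -u_{xx}$ and $u_{xy} = -v_{yy}$, so $\sqrt{\eps}u_{xy}$ is already in hand. One then reads $\eps v_{xx}$ off the second equation of the system once $P_y$ is eliminated (either by testing against divergence-free functions, or by noting that $P$ solves an elliptic equation obtained by applying the divergence to the vector equation, giving $P \in H^1_\loc$). Hence the pressure does not enter the $H^2_m$ estimate.

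With $(u,v) \in H^2_m$ in a neighborhood of $\{x=L\}\cap\supp\chi_2$, the traces $(u_y+\eps v_x)|_{x=L}$ and $(P-2\eps u_x)|_{x=L}$ are well-defined; integrating by parts in (\ref{weak.form.1}) against test functions that do not vanish at $\{x=L\}$ and comparing boundary terms yields (\ref{bc.bc.1}) in the pointwise a.e.\ sense. The principal technical obstacle will be the construction of admissible divergence-free test functions, since the naive choice $\partial_y(\eta^2 D^y_{-h}(u,v))$ is not divergence-free; the correction must respect the weights $\langle y\rangle^{2m}$ uniformly in $h$ and handle the cross term $-\alpha\partial_x(\langle y\rangle^{2m}(u_y+\eps v_x))$ without generating a spurious $\eps$-loss.
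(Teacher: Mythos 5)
Your core strategy---difference quotients in the $y$-direction inside the weak formulation, with the coercivity for the weighted second derivatives supplied by the $\alpha$-regularization terms of $S_{\alpha,m}$---is the same as the paper's. But two points need attention, one of which is a genuine gap.

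First, the self-identified ``principal technical obstacle'' (restoring divergence-freeness after inserting the cutoff $\eta^2$, via Bogovskii or a stream function) is self-inflicted and avoidable. The paper simply tests against $D^{-h}D^{h}\varphi^{(n)}$, where $[\varphi^{(n)},\phi^{(n)}]\in C_{0,S}$ approximate $[u,v]$ in $H^1_m$: since the difference quotient in $y$ commutes with $\p_x$ and $\p_y$, it maps divergence-free fields to divergence-free fields, and no spatial localization is needed because the weight $\langle y\rangle^{2m}\chi_1(y)$ is already built into the bilinear form (the cutoff $\chi_2$ only enters the definition of the norm being estimated, not the test function). This removes the entire correction machinery you propose.

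Second, and more seriously, your treatment of the $\eps v_{xx}$ component of the $H^2_m$ norm does not work as written. Incompressibility gives $v_{yy}=-u_{xy}$ and $v_{xy}=-u_{xx}$, but $v_{xx}$ is not reachable this way, and ``reading $\eps v_{xx}$ off the second equation once $P_y$ is eliminated'' collides with the fact that the pressure enters that equation as $P_y/\eps$: even granting $P\in H^1_{\loc}$, the resulting bound on $\|\eps v_{xx}\|$ would carry a factor $\eps^{-1}\|P_y\|$, which is not controlled by the right-hand side of (\ref{h2.lax}). You cannot simultaneously invoke divergence-free testing (which hides the pressure) and a pointwise identity from the strong form (which exposes it). The $\eps v_{xx}$ control must be extracted while staying inside the pressure-free weak formulation, from the symmetric-gradient terms $\int|D_\eps\,\cdot|^2$ and the regularization term $-\alpha\p_x\{\langle y\rangle^{2m}(u_y+\eps v_x)\chi_1\}$ applied to appropriate (divergence-free) difference-quotient test functions; this is what the paper's phrase ``performing similar calculations for each of the terms'' is covering, and it is the step your proposal leaves unproved.
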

\begin{proof}

This follows formally from differentiating (\ref{stokes.weak}) in $y$ and applying the multiplier $u_y$, with the help of the cut-off function $\chi_2$ in (\ref{defn.h2m}) to avoid the corners. Rigorously, one needs to work with difference quotients within the weak formulation (\ref{weak.form.1}). We demonstrate this now for the main weighted term. Given $[u,v] \in H^1_m$, there exists a sequence $\varphi^n, \phi^n$ such that: $[\varphi^n, \phi^n ] \xrightarrow{H^1_m} [u,v]$ by the density (\ref{density.2}). Denote by $D^h$ the difference quotient in the $y$-direction: $D^hu(x,y) = \frac{u(x, y+h) - u(x,y)}{h}$. We will select the multiplier $D^{-h} D^h \varphi$ to apply the weak formulation (\ref{weak.form.1}):
\begin{align} \label{limit.1}
\int y^m \chi_1(y) u_y D^{-h} D^h \varphi^{(n)}_y = \int D^h \Big\{ y^m \chi_1(y) u_y \Big\} D^h \varphi^{(n)}_y. 
\end{align}

By definition of difference quotient, for each fixed $h$, $\langle y \rangle^m D^h \varphi^{(n)}_y \xrightarrow{L^2}  \langle y \rangle^m  D^h u_y$. Similarly, for each fixed $h$, $\langle y \rangle^m D^h u_y \in L^2$. Thus, for each fixed $h$, we can take $n \rightarrow \infty$: 
\begin{align} \label{limit.2}
(\ref{limit.1}) \xrightarrow{n \rightarrow \infty} \int D^h \Big\{ y^m \chi_1(y) u_y \Big\} D^h u_y. 
\end{align}

Next taking limits in $h$ gives: 
\begin{align}
(\ref{limit.2}) \xrightarrow{h \rightarrow 0} \int \p_y \{ y^m \chi_1(y) u_y \} \cdot u_{yy}.
\end{align}

Performing similar calculations for each of the terms yields the desired result. The boundary conditions (\ref{bc.bc.1}) are satisfied by integrating by parts (\ref{stokes.weak}) against a test function, justified as $[u,v]$ are strong solutions, and comparing the boundary terms with (\ref{weak.form.1}).

\end{proof}

Near the boundary $y = 0$, standard Stokes theory (applicable due to cutoff $\chi_1(y)$) implies: 
\begin{lemma} Solutions $[u,v]$ to the system (\ref{stokes.weak}) satisfy the following estimate: 
\begin{align}
||u,v||_{H^{\frac{3}{2}}_{loc}}^2 \lesssim ||f,\sqrt{\eps} g||_{L^2}^2 + ||\Big\{\frac{a_L}{\sqrt{\eps}}, b_L \Big\}y^{2m}||_{L^2(x = L)}^2.
\end{align}
\end{lemma}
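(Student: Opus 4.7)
The plan is to exploit the cutoff $\chi_1(y)$ to reduce locally to a standard Stokes problem. Recall that $\chi_1 \equiv 0$ on $[0,1)$, so in the strip $\{y < 1\}$ the $\alpha$-regularization in (\ref{stokes.weak}) vanishes identically and the system reduces to the classical inhomogeneous Stokes system
\[
-\Delta_\eps u + P_x = f, \quad -\Delta_\eps v + \tfrac{P_y}{\eps} = g, \quad u_x + v_y = 0,
\]
with Dirichlet data $u = v = 0$ on $\{y = 0\}$. On this slab, standard Stokes regularity theory (Cattabriga / Galdi-type boundary estimates for $L^2$ forcing against a flat Dirichlet portion of the boundary) yields local $H^{3/2}$ control up to $\{y=0\}$, away from the lateral boundaries $\{x=0\}$ and $\{x=L\}$. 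The forcing on the right-hand side is exactly $(f,g)$, which is already in $L^2$ by hypothesis, and the lower-order $H^1$ norm that appears in the Stokes estimate is controlled by the previous lemma.

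The second ingredient is the region where $\chi_1 \equiv 1$, i.e.\ $\{y > 2\}$, where one may instead use (\ref{h2.lax}): interior to this region the solution enjoys $H^2_m$ control, which in particular gives $H^{3/2}_{loc}$ control with the advertised right-hand side. The transition zone $\{1 \le y \le 2\}$ is handled either by the $H^2_m$ estimate (since $\chi_2$ is $1$ there for large enough $m$) or by absorbing the commutator into the $L^2$ forcing via a smooth cutoff; in either case only $L^\infty$ derivatives of $\chi_1$ appear, contributing bounded multipliers against already-controlled quantities.

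To glue the two regimes, I would introduce a partition of unity $1 = \eta_1(y) + \eta_2(y)$ with $\eta_1$ supported in $\{y < 3/2\}$ and $\eta_2$ supported in $\{y > 1\}$, and obtain $H^{3/2}$ regularity of $\eta_i [u,v]$ separately. For $\eta_1[u,v]$, one must preserve the divergence-free constraint when localizing; the standard fix is to solve an auxiliary Bogovski\u{\i} problem on the slab to correct the divergence produced by $\eta_1$, which introduces only $L^2$-bounded corrections to the right-hand side (controlled by $\|u,v\|_{H^1_m}$ from the previous lemma). Then apply the Stokes $H^{3/2}$ estimate. For $\eta_2[u,v]$, simply invoke (\ref{h2.lax}).

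The main technical obstacle is justifying the Stokes $H^{3/2}$ estimate on a half-plane-like slab with the mixed stress-free condition at $\{x=L\}$ entering only through the boundary data $(a_L,b_L)$ weighted appropriately; by restricting to compact subsets that avoid the corners $\{y=0\} \cap \{x \in \{0,L\}\}$, the boundary contribution reduces to the trace norms $\|a_L/\sqrt{\eps}, b_L\|_{L^2}$ as stated, and the argument can then be carried out on a flat-boundary Stokes problem to which the classical theory applies directly. The fact that we only claim $H^{3/2}_{loc}$ (as opposed to a global estimate up to the corners) is precisely what allows us to avoid the delicate corner asymptotics invoked elsewhere in the paper (cf.\ (\ref{sing.asy})).
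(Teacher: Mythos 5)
Your overall strategy is the one the paper intends: the paper offers no proof beyond the one-line appeal ``standard Stokes theory (applicable due to cutoff $\chi_1$)'', and your reduction --- kill the $\alpha$-terms near $y=0$ using $\chi_1$, localize with a partition of unity, restore the divergence-free constraint via a Bogovski\u{\i} correction, and invoke classical Stokes regularity for the resulting flat-boundary problem with $L^2$ forcing --- is exactly the standard way to make that one line precise. Two small slips: $\chi_2$ is a fixed cutoff vanishing on $[0,10)$, so (\ref{h2.lax}) gives you nothing on $2\le y\le 10$ (not ``$\chi_2$ is $1$ there for large enough $m$''); that band is handled by interior/lateral-boundary elliptic regularity, with the smooth, bounded $\alpha$-coefficients absorbed into the operator or the right-hand side using the $H^1_m$ bound.

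The genuine gap is in your last paragraph. The purpose of this lemma is to feed the compact embedding $H^2_m\cap H^{3/2}_{loc}\subset\subset H^1$, and $H^1$ is a global norm over $\Omega$; so the $H^{3/2}$ control must hold on bounded-in-$y$ sets \emph{including} neighborhoods of the corners $(0,0)$ and $(L,0)$, otherwise compactness fails exactly there. Excising the corners is therefore not an option, and the exponent $3/2$ is not a convenience that lets you avoid corner asymptotics --- it is forced by them. At $(L,0)$ the Dirichlet condition on $\{y=0\}$ meets the stress-free condition on $\{x=L\}$, and the corner expansion (\ref{sing.asy}) gives $|D^2u|\lesssim r^{-3/2}$, i.e.\ $u\in W^{2,p}$ only for $p<4/3$, which by Sobolev embedding is precisely the borderline for $H^{3/2}$; away from the corners your argument would give full $H^2_{loc}$, and the lemma would have said so. To close the proof you must supplement the flat-boundary Stokes estimate with the corner regularity theory (e.g.\ Theorem 4.1 of \cite{OS}, already invoked in the paper to justify (\ref{sing.asy})) on small neighborhoods of $(0,0)$ and $(L,0)$, and verify that those corner estimates are controlled by $\|f,\sqrt{\eps}g\|_{L^2}$ and the trace norms of $a_L/\sqrt{\eps}, b_L$.
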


To summarize, we have established that: 
\begin{corollary} For $m, \alpha > 0$, the map $S_{\alpha,m }^{-1}: [L^2]^{\times 2} \times [L^2(x = L)]^{\times 2} \rightarrow [H^2_{m}  \cap H^{\frac{3}{2}}_{loc} \Big]^2$ is well defined, and returns a solution to the system (\ref{stokes.weak}) which satisfies the boundary conditions specified by the third and fourth inputs of $S_{\alpha, m}$.
\end{corollary}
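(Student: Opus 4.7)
The proof is essentially a bookkeeping assembly of the three lemmas immediately preceding the corollary, which collectively establish existence, regularity, and the strong (boundary-trace) interpretation of $S_{\alpha,m}^{-1}$.

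First, given inputs $(f,g) \in [L^2]^2$ and $(a_L,b_L) \in [L^2(x=L)]^2$ satisfying (\ref{as.aL.bL}), I invoke the Lax-Milgram-based lemma. The key ingredient there is coercivity of the bilinear form $B[\cdot,\cdot]$ appearing in (\ref{weak.form.1}) on $H^1_m$: the weighted cross term $\int \eps u_y v_x \chi_1 \langle y\rangle^{2m}$ is absorbed using Cauchy--Schwarz, exploiting the numerical factor $10$ built into (\ref{stokes.weak}), and the $x=L$ boundary contributions arising from $a_L,b_L$ are absorbed into $\int |D_\eps v|^2$ via a one-dimensional trace inequality combined with the smallness of $L$ and $\alpha$. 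Coercivity plus continuity on $H^1_m$ deliver a unique weak solution $[u,v]\in H^1_m$ with the quantitative bound scaling as $\alpha^{-1}$.

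Second, I apply the $H^2_m$ bootstrap lemma to this weak solution. The argument uses a tangential-in-$y$ difference quotient $D^h$ inside the weak formulation, legitimate thanks to the cut-off $\chi_2$ in the definition (\ref{defn.h2m}) which separates all calculations from the corner at $y=0$. Passing to the limit first in the approximating sequence from the density identity (\ref{density.2}) and then in $h$ produces the one-higher-derivative estimate (\ref{h2.lax}). Integrating the resulting strong equation against a smooth test function and comparing boundary terms with (\ref{weak.form.1}) then recovers the stress-free boundary conditions (\ref{bc.bc.1}) in the trace sense, so $[u,v]$ is a genuine strong solution.

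Finally, the local Stokes regularity lemma supplies $H^{3/2}_{loc}$ control in a neighborhood of the $y=0$ boundary, where the weighted perturbation in (\ref{stokes.weak}) vanishes identically because $\chi_1(y)=0$ on $[0,1)$; standard Stokes regularity theory then applies directly to the unperturbed system in that region. Concatenating the three estimates shows that $S_{\alpha,m}^{-1}:[L^2]^{\times 2}\times[L^2(x=L)]^{\times 2}\to [H^2_m \cap H^{3/2}_{loc}]^2$ is a well-defined linear map whose image solves (\ref{stokes.weak}) together with the stated boundary conditions, and uniqueness follows from the coercivity already invoked. The only real subtlety, and the step most worth double-checking, is the bookkeeping of boundary traces at $x=L$: one must verify that the strong identities $u_y+\eps v_x|_{x=L}=b_L$ and $P-2\eps u_x|_{x=L}=a_L$ emerge correctly from (\ref{weak.form.1}) after the difference-quotient upgrade. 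This is, however, exactly what the second lemma delivers, so no new work is required at this stage.
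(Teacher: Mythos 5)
Your proposal is correct and follows exactly the route the paper takes: the corollary is stated as a summary ("To summarize, we have established that:") of the three preceding lemmas — Lax--Milgram existence in $H^1_m$, the difference-quotient bootstrap to $H^2_m$ with recovery of the boundary conditions (\ref{bc.bc.1}), and local Stokes regularity away from the weighted perturbation — and your assembly of these is the intended argument.
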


We now define: 
\begin{align} \n
T[u,v] = &u_s u_x + u_{sx}u  + v_s u_y + u_{sy}v \\
& u_s v_x + v_{sx}u + v_s v_y + v_{sy}v.
\end{align}

We will study: 
\begin{align} \n
&S_{\alpha, m}[u,v] + T[u,v] = (f,g) \Rightarrow \\ \label{fred.1}
&[u,v] + S_{\alpha, m}^{-1}\Big[T[u,v], a_L, b_L\Big] = S_{\alpha, m}^{-1} \Big[f,g, a_L, b_L\Big]
\end{align}

as an equality in $H^1 \times H^1$. 
\begin{lemma} For $m > 0$, we have the following compact embedding:
\begin{align}
H^2_m \cap H^{\frac{3}{2}}_{loc} \subset \subset H^1.
\end{align}
\end{lemma}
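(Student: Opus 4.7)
The plan is to establish the embedding via the classical two-step Rellich strategy: first extract a subsequence converging in $H^1$ on every compact subdomain (bulk compactness via Rellich-Kondrachov), then upgrade to convergence on the full unbounded domain by proving tightness as $y \to \infty$, using the weight $\langle y \rangle^m$ in the $H^2_m$ norm.

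Let $(u^n,v^n)$ be a bounded sequence in $H^2_m \cap H^{3/2}_{loc}$. For each $R > 20$, the rectangle $\Omega_R := (0,L) \times (0,R)$ is a bounded Lipschitz domain, and the classical Rellich-Kondrachov theorem gives the compact embedding $H^{3/2}(\Omega_R) \hookrightarrow H^1(\Omega_R)$. Since $(u^n,v^n)|_{\Omega_R}$ is uniformly bounded in $H^{3/2}(\Omega_R)$ by the $H^{3/2}_{loc}$ hypothesis, one extracts a subsequence converging strongly in $H^1(\Omega_R)$. A Cantor diagonal argument over an exhaustion $R_k \nearrow \infty$ then produces a subsequence $(u^{n_k},v^{n_k})$ converging to some $(u,v)$ strongly in $H^1_{loc}(\Omega)$.

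The core of the proof is tightness at $y \to \infty$: one must find $R$ (independent of $n$) such that $\int_0^L \int_R^\infty \{|u^n_y|^2 + \eps|v^n_y|^2 + \eps^2|v^n_x|^2\}\,dy\,dx$ is arbitrarily small. On $\{y \ge 20\}$ we have $\chi_2 \equiv 1$, so the $H^2_m$ bound yields $\{u^n_{yy},\sqrt{\eps}v^n_{yy},\eps v^n_{xx}\}\langle y \rangle^m$ uniformly bounded in $L^2$, while the trace theorem applied to the $H^{3/2}_{loc}$ bound controls $\{u^n_y(\cdot,20),\sqrt{\eps}v^n_y(\cdot,20),\eps v^n_x(\cdot,20)\}$ uniformly in $L^2(0,L)$. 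One then writes, for $y \ge 20$,
\[
u^n_y(x,y) = u^n_y(x,20) + \int_{20}^{y} u^n_{yy}(x,s)\,ds,
\]
and invokes a weighted Hardy-type bound (splitting $1 = \langle s\rangle^m\langle s\rangle^{-m}$ and using Cauchy-Schwarz) to convert the uniform $L^2$ bound on $u^n_{yy}\langle s\rangle^m$ into uniform smallness of $\int_R^\infty |u^n_y|^2\,dy$ as $R\to\infty$. The analogous arguments for $\sqrt{\eps}v^n_y$ and $\eps v^n_x$ proceed identically, yielding the tightness estimate.

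The main obstacle is executing the tightness step cleanly for small $m$: the naive pointwise Hardy bound obtained this way gives $\int_R^\infty |u^n_y|^2\,dy \lesssim R^{2-2m}$, which decays only for $m > 1$. For $0 < m \le 1$, one must instead establish tightness through an integrated weighted inequality; a clean route is to first prove the intermediate bound $\|u^n_y \langle y \rangle^{\delta}\|_{L^2(\{y\ge 20\})} \le C$ for some $\delta = \delta(m) > 0$ by interpolating between the $L^2$ bound coming from Rellich on slabs and the weighted $L^2$ bound on $u^n_{yy}$, and then apply Chebyshev to conclude. Once this tightness is in hand, combining with the $H^1_{loc}$ convergence from the Rellich step yields $(u^{n_k},v^{n_k}) \to (u,v)$ strongly in $H^1(\Omega)$, completing the proof.
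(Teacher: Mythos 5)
The paper does not actually prove this lemma; it defers to a standard argument in \cite{Iyer}. Your architecture (Rellich--Kondrachov on bounded truncations $\Omega_R$ via the $H^{3/2}_{loc}$ bound, a diagonal extraction, and then a tightness estimate at $y=\infty$ from the weight) is exactly the standard argument being invoked, and your treatment of the case $m>1$ is essentially correct, provided you integrate $u^n_{yy}$ from $y$ to $\infty$ rather than from $20$ to $y$: the forward integration only gives $|u^n_y(x,y)|\lesssim |u^n_y(x,20)|+\|u^n_{yy}\langle\cdot\rangle^m\|_{L^2_y}$, which has no decay in $y$; writing $u^n_y(x,y)=-\int_y^\infty u^n_{yy}(x,s)\,ds$ and using Cauchy--Schwarz with the weight gives the $y^{1-2m}$ pointwise decay that produces your $R^{2-2m}$ tail bound.

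The genuine gap is your fallback for $0<m\le 1$, which is the case the paper actually uses ($m=1$ in the positivity estimate). The claimed interpolation $\|u^n_y\langle y\rangle^{\delta}\|_{L^2}\le C$ from the unweighted $L^2$ bound on $u^n_y$ together with the weighted $L^2$ bound on $u^n_{yy}$ is false: for a scalar model take $f_n(y)=n^{-1/2}\eta((y-a_n)/n)$ with $\eta\in C_c^\infty$ and $a_n=n^{1/m}$. Then $\|f_n\|_{L^2}$ and $\|f_n'\langle y\rangle^m\|_{L^2}$ are both $O(1)$, yet $\|f_n\langle y\rangle^{\delta}\|_{L^2}\sim a_n^{\delta}\to\infty$ for every $\delta>0$, and the $L^2$ mass of $f_n$ escapes to infinity, so tightness itself fails at this level of generality (even for $m=1$, with $a_n=0$ and support spreading over $(n,2n)$). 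Thus neither the naive Hardy bound nor the interpolation can close the case $m\le 1$ from the hypotheses you are using. The resolution is that in the paper's application the functions carry an additional bound that you have not used: the Lax--Milgram lemma gives a uniform bound on $\alpha^{1/2}\|u,v\|_{H^1_m}$, i.e.\ on the \emph{weighted first derivatives} $\|\{u_y,\sqrt{\eps}v_y,\eps v_x\}\langle y\rangle^m\chi_1\|_{L^2}$, and with that bound tightness is immediate by Chebyshev: $\int_{y\ge R}|u^n_y|^2\le \langle R\rangle^{-2m}\|u^n_y\langle y\rangle^m\|_{L^2}^2\to 0$ uniformly in $n$. You should either incorporate this $H^1_m$ control into the hypotheses of the tightness step, or restrict your FTC argument to $m>1$; as written, the $0<m\le 1$ branch of your proof does not hold.
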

\begin{proof}
The proof follows from a standard argument, see for instance \cite[P. 145, Lemma 13.1]{Iyer}. 
\end{proof}

As a direct consequence, $S_{\alpha,m}^{-1}T$ is a compact operator on $H^1$. An application of the Fredholm Alternative shows that to produce an $H^1$ solution of (\ref{fred.1}), we must rule out nontrivial solutions to the homogeneous problem, which occurs when $f = g = a_L = b_L = 0$. For this purpose, we give a-priori estimates of the problem (\ref{fred.1}), under the hypothesis that $[u,v] \in H^1$. For such functions, we automatically know that $[u,v] \in H^2_m$ due to (\ref{h2.lax}).

\begin{lemma}[Energy Estimates] Solutions $[u,v] \in H^2_m$ to the system (\ref{stokes.weak}) satisfy the following energy estimate: 
\begin{align}\n
& ||u_y, \sqrt{\eps}u_x, \eps v_x||_{L^2}^2 + \alpha ||\{u_y, \sqrt{\eps}v_y, \eps v_x \} \cdot \chi y^m||_{L^2}^2 \\  \label{alpha.est.1} & \hspace{5 mm} \lesssim \bigO(L) ||u_x, \sqrt{\eps}v_x||_{L^2}^2 + \mathcal{R}_1  + ||\Big\{\frac{a_L}{\sqrt{\eps}}, b_L \Big\}y^{2m}||_{L^2(x = L)}^2.
\end{align}
\end{lemma}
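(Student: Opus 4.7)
The plan is to mirror the proof of Proposition \ref{prop.energy} by testing the operator $S_{\alpha,m} + T$ against the multiplier pair $(u, \eps v)$, producing the standard energy identity from the unregularized piece together with three additional positive contributions coming from the $\alpha$-terms.

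First, I would use the decompositions $\Delta_\eps u = u_{yy} + 2\eps u_{xx} + \eps v_{xy}$ and $\Delta_\eps v = 2v_{yy} + \eps \p_x\{u_y + \eps v_x\}$ already employed in Proposition \ref{prop.energy}, integrate by parts, and exploit the vanishing of $[u,v]$ on $y=0$, $x=0$, $y \to \infty$ together with the stress-free boundary relations $P-2\eps u_x = a_L$ and $u_y + \eps v_x = b_L$ at $x=L$. This reproduces the coercive pieces $\|u_y\|_{L^2}^2$, $\|\sqrt{\eps}u_x\|_{L^2}^2$, $\|\eps v_x\|_{L^2}^2$ and creates boundary contributions at $x=L$ that are absorbed into $\|\tfrac{a_L}{\sqrt{\eps}}, b_L\|_{L^2(x=L)}^2$ plus $\bigO(L)\|u_x, \sqrt{\eps}v_x\|_{L^2}^2$ via Young's inequality, exactly as in Proposition \ref{prop.energy}.

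Second, I would treat the three new $\alpha$-regularized terms. Testing $-10\alpha \p_y\{\langle y\rangle^{2m} u_y \chi_1\}$ against $u$ and integrating by parts in $y$ produces the coercive term $10\alpha \|\chi_1^{1/2} u_y y^m\|_{L^2}^2$ (with no boundary contribution at $y=0$ due to $\chi_1$). Testing $-2\alpha \p_y\{\langle y\rangle^{2m} v_y \chi_1\}$ against $\eps v$ similarly gives $2\alpha\eps \|\chi_1^{1/2} v_y y^m\|_{L^2}^2$. The cross term $-\alpha \p_x\{\langle y\rangle^{2m}(u_y+\eps v_x)\chi_1\}$ against $\eps v$ is integrated by parts in $x$: the interior contribution reads
\begin{align*}
\alpha \eps \int \langle y\rangle^{2m}\chi_1 (u_y v_x + \eps v_x^2) - \alpha \eps \int_{x=L} \langle y\rangle^{2m}\chi_1 b_L v,
\end{align*}
and the cross term is handled by $|u_y v_x| \le \tfrac{1}{2}u_y^2 + \tfrac{1}{2}\eps v_x^2$, which is absorbed thanks to the coefficient $10$ (versus $2$) built into (\ref{stokes.weak}); the boundary piece is bounded by $\|b_L y^{2m}\|_{L^2(x=L)}^2 + \bigO(L)\alpha^2 \eps \|\sqrt{\eps}v_x\|_{L^2}^2$.

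Third, the convective profile terms from $T[u,v]$ are handled identically to Proposition \ref{prop.energy}, the main estimate being $|\int u_{sy}uv| \lesssim \bigO(L)\|u_{sy}\cdot y\|_{L^\infty}\|v_y,\sqrt{\eps}v_x\|_{L^2}^2$ via (\ref{Lemma.Unif.1}) with $k=1$. Crucially, wherever $v_s$ appears in $T$ it is paired with a $\p_y$ derivative of a profile (for instance $v_s u_y$ combines with $u_{sy}v$ after integration by parts), so the $\bigO(1/\sqrt{\eps})$ singularity in $v_s$ never sees the energy level. The forcing contributions are grouped into $\mathcal{R}_1$. I expect the main obstacle to be the cross term from the third $\alpha$-regularizer together with its $x=L$ boundary trace: one must verify that the coefficient $10$ in (\ref{stokes.weak}) is large enough to absorb the cross interior piece and that the boundary integral $\alpha\eps\int_{x=L} \langle y\rangle^{2m}\chi_1 b_L v$ can be controlled using the one-dimensional trace inequality $\|v\|_{L^2(x=L)}^2 \lesssim L \|v_x\|_{L^2}^2$, whose smallness in $L$ allows absorption into the coercive $\|\eps v_x\|_{L^2}^2$ on the left-hand side of (\ref{alpha.est.1}).
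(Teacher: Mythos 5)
Your proposal is correct and follows essentially the same route as the paper: the paper's proof simply tests the weak formulation against $(u,\eps v)$ (via an approximating sequence in $H^1_m$) and repeats Proposition \ref{prop.energy}, with the coercivity of the three $\alpha$-terms — including the absorption of the cross term $\alpha\eps\int u_y v_x \chi_1\langle y\rangle^{2m}$ by the coefficient $10$ and the treatment of the $x=L$ trace of $b_L$ — already carried out verbatim in the preceding Lax--Milgram lemma. The only detail you omit is the density argument justifying the use of $(u,\eps v)$ itself as a test pair, which the paper handles by passing to the limit along $[\varphi^{(n)},\phi^{(n)}]\to[u,v]$ in $H^1_m$.
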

\begin{proof}
This follows upon testing the system (\ref{fred.1}) against $[\varphi^{(n)}, \phi^{(n)}]$, where the sequence $[\varphi^{(n)}, \phi^{(n)}] \xrightarrow{H^1_m} [u,v]$, and repeating the energy estimate in Proposition \ref{prop.energy}.  

\end{proof}

\begin{lemma}[Positivity Estimates] Let $m = 1$. Then solutions $[u,v] \in H^2_m$ to the system (\ref{stokes.weak}) satisfy the following estimate: 
\begin{align} \n
||v_y, \sqrt{\eps}v_x||_{L^2}^2 + ||\sqrt{\eps}u_x||_{L^2(x = L)}^2 &\lesssim ||u_y||_{L^2}^2 + \bigO(v^0_e) ||u_y \cdot y, \sqrt{\eps}v_y y||_{L^2}^2 \\ \n  &+\alpha ||\{u_y, \sqrt{\eps}v_y, \eps v_x \} \cdot \chi y^m||_{L^2}^2  + \mathcal{R}_2 \\  \label{lax.pos.1}
& + ||\Big\{\frac{a_L}{\sqrt{\eps}}, b_L, \p_y b_L \Big\}y^{2m}||_{L^2(x = L)}^2.
\end{align}
\end{lemma}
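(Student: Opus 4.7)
The plan is to run exactly the multiplier method from the earlier Positivity Estimate (the one producing (\ref{pos.est.1})), applied now to the coupled system $S_{\alpha,m}[u,v,P] + T[u,v] = (f,g)$ from (\ref{fred.1}), and to absorb the three new regularization terms in $S_{\alpha,m}$ into the extra $\alpha$-weighted norm appearing on the right-hand side of (\ref{lax.pos.1}). Specifically, I would test against the multiplier $[-\beta_y, +\eps\beta_x]$ with $\beta = v/u_s$, first approximating $\beta$ by smooth truncations so that the integrations by parts are justified (here we exploit that $[u,v]\in H^2_m$ by (\ref{h2.lax}), so $\beta$ has the required regularity).

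The contributions from $-\Delta_\eps u$, $-\Delta_\eps v$, $P_x$, $P_y/\eps$, $S^u$, and $S^v$ produce exactly the identities of Steps 1--4 of the previous proof. These yield on the left-hand side the coercive quantity $\|u_s\beta_y, \sqrt{\eps}u_s\beta_x\|_{L^2}^2$ together with the boundary term $\int_{x=L} \eps v_y^2/u_s$, and on the right-hand side the terms $\|u_y\|_{L^2}^2$, $\bigO(v^0_e)\|u_y\cdot y, \sqrt{\eps}v_y\cdot y\|_{L^2}^2$, $\mathcal{R}_2$, and the boundary-data contributions $\|a_L/\sqrt{\eps}, b_L, \p_y b_L\|_{L^2(x=L)}^2$, matching the corresponding pieces of (\ref{lax.pos.1}).

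The only new work is pairing the three regularization terms
\[
-10\alpha\,\p_y\{\langle y\rangle^{2m} u_y\chi_1\},\qquad -2\alpha\,\p_y\{\langle y\rangle^{2m} v_y\chi_1\},\qquad -\alpha\,\p_x\{\langle y\rangle^{2m}(u_y+\eps v_x)\chi_1\}
\]
with the multiplier $[-\beta_y,+\eps\beta_x]$. Since $\chi_1$ is supported in $\{y\geq 1\}$, no boundary contributions at $y=0$ arise upon integration by parts, and on the support of $\chi_1$ the multiplier is comparable to $[v_y/u_s,\eps v_x/u_s]$ with smooth, bounded corrections of order $u_{sy}/u_s^2$. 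I would integrate by parts each of the three paired integrals to transfer the outer derivative onto $\beta$, expand $\beta_y = v_y/u_s - v u_{sy}/u_s^2$ and $\beta_x = v_x/u_s - v u_{sx}/u_s^2$, and apply Cauchy--Schwarz. The leading contributions come out as $\alpha\int\langle y\rangle^{2m}\chi_1(u_y v_y + \eps v_y^2 + \eps u_y v_x + \eps^2 v_x^2)/u_s$ plus harmless lower-order pieces. By Young's inequality, each is bounded by $\delta(\|\beta_y\|_{L^2}^2+\eps\|\beta_x\|_{L^2}^2)$, absorbable into the coercive LHS, plus a term of the form $\alpha\|\{u_y,\sqrt{\eps}v_y,\eps v_x\}\chi y^m\|_{L^2}^2$, which is precisely the new RHS term in (\ref{lax.pos.1}).

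The main obstacle I anticipate is bookkeeping the second-derivative contributions that appear after the final integration by parts, namely terms involving $\beta_{yy}$, $\beta_{xy}$, $\beta_{xx}$. These reduce to combinations of $v_{yy},v_{xy},v_{xx}$ (multiplied by bounded functions of $u_s$ and its derivatives) and must be estimated without appealing to the $H^2_m$ norm bound (\ref{h2.lax}), whose constant degenerates as $\alpha\to 0$. The resolution is to perform the integration by parts in the \emph{opposite} direction: keep one derivative on the $u_y$ or $v_y$ factor inside the regularization and let the remaining derivative fall on the multiplier only once, so that the resulting pairing has the schematic form $\alpha\cdot(\text{first derivative of }u \text{ weighted by }\chi_1 y^m)\cdot(\text{first derivative of }\beta)$. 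Once this is done uniformly for all three regularization terms, summing with the identities from the standard positivity estimate proof and using the equivalence of $\|v_y,\sqrt{\eps}v_x\|_{L^2}$ with $\|\beta_y,\sqrt{\eps}\beta_x\|_{L^2}$ (as in the lemma preceding the original positivity estimate) yields (\ref{lax.pos.1}).
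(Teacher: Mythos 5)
Your overall framework is right, and you have correctly located the crux: the three $\alpha$-regularization terms, paired with $[-\beta_y,\eps\beta_x]$, inevitably generate a second derivative somewhere, and the $H^2_m$ bound (\ref{h2.lax}) is useless here because its constant carries a factor $\alpha^{-1}$ that exactly cancels the $\alpha$ prefactor. However, your proposed resolution does not work, and this is a genuine gap. Integrating by parts "in the opposite direction" cannot produce a pairing of two first derivatives: starting from $\alpha\int \p_y\{\langle y\rangle^{2m}u_y\chi_1\}\cdot\beta_y$, either you expand the outer $\p_y$ and face $\alpha\int \langle y\rangle^{2m}\chi_1\, u_{yy}\,\beta_y$, or you integrate by parts and face $\alpha\int \langle y\rangle^{2m}\chi_1\, u_y\,\beta_{yy}$; one factor always carries two derivatives. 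Consequently your claimed leading contribution $\alpha\int\langle y\rangle^{2m}\chi_1(u_yv_y+\eps v_y^2+\cdots)/u_s$ is not what actually arises — the only terms of that form come from derivatives hitting the weight, which are the lower-order pieces.

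The missing idea, which is how the paper closes this step, is structural rather than a choice of direction: accept the second derivative, then use the divergence-free condition to turn the dangerous quadratic term into an exact $x$-derivative of a square. Concretely, after moving $\p_y$ onto $\beta_y=\p_y(v/u_s)$, the leading term is $-\alpha\int u_y\,y^{2m}\,v_{yy}/u_s$; writing $v_{yy}=-u_{xy}$ gives $+\alpha\int u_y u_{xy}\,y^{2m}/u_s=\tfrac{\alpha}{2}\int y^{2m}\p_x(u_y^2)/u_s$, and an integration by parts in $x$ produces a \emph{positive} boundary term $\tfrac{\alpha}{2}\int_{x=L}u_y^2y^{2m}/u_s$ plus a bulk term $-\tfrac{\alpha}{2}\int u_y^2y^{2m}\p_x(1/u_s)$ absorbed by $\alpha\|\{u_y,\sqrt{\eps}v_y,\eps v_x\}\chi y^m\|_{L^2}^2$. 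The same mechanism ($v_yv_{xy}=\tfrac12\p_x(v_y^2)$, $v_xv_{xx}=\tfrac12\p_x(v_x^2)$, and $u_x=-v_y$ for the cross term) handles the other two regularization terms; the hypothesis $m=1$ is used precisely when $\p_y$ hits the weight in the cross term, yielding $\alpha\|\sqrt{\eps}u_xy\|_{L^2}\|\sqrt{\eps}v_x\|_{L^2}$ with $\sqrt{\eps}u_xy=-\sqrt{\eps}v_yy$ controlled by the $\alpha$-weighted norm. You should also justify the integration by parts at $y\to\infty$ via a truncation $\int_{y=0}^{M}$ and the $H^2_m$ membership, as the paper does.
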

\begin{proof}

We must perform estimates on the new, weighted quantities appearing from (\ref{stokes.weak}). Temporarily omitting the prefactor of $10$, we have:
\begin{align} \label{lax.3}
+ \alpha \int \p_y \{u_y y^{2m} \} \cdot \p_y \frac{v}{u_s} &= - \alpha \int u_y y^{2m} \p_{y} \Big[ \frac{v_y}{u_s} - v \frac{u_{sy}}{u_s^2}\Big] \\ \n
& = - \alpha \int u_y y^{2m} \frac{v_{yy}}{u_s} + \alpha \int u_y y^{2m} v_y \frac{u_{sy}}{u_s^2} \\  \n
& - \alpha \int u_y y^{2m} v \p_y \{ \frac{u_{sy}}{u_s^2} \} \\ \n
& = + \alpha \int u_y y^{2m} \frac{u_{xy}}{u_s} + \alpha \int u_y y^{2m} v_y \frac{u_{sy}}{u_s^2} \\ \n
& - \alpha \int u_y y^{2m} v \p_y \{ \frac{u_{sy}}{u_s^2} \} 
\\ \n & = - \alpha \int u_y^2 y^{2m} \p_x \{ \frac{1}{u_s} \}  + \int_{x = L} \alpha u_y^2 y^{2m} \frac{1}{2u_s} \\
& + \alpha \int u_y y^{2m} v_y \frac{u_{sy}}{u_s^2} - \alpha \int u_y y^{2m} v \p_y \{ \frac{u_{sy}}{u_s^2} \} .
\end{align}

The boundary contribution above is positive, whereas the other terms can all be estimated by the $\alpha$ term in (\ref{lax.pos.1}). We need to justify the integration by parts leading to the equality in (\ref{lax.3}). For this we notice that our solution is in $H^2_m$, and so both the left and right-hand sides of (\ref{lax.3}) are in $L^1$. This then justifies the following limit: 
\begin{align} \n
\int \p_y \{ u_y y^{2m} \} \cdot \p_y \frac{v}{u_s} &= \lim_{M \rightarrow \infty} \int_{y = 0}^M  \p_y \{ u_y y^{2m} \} \cdot \p_y \frac{v}{u_s} \\ \n
& = \lim_{M \rightarrow \infty}  \Big[ - \int_0^M u_y y^{2m} \p_{yy} \frac{v}{u_s} + \int_{y = M} u_y y^{2m} \p_y \frac{v}{u_s} \Big] \\
& = - \int  u_y y^{2m} \p_{yy} \frac{v}{u_s}, 
\end{align}

where the limit of the boundary contribution vanishes as $||u_y y^m||_{L^2_x}^2$ and $||v_y y^m ||_{L^2_x}$ are $H^1_y$ functions, according to the definition of $H^2_m$. We similarly have:
\begin{align} \n
\int -2\alpha &\p_y \{ \chi_1 y^{2m} v_y \} \cdot \eps \p_x \{ \frac{v}{u_s} \} \\
& = \int 2\alpha \chi y^{2m}  \eps v_y \Big( \frac{v_{xy}}{u_s} + v_y \p_x \frac{1}{u_s} + v_x \p_y \frac{1}{u_s} + v \p_{xy} \frac{1}{u_s} \Big).
\end{align}

Finally: 
\begin{align} \n
- \int \alpha \eps &\p_x \{ \chi y^{2m} \{u_y + \eps v_x \} \} \cdot \p_x \{ \frac{v}{u_s} \}  \\ \n
& = - \int \alpha \eps \chi y^{2m} u_{xy} \p_x \{ \frac{v}{u_s} \} - \int \alpha \eps^2 v_{xx}v_x \chi y^{2m} \frac{1}{u_s} \\ \label{st.2}
& \hspace{10 mm} -\int \eps^2 \alpha v_{xx}v \p_x \frac{1}{u_s} \chi y^{2m}.
\end{align}

The latter two terms in (\ref{st.2}) are estimated according to standard calculations. For the first term: 
\begin{align} \n
-\int \alpha \eps &\chi y^{2m} u_{xy} \p_x \{ \frac{v}{u_s} \} = \int \alpha \eps u_x \p_y \{ \chi y^{2m} \p_x \{ \frac{v}{u_s} \} \} \\ \n
& = \int \eps \alpha u_x \chi y^{2m} \Big[ \frac{v_{xy}}{u_s} + v_x \p_y \frac{1}{u_s} + v_y \p_x \{ \frac{1}{u_s} \} + v \p_{xy} \frac{1}{u_s}\Big] \\
& \hspace{10 mm} + \int \eps \alpha u_x \p_x \frac{v}{u_s} \p_y \{ \chi y^{2m} \}.
\end{align}

For the final term above, we must use that $m = 1$: 
\begin{align}
 \int \eps \alpha u_x \p_x \frac{v}{u_s} \p_y \{ \chi y^{2m} \} \le \alpha ||\sqrt{\eps} u_x y||_{L^2} ||\sqrt{\eps}v_x||_{L^2}.
\end{align}

The remaining terms can all be estimated similarly to estimate (\ref{pos.est.1}). 

\end{proof}

We first recall the definition of $\mathcal{R}_1$ given in (\ref{R.1}).
\begin{lemma}[Weighted Estimate] Solutions $[u,v] \in H^2_m$ to the system (\ref{stokes.weak}) satisfy the following estimate: 
\begin{align} \n
||\Big\{u_{yy}, &\sqrt{\eps}u_{xy}, \eps u_{xx} \Big\} \cdot y||_{L^2}^2 + ||\{u_y, \sqrt{\eps}u_x \} y||_{L^2}^2 + ||\{u_y, \sqrt{\eps}u_x \} y||_{L^2(x = L)}^2 \\ \n
& + \alpha ||\Big\{u_{yy}, \sqrt{\eps} u_{xy}, \eps u_{xx} \Big\}\cdot y^{m+1}||_{L^2}^2 \lesssim \alpha ||\{u_y, \sqrt{\eps}v_y, \eps v_x \} \cdot \chi y^m||_{L^2}^2 \\ \n
& + ||v_y, \sqrt{\eps}v_x||_{L^2}^2 + ||u_y||_{L^2}^2 + ||\{ a_L, \p_y a_L, b_L, \p_y b_L \} \langle y \rangle^2||_{L^2(x = L)}^2 \\  \label{lax.weight.1}
& + ||\sqrt{\eps}u_x||_{L^2(x = L)}^2 + \mathcal{R}_1.
\end{align}
\end{lemma}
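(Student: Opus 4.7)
The plan is to imitate the proof of Proposition \ref{weight.prop} applied now to the $\partial_y$-differentiated form of the regularized system (\ref{stokes.weak}) with convection $T[u,v]$ added, using the same multiplier $[\partial_y\{uwy^2\},\,-\eps\partial_x\{uwy^2\}]$ with $w(x)=1-x$. The contributions from $-\Delta_\eps$, $\nabla_\eps P$ and $T[u,v]$ (which coincides with $S^u,S^v$) are handled verbatim as in Proposition \ref{weight.prop}; coupled with the scaled Korn inequality (\ref{korn.thm.1}) they produce the first (weight-$y$) positivity block on the LHS of (\ref{lax.weight.1}) balanced against the non-$\alpha$ error terms on the RHS together with the forcing integrand $\mathcal{R}_1$. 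What remains is to analyze the three $\alpha$-regularization terms from (\ref{stokes.weak}).

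After $\partial_y$-differentiation, the first-equation $\alpha$-term becomes $-10\alpha\partial_y^2\{y^{2m}\chi_1 u_y\}$; pairing against $\partial_y\{uwy^2\}$ and integrating by parts twice in $y$ (the cutoff $\chi_1$ removes any $y=0$ boundary contribution) produces the principal positive quantity $10\alpha\int y^{2m+2}\chi_1 u_{yy}^2 w$, together with lower-order cross terms of the form $\alpha\int y^{2m+1}\chi_1 u_y u_{yy}w$ and $\alpha\int y^{2m}\chi_1' u_y^2 w$ absorbed either into the RHS $\alpha\|\{u_y,\sqrt{\eps}v_y,\eps v_x\}\chi_1 y^m\|_{L^2}^2$ via Cauchy–Schwarz with small constants or into the emerging $u_{yy}^2$ positivity. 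The two $\alpha$-terms in the second equation, namely $-2\alpha\partial_y^2\{y^{2m}\chi_1 v_y\}$ and $-\alpha\partial_y\partial_x\{y^{2m}\chi_1\{u_y+\eps v_x\}\}$, paired with $-\eps\partial_x\{uwy^2\}$ and combined with the divergence-free identities $v_y=-u_x$, $v_{xy}=-u_{xx}$, yield after analogous integrations by parts the bulk quadratic form $\alpha\int y^{2m+2}\chi_1\,[u_{yy}^2+4\eps u_{xy}^2+\eps^2 u_{xx}^2-2\eps u_{yy}u_{xx}]w$ modulo lower-order contributions. A weighted rescaled Korn inequality of the type (\ref{korn.thm.1}), with the weight $y^2 w$ replaced throughout by $y^{2m+2}\chi_1 w$ (the rescaling proof goes through unchanged), then upgrades this combined form to the desired coercivity $\alpha\|\{u_{yy},\sqrt{\eps}u_{xy},\eps u_{xx}\}y^{m+1}\|_{L^2}^2$ on the LHS.

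Boundary contributions at $x=L$ come from two sources. The pressure-boundary contribution is handled using $P-2\eps u_x|_{x=L}=a_L$ exactly as in (\ref{bd.crucial}) and produces the $\|a_L\langle y\rangle^2\|_{L^2(x=L)}^2$ term; the derivatives in $y$ of $a_L$ appear after the additional $\partial_y$-differentiation of the system, which explains the presence of $\partial_y a_L\langle y\rangle^2$ in the RHS. The $\partial_x$-integration by parts from the $x$-derivative in the second $\alpha$-term produces a boundary term at $x=L$ of the form $\alpha\int_{x=L}y^{2m}\chi_1(u_y+\eps v_x)\cdot(\text{multiplier}) = \alpha\int_{x=L}y^{2m}\chi_1 b_L\cdot(\text{multiplier})$, which Cauchy–Schwarz bounds by $\|b_L\langle y\rangle^2\|_{L^2(x=L)}^2$ plus quantities absorbable into the $\alpha$-positivity on the LHS; the $\partial_y$ of the equation then introduces the $\partial_y b_L\langle y\rangle^2$ contribution for the same reason.

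The main obstacle is rigorously justifying the $\partial_y$-differentiation and the repeated integrations by parts, particularly near the singular corner $(L,0)$. Since $[u,v]\in H^2_m\cap H^{3/2}_{loc}$ by (\ref{h2.lax}), the requisite regularity is available in the interior and on the sides, but the corner requires the same cut-and-pass-to-the-limit argument as in Proposition \ref{weight.prop}: one works on $\Omega\setminus C_r$ with $C_r$ a small ball around $(L,0)$, invokes the elliptic expansions (\ref{sing.asy}) from \cite{OS} to show that the boundary integrals over $\partial C_r$ vanish as $r\to 0$, and concludes. Helpfully, the cutoff $\chi_1$ built into the $\alpha$-terms automatically removes their contribution near $y=0$, so the new $\alpha$-analysis introduces no additional corner difficulty beyond what is already present in Proposition \ref{weight.prop}. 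A secondary technical point is the treatment of mixed terms between the $\alpha$-regularization and the non-shear convective interactions $v_s u_y$, $u_{sy}v$; these are absorbed using $\|v^0_e/Y\|_{L^\infty}\ll 1$ and $L\ll 1$ exactly as in Proposition \ref{weight.prop}.
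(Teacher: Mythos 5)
Your overall strategy is the paper's: apply the multiplier $[\p_y\{u w y^2\},\,-\eps\p_x\{u w y^2\}]$ to the $\p_y$-differentiated system, reuse the computations of Proposition \ref{weight.prop} for the $-\Delta_\eps$, pressure, and convective contributions, and treat the three new $\alpha$-regularization terms of (\ref{stokes.weak}) by further integrations by parts so that they yield the $\alpha$-weighted positivity on the left of (\ref{lax.weight.1}) plus errors controlled by $\alpha||\{u_y,\sqrt{\eps}v_y,\eps v_x\}\chi y^m||_{L^2}^2$. Your accounting of the $x=L$ boundary data, of the role of $\chi_1$ near $y=0$, and of the corner at $(L,0)$ all match the paper.

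The one device you are missing is a truncation at large $y$, and it is needed. Membership in $H^2_m$ only gives $\langle y\rangle^m\chi_2\{u_{yy},\sqrt{\eps}v_{yy},\eps v_{xx}\}\in L^2$ (see (\ref{defn.h2m})); it does not give $y^{m+1}u_{yy}\in L^2$. Hence the quantity $\alpha||\{u_{yy},\sqrt{\eps}u_{xy},\eps u_{xx}\}y^{m+1}||_{L^2}^2$ you are extracting is not a priori finite, and the repeated integrations by parts in $y$ against the combined weight $y^{2m}\cdot y^2$ generate contributions at $y\to\infty$ that your stated regularity (``interior and on the sides'') does not kill --- that phrase covers $x=0$, $x=L$, $y=0$, but not $y=\infty$. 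The paper's proof inserts an additional cutoff $\chi_N(y)=\chi(y/N)$ into the multiplier, so that all integrals are finite and no boundary term from $y=\infty$ arises; the commutator terms carrying $\frac{1}{N}\chi_N'$ are shown to be $\alpha\,\bigO(||u_y||_{H^1_m}^2)$, and one then lets $N\to\infty$ (monotone convergence recovers the untruncated left-hand side). Everything else in your argument survives this modification unchanged, so the fix is purely the insertion of $\chi_N$ and the limit $N\to\infty$.
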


\begin{proof}
For this step, we can apply a cut-off $\chi_N(y) = \chi(\frac{y}{N})$, and take $N \rightarrow \infty$. Due to the cut-off, there is no need to justify contributions from $y = \infty$. Consider the new term: 
\begin{align} \n
- \alpha \int \p_{yy} &\{ y^{2m} \chi(y) u_y \} \cdot \p_y \{u y^2 w(x) \} \chi_N(y) \\ \n
& = + \alpha \int \p_y \{ y^{2m} \chi(y) u_y \} \cdot \p_{y} \{ u y^2 w \} \frac{1}{N} \chi_N'(y) \\
& = +\alpha \int \chi \chi_N y^{2m+2} u_{yy}^2  + \alpha \bigO(||u_y||_{H^1_m}^2).
\end{align}

Analogous calculations can be performed for the remaining $\alpha-$ terms from (\ref{stokes.weak}). For the remaining terms from (\ref{fred.1}), one can repeat the proof of Proposition \ref{weight.prop} with the additional cutoff term $\chi_N(y)$. We omit repeating those details. 
 
\end{proof}

Putting the above estimates, (\ref{alpha.est.1}), (\ref{lax.pos.1}), (\ref{lax.weight.1}) together gives the following uniform in $\alpha$ estimate:
\begin{align} \n
||u,v||_{\X}^2 &+ \alpha ||\Big\{u_{yy}, \sqrt{\eps} u_{xy}, \eps u_{xx} \Big\}\cdot y^{m+1}||_{L^2}^2 +  \alpha ||\{u_y, \sqrt{\eps}v_y, \eps v_x \} \cdot \chi y^m||_{L^2}^2 \\ \label{u.X}
&  \lesssim \mathcal{R}_1 + \mathcal{R}_2 + \mathcal{R}_3 + C(a_L, b_L). 
\end{align}

Taking the forcing $f = g = a_L = b_L = 0$ (thus $\mathcal{R}_i = 0$), we can apply the Fredholm Alternative to conclude that there exists an $H^1$ solution $[u,v]$ to the problem (\ref{fred.1}). Such a solution is automatically $H^2$ by (\ref{h2.lax}), and so is a strong solution. The final task is to establish a solution to our original system (\ref{NSR.1}) - (\ref{NSR.3}), which can be achieved as a weak limit in $\X$ as $\alpha \rightarrow 0$ using the uniform in $\alpha$ estimate (\ref{u.X}). This then proves Proposition \ref{lin.exist}. Proposition \ref{nl.exist} then follows upon applying the Contraction Mapping Theorem when coupled with the main $\X$-estimate in Theorem \ref{th.z}.

\vspace{5 mm}

\textbf{Acknowledgements:} Supported in part by NSF Grant DMS-1611695. The author thanks Toan Nguyen for sharing an unpublished note regarding the contents of Lemmas \ref{lemma.a1} through Lemma \ref{lemma.a4} of Appendix \ref{app.construct}, and also for informing him of the reference \cite{CS}.

\end{document}